\let\oldtheorem\newtheorem
\RenewDocumentCommand{\newtheorem}{s m o m O{}}{%
\IfBooleanTF{#1}%
{\oldtheorem{#2}{#4}}%
{\IfNoValueTF{#3}{\oldtheorem{#2}{#4}[#5]}%
{\newaliascnt{#2}{#3}%
\oldtheorem{#2}[#2]{#4}%
\aliascntresetthe{#2}}}}
\theoremstyle{definition}
\newtheorem{nul}{}[section]
\newtheorem{dfn}[nul]{Definition}
\newtheorem{rmk}[nul]{Remark}
\newtheorem{cnstr}[nul]{Construction}
\newtheorem{ntn}[nul]{Notation}
\newtheorem{exm}[nul]{Example}
\newtheorem{rec}[nul]{Recollection}
\newtheorem{wrn}[nul]{Warning}
\newtheorem{qst}[nul]{Question}
\newtheorem*{dfn*}{Definition}
\newtheorem*{axm*}{Axiom}
\newtheorem*{ntn*}{Notation}
\newtheorem*{exm*}{Example}
\newtheorem*{exr*}{Exercise}
\newtheorem*{int*}{Intuition}
\newtheorem*{qst*}{Question}
\newtheorem*{rmk*}{Remark}
\theoremstyle{plain}
\newtheorem{thm}[nul]{Theorem}
\newtheorem{prop}[nul]{Proposition}
\newtheorem{lem}[nul]{Lemma}
\newtheorem{cor}[nul]{Corollary}
\newtheorem*{thm*}{Theorem}
\newtheorem*{prop*}{Proposition}
\newtheorem*{cor*}{Corollary}
\newtheorem*{lem*}{Lemma}
\newtheorem*{cnj*}{Conjecture}
\let\oldwidetilde\widetilde
\protected\def\widetilde{\oldwidetilde}
\DeclareMathOperator{\Map}{\mathrm{Map}}
\DeclareMathOperator{\gr}{\mathrm{gr}}
\DeclareMathOperator{\fil}{\mathrm{fil}}
\DeclareMathOperator{\can}{\mathrm{can}}
\DeclareMathOperator{\fib}{\mathrm{fib}}
\DeclareMathOperator{\coker}{\mathrm{coker}}
\DeclareMathOperator{\Ss}{\mathbb{S}}
\DeclareMathOperator{\SP}{\mathbb{S}}
\DeclareMathOperator{\FF}{\mathbb{F}}
\DeclareMathOperator{\F}{\mathbb{F}}
\DeclareMathOperator{\QQ}{\mathbb{Q}}
\DeclareMathOperator{\NN}{\mathbb{N}}
\DeclareMathOperator{\A}{\mathbb{A}}
\DeclareMathOperator{\C}{\mathcal{C}}
\DeclareMathOperator{\cC}{\mathcal{C}}
\DeclareMathOperator{\cD}{\mathcal{D}}
\DeclareMathOperator{\E}{\mathbb{E}}
\DeclareMathOperator{\Spec}{\text{Spec}}
\DeclareMathOperator{\Spc}{\text{Spc}}
\DeclareMathOperator{\Psh}{\text{Psh}}
\DeclareMathOperator{\CycSp}{\mathrm{CycSp}}
\DeclareMathOperator{\Cat}{\mathrm{Cat}}
\DeclareMathOperator{\et}{\text{\'et}}
\DeclareMathOperator{\Fil}{\mathbf{Fil}}
\DeclareMathOperator{\Alg}{\mathrm{Alg}}
\DeclareMathOperator{\Sp}{\mathrm{Sp}}
\DeclareMathOperator{\const}{\mathrm{const}}
\DeclareMathOperator{\cyc}{\mathrm{cyc}}
\DeclareMathOperator{\ev}{\mathrm{ev}}
\newcommand{\BP}{\mathrm{BP}}
\newcommand{\THH}{\mathrm{THH}}
\newcommand{\TP}{\mathrm{TP}}
\newcommand{\TC}{\mathrm{TC}}
\newcommand{\TR}{\mathrm{TR}}
\newcommand{\mot}{\mathrm{mot}}
\newcommand{\Syn}{\mathrm{Syn}}
\newcommand{\Mod}{\mathrm{Mod}}
\newcommand{\EE}{\mathbb{E}}
\newcommand{\CC}{\mathbb{C}}
\newcommand{\Z}{\mathbb{Z}}
\newcommand{\ZZ}{\mathbb{Z}}
\def\Spf{\mathrm{Spf}}
\def\H{\mathrm{H}}
\def\L{\mathcal{L}}
\def\G{\mathbb{G}}
\def\ku{\mathrm{ku}}
\def\Gal{\mathrm{Gal}}
\def\HT{\mathrm{HT}}
\def\Nyg{\mathrm{Nyg}}
\def\Zp{\mathbb{Z}_p}
\def\N{\mathcal{N}}
\def\O{\mathcal{O}}
\def\Fr{\mathrm{Fr}}
\def\syn{\mathrm{Syn}}
\def\Ani{\mathrm{an}}
\def\CAlg{\mathrm{CAlg}}
\def\Fr{\mathrm{Fr}}
\def\TR{\mathrm{TR}}
\def\Spa{\mathrm{Spa}}
\def\Q{\mathbb{Q}}
\def\RHom{\mathrm{RHom}}
\DeclareSymbolFontAlphabet{\mathbbl}{bbold}
\def\prism{\mathbbl{\Delta}}
\DeclarePairedDelimiter\abs{\lvert}{\rvert}%
\let\oldabs\abs
\def\abs{\@ifstar{\oldabs}{\oldabs*}}
\let\oldtocsection=\tocsection
\let\oldtocsubsection=\tocsubsection
\let\oldtocsubsubsection=\tocsubsubsection
\renewcommand{\tocsection}[2]{\hspace{0em}\oldtocsection{#1}{#2}}
\renewcommand{\tocsubsection}[2]{\hspace{1em}\oldtocsubsection{#1}{#2}}
\renewcommand{\tocsubsubsection}[2]{\hspace{2em}\oldtocsubsubsection{#1}{#2}}
\newcommand{\NB}[1]{\todo[color=gray!40]{#1}}
\newcommand{\TODO}[1]{\todo[color=red]{#1}}
\newcommand{\NB}[1]{}
\newcommand{\TODO}[1]{}
\renewcommand{\todo}[1]{}
\renewcommand{\todo}[1]{}
\title{Crystallinity for syntomic cohomology, \'etale cohomology, and algebraic $K$-theory}%the mod $(p,v_1^{p^{n-2}})$ $K$-theory of $\Z/p^{n}$}
\author{Jeremy Hahn}
\address{Department of Mathematics, MIT, Cambridge, MA, USA}
\email{jhahn01@mit.edu}
\author{Ishan Levy}
\address{Department of Mathematics, Institute for Advanced Study, USA}
\email{ishanl@ias.edu}
\author{Andrew Senger}
\address{Department of Mathematics, University of Maryland, College Park, MD, USA}
\email{senger@umd.edu}
\begin{document}
\begin{abstract}
We prove for $n\geq c-1$ that the functor taking an animated ring $R$ to its mod $(p^c,v_1^{p^n})$ syntomic cohomology factors through the functor $R \mapsto R/p^{c(n+2)}$, a phenomenon we term crystallinity for mod $(p^c,v_1^{p^n})$ syntomic cohomology. As an application, we completely and explicitly compute the mod $(p,v_1 ^{p^{n}-1})$ algebraic $K$-theory of $\Z/p^{k}$ whenever $k \geq n+2$ and $p>2$.  As a second application, we deduce crystallinity for the mod $p^c$ syntomic complexes associated to smooth $p$-adic formal schemes, and in particular for the Galois equivariant mod $p^c$ \'etale cohomologies of their adic generic fibers.  Finally, we strengthen known $p$-adic convergence theorems for the topological Hochschild homology of ring spectra, and as a result relate crystallinity for algebraic $K$-theory to Lichtenbaum--Quillen theorems.
%	We deduce the mod $(p,v_1^{p^n})$ algebraic $K$-theory of $\mathbb{Z}/p^{k}$ whenever $p$ is an odd prime.
%	When $p=2$ and $n>1$, we deduce the associated graded of the motivic filtration on the mod $(2,v_1^{2^n})$ algebraic $K$-theory of $\mathbb{Z}/2^k$.
\end{abstract}
\maketitle

\setcounter{tocdepth}{1}
\tableofcontents
\vbadness 5000
	
%----------------------------------------------------------------------%

% \newpage
\section{Introduction} \label{sec:intro}
Let $X$ denote a smooth qcqs $p$-adic formal $\mathbb{Z}_p$-scheme.  The existence of crystalline cohomology implies, rather miraculously, that the ($p$-completed, derived) deRham cohomology of $X$ depends functorially on the (derived) mod $p$ reduction $X_{p=0}$. Here, we prove that similar crystallinity properties are a general phenomenon, and apply them to the computation of the algebraic $K$-theory of $\Z/p^n$. %Recent years have seen the development and application of new $p$-adic cohomology theories, including prismatic and syntomic cohomology.% Prismatic cohomology can be viewed as a deformation of deRham cohomology, while the syntomic complexes $\mathbb{F}_p(*)(X)$ are closely related to mod $p$ algebraic $K$-theory and \'etale cohomology.

This paper achieves three main goals:
\begin{enumerate}
\item To determine new crystallinity properties for $p$-adic cohomology theories of $p$-adic formal schemes, including prismatic and syntomic cohomology.  We prove in particular that, for each $n \ge c-1\geq 0$, $\ZZ/p^c(*)(X) / v_1^{p^n}$ depends functorially on $X_{p^{c(n+2)}=0}$.  That is to say, while no $p$-power reduction of $X$ completely recovers the syntomic complexes $\ZZ/p^c(*)(X)$, $v_1$-power reductions of syntomic cohomology may be functorially recovered from $p$-power reductions of $X$.

  When $X$ is smooth over $\mathbb{Z}_p$, we can deduce results before killing any power of $v_1$.  In that case, we functorially recover the mod $p^c$ syntomic complexes $\Z/p^c (*)(X)$ from $X_{p^j = 0}$, where $j$ is an integer depending only on $c$ and the dimension of $X$. As a consequence, we may recover the mod $p^c$ \'etale cohomology complexes $R\Gamma_{\et} (X_{\eta, \mathbb{C}_p},\mathbb{Z}/p^c(*))$ of the adic generic fiber as $\Gal(\Q_p)$-representations.
    If $X$ is affine and smooth over some $\mathcal{O}_K$, then the \'etale cohomology groups $H^*_{\et} (X_{\eta, \mathbb{C}_p},\mathbb{Z}/p^c(*))$ can be recovered as $\Gal(\Q_p)$-representations from $X_{p^j=0}$ for an integer $j$ depending on $c$ alone, without reference to dimension.

    %the mod $p^c$ \'etale cohomology complexes $R\Gamma_{\et} (X_{\eta},\mathbb{Z}/p^c(*))$ of the adic generic fiber of $X$ can be functorially recovered from $X_{p^j=0}$, where $j$ is an integer depending on $c$ and the dimension of $X$.  If $X$ is affine, then the cohomology groups $H^*_{\et} (X_{\eta},\mathbb{Z}/p^c(*))$ can be recovered from $X_{p^j=0}$ for an integer $j$ depending on $c$ alone.
		
  \item We prove crystallinity for algebraic $K$-theory. Namely, for $R$ any $p$-complete $\mathbb{Z}$-algebra we prove that the mod $(p^{j_0},v_1^{j_1})$ algebraic $K$-theory of $R$ depends only on the derived quotient $R/p^i$, for some fixed $i$ depending only on $j_0$ and $j_1$. 	In the course of the argument, we strengthen previous $p$-adic continuity results for $\mathrm{TR}$, for example establishing the rapid convergence of the tower $\tau_{\le k}\mathrm{TR}(\mathbb{S}/p^\bullet)/p^a$ for any fixed integers $k,a \ge 0$.
	
  Moreover, we prove analogous results for algebras over any ring spectrum $A$ satisfying the Lichtenbaum--Quillen property.  More specifically, we prove that if the mod $(p^{j_0}, v_1 ^{j_1}, \dots v_{n}^{j_n})$ reduction of $\TR(A)$ is bounded, then the mod $(p^{j_0}, v_1 ^{j_1}, \dots v_{n}^{j_n})$ topological cyclic homology $\TC$ of any connective $A$-algebra $R$ may be functorially recovered from the mod $(p^{i_0}, v_1 ^{i_1}, \dots, v_{n-1} ^{i_{n-1}})$ reduction of $R$, for $(i_0, \dots, i_{n-1})$ sufficiently large.

%    For example, this may be applied to obtain crystallinity results over the base rings $R = \Z_p, \ku, \mathrm{ko}, \mathrm{BP}\langle n \rangle, j_{\zeta}, \tmf$. 

%	To establish a general crystallinity result for $p$-adic cohomology theories applied to algebras over a connective ring spectrum satisfying the Lichtenbaum--Quillen property. More specifically, we prove that if the mod $(p^{j_0}, v_1 ^{j_1}, \dots v_{n}^{j_n})$ reduction of $\TR(R)$ is bounded, then the mod $(p^{j_0}, v_1 ^{j_1}, \dots v_{n}^{j_n})$ topological cyclic homology $\TC$ and algebraic $K$-theory of an $R$-algebras $S$ may be functorially recovered from the mod $(p^{i_0}, v_1 ^{i_1}, \dots, v_{n-1} ^{i_{n-1}})$ reduction of $S$, for $(i_0, \dots, i_{n-1})$ sufficiently large.

%    For example, this may be applied to obtain crystallinity results over the base rings $R = \Z_p, \ku, \mathrm{ko}, \mathrm{BP}\langle n \rangle, j_{\zeta}, \tmf$. 

\item To study the algebraic $K$-theory of $\Z/p^n$.  Using \cite{BMS}, this may be understood through syntomic cohomology.  Our crystallinity result implies that, so long as $k \le p^{n-2}$, the mod $(p,v_1^k)$ syntomic cohomology of $\Z/p^n$ agrees with the mod $(p,v_1^k)$ syntomic cohomology of the trivial square-zero extension $\Z_p \oplus \Z_p[1]$.  In particular, the answer is independent of $n$ in this range!

We furthermore explicitly compute the mod $p$ syntomic cohomology, with all Breuil--Kisin twists, of the trivial square-zero extension $\Z_p \oplus \Z_p[1]$.  To make this calculation, we first prove that classic splitting results for $\TC$ of square-zero extensions, due to Lindenstrauss and McCarthy, respect the motivic filtration.  Even given these splittings, the feasibility of the computation rests on several tricks and happy coincidences, and in particular the surprise that it suffices to compute both $\mathrm{can}$ and $\varphi$ up to $v_1$-adic associated graded.
\end{enumerate}

One upshot of all this is the first closed form, complete calculation of \[\pi_* \left( K(\mathbb{Z}/p^n) / (p,v_1^k) \right),\] whenever $k < p^{n-2}$ and $p$ is odd.\footnote{A powerful algorithm to compute the integral homotopy groups of $K(\mathbb{Z}/p^n)$ was recently developed in \cite{kzpnother}.  We view this algorithm as complementary to our results, as we explain in \Cref{sec:intro-comparison}.} When $p=2$ and $k<2^{n-2}$ is divisible by $4$, we obtain the associated graded of the motivic filtration on this algebraic $K$-theory.

%This paper has two goals:
%\begin{enumerate}
%  \item To study the \emph{crystallinity} properties of reduced variants of $p$-adic cohomology theories such as prismatic and syntomic cohomology.
%    This yields one possible answer to the question: given a $p$-adic formal scheme $X$, what does the locus $X_{p^n = 0}$ know about the prismatic and syntomic cohomology of $X$?
%  \item To study the algebraic $K$-theory of $\Z/p^n$.
%    Using Bhatt--Morrow--Scholze's motivic spectral sequence \cite{BMS}, this may be studied through syntomic cohomology.
%    Using our crystallinity results, we are able to give a completely explicity closed form computation of mod $(p,v_1 ^{k})$ syntomic cohomology of $\Z/p^n$ when $k \leq p^{n-2}$.
%    By crystallinity, the answer is independent of $n$ in this range!
%\end{enumerate}

\subsection{Crystallinity for prismatic and syntomic cohomology}

The ``crystalline miracle'' may be formulated as follows:
the $p$-complete derived de Rham cohomology of $p$-complete animated commutative rings
\[A \mapsto \mathrm{dR}_{A/\Z_p}\]
factors through the derived mod $p$ reduction
\[A \mapsto A/p,\]
via the derived crystalline cohomology functor.
This plays a foundational role in $p$-adic geometry and Hodge theory, as crystalline cohomology defines a good $p$-adic cohomology theory for characteristic $p$ schemes.
%has many important consequences, such as the existence of an action of Frobenius on the derived de Rham cohomology.

In recent years, new $p$-adic cohomology theories have been introduced and used to study mixed characteristic phenomena: prismatic cohomology, which is a deformation of de Rham cohomology, and syntomic cohomology, which is closely related to algebraic $K$-theory and \'etale cohomology.

Our main result is an analogue of the crystalline miracle for certain reduced variants of these cohomology theories: they factor through reduction mod $p^n$.
One novel aspect of this theorem is that it provides a concrete understanding of how the prismatic or syntomic cohomology of a $p$-adic formal scheme $X$ is built up from its mod $p^n$ reductions $X_{p^n = 0}$.
%
%Our main result studies the extent to which a similar theorem is true for prismatic and syntomic cohomology, at least after reduction modulo $(p,v_1 ^{p^n})$.\footnote{We recall the meaning of the class $v_1$ in PLACE. This class is the same as the $v_1$ of chromatic homotopy theory.}
%Instead of proving that these invariants factor through the mod $p$ reduction, we study when they factor through the mod $p^n$ reduction.

\begin{thm} \label{thm:mainintro}
  Let $n,c\geq 0$.
  The following functors on $\CAlg_p ^\Ani$ factor through the mod $p^{c(n+2)}$ reduction functor $\CAlg_p^\Ani \to \CAlg_{\Z/p^{c(n+2)}}^\Ani$:
  \begin{enumerate}
    \item mod $(p^c,v_1 ^{p^n})$ syntomic cohomology\footnote{We recall the meaning of the class $v_1$ in \Cref{dfn:prism-v1} and \Cref{rmk:v1}. This class is the same as the $v_1$ of chromatic homotopy theory.} for $n\geq c-1$
      \[R \mapsto \ZZ/p^c (\ast) (R) /v_1 ^{p^n}\]
    \item mod $(p^c,v_1 ^{p^n})$ Nygaard-filtered derived prismatic cohomology for $n\geq c-1$
      \[R \mapsto N^{\geq \ast} \prism_{R} \{\ast\} /(p^c, v_1^{p^{n}})\]
    \item mod $(p^c,v_1 ^{p^{n+1}})$ derived prismatic cohomology for $n+1\geq c-1$
      \[R \mapsto \prism_{R} \{\ast\} /(p^c, v_1^{p^{n+1}})\]
    \item mod $(p^{kc}, (F^{n+1-k})^* I)$ derived prismatic cohomology, where $k \geq 1$, $F$ is the Frobenius, and $I$ is the Hodge-Tate ideal
      \[R \mapsto \prism_{R} \{\ast\} /(p^{kc}, (F^{n+1-k})^* I)\]
  \end{enumerate}
%
%  Given any $n \geq 0$, the mod $(p,v_1 ^{p^n})$ derived syntomic cohomology functor
%  \[\F_p (\bullet) / v_1 ^{p^n} : \CAlg_p^\Ani \to \CAlg_p^{\Ani,\gr}\]
%  factors through the functor which takes $R$ to its (derived) quotient $R/p^{n+2}$.
%
%  In other words, we construct a commutative diagram
%  \begin{center}
%    \begin{tikzcd}
%      \CAlg_p^\Ani \ar[rr,"A \mapsto \F_p (\bullet) (A) / v_1 ^{p^n}"] \ar[dr, "A \mapsto A/p^{n+2}"] & & \CAlg_p ^{\Ani,\gr} \\
%      & \Calg_{\Z/p^{n+2}} ^\Ani. \ar[ur] &
%    \end{tikzcd}
%  \end{center}
\end{thm}

\begin{rmk} \label{rmk:stronger-a-theorem}
  Parts (2) and (3) are consequences of the stronger \Cref{thm:main}(5) about
   mod $(p^c, a^{p^{n+1}} \mu^{p^n})$ Nygaard-filtered derived prismatic cohomology
      \[N^{\geq \ast} \prism_{R} \{\ast\} /(p^c, a^{p^{n+1}} \mu^{p^n}),\]
			where $v_1=a \mu$.  The element $a$ here is closely related to the class $a_{\lambda}:S^{-\lambda} \to S^{0}$ of $S^1$-equivariant homotopy theory.
\end{rmk}

\Cref{thm:mainintro} came to us as a surprise. At the outset of this project, our goal was to study the mod $(p,v_1)$ algebraic $K$-theory of $\Z/p^n$.
We began to suspect a version of \Cref{thm:mainintro} after carrying out the computation and finding something unexpected: the answer does not depend on $n \ge 2$.
%We began to suspect that some version of \Cref{thm:main} was true after carrying out this computation and finding something unexpected: the answer did not depend on $n \geq 2$.
%\Cref{thm:main} came about through our attempts to understand this phenomenon.

\begin{rmk}
  At least for mod $p$ syntomic cohomology, \Cref{thm:mainintro} is optimal in the following sense: for every $n \geq 2$, there is a pair of rings $A_1,A_2$ such that $A_1/p^n \cong A_2/p^n$ but $\F_p (\ast) (A_1) /v_1 ^{p^n} \not \simeq \F_p (\ast) (A_2) /v_1 ^{p^n}$.

  In fact, $A_1 = \Z/p^n$ and $A_2 = \Z/p^{n+1}$ work, as shown by Achim Krause and the third author in \cite{KS}.
\end{rmk}

%\begin{rmk}
  %Originally, we computed $\pi_* K(\Z/p^n) / (p,v_1)$.
  %After doing so, we found something unexpected: this answer did not depend on the choice of $n \geq 2$!
  %This paper represents our attempt to explain this phenomenon.
%
  %Our methods were particularly influenced by BHATT-LURIE and PETROV.
%\end{rmk}

Our proof of \Cref{thm:mainintro}, which was inspired by \cite[Example 5.15]{BLprismatization} and \cite[Section 6.3]{Petrov}, makes essential use of the stacky approach to prismatic and syntomic cohomology \cite{Drinfeld,APC,BLprismatization,fgauge}.
As a consequence, we actually prove the following stacky refinement of \Cref{thm:mainintro}.
\begin{thm}\label{thm:stackintro}
  The functors
  \begin{enumerate}
    \item \[X \mapsto (X^\syn)_{p^c=v_1^{p^n}=0} \]
    \item \[X \mapsto (X^\Nyg)_{p^c=v_1^{p^n}=0} \]
    \item \[X \mapsto (X^\prism)_{p^c=v_1^{p^{n+1}}=0}\]
    \item \[X \mapsto (X^\prism)_{p^{kc}=(F^{n+1-k})^* I=0}\]
  \end{enumerate}
  factor through the functor $X \mapsto X_{p^{c(n+2)}=0}$, where $X_{p^{(n+2)}=0}$ is regarded as derived $\Z/p^{c(n+2)}$-scheme. For (1),(2) we require $n \geq c-1$, and for (3) we require $n+1 \geq c-1$.
  Here, $X$ is a derived $p$-adic formal scheme, and all of the vanishing loci are taken in a derived sense.
\end{thm}

\begin{rmk}
  As in \Cref{rmk:stronger-a-theorem}, parts (2) and (3) are consequences of a stronger theorem about $(X^\Nyg)_{p^c=a^{p^{n+1}} \mu^{p^n}=0}$ (\Cref{thm:stack}(5)).
\end{rmk}

\begin{rmk}
  \Cref{thm:stackintro} implies that the map $(X^\syn)_{p=v_1^{p^n}=0} \to (X^{\syn})_{p=0}$ factors through $((X_{p^{n+2} = 0}) ^\syn)_{p=0}$
  On the other hand, Antieau--Krause--Nikolaus have shown that the map $((X_{p^n = 0}) ^\syn)_{p=0} \to (X^\syn)_{p=0}$ factors through $(X^\syn)_{p=v_1^{1+p+\dots+p^{n-1}}=0}$ \cite[Theorem 1.8]{kzpnother}.
  These results may be interpreted as saying that, on $(X^\syn)_{p=0}$, the $v_1$-adic filtration and the filtration induced by the $p$-adic filtration on $X$ are commensurate.
  We view this as a quantitative aspect of the chromatic redshift philosophy in algebraic $K$-theory.
\end{rmk}

\subsection{Applications to the \'etale cohomology and $F$-gauge} \label{sec:intro-etale}

In the case of $F$-smooth qcqs $p$-adic formal schemes $X$, we are able to functorially recover the mod $p^c$ syntomic cohomology from $X_{p^n=0}$ when $n$ is sufficiently large, without modding out by a power of $v_1$.
As a consequence, we are also able to recover the \'etale cohomology of the generic fiber from the $X_{p^n=0}$ for $n$ sufficiently large.
Examples of $F$-smooth $p$-adic formal schemes are given by those that are smooth over $\Spf (\O_K)$, where $\O_K$ is the ring of integers in a local or perfectoid field $K$.

%The following theorem may be viewed as a sort of $p$-adic continuity of the assignment
%\[X \mapsto H^*_{\mathrm{syn}}(X;\ZZ/p^c(j))\] for such rings.
%The statement of this result depends on the notion of \emph{affine cohomological dimension} that we introduce in \Cref{dfn:affcohdim}.
%The affine dimension is always bounded above by the Zariski cohomological dimension, but may be smaller.
%In particular, every affine $X$ has affine cohomological dimension equal to zero.

\begin{thm} \label{thm:genericintro}
	Let $X$ denote a $p$-torsionfree $F$-smooth qcqs affine $p$-adic formal scheme, and suppose that $p+c\geq 5$. %\footnote{Here, we mean the dimension of the underlying space of $X$. This is equal to the dimension of the scheme $X_{p=0}$.}
	Then the syntomic and \'etale mod $p^c$ cohomology rings\footnote{Here, $X_\eta$ is the adic generic fiber of $X$, i.e. $X \times_{\Spa(\Z_p, \Z_p)} \Spa(\Q_p, \Z_p)$ in the category of (pre-)adic spaces.}
	\[H^i_{\mathrm{syn}}(X;\ZZ/p^c(j)) \text{ and } H_{\et}^i(X_\eta; \ZZ/p^c(j))\]
	may be recovered functorially from $X_{p^{3c} = 0}$.
	
	More precisely, these functors factor through the essential image of the functor $X \mapsto X_{p^{3c}=0}$ from such $X$ into $\ZZ/p^{3c}$-schemes.
	%  In particular, if $X_1$ and $X_2$ as above satisfy $(X_1)_{p^{b(d)} = 0} \cong (X_2)_{p^{b(d)} = 0}$, then $b_{i,j} (X_1;\F_p) = b_{i,j} (X_2;\F_p)$.
\end{thm}

\begin{rmk}\label{rmk:notaffine}
	The affineness assumption above is not necessary. For a more general $X$, we show the cohomology depends on $X_{p^{b'_c(d)}=0}$, where $b'_c(d)$ is a function depending on $d$, the \textit{affine cohomological dimension} of $X$ (see \Cref{dfn:affcohdim}). See \Cref{thm:generic} for a precise statement.
\end{rmk}

\begin{wrn}
	We note that \Cref{thm:genericintro} recovers the cohomology rings functorially, but not the complexes.
\end{wrn}

\begin{rmk}
  Let $X$ be smooth over the ring of integers $\mathcal{O}_{K}$ of a finite extension $K$ of $\QQ_p$. As in \Cref{rmk:notaffine}, the functoriality shows that we can then recover the \'etale cohomology of $X_{\eta, \mathbb{C}_p}$ as a mod $p^c$ Galois representation from $X_{p^{b'_c(d')}=0}$, where $d'$ is the affine cohomological dimension of $X\times_{\Spf(\mathcal{O}_K)} \Spf(\mathcal{O}_{\CC_p})$.
\end{rmk}

%\begin{rmk}
	%
	%since the mod $p^k$ reduction of the Galois representation only depends on the mod $p^\ell$ reduction of $X$, where $\ell$ depends on $k$ and the dimension of $X$.
%\end{rmk}

If we restrict to $p$-adic formal schemes that are smooth over $\Spf \Zp$, then we refine our results by recovering the mod $p^c$ $F$-gauge of $X$ from the mod $p^n$ reduction of $X$.

\begin{thm} \label{thm:FLintro}
	Let $\pi : X \to \Spf \Zp$ denote a smooth $p$-adic formal scheme of relative dimension $d$, and let $b_c ''(d) = \max\left(c+1, \lceil \log_p \left( \frac{d+1}{p-1} \right) + 2 \rceil \right)$.
	Then the associated mod $p^c$ $F$-gauge $R\pi ^\syn _* \mathcal{O}_{(X^\syn)_{p^c=0}} \in \mathcal{D} ((\Zp ^{\syn})_{p=0})$ may be functorially recovered from $X_{p^{cb''_c (d)} = 0}$.
\end{thm}

Taking the \'etale realization, one obtains the following corollary:

\begin{cor}
  Given a smooth $p$-adic formal scheme $\pi : X \to \Spf \Zp$ of relative dimension $d$, the mod $p^c$ \'etale cohomology complex of the generic fiber
	\[R\Gamma_{\et} (X_{\eta, \mathbb{C}_p}, \mathbb{Z}/p^c)\]
  may be functorially recovered from $X_{p^{b''_c (d)} = 0}$ as a continuous $\Gal(\Q_p)$-representation.%  Functoriality in particular means that we recover the $\mathrm{Gal}(\mathbb{Q}_p)$ action on this complex. %the complex as a continuous representation of $\mathrm{Gal}(\mathbb{Q}_p)$.
\end{cor}

\begin{rmk}
	For example, when $d \leq p-2$ the above theorem shows that the mod $p$ \'etale cohomology of the generic fiber may be recovered from the mod $p^2$ reduction of $X$, a result that one could also imagine proving using Fontaine--Laffaille theory.
	When $d \leq p^2-p-1$, our result shows that the mod $p$ \'etale cohomology may be recovered from the mod $p^3$ reduction of $X$, and the mod $p^2$ \'etale cohomology may be recovered from the mod $p^6$ reduction of $X$.
\end{rmk}

\begin{rmk}
	In fact, the proof of \Cref{thm:FLintro} establishes something stronger.
	Namely, given $d \geq 0$, there is a cohomology theory on smooth morphisms $\overline{X} \to \Spec (\Z/p^{cb_c''(d)})$ of relative dimension $\leq d$ taking values in $\mathcal{D} ((\Zp ^{\syn})_{p^c=0})$ that recovers the mod $p^c$ $F$-gauge of $X$ when $\overline{X} = X_{p^{cb''(d)}=0}$.
	In particular, taking the \'etale realization, we find that the ``mod $p^c$ \'etale cohomology of the generic fiber'' may be defined without the assumption that $\overline{X}$ lifts to a formal scheme over $\Zp$.
\end{rmk}

\subsection{Crystallinity for the cohomology of ring spectra}

We are also able to prove a very general crystallinity theorem for the $\THH$ and algebraic $K$-theory of ring spectra.
From the point of view of this result, the key input is that the base ring $R$ that one works over satisfies the Lichtenbaum--Quillen property: $\TR (R) \otimes V$ should be truncated for some type $n+2$-complex $V$.

To account for the fact that generalized Moore spectra do not admit $\E_\infty$-structures, as well as to accommodate interesting examples of $R$ which do not admit $\E_\infty$-structures, this theorem will be stated in the context of $\E_m$-rings.

Before we state the theorem, we need to fix multiplicative structures on generalized Moore spectra.
In what follows, we fix positive integers $m$, $c$ and $b$, an $\mathbb{E}_m$-algebra structure on $\Ss/p^c$, a tower of $\mathbb{E}_{m+1}$-algebras
  \[\dots \to \Ss/p^{b+2} \to \Ss/p^{b+1} \to \Ss/p^{b},\]
  and an $\mathbb{E}_{m+1}$-map
  \[\Ss/p^{b+i_0} \to W = \Ss / (p^{b+i_0}, \dots, v_n ^{i_n})\]
  for some $i_0, \dots, i_n$.
It is a consequence of \cite{burklund2022multiplicative} that this is possible for suitable choices of $c,b, i_0, \dots, i_n$.

\begin{thm} \label{thm:general-crys}
	Let $R$ denote a connective $\mathbb{E}_{m+1}$-algebra for $m\geq1$.
  Fix $n \ge 0$ and $q=2^k-1$, and let $\Ss/p^c \to V$ denote an $\EE_{m}$-map where $V$ is a type $n+2$-complex such that $\TR(R) \otimes  V$ is $q$-truncated.
	
  Suppose further that
  $i_0 \geq (q+1)(2c+1)$
  and
  $i_\ell \geq \frac{q+2}{2p^\ell-2},$
  for all $1 \leq \ell \leq n$.
  
  Let $\Alg_{\EE_m}'(\Mod(R)_{\geq 0})$ be the full subcategory of $p$-complete connective $\EE_m$-$R$-algebras such that either $\pi_0$ is commutative or $p$ is nilpotent.
  Then the functors
  \begin{align*}
    \THH(-) \otimes V &: \Alg_{\EE_m}(\Mod(R))\to \Alg_{\EE_{m-1}}(\Mod_{\CycSp}(\SP/p^c)) \\
    K(-) \otimes V &: \Alg'_{\EE_m}(\Mod(R)_{\geq 0})\to \Alg_{\EE_{m-1}} (\mathrm{Sp}_{\geq 0})
  \end{align*}
  factor through the functors
  \[- \otimes W : \Alg_{\EE_m}(\Mod(R))\to \Alg_{\EE_m}(\Mod(R\otimes W))\]
  \[- \otimes W : \Alg'_{\EE_m}(\Mod(R))\to \Alg_{\EE_m}(\Mod(R\otimes W))\]
  respectively.
  %Then the functors on $\Alg_{\EE_m}(\Mod(R)^{\geq 0})$
  %\[A \mapsto \THH(A) \otimes V \text{ and } K(A) \otimes V\]
  %with values in $\Alg_{\EE_{m-1}}(\Mod_{\CycSp}((\tau^{\cyc}_{\leq q}\SP)/p^c))$ and $\Alg_{\EE_{m-1}} (\mathrm{Sp})$, respectively,
  %factor through the functor $\Alg_{\EE_m}(\Mod(R))\to \Alg_{\EE_m}(\Mod(R\otimes W))$
	%\[A \mapsto A \otimes W.\]
%
	%Then the functor $\Alg_{\EE_m}(\Mod(R))\to \Alg_{\EE_{m-1}}(\Mod_{\CycSp}((\tau^{\cyc}_{\leq q}\SP)/p^c))$ 
	%\[A \mapsto \THH(A) \otimes V\]
	%factors through the functor $\Alg_{\EE_m}(\Mod(R))\to \Alg_{\EE_m}(\Mod(R\otimes W))$
	%\[A \mapsto A \otimes W.\]
%
  %Similarly, the functor $\Alg_{\EE_m}(\Mod(R)^{\geq 0})\to \Alg_{\EE_{m-1}} (\mathrm{Sp}^{\geq 0})$ 
	%\[A \mapsto K(A) \otimes V\]
  %factors through the functor $\Alg_{\EE_m}(\Mod(R)^{\geq 0})\to \Alg_{\EE_m}(\Mod(R\otimes W)^{\geq 0})$
	%\[A \mapsto A \otimes W.\]
\end{thm}

\begin{rmk}
  The theorem applies to the $\E_\infty$-rings $R = \Z_p, \ku, \mathrm{ko}, \ell, \ell^{h\Z}$ and to the $\E_3$-rings $R = \BP\langle n \rangle$ by \cite{ausoni2002algebraic, ausoni2010algebraic, hahn2022redshift, algKrealK, telescope}. In particular, applying this to $R=\mathbb{Z}_p$, we learn that the mod $(p^{j_0},v_1^{j_1})$ algebraic $K$-theory of a $p$-complete $\mathbb{Z}$-algebra $A$ depends only on the derived quotient $A/p^i$, where $i$ is an integer depending only on $j_0$ and $j_1$.
\end{rmk}

The proof of the above theorem begins with the fact that $\THH(R) \otimes V$ is $q$-truncated in the cyclotomic $t$-structure, and hence receives an $\E_m$-map from $(\tau_{\leq q}^{\mathrm{cyc}} \Ss) / p^c$.
The main step in the proof of the theorem is the construction of a retraction of $(\tau_{\leq q} ^{\mathrm{cyc}} \Ss) / p^c \to \THH(W) \otimes (\tau_{\leq q}^{\mathrm{cyc}} \Ss)/p^c$ in cyclotomic $\E_m$-rings.
This, in turn, we deduce from the following theorem:

\begin{thm}\label{thm:thhcontinuity}
  Let $q = 2^k-1$. Then the tower of $\E_m - \tau^{\mathrm{cyc}} _{\leq q} \Ss$-algebras in cyclotomic spectra
\[\{i \mapsto \THH(\mathbb{S}/p^{\max(i,b)})\otimes (\tau_{\leq q}^{\cyc}\mathbb{S})/p^c)\}\]
is rapidly converging to $\tau^{\mathrm{cyc}}_{\le q} (\mathbb{S}/p^c)$ at rate $2^k(2c+1)$ and constant $b$.
\end{thm}

A corollary of this theorem is the following:

\begin{cor}
	Let $R$ be an $\EE_{m+1}$-algebra with $\THH(R)$ bounded below.
	For each $k \ge 0$, let $q = 2^k-1$. Then the tower of $\mathbb{E}_m$-algebras in topological Cartier modules
	\[\{i \mapsto \tau_{\leq q}(\TR(R/p^{\max(i,b)})/p^c)\}\]
	is rapidly converging to $\tau_{\leq q}(\TR(R)/p^c)$ at rate $2^k(2c+1)$ and constant $b$.
\end{cor}

Results similar to the above corollary have been proven before: \cite[Theorem 5.20]{CMM} proves a similar result for $\TC$ as a spectrum, and \cite[Theorem 0.4]{dundas2017finite} proves a result for the homotopy groups of $\TR^n$. Our result strengthens these, because the data of $\TR$ as a topological Cartier module is equivalent to the full data of $\THH$ as a cyclotomic spectrum \cite{antieau2021cartier}, from which $\TC$ and the homotopy groups of $\TR^n$ can be derived.  Furthermore, we prove rapid convergence in the category of $\EE_m$-algebras, which is stronger than rapid convergence in the category of spectra.

%Our result strengthens these by working for $\TR$ as a topological Cartier module (or equivalently $\THH$ as a cyclotomic spectrum) and also giving rapid convergence as $\EE_m$-algebras.

\subsection{$K$-theory of $\Z/p^n$}

The original purpose of this project was the explicit computation of the mod $(p,v_1)$ $K$-theory of $\Z/p^n$, via mod $(p,v_1)$ syntomic cohomology.  We achieve this, and more, based on the following corollary of \Cref{thm:main}:

\begin{cor} 
  Let $\Z_p \langle \epsilon \rangle$ denote the free animated $\Z_p$ algebra on a class $\epsilon$ in degree $1$.
  For $n \geq k+2$, there are equivalences
  \[\F_p (*) (\Z/p^n)  / v_1 ^{p^k} \simeq \F_p (*) (\Zp \langle\epsilon\rangle) / v_1 ^{p^k}.\]
  In particular, the answer is independent of $n$.
\end{cor}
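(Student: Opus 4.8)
The plan is to obtain the corollary directly from \Cref{thm:main}(1). That statement says the functor $R \mapsto \F_p(\ast)(R)/v_1^{p^k}$ factors through the mod $p^{k+2}$ reduction $R \mapsto R/p^{k+2}$, where $R/p^{k+2}$ denotes the \emph{derived} quotient, i.e.\ $R \otimes_{\Z_p} \Z/p^{k+2}$ computed in animated commutative rings. So it suffices to prove that, for $n \geq k+2$, the animated rings $(\Z/p^n)/p^{k+2}$ and $(\Z_p\langle\epsilon\rangle)/p^{k+2}$ agree and are independent of $n$.

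For the free algebra this is immediate: $(\Z_p\langle\epsilon\rangle)/p^{k+2} \simeq (\Z/p^{k+2})\langle\epsilon\rangle$, since forming the free animated algebra on a degree $1$ class commutes with base change. For $\Z/p^n$, I would first use that $\Z/p^n$ is already the \emph{derived} quotient $\Z_p/p^n$, as $p^n$ is a nonzerodivisor in $\Z_p$; hence, killing $p^n$ and $p^{k+2}$ in either order,
\[(\Z/p^n)/p^{k+2} \;\simeq\; \Z_p/(p^n,p^{k+2}) \;\simeq\; (\Z_p/p^{k+2})/p^n \;\simeq\; (\Z/p^{k+2})/p^n,\]
using once more that $p^{k+2}$ is a nonzerodivisor in $\Z_p$. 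The substantive observation is now that, because $n \geq k+2$, the element $p^n$ is zero in $\Z/p^{k+2}$, so $(\Z/p^{k+2})/p^n$ is the derived quotient of $\Z/p^{k+2}$ by $0$, which is the free animated $\Z/p^{k+2}$-algebra on a degree $1$ class, i.e.\ $(\Z/p^{k+2})\langle\epsilon\rangle$. (The identification of the derived quotient of a ring by a zero element with the corresponding shifted free algebra follows from the universal property of the defining pushout.) Combining, $(\Z/p^n)/p^{k+2} \simeq (\Z/p^{k+2})\langle\epsilon\rangle \simeq (\Z_p\langle\epsilon\rangle)/p^{k+2}$ for every $n \geq k+2$, and \Cref{thm:main}(1) then gives $\F_p(\ast)(\Z/p^n)/v_1^{p^k} \simeq \F_p(\ast)(\Z_p\langle\epsilon\rangle)/v_1^{p^k}$, manifestly independent of $n$.

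I do not anticipate a real obstacle: the content is carried entirely by \Cref{thm:main}, and what remains is elementary bookkeeping with derived quotients. The one point that must be handled correctly is that the mod $p^{k+2}$ reduction in \Cref{thm:main} is taken in the derived sense. The ordinary quotient of $\Z/p^n$ by $p^{k+2}$ is the discrete ring $\Z/p^{k+2}$ for every $n$ and remembers far too little; the derived quotient additionally sees $\mathrm{Tor}_1^{\Z_p}(\Z/p^n, \Z/p^{k+2}) \cong \Z/p^{k+2}$ — equivalently, the degree $1$ homotopy class $\epsilon$ — and it is this, together with the animated ring structure, that permits the identification with the trivial square-zero extension $\Z_p\langle\epsilon\rangle \simeq \Z_p \oplus \Z_p[1]$.
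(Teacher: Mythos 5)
Your proposal is correct and is exactly the paper's route: the paper deduces the corollary from \Cref{thm:main}(1) together with the observation (made at the start of \Cref{sec:epsilon}) that for $m \ge k+2$ the derived quotient $(\Z/p^m)/p^{k+2}$ agrees with $\Z_p\langle\epsilon\rangle/p^{k+2}$, precisely because $p^m$ becomes the zero element of $\Z/p^{k+2}$ and the derived quotient by $0$ is the free algebra on a degree~$1$ class. You merely spell out the derived-quotient bookkeeping in more detail than the paper does, and you correctly flag the one point where a reader could go wrong, namely that the reduction must be taken in the derived sense.
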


To compute the mod $(p,v_1^{p^k})$ syntomic cohomology of $\Z/p^{n}$, for any $n \ge k+2$, it thus suffices to compute the mod $p$ syntomic cohomology of $\Z_p \langle \epsilon \rangle$ as an $\F_p [v_1]$-module. The animated ring $\Z_p \langle \epsilon \rangle$ is isomorphic to the trivial square-zero extension of $\Z_p$ by $\Z_p[1]$. To compute its syntomic cohomology, we study the interaction of a formula for the topological cyclic homology of a square-zero extension with the Bhatt--Morrow--Scholze motivic filtration in \Cref{sec:epsilon}. Using this, we are able to leverage our computational knowledge of $\THH(\Z_p)$ as a cyclotomic spectrum give a complete computation of the mod $p$ syntomic cohomology of $\Z_p \langle \epsilon \rangle$.

\begin{thm} \label{thm:zpn-comp}
  There is an isomorphism of $\F_p [v_1]$-modules
  \begin{align*}
    &\pi_* \F_p (*) (\Z_p \langle \epsilon \rangle) \cong \\
    &\left(\pi_* \F_p (*) (\Z_p) \right) \oplus \bigoplus_{\stackrel{k > 0}{p \nmid k}} \bigoplus_{n=0} ^\infty \left( A_{n,k} \oplus B_{n,k} \oplus C_{n,k} \oplus \bigoplus_{r=1} ^n \left(D_{n,k,r} \oplus E_{n,k,r} \right) \right),
  \end{align*}
  where $A_{n,k}, B_{n,k}, C_{n,k}, D_{n,k,r}, E_{n,k,r}$ are explicit collections of elements described in \Cref{dfn:trfamilies}.
	More specifically, linearly independent $\mathbb{F}_p[v_1]$-module generators for $A_{n,k},B_{n,k},C_{n,k},D_{n,k,r},$ and $E_{n,k,r}$ are enumerated in \Cref{dfn:tr-family-generators}, and the $v_1$-torsion order of each of these generators is enumerated as \Cref{lemma:trfamilies}.
\end{thm}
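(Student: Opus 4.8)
The plan is to obtain the answer by combining a motivic-filtered refinement of the Lindenstrauss--McCarthy splitting of $\TC$ of a trivial square-zero extension with the known structure of $\THH(\Z_p)$ as a cyclotomic spectrum, and then reading off the $\F_p[v_1]$-module one $\THH$-weight at a time.

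First, since $\F_p(\ast)$ is extracted, after reduction mod $p$, from the Bhatt--Morrow--Scholze motivic filtration on $\TC$, and $\Z_p\langle\epsilon\rangle$ is the trivial square-zero extension $\Z_p\oplus\Z_p[1]$, I would establish the filtered splitting carried out in \Cref{sec:epsilon}: the reduced cyclotomic spectrum $\widetilde{\THH}(\Z_p\oplus\Z_p[1])$ splits, compatibly with the motivic filtrations on $\THH$, $\TC^{-}$ and $\TP$ and with the structure maps $\varphi$ and $\can$, as a sum over the $\THH$-weight $m\geq 1$ of terms $\THH(\Z_p;\Z_p[1]^{\otimes_{\Z_p}m})_{hC_m}$. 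Writing $m=kp^n$ with $p\nmid k$, and using $\Z_p[1]^{\otimes_{\Z_p}m}\simeq\Z_p[m]$, each summand is a shift of $\THH(\Z_p)$ equipped with a residual $S^1$-action and a cyclotomic Frobenius that raises weight by a factor of $p$. This isolates the $\F_p(\ast)(\Z_p)$ summand and reduces the rest to a weight-by-weight computation of the corresponding summand of filtered $\TC$.

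Second, I would feed in the computation of $\THH(\Z_p)$ as a cyclotomic spectrum together with its motivic filtration (governed by the prismatic cohomology of $\Z_p$ from \cite{BMS}), and compute the associated graded of the motivic filtration on $\TC^{-}$ and $\TP$ of each summand by running the $C_m$-homotopy orbits followed by the $S^1$-homotopy fixed point and Tate constructions. The \emph{crucial} simplification, and the reason the computation terminates, is that it suffices to determine $\can$ and $\varphi$ only modulo $v_1$-adic associated graded: this collapses the $v_1$-Bockstein towers in $\pi_\ast\THH(\Z_p)/p$ (indexed by the prime-to-$p$ integer $k$) together with the finitely many weights $k,kp,\dots,kp^n$ coupled by the Frobenius --- or equivalently the restriction levels of $\TR$ attached to $k$, which is where the index $r\in\{1,\dots,n\}$ comes from --- into the finite list of $\F_p[v_1]$-module generators organized as $A_{n,k},B_{n,k},C_{n,k}$ and $D_{n,k,r},E_{n,k,r}$. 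Taking the fiber of $\varphi-\can$ on the filtered pieces then produces the graded pieces of the motivic filtration on $\TC$, i.e.\ $\F_p(\ast)(\Z_p\langle\epsilon\rangle)$; reading off generators and their $v_1$-torsion orders summand by summand yields exactly the enumeration of \Cref{dfn:tr-family-generators} and the torsion orders of \Cref{lemma:trfamilies}, hence the claimed isomorphism.

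The main obstacle is the second step: one must simultaneously control, over all weights $m=kp^n$ and all Breuil--Kisin twists, the interplay of the $C_m$-homotopy orbits, the $S^1$-Tate construction, the weight-shifting cyclotomic Frobenius (which couples the summands of weights $k,kp,\dots,kp^n$ sharing a fixed prime-to-$p$ part), and the $v_1$-adic filtration, and then compress the resulting infinite diagram into closed form. Even proving that it is enough to know $\can$ and $\varphi$ up to $v_1$-adic associated graded is a nontrivial input, and it is precisely what makes this bookkeeping feasible.
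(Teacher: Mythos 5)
Your proposal is correct and follows essentially the same route as the paper: the Lindenstrauss--McCarthy splitting of $\THH(\Z_p\langle\epsilon\rangle)$ refined to the motivic filtration via the double-speed Postnikov identification for qrsp rings (\Cref{thm:mot-doublespeed}, \Cref{cor:mot-fil-epsilon-main}), regrouping the weights $kp^n$ into polygonic $\TR(\Z_p;\Sigma^{2k}\Z_p)$, computing the twisted Nygaard (homotopy fixed point and Tate) spectral sequences for $\THH(\Z_p)\otimes\Ss^{\ell\rho_n}$, determining $\can$ and $\varphi$ only on the $v_1$-adic associated graded, and extracting the kernel of $\varphi-\can$ (which the paper shows is surjective, then computes the kernel by induction over the truncations $\TR^{[m]}$ and resolves the residual $v_1$-extension problems by sparsity).
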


\begin{rmk}
The retract \[\mathbb{Z}_p \to \mathbb{Z}_p\langle \epsilon\rangle \to \mathbb{Z}_p\] of rings leads to the $\pi_* \F_p (*) (\Z_p)$ summand in \Cref{thm:zpn-comp}.
Note that $\pi_* \F_p (*) (\Z_p)$ was computed by \cite{LiuWang}.  It is a free $\mathbb{F}_p[v_1]$ module on $p+3$ explicit basis elements.
\end{rmk}

\begin{rmk}
The computation constituting \Cref{thm:zpn-comp} is far from straightforward, even given the splitting results of \Cref{sec:epsilon}.  We perform several intermediate steps of the computation only at the level of $v_1$-adic associated graded.  It is then a striking fact that, for bidegree reasons alone, the final answer can be uniquely assembled from this partial information. 
\end{rmk}

From \Cref{thm:zpn-comp}, one is easily able to deduce the mod $(p,v_1 ^{k})$ syntomic cohomology of $\Z/p^n$ so long as $k \le p^{n-2}$.  In order to deduce results about algebraic $K$-theory, one first needs to understand both differentials and extension problems in the motivic spectral sequence converging to $\pi_*\TC(\mathbb{Z}/p^n)/(p,v_1^{k})$.  Controlling the behavior of this spectral sequence turns out to be somewhat more subtle than controlling the behavior of the motivic spectral sequence converging to the integral groups $\pi_*\TC(\mathbb{Z}/p^n)$, essentially because, when displayed in Adams grading, one has to worry about classes appearing below the $x$-axis. Nonetheless, in \Cref{sec:consequences} we use a result of Achim Krause and the third author to prove that the following theorem:

\begin{thm} \label{thm:intro-tc}
  Let $p$ denote a prime, $n \geq 2$, and $k \leq p^{n-2}$.

  If $p$ is odd, then there is an isomorphism of $\F_p$-vector spaces
  \[\pi_* \TC(\Z/p^n) / (p,v_1 ^{k}) \cong \pi_* \bigoplus_{i \geq 0} \F_p (i) (\Z_p \langle \epsilon \rangle) [2i] / (p,v_1^{k}).\]

  If $p=2$ and $k$ is divisible by $4$, then the graded abelian group $\pi_* \TC(\Z/2^n) / (2,v_1 ^{k})$ admits a filtration with associated graded given by $\pi_* \bigoplus_{i \geq 0} \F_2 (i) (\Z_2 \langle \epsilon \rangle) [2i] / (2,v_1^{k})$.
\end{thm}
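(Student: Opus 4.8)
The plan is to run the Bhatt--Morrow--Scholze motivic spectral sequence for $\pi_* \TC(\Z/p^n)/(p,v_1^k)$, to identify its $E_1$-page using \Cref{thm:main} and \Cref{thm:zpn-comp}, and then to prove that --- in the range $k\le p^{n-2}$ --- it degenerates, so that the abutment is abstractly isomorphic to this $E_1$-page. Recall from \cite{BMS} that $\TC(\Z/p^n)$ carries a complete, exhaustive, multiplicative filtration with $i$-th associated graded $\F_p(i)(\Z/p^n)[2i]$ after $p$-completion; smashing with the Moore spectrum $\Ss/(p,v_1^k)$ produces a filtration on $\TC(\Z/p^n)/(p,v_1^k)$ with $i$-th graded piece $\F_p(i)(\Z/p^n)[2i]/(p,v_1^k)$. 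Because $\Z/p^n$ has Krull dimension zero, in any fixed total degree only finitely many graded pieces contribute, so this motivic spectral sequence converges to $\pi_*\TC(\Z/p^n)/(p,v_1^k)$.

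First, I would rewrite the $E_1$-page. The hypothesis $k\le p^{n-2}$ places us in the regime of \Cref{thm:main} (in the form of its corollary above), so $\F_p(\ast)(\Z/p^n)/v_1^k\simeq \F_p(\ast)(\Z_p\langle\epsilon\rangle)/v_1^k$, and the $E_1$-page is thereby identified with
\[\pi_*\bigoplus_{i\ge 0}\F_p(i)(\Z_p\langle\epsilon\rangle)[2i]/(p,v_1^k).\]
An $\F_p$-basis for this, together with the motivic weight and internal degree of each generator, is given explicitly by \Cref{thm:zpn-comp}, \Cref{dfn:tr-family-generators}, and \Cref{lemma:trfamilies}.

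The crux is to show that no differentials occur. Differentials supported by classes lying on or above the $x$-axis in Adams grading are reductions of differentials in the motivic spectral sequence for the \emph{integral} invariant $\TC(\Z/p^n)$, via the filtered comparison map $\TC(\Z/p^n)\to\TC(\Z/p^n)/(p,v_1^k)$, and are therefore understood. The genuinely new phenomenon --- the one flagged in the introduction --- is the appearance of classes \emph{below} the $x$-axis, produced by truncating the long $v_1$-power-torsion towers of \Cref{lemma:trfamilies} at $v_1^k$; these have no analogue in the integral spectral sequence, and a differential emanating from such a class could a priori reach the visible part of a later page. To rule these out I would invoke the theorem of Achim Krause and the third author \cite{KS}, which controls exactly this portion of the spectral sequence, together with a sparseness count built from the bidegree data of \Cref{lemma:trfamilies} showing that the relevant source and target bidegrees are never both occupied. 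At $p=2$ this analysis runs only when $4\mid k$, which is precisely the condition under which \Cref{thm:zpn-comp} supplies the needed $\Ss/(2,v_1^k)$-linear information.

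Granting degeneration, the theorem follows. For $p$ odd, any exhaustive Hausdorff filtration of a graded $\F_p$-vector space splits non-canonically, so $\pi_*\TC(\Z/p^n)/(p,v_1^k)$ is abstractly isomorphic to its associated graded $\pi_*\bigoplus_{i\ge 0}\F_p(i)(\Z_p\langle\epsilon\rangle)[2i]/(p,v_1^k)$. For $p=2$, the spectrum $\Ss/(2,v_1^k)$ is not an $\F_2$-algebra, so the motivic filtration need not split additively; one concludes only that $\pi_*\TC(\Z/2^n)/(2,v_1^k)$ carries a filtration --- namely the motivic one --- with the asserted associated graded, which is why the $p=2$ statement is weaker. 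I expect the main obstacle to be the content of the third paragraph: verifying that the motivic spectral sequence has no differentials into or out of the new ``below the $x$-axis'' classes, for which the input of \cite{KS} is essential.
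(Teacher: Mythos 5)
Your high-level strategy (motivic spectral sequence, identify the $E_1$-page via \Cref{thm:main}, prove degeneration) is the same as the paper's, and your first, second, and final paragraphs are fine. The gap is in the third paragraph, which is exactly the step the paper identifies as the crux and which you leave essentially unexecuted. Two specific problems. First, the claim that differentials supported on or above the $x$-axis are ``reductions of differentials in the integral spectral sequence and therefore understood'' is not an argument: the comparison map $\TC(\Z/p^n)\to\TC(\Z/p^n)/(p,v_1^k)$ only controls classes in its image, and in any case you have not said why the integral (or mod $p$) spectral sequence degenerates. Second, for the below-the-axis classes you invoke \cite{KS} ``together with a sparseness count'' without saying which statement from \cite{KS} is needed or what is being counted; as written this is a placeholder, not a proof.

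What the paper actually does is reorganize the problem so that the mod $(p,v_1^k)$ degeneration becomes a formal consequence of two statements about the \emph{mod $p$} motivic spectral sequence for $\TC(R)/p$, $R=\Z/p^n$ or $\Z_p\langle\epsilon\rangle$: (i) it degenerates at $E_2$, which follows from sparsity (the $E_2$-page is concentrated on the $0$-, $1$-, and $2$-lines, and the checkerboard pattern kills all differentials); and (ii) it has no hidden $v_1$-extensions. Point (ii) is where the real content lies, and it is not a bidegree sparseness count on the mod $(p,v_1^k)$ page: a hidden $v_1$-extension would have to run from the $0$-line to the $2$-line, degree considerations show the only possible source is the class $1$, and this is ruled out by comparing the $v_1$-torsion order $1+\dots+p^{n-1}$ of $1$ (from \cite{kzpnother}) with the relation $v_1^{p^{n-2}}\partial\lambda_1=0$ of \Cref{thm:2-line-tors} --- this last relation being the precise input from Krause--Senger that your proposal gestures at but does not name. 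For $R=\Z_p\langle\epsilon\rangle$ one instead uses the splitting of \Cref{thm:mot-fil-epsilon-main}: the $\TC(\Z_p)$ summand is $v_1$-torsionfree and the $\TR$ summands live on the $0$- and $1$-lines only. Once (i) and (ii) are known, $\pi_*\TC(R)/p$ is determined as an $\F_p[v_1]$-module (resp.\ $\F_2[v_1^4]$-module), and the degeneration of the mod $(p,v_1^k)$ spectral sequence, together with the identification of its $E_2$-page via \Cref{thm:main} for $k\le p^{n-2}$, follows. To make your proposal into a proof you would need to supply exactly this hidden-extension analysis; the direct attack on differentials in the mod $(p,v_1^k)$ spectral sequence that you outline does not bypass it.
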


\begin{rmk}
  At the prime $2$, the filtration described at the end of \Cref{thm:intro-tc} does not split, and indeed $\pi_* \TC(\Z/2^n)/(2,v_1^{k})$ is not an $\mathbb{F}_2$-vector space.  Instead, there are motivic $2$-extensions mapped forward from $\pi_*(\TC(\Z_2)/2)$ \cite[Theorem 8.21]{LiuWang}. We do not know whether this accounts for all of the $2$-extensions.
\end{rmk}

Applying the Dundas--Goodwillie--McCarthy theorem and the known computation of the algebraic $K$-theory and topological cyclic homology of $\F_p$, we obtain the following theorem bridging the gap between $\TC$ and algebraic $K$-theory:

\begin{thm}
  Let $p$ denote a prime, $n \geq 2$, and let $k \leq p^{n-2}-1$. If $p=2$, then assume that $k$ is divisible by $4$.
  Then there is an exact sequence
  \[0 \to \F_p \{v_1 ^k \partial\} \to \pi_* K(\Z/p^n)/(p,v_1 ^k) \to \pi_* \TC(\Z/p^n)/(p,v_1 ^k) \to \F_p \{\partial\} \to 0.\]
\end{thm}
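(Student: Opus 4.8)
The plan is to read the four-term sequence off a Dundas--Goodwillie--McCarthy comparison, feeding in the homotopy types of $K(\F_p)$ and $\TC(\F_p)$ and, for the one non-formal point, the $\TC$-calculation of \Cref{thm:intro-tc}.

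First I would apply the Dundas--Goodwillie--McCarthy theorem to the surjection $\Z/p^n \twoheadrightarrow \F_p$, whose kernel $(p)/(p^n)$ is nilpotent: after $p$-completion the square relating $K$ and $\TC$ of $\Z/p^n$ and of $\F_p$ is cartesian, so the fiber of the cyclotomic trace $\mathrm{tr}\colon K(\Z/p^n) \to \TC(\Z/p^n)$ agrees $p$-completely with the fiber of $\mathrm{tr}\colon K(\F_p) \to \TC(\F_p)$. By Quillen, $K(\F_p)^{\wedge}_p \simeq \H\Z_p$; by Hesselholt--Madsen (or Nikolaus--Scholze), $\TC(\F_p;\Z_p) \simeq \H\Z_p \oplus \Sigma^{-1}\H\Z_p$ with the trace the inclusion of the first summand; hence $\fib(\mathrm{tr})^{\wedge}_p \simeq \Sigma^{-2}\H\Z_p$ for both rings. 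Smashing with the generalized Moore spectrum $S/(p,v_1^k)$ (which exists for $p$ odd, and for $p=2$ when $4 \mid k$) and using that $v_1^k$ acts nullhomotopically on $\H\Z_p \smsh S/p \simeq \H\F_p$ --- an $\H\Z_p$-linear self-map of $\H\F_p$ of degree $2(p-1)k > 0$ lies in $\Ext^{-2(p-1)k}_{\Z_p}(\F_p,\F_p) = 0$ --- I obtain
\[ \fib\bigl(\mathrm{tr}\colon K(\Z/p^n)/(p,v_1^k) \to \TC(\Z/p^n)/(p,v_1^k)\bigr) \simeq \Sigma^{-2}\H\F_p \oplus \Sigma^{2(p-1)k-1}\H\F_p. \]

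Next I would run the long exact sequence of this fiber sequence. Because $K(\Z/p^n)$ is connective and $S/(p,v_1^k)$ is a connective finite complex, $K(\Z/p^n)/(p,v_1^k)$ is connective, so its $\pi_{-1}$ and $\pi_{-2}$ vanish; the fiber just computed has homotopy only in degrees $-2$ and $2(p-1)k-1$, so $\mathrm{tr}_*$ is an isomorphism outside degrees $\{-2,-1,2(p-1)k-1,2(p-1)k\}$. Near degree $-1$, connectivity of the source collapses the relevant stretch of the long exact sequence to an isomorphism $\pi_{-1}\TC(\Z/p^n)/(p,v_1^k) \xrightarrow{\sim} \pi_{-2}(\fib)$; since the left-hand group is $\F_p\{\partial\}$ --- with $\partial$ the class inherited, via the comparison just made, from the $\Sigma^{-1}\H\Z_p$-summand over $\F_p$ --- this forces $\coker(\mathrm{tr}_*) = \F_p\{\partial\}$ in degree $-1$ and nothing near degree $-2$. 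The entire remaining content is then a single connecting homomorphism $\delta\colon \pi_{2(p-1)k}\TC(\Z/p^n)/(p,v_1^k) \to \pi_{2(p-1)k-1}(\fib) = \F_p$: if $\delta = 0$ the long exact sequence gives $\ker(\mathrm{tr}_*) = \F_p$ in degree $2(p-1)k-1$, generated by a class we denote $v_1^k\partial$ (the mod $v_1^k$ reduction of a $K$-theory lift of the class $v_1^k\partial \in \pi_{2(p-1)k-1}\TC(\Z/p^n)/p$, which lies in the image of $\mathrm{tr}_*$ since $\cofib(\mathrm{tr})/p$ is concentrated in degree $-1$), and no cokernel in degree $2(p-1)k$, which is exactly the claim; a nonzero $\delta$ would instead swap the kernel and this cokernel.

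The hard part is the vanishing of $\delta$, and it is the only place the hypothesis $k \le p^{n-2}-1$ enters. Comparing the $v_1^k$-Bockstein long exact sequence for $\TC(\Z/p^n)/(p,v_1^k) = \cofib(v_1^k)$ with that for $\cofib(\mathrm{tr})/(p,v_1^k)$, and using that $\mathrm{tr}$ is an isomorphism on $\pi_{-1}$ modulo $p$, one finds that $\delta = 0$ is equivalent to $v_1^k\partial \neq 0$ in $\pi_{2(p-1)k-1}\TC(\Z/p^n)/p$, and --- since the homotopy of $\TC(\Z/p^n)/p$ is concentrated in degrees $\geq -1$ --- to $v_1^k\partial \neq 0$ in $\pi_{2(p-1)k-1}\TC(\Z/p^n)/(p,v_1^{k+1})$ (at $p=2$ one uses exponent $k+4$ so that it stays divisible by $4$; this is what forces $k \le p^{n-2}-1$ rather than $k \le p^{n-2}$). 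I would then invoke \Cref{thm:intro-tc}: under its identification of $\pi_*\TC(\Z/p^n)/(p,v_1^{k+1})$ with $\pi_*\bigoplus_i \F_p(i)(\Z_p\langle\epsilon\rangle)[2i]/(p,v_1^{k+1})$, the class $\partial$ sits in the retract summand $\pi_*\F_p(*)(\Z_p)$ coming from $\Z_p \to \Z_p\langle\epsilon\rangle \to \Z_p$, and by Liu--Wang \cite{LiuWang} this summand is a free $\F_p[v_1]$-module having $\partial$ among its generators, so $v_1^k\partial \neq 0$ (at $p=2$ one has only the associated graded of \Cref{thm:intro-tc}, but a class nonzero there is nonzero). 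Hence $\delta = 0$, and assembling the three steps yields the asserted exact sequence.
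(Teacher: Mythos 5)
Your argument is correct and follows essentially the same route as the paper: Dundas--Goodwillie--McCarthy identifies the fiber of the trace with that of $K(\F_p)\to\TC(\F_p)$, i.e.\ $\Sigma^{-2}\H\Z_p$ after $p$-completion, and the only nontrivial point is the vanishing of the connecting map in degree $2(p-1)k$, which reduces to $v_1^k\partial\neq 0$ and is supplied by \Cref{thm:main} together with the $v_1$-freeness of the $\Z_p$-summand of the $\Z_p\langle\epsilon\rangle$ computation --- exactly the paper's argument, just spelled out in more detail. (One phrase should be adjusted: $\mathrm{tr}$ is not an isomorphism on $\pi_{-1}$ mod $p$ --- rather $\pi_{-1}K(\Z/p^n)/p=0$ by connectivity, so that $\pi_{-1}\TC(\Z/p^n)/p\cong\F_p\{\partial\}$ via the boundary map, which is the fact your reduction actually uses.)
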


\begin{rmk}
We have to exclude the case of $k=p^{n-2}$ because we do not know whether $\partial \in \pi_{-1} \TC (\Z/p^n)/p$ is $v_1^{p^{n-2}}$-torsion.
\end{rmk}

Recall from \cite{kzpnother} that the even $K$-groups of $\Z/p^n$ vanish in degrees large relative to $n$. In \cite{KS}, Achim Krause and the third author use \Cref{thm:main} and \Cref{thm:zpn-comp} to give precise conditions for when these $K$-groups vanish. As a final result, we prove the following consequence of \Cref{thm:main}:

\begin{thm} \label{thm:intro-cyclic}
  For $n \geq 2$ and $i\ge 0$, the map
  \[K_{2i} (\Z_p) \to K_{2i} (\Z/p^n)\]
  is surjective.
  In particular, $K_{2i}(\Z/p^n)$ is cyclic.
\end{thm}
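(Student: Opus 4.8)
The plan is to reduce the statement to a surjectivity question for topological cyclic homology, and then — via the Bhatt--Morrow--Scholze motivic filtration — to a concrete question about the syntomic cohomology of $\Z/p^n$ that is controlled by the computations of this paper.

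The case $i=0$ is trivial since $K_0(\Z_p)=K_0(\Z/p^n)=\Z$, so fix $i\geq 1$. Because $\Z/p^n$ is a finite ring, $K_{2i}(\Z/p^n)$ is finite, and since the relative $K$-theory of the nilpotent extension $\Z/p^n\to\F_p$ is $p$-power torsion while $K_{2i}(\F_p)=0$, the group $K_{2i}(\Z/p^n)$ is a finite $p$-group, hence equal to its $p$-completion. Applying Dundas--Goodwillie--McCarthy to $\Z/p^n\to\F_p$, together with Quillen's computation of $K_\ast(\F_p)$ and $\pi_\ast\TC(\F_p;\Z_p)=\Z_p\oplus\Sigma^{-1}\Z_p$, the cyclotomic trace induces isomorphisms $K_{2i}(\Z/p^n;\Z_p)\cong\TC_{2i}(\Z/p^n;\Z_p)$, and likewise for $\Z_p$, for all $i\geq 1$. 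So it suffices to show $\TC_{2i}(\Z_p;\Z_p)\to\TC_{2i}(\Z/p^n;\Z_p)$ is surjective. (One may further invoke Nakayama's lemma to reduce to the mod $p$ statement, but I will work integrally.)

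Next, the motivic filtration on $\TC$ has graded pieces $\Z_p(j)(-)[2j]$, and for $R\in\{\Z_p,\Z/p^n\}$ the complex $\Z_p(j)(R)$ sits in cohomological degrees $\leq 2$ (standard for $\Z_p$; for $\Z/p^n$ this follows from the boundedness of $\TC(\Z/p^n)$ and the explicit computations). Hence on $\pi_{2i}$ only $\mathrm{gr}^i$ and $\mathrm{gr}^{i+1}$ contribute, giving a functorial exact sequence
\[0\to H^2(\Z_p(i+1)(R))\to\TC_{2i}(R;\Z_p)\to H^0(\Z_p(i)(R))\to 0.\]
For $i\geq 1$ one has $H^0(\Z_p(i)(R))=0$ for both rings: writing syntomic cohomology as the fiber of $\varphi_i-\mathrm{can}$ on $N^{\geq i}\widehat{\prism}_R\{i\}\to\widehat{\prism}_R\{i\}$, the divided Frobenius $\varphi_i$ is an isomorphism on $H^0$ while $\mathrm{can}$ lands in $p^i$ times the target, so $\varphi_i-\mathrm{can}$ is an isomorphism on $H^0$ and the fiber has trivial $H^0$. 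Thus $\TC_{2i}(\Z_p;\Z_p)\cong H^2(\Z_p(i+1)(\Z_p))$, and its image in $\TC_{2i}(\Z/p^n;\Z_p)$ lands in the subobject $H^2(\Z_p(i+1)(\Z/p^n))$, so the whole problem reduces to surjectivity of $H^2(\Z_p(i+1)(\Z_p))\to H^2(\Z_p(i+1)(\Z/p^n))$, equivalently to the vanishing of $H^2(Q)$ where $Q=\mathrm{cofib}\big(\Z_p(\ast)(\Z_p)\to\Z_p(\ast)(\Z/p^n)\big)$.

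This vanishing is the heart of the matter, and here \Cref{thm:main} does the work. The ring maps $\Z_p\to\Z/p^n$ and $\Z_p\to\Z_p\langle\epsilon\rangle$ have the same derived reduction modulo $p^{n}$ (each becomes the canonical section of the trivial square-zero extension of $\Z/p^n$ by $\Z/p^n[1]$), so \Cref{thm:main} identifies $\F_p(\ast)(\Z_p)/v_1^{p^{n-2}}\to\F_p(\ast)(\Z/p^n)/v_1^{p^{n-2}}$ with the split inclusion $\F_p(\ast)(\Z_p)/v_1^{p^{n-2}}\hookrightarrow\F_p(\ast)(\Z_p\langle\epsilon\rangle)/v_1^{p^{n-2}}$ coming from the retraction $\Z_p\to\Z_p\langle\epsilon\rangle\to\Z_p$. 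Hence $Q/(p,v_1^{p^{n-2}})$ is the reduced syntomic cohomology of $\Z_p\langle\epsilon\rangle$ modulo $(p,v_1^{p^{n-2}})$, which by \Cref{thm:zpn-comp} — together with the enumeration of the tr-families in \Cref{dfn:trfamilies} and their $v_1$-torsion orders in \Cref{lemma:trfamilies} — is built from $v_1$-power-torsion classes of controlled degree. Running the $v_1$-Bockstein and then the $p$-Bockstein spectral sequences, and using the $H^0$-vanishing established above, one concludes that $H^2(Q)$ is $p$-divisible, hence (being $p$-complete) zero. I expect the main obstacle to be exactly this last step — tracking which reduced tr-family classes can survive both Bockstein spectral sequences into cohomological degree $2$ — which is where the precise numerics of \Cref{lemma:trfamilies} are used. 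Granting the surjectivity, $K_{2i}(\Z/p^n)$ is a quotient of $K_{2i}(\Z_p;\Z_p)$, which is a cyclic $p$-group (it is $0$, or $\Z/p^{v_p(i)+1}$), so $K_{2i}(\Z/p^n)$ is cyclic.
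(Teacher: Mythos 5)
Your overall route is the same as the paper's: reduce to $\TC$ via Dundas--Goodwillie--McCarthy and finiteness of the positive $K$-groups, use degeneration of the motivic spectral sequence together with the vanishing of the $0$-line in positive degrees to reduce to surjectivity of $H^2(\Z_p(i+1)(\Z_p)) \to H^2(\Z_p(i+1)(\Z/p^n))$ (this is exactly \Cref{thm:2-linesurjectivity}), and then invoke \Cref{thm:main} to replace $\Z/p^n$ by $\Z_p\langle\epsilon\rangle$ and appeal to the splitting off of $\F_p(\ast)(\Z_p)$. The gap is in the last, decisive step. Having identified $Q/(p,v_1^{p^{n-2}})$ with the reduced part of the $\Z_p\langle\epsilon\rangle$ computation, you assert that ``running the $v_1$-Bockstein and then the $p$-Bockstein\ldots one concludes that $H^2(Q)$ is $p$-divisible,'' and you flag the required bookkeeping of ``which reduced tr-family classes can survive both Bockstein spectral sequences into cohomological degree $2$'' as the main obstacle, to be controlled by the $v_1$-torsion orders of \Cref{lemma:trfamilies}. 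Nothing you have written actually rules out reduced classes in cohomological degree $2$, so the claimed $p$-divisibility of $H^2(Q)$ is unsupported; and the invariant you propose to track ($v_1$-torsion orders) is not the relevant one.

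The input that closes the argument is a statement about cohomological degree (the motivic line), not about torsion orders: every $\F_p[v_1]$-module generator of $\gr^*_{\mot}\TR(\Z_p;\Sigma^{2\ell}\Z_p)/p$ listed in \Cref{dfn:tr-family-generators} is of the form $(\cdots)\lambda_1^{e}$ or $(\cdots)\lambda_1 u_n^{e}$ with the parenthesized factor on the $0$-line, $\lambda_1$ on the $1$-line and $u_n$ on the $(-1)$-line, so each reduced summand is concentrated on the $0$- and $1$-lines; after further reducing mod a power of $v_1$ the torsion classes move down one line, so the reduced part mod $(p,v_1^{\bullet})$ lives on lines $-1,0,1$. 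Hence $H^2$ (and $H^3$) of the reduced part vanish identically --- there is nothing to track --- and the long exact sequence gives $H^2(Q/(p,v_1^{p^{n-2}}))=0$, whence $H^2(Q/p)=0$ by $v_1$-divisibility plus vanishing in low weights, and $H^2(Q)=0$ by $p$-divisibility plus derived $p$-completeness. This line-concentration observation is precisely how the paper proves \Cref{thm:2-linesurjectivity} (via \Cref{cor:tczpeps}); without it your proof does not go through. The remaining steps of your write-up (the DGM reduction, the exact sequence $0 \to H^2(\Z_p(i+1)(R)) \to \TC_{2i}(R;\Z_p) \to H^0(\Z_p(i)(R)) \to 0$, and the vanishing of $H^0$ in positive weights, which the paper cites from Antieau--Krause--Nikolaus rather than reproving) are essentially correct and agree with the paper.
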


\subsection{Comparison with the results of \cite{kzpnother}} \label{sec:intro-comparison}
In \cite{kzpnother}, Antieau--Krause--Nikolaus combine the algebra of prismatic envelopes with a clever filtration argument to give a finitary algebraic model for the syntomic cohomology of $\O_K / \varpi^n$ of a given weight.
Using this, they give an algorithm which computes the algebraic $K$-groups of $\O_K/ \varpi^n$, and have written a computer program which implements this algorithm. %in a range, and have written a computer program which implements this program.
Results of their program appear in tables \cite[Figure 1 \& Appendix A]{kzpnother} \cite[Figures 1-4]{kzpnotherannounce}.

In contrast, we give a closed form description of the mod $(p,v_1 ^{p^{n-2}})$ syntomic cohomology of $\Z/p^n$ in all weights.
In particular:
\begin{itemize}
  \item The methods of \cite{kzpnother} compute integral $K$-groups, while our methods are at best only able to get at mod $p^c$ $K$-groups.
  \item As $p$ and $n$ get large, the compute time of \cite{kzpnother}'s algorithm also becomes large.
    On the other hand, as $p$ and $n$ get large, the degree of $v_1 ^{p^{n-2}}$ becomes large, so that mod $(p,v_1 ^{p^{n-2}})$ $K$-theory agrees with mod $p$ $K$-theory in a larger and larger range.
		
    For example, $48$ hours on a high-performance cluster was enough to compute $K_* (\Z/7^7)$ through degree $*=15$ using the program of \cite{kzpnother}, but not $K_{16}(\Z/7^7)$ \cite[Appendix A.4]{kzpnother}. Our results determine $\pi_*(K(\Z/7^7)/7)$ through degree $\approx \abs{v_1^{7^{5}}} = 201684$.

  \item One may compare our results with the tables of Antieau--Krause--Nikolaus and see that they are consistent.
  \item The results of \cite{kzpnother} apply to $\O_K / \varpi^n$, where $\O_K$ is a characteristic $(0,p)$ complete discrete valuation ring with finite residue field and uniformizer $\varpi$.
    Here, we have restricted our computations to the case of $\Z/p^n$.
\end{itemize}

\begin{rmk}
In degrees $i$ large enough that their even vanishing theorem applies, Antieau--Krause--Nikolaus apply work of Angeltveit to prove that the order of $K_{2i-1}(\mathbb{Z}/p^n)$ is exactly $(p^i-1)p^{i(n-1)}$ \cite[Corollary 1.6]{kzpnother}.  The other most basic question one could ask about this abelian group is its rank, i.e. how many summands appear when it is written as a direct sum of cyclic groups of prime power order.  Again applying even vanishing, computing this rank is equivalent to computing the mod $p$ algebraic $K$-group $\pi_{2i-1} (K(\mathbb{Z}/p^n) / p)$.  In fact, in low degrees where even vanishing does not apply, the combination of our \Cref{thm:intro-cyclic} with \cite{KS} still reduces computing the rank of $K_{2i-1}(\mathbb{Z}/p^n)$ to the computation of mod $p$ $K$-groups.  Thus, for example, we now know the rank of $K_{*}(\mathbb{Z}/7^7)$ for all $* \le 201682$.
\end{rmk}

In order to obtain a complete, closed form computation of $\pi_*\left( K(\Z/p^n) / p \right)$, the natural strategy is to use the $v_1$-Bockstein spectral sequence
\[\mathrm{E}_1=\pi_*(\TC(\Z/p^n) / (p,v_1)) [v_1] \implies \pi_*(\TC(\Z/p^n) / p).\]
This spectral sequence has finitely many pages, because $v_1$ acts nilpotently on $\pi_*(\TC(\Z/p^n)/p)$.
We give in this paper explicit names to every element on the $\mathrm{E}_1$-page of the Bockstein spectral sequence.  We also explain how to compute the first several differentials by determining the homotopy groups of $\TC(\Z/p^n) / (p,v_1^{p^{n-2}})$, but there remain a few undetermined differentials.

By our work here, elements on the $\mathrm{E}_1$-page appear in five infinite families. For low values of $n$, Bockstein differentials on the initial elements in each family may be understood using \cite{kzpnother} and high performance computing.  Preliminary experiments in this direction give hope that the Bockstein spectral sequence follows a reasonably controlled pattern.  After the $v_1$-Bockstein spectral sequence is completely understood, one might turn to the $p$-Bockstein spectral sequence determining integral algebraic $K$-groups.

\subsection{Questions}
Our work raises several additional questions that we have not attempted to address, but which we would like to highlight here.

To begin with, we have elected in this paper to stick to $\Z/p^n$, rather than more general rings of the form $\O_K / \varpi^n$. %, so as not to get too bogged down.
It would be interesting to extend our computations to the case of $\O_K / \varpi^n$.
In the case that $p^2 \mid \varpi^n$, the mod $(p,v_1)$ syntomic cohomology of $\O_K / \varpi^n$ may be studied using \Cref{thm:main}.
In the case that $\varpi^n \mid p$, $\O_K / \varpi^n$ is a truncated polynomial algebra over a finite field, so its $K$-theory is known by \cite{HM}.

\begin{qst}
  How does the mod $(p,v_1)$ syntomic cohomology of $\O_K / \varpi^n$ behave when $p^2 \nmid \varpi^n$ and $\varpi^n \nmid p$?
  Curiously, \cite[Figure 2]{kzpnotherannounce} shows that the syntomic cohomology of $\F_2[z] / z^3$ and $\Z_2[2^{1/2}]/2^{3/2}$ agree in a range.
\end{qst}

When $A$ is a $\Z_p [\zeta_p]$-algebra, $v_1$ admits a natural $(p-1)$st root $\beta$ in the mod $p$ syntomic cohomology of $A$.
We wonder if our theorem can be refined to study the mod $(p, \beta^k)$ syntomic cohomology in this case.

\begin{qst}
  Is there a refined version of \Cref{thm:main} for $p$-complete animated $\Z_p [\zeta_p]$-algebras, where we quotient on the inside by powers of $(\zeta_p-1)$ and on the outside by $(p,\beta^k)$, where $\beta \in H^0 (\F_p (1) (\Z_p [\zeta_p]))$ is the Bott element, a $(p-1)$st root of $v_1$?

  In general, can \Cref{thm:main} be improved in the ramified situation? Can modding out by $p^2$ be replaced by modding out by $p \varpi$?
\end{qst}

It also seems likely that \Cref{thm:FL} should generalize to smooth $p$-adic formal schemes over more general $\mathcal{O}_K$.
The missing ingredient that one would have to supply is a version of \Cref{lem:vol} for the syntomification of $\mathcal{O}_K$.

\begin{qst}
	Is there also a version of \Cref{thm:FL} where $X$ is allowed to be smooth of relative dimension $d$ over $\Spf \mathcal{O}_K$ for a ring of integers in a finite extension of $\ZZ_p$ (as opposed to just $\ZZ_p$)?
\end{qst}

It would also be interesting to find optimal constants for our results.

\begin{qst}
	What are optimal quantitative versions of the crystallinity results, \Cref{thm:stackintro}, \Cref{thm:genericintro}, \Cref{thm:FLintro}, \Cref{thm:general-crys}? For example, if you fix $i\geq0, c\geq 0$, what is the smallest $n$ such that the functor $X \mapsto (X^\prism)_{p^c=v_1^{p^{c-1}i}=0}$ factors through $X \mapsto X_{p^n=0}$?
\end{qst}

If one were able to compute the mod $p^c$ syntomic cohomology of $\Z_p \langle \epsilon \rangle$ along with its action of $v_1 ^{p^{c-1}}$, then one could use crystallinity to deduce the mod $(p^c, v_1 ^{p^{c-1}})$ syntomic cohomology of $\Z/p^n$ for $n \geq c^2+c$.

\begin{qst}
  Can one give a closed-form computation of the mod $p^c$ syntomic cohomology of $\Z_p \langle \epsilon \rangle$ along with its action of $v_1 ^{p^{c-1}}$?
\end{qst}

We wonder if there is an absolute version of \Cref{thm:THHcrysmain}. A basic version of this would be:

\begin{qst}
	Fix a connective $\EE_m$-algebra $V$ that is a type $n$ complex. Consider the category of connective $\EE_{\infty}$-rings $R$ such that $\THH(R)\otimes V$ is $c$-truncated for some $c$. Then does the functor $R \mapsto \THH(R)\otimes V$ factor through the functor $R \to R\otimes W$ to $\EE_{m'}$-algebras under $W$ for some connective $\EE_{m'}$-algebra $W$ that is a type $n-1$ complex?
\end{qst}

Finally, it seems very likely that \Cref{thm:general-crys} is compatible the motivic filtration on $\THH$.

\begin{qst}
  Can \Cref{thm:general-crys} be refined to be compatible with the motivic filtration on $\THH$? In this case, the target category should be some variant of $\E_n$-rings in the category of synthetic cyclotomic spectra studied in \cite{CycSyn}.
\end{qst}

\subsection{Outline}
In \Cref{sec:prismatize}, we carry out our crystallinity argument for mod $p$ prismatic cohomology, proving a weak mod $p$ version of part (3) of \Cref{thm:mainintro} and \Cref{thm:stackintro}.
In \Cref{sec:Nygaard}, we extend this argument from the prismatization to the Nygaard-filtered prismatization and the syntomification, proving the entire mod $p$ version of \Cref{thm:mainintro} and \Cref{thm:stackintro}. In \Cref{sec:rapid}, we study the notion of rapid convergence of towers, and prove \Cref{thm:mainintro} and \Cref{thm:stackintro} from the mod $p$ version. In \Cref{sec:generic}, we prove \Cref{thm:genericintro} and \Cref{thm:FLintro} as consequences.

In \Cref{sec:trcont}, we turn to proving the key $p$-adic continuity theorem for $\THH$ as a cyclotomic spectrum \Cref{thm:thhcontinuity}. We then use this in \Cref{sec:crysthhk} to deduce \Cref{thm:general-crys}.

Next, we study the motivic filtration on the topological cyclic homology of $R\langle \epsilon \rangle$ in \Cref{sec:epsilon}.
In \Cref{sec:compute}, we use this to explicitly compute the mod $p$ syntomic cohomology of $\Z_p \langle \epsilon \rangle$.
Finally, we study consequences for the $K$-theory and topological cyclic homology of $\Z/p^n$ in \Cref{sec:consequences}.

\subsection{Acknowledgements}
We would like to thank Ben Antieau and Achim Krause for a tremendous number of interesting discussions related to this work. We also thank Robert Burklund for helpful discussions.
We would particularly like to thank Ben for pointing out \cite[Example 5.15]{BLprismatization} to us, which got us started on the right track. Finally, we thank Sanath
Devalapurkar for explaining to us his key technique identifying the Hodge--Tate stack of $\mathbb{Z}/p^n$ by means of the specific identity $p^n=V(p^{n-1})$ in the Witt vectors of $\mathbb{Z}/p^n$ (cf. \cite[Section 6.3]{Petrov}).

During the course of this work, Jeremy Hahn was supported by the Sloan Research Foundation.  Ishan Levy was supported by the NSF Graduate Research Fellowship under Grant No. 1745302 and the Clay Research Fellowship. Andrew Senger was supported by NSF grant DMS-2103236.

\section{Crystallinity for prismatization}\label{sec:prismatize}

In this and the following section, we will prove the following preliminary version of \Cref{thm:stackintro}.
In \Cref{sec:rapid}, we will deduce the full statement of \Cref{thm:stackintro} using a formal argument.

\begin{thm}\label{thm:stackmodp}
	The functors
	\begin{enumerate}
		\item \[X \mapsto (X^\syn)_{p=v_1^{p^n}=0} \]
		\item \[X \mapsto (X^\Nyg)_{p=v_1^{p^n}=0} \]
		\item \[X \mapsto (X^\prism)_{p=v_1^{p^{n+1}}=0}\]
		\item \[X \mapsto (X^\prism)_{p^k=(F^{n+1-k})^* I=0}\]
	\end{enumerate}
	factor through the functor $X \mapsto X_{p^{n+2}=0}$, where $X_{p^{n+2}=0}$ is regarded as derived $\Z/p^{n+2}$-scheme.
	Here, $X$ is a derived $p$-adic formal scheme, and all of the vanishing loci are taken in a derived sense.
\end{thm}

As a corollary, we obtain the same result for syntomic and derived prismatic cohomology:

\begin{cor} \label{thm:mainmodp}
	Let $n \geq 0$.
	The following functors on $\CAlg_p ^\Ani$ factor through the mod $p^{n+2}$ reduction functor $\CAlg_p ^\Ani \to \CAlg_{\Z/p^{n+2}} ^\Ani$:
	\begin{enumerate}
		\item mod $(p,v_1 ^{p^n})$ syntomic cohomology\footnote{We recall the meaning of the class $v_1$ in \Cref{dfn:prism-v1} and \Cref{rmk:v1}. This class is the same as the $v_1$ of chromatic homotopy theory.}
		\[R \mapsto \F_p (\ast) (R) /v_1 ^{p^n}\]
		\item mod $(p,v_1 ^{p^n})$ Nygaard-filtered derived prismatic cohomology
		\[R \mapsto N^{\geq \ast} \prism_{R} \{\ast\} /(p, v_1^{p^{n}})\]
		\item mod $(p,v_1 ^{p^{n+1}})$ derived prismatic cohomology
		\[R \mapsto \prism_{R} \{\ast\} /(p, v_1^{p^{n+1}})\]
		\item mod $(p^k, (F^{n+1-k})^* I)$ derived prismatic cohomology, where $k \geq 1$, $F$ is the Frobenius, and $I$ is the Hodge-Tate ideal
		\[R \mapsto \prism_{R} \{\ast\} /(p^k, (F^{n+1-k})^* I)\]
	\end{enumerate}
	%
	%  Given any $n \geq 0$, the mod $(p,v_1 ^{p^n})$ derived syntomic cohomology functor
	%  \[\F_p (\bullet) / v_1 ^{p^n} : \CAlg_p^\Ani \to \CAlg_p^{\Ani,\gr}\]
	%  factors through the functor which takes $R$ to its (derived) quotient $R/p^{n+2}$.
	%
	%  In other words, we construct a commutative diagram
	%  \begin{center}
		%    \begin{tikzcd}
			%      \CAlg_p^\Ani \ar[rr,"A \mapsto \F_p (\bullet) (A) / v_1 ^{p^n}"] \ar[dr, "A \mapsto A/p^{n+2}"] & & \CAlg_p ^{\Ani,\gr} \\
			%      & \Calg_{\Z/p^{n+2}} ^\Ani. \ar[ur] &
			%    \end{tikzcd}
		%  \end{center}
\end{cor}

\begin{proof}
	We indicate how (4) is a consequence of \Cref{thm:stackmodp}(4), as the cases (1)-(3) are essentially the same. Derived prismatic cohomology is left Kan extended from $p$-completed polynomial algebras, it suffices to prove the theorem for $p$-completed polynomial algebras.
	But this follows from combining \Cref{thm:stackmodp}(4) with \Cref{thm:prism-comparison}, since $p$-completed polynomial algebras and their mod $p^n$ reductions are $p$-quasisyntomic.
\end{proof}

%\todo{fix citations to intro}
Our goal in this section is to prove \Cref{thm:stackmodp}(4).
This will also serve as a warmup for the other parts of \Cref{thm:stackmodp}, since we will not need to engage here with Nygaard-filtered prismatization or syntomification.

\subsection{Background on prismatization}

We begin with some background on prismatization from \cite{Drinfeld, APC, BLprismatization}.

\begin{dfn}
  A \emph{Cartier--Witt divisor} over a ($p$-nilpotent) ring $R$ consists of an invertible $W(R)$-module $I$ and a $W(R)$-module map
  \[I \to W(R),\]
  such that $I \to W(R) \to R$ is nilpotent and
  $I \to W(R) \xrightarrow{\delta} W(R)$ generates the unit ideal.
	
	A morphism $(I \xrightarrow{\alpha} W(R)) \to (J \xrightarrow{\beta} W(R))$ of Cartier--Witt divisors is a commuting diagram of $W(R)$-modules
	\[
	\begin{tikzcd}
	I \arrow{r}{\alpha} \arrow{d} & W(R) \\
	J \arrow{ur}{\beta}.
	\end{tikzcd}
	\]
%
  %We define
  %$W_\prim$
  By \cite[Lemma 4.2.5]{Drinfeld}, every morphism of Cartier--Witt divisors is an isomorphism.
\end{dfn}

\begin{dfn}
  We define $\Zp^\prism$ to be the $p$-adic formal stack whose functor of points sends a $p$-nilpotent ring $R$ to its category of Cartier--Witt divisors, so that
  \[\Zp^\prism (R) = \{(I, I \to W(R))\}.\]
%
 %Prismatization of $\Zp$ is $W_\prim / W^\times$.
  %Concretely, that means...
\end{dfn}

\begin{dfn}
  We say that a Cartier--Witt divisor $I \to W(R)$ is \emph{Hodge--Tate} if the composite $I \to W(R) \to R$ is zero.
  We let $\Zp^\HT \subset \Zp^\prism$ denote the closed substack of Hodge--Tate divisors.
\end{dfn}

\begin{dfn}
  There is a Frobenius map
  $F : \Zp^\prism \to \Zp ^\prism$
  which sends a Cartier--Witt divisor
  $I \xrightarrow{\alpha} W(R)$
  to
  $F^* I \coloneqq I \otimes_{W(R),F} W(R) \xrightarrow{F^* \alpha} W(R)$.
\end{dfn}

\begin{rmk}
  Because the Frobenius map $F: W(R) \to W(R)$ agrees with the map induced by the Frobenius of $R$ when $R$ is an $\F_p$-algebra, the prismatic Frobenius
  \[F : \Zp^\prism \to \Zp^\prism\]
  is a lift of the mod $p$ Frobenius.
\end{rmk}

\begin{dfn}
  There is an animated ring stack $\G^\prism$ lying over $\Spf \Zp^\prism$, which sends a Cartier--Witt divisor
  $I \to W(R)$
  to its animated quotient $W(R) / I$.

  Given a derived $p$-adic formal scheme $X$, we define its prismatization $X^\prism \to \Spf \Zp^\prism$ via
  $X^{\prism} (R) = X(\G^\prism (R))$.
  We define its Hodge--Tate locus to be $X^{\HT} \coloneqq X^{\prism} \times_{\Z_p ^\prism} \Z_p^{\HT}$.
\end{dfn}

\begin{exm}
  Explicitly, we have $(\Z/p^n)^\prism (R) = \{(I, \alpha: I \to W(R), x \in I) \vert \alpha(x) = p^n\}$.
\end{exm}

\begin{dfn}
  There is a line bundle $\O\{1\}$ over $\Zp^\prism$ which corresponds to the Breuil--Kisin twist, see \cite[Section 2 \& Example 3.3.8]{APC} and \cite[Section 4.9]{Drinfeld}.
  Heuristically, it is defined as follows. Let $\mathcal{I}$ denote the line bundle on $\Zp^\prism$ which sends $I \to W(R)$ to $I \otimes_{W(R)} R$.
  The map $I \to W(R)$ induces a map $\mathcal{I} \to \O_{\Zp^\prism}$ cutting out the Hodge-Tate locus.
  Then $\O\{1\} \coloneqq \bigotimes_{n=0} ^\infty (F^n)^* \mathcal{I}$.
\end{dfn}

\begin{rmk}
  By definition, we have $F^* \O_{\Zp^\prism} \{1\} \cong \O_{\Zp^\prism} \{1\} \otimes \mathcal{I}^{-1}$.
  Reducing this mod $p$, we find that 
  \[\O_{(\Zp)^\prism_{p=0}} \{p\} \cong \O_{(\Zp)_{p=0} ^\prism} \{1\} \otimes \mathcal{I}_{p=0} ^{-1},\]
  hence that
  \[\mathcal{I}_{p=0} \cong \O_{(\Zp)^\prism_{p=0}} \{1-p\}.\]
\end{rmk}

\begin{dfn} \label{dfn:prism-v1}
  We define $v_1$ to be the section of $\O_{(\Zp)^\prism _{p=0}} \{p-1\}$ corresponding to the canonical map $\mathcal{I}_{p=0} \to \O_{(\Zp)^\prism _{p=0}}$ under the isomorphism
  \[\mathcal{I}_{p=0} \cong \O_{(\Zp)_{p=0}} \{1-p\}.\]
\end{dfn}

\begin{rmk}
  By definition, the closed substack $(\Zp^{\HT})_{p=0} \subset (\Zp^{\prism})_{p=0}$ agrees with $(\Zp^{\prism})_{p=v_1=0}$.
\end{rmk}

Finally, we recall the comparison between the cohomology of the prismatization and prismatic cohomology.

\begin{thm}[{{\cite[Corollaries 8.13 \& 8.17]{BLprismatization}}}] \label{thm:prism-comparison}
  Let $X$ be a qcqs $p$-quasisyntomic $p$-adic formal scheme.
  Then there is a natural isomorphism
  \[\prism_{X} \{i\} \cong R\Gamma(X^\prism, \O_{X^\prism} \{i\}).\]
\end{thm}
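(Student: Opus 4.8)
The plan is to deduce the comparison by quasisyntomic descent, reducing to the case where $X = \Spf S$ for $S$ a quasiregular semiperfectoid ring. First I would check that both sides define sheaves for the quasisyntomic topology on qcqs $p$-quasisyntomic formal schemes, with values in derived $p$-complete $\Zp$-modules. For the left-hand side $X \mapsto \prism_X\{i\}$ this is essentially the construction of absolute prismatic cohomology. For the right-hand side $X \mapsto R\Gamma(X^\prism, \O_{X^\prism}\{i\})$, the point is that prismatization $X \mapsto X^\prism$ sends a quasisyntomic cover $\Spf S' \to \Spf S$ to a faithfully flat cover of formal stacks $(\Spf S')^\prism \to (\Spf S)^\prism$; granting this, flat descent for quasicoherent cohomology applies, and one identifies the \v{C}ech nerve of $(\Spf S')^\prism \to (\Spf S)^\prism$ with the result of applying $(-)^\prism$ to the \v{C}ech nerve of $\Spf S' \to \Spf S$, using the defining formula $X^\prism(R) = X(\G^\prism(R))$ and the fact that it preserves limits in $X$. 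Since every qcqs $p$-quasisyntomic formal scheme has a finite Zariski cover by affines, and each $p$-quasisyntomic affine admits a quasisyntomic cover by $\Spf S$ with $S$ quasiregular semiperfectoid, it suffices to treat this case.

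For $S$ quasiregular semiperfectoid I would pick a surjection $S_0 \twoheadrightarrow S$ from a perfectoid ring and let $(A_0, I_0)$ be the associated perfect prism, so $A_0/I_0 = S_0$. On the cohomological side, Bhatt--Scholze's analysis of the absolute prismatic site of a quasiregular semiperfectoid ring shows that $\prism_S\{i\}$ is computed by a \v{C}ech-type complex assembled from the \emph{discrete} relative prisms attached to the framing $A_0 \to \prism_S$ and its prismatic self-products. On the stacky side, an $R$-point of $(\Spf S)^\prism$ is a Cartier--Witt divisor $I \to W(R)$ together with a map $S \to W(R)/I$; the pair $(W(R), I)$ is a generalized prism, so the composite $S_0 \twoheadrightarrow S \to W(R)/I$ lifts, uniquely since perfectoid rings correspond to perfect prisms, to a map of prisms $(A_0, I_0) \to (W(R), I)$, and choosing such a lift records the $R$-point as a map out of the relative prism $\prism_{S/A_0}$. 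This exhibits $(\Spf S)^\prism$ as the quotient of the affine formal scheme $\Spf \prism_{S/A_0}$ by the groupoid of reframings, and computing $R\Gamma(\O_{(\Spf S)^\prism}\{i\})$ through this presentation reproduces precisely the \v{C}ech complex for $\prism_S\{i\}$. Naturality in $X$ is visible at each stage.

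The hard part is this second step, together with the faithful-flatness input in the descent: one has to make precise the dictionary between the stack $(\Spf S)^\prism$ and the absolute prismatic site of $S$, establishing that Cartier--Witt divisors over $R$ equipped with a map from a quasiregular semiperfectoid ring assemble, under quasisyntomic descent, into exactly absolute prismatic cohomology with its Breuil--Kisin twists. This is the technical heart of the stacky approach to prismatic cohomology developed in \cite{BLprismatization}, and in the present paper the theorem is simply quoted from there.
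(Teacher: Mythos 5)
The paper gives no proof of this statement: it is imported wholesale from \cite[Corollaries 8.13 \& 8.17]{BLprismatization}, so there is no internal argument to compare yours against. As a reconstruction of the cited proof, your outline follows the standard route: both sides are quasisyntomic sheaves, so one reduces to $X = \Spf S$ with $S$ quasiregular semiperfectoid, where the comparison comes down to identifying $(\Spf S)^{\prism}$ with the affine formal scheme $\Spf \prism_S$ (so that higher coherent cohomology vanishes and global sections give the absolute prismatic cohomology ring, with the Breuil--Kisin twists matching by construction); your presentation via a perfectoid framing $A_0 \twoheadrightarrow S$ and the \v{C}ech complex of reframings is one way to see this, recovering the relative-to-absolute comparison for absolute prismatic cohomology of qrsp rings. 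Two cautions on the sketch itself: since the lift of $A_0 \to W(R)/I$ to a map of prisms $(A_0, I_0) \to (W(R), I)$ is unique, the nontrivial groupoid structure comes from varying the perfectoid framing and from automorphisms of Cartier--Witt divisors, not from a choice of lift; and the assertion that prismatization carries quasisyntomic covers to covers for which flat descent applies is itself a substantive input rather than a formality. You correctly identify that this technical content is the heart of \cite{BLprismatization} and that the present paper simply quotes the result.
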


\subsection{Proving crystallinity}

\begin{dfn}
Given a morphism of formal stacks $A \to \mathbb{Z}_p^{\prism}$, and an integer $n \ge 0$, we denote by $F^{-n}A$ the pullback
\[
\begin{tikzcd}
F^{-n}A \arrow{r} \arrow{d} & A \arrow{d} \\
\Zp^{\prism} \arrow{r}{F^n} & \Zp^{\prism},
\end{tikzcd}
\]
where $F^{n}$ denotes the $n$-fold iterated composite of $F:\Zp^{\prism} \to \Zp^{\prism}$.
\end{dfn}

%\begin{lem}
  %The (a priori derived) $p$-adic formal stacks $(\Zp) ^\prism_{p^{k} = 0}$, $(\Z/p^n) ^\prism_{p^{k} = 0} $ and $F^{-1} (\Z/p^n)$ are classical.
%\end{lem}
%
%\begin{proof}
  %For $(\Zp ^\prism)_{p^k =0}$, it suffices to check this after pullback along the flat cover $\Spf \Z_p [[x]] \to \Z_p ^{\prism}$, where it easy.
  %
  %For , we argue using the
%\end{proof}
%
%This means that we need only work with classical rings when making arguments about the functor of points.

Our main result is as follows:

\begin{thm} \label{thm:prism-main}
  For integers $n \geq 0$ and $k \geq 1$, there exist dashed arrows making the following diagram commute:
  \begin{center}
    \begin{tikzcd}
      F^{-n} \left( (\Zp^{\HT})_{p^{k+1} = 0} \right)\ar[rr] \ar[rd, dashed] & & (\Zp)^{\prism}_{p^{k+1}=0} \\
       & (\Z/p^{n+k+1})_{p^{k+1}=0}^\prism  \ar[ru], &
    \end{tikzcd}
  \end{center}
such that the arrows are compatible as $k$ and $n$ vary.
\end{thm}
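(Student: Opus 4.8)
\emph{Proof plan.} The strategy is to reduce \Cref{thm:prism-main} to an explicit divisibility statement in the big Witt vectors, and then to exhibit the dashed arrow by writing down the required Cartier--Witt witness directly. Set $m = n+k+1$. Recall that a point of $(\Z/p^{m})^{\prism}$ over $R$ is a Cartier--Witt divisor $J \xrightarrow{\alpha} W(R)$ together with an element $x \in J$ satisfying $\alpha(x) = p^{m}$ (in the derived sense), and that the forgetful map $(\Z/p^{m})^{\prism} \to \Zp^{\prism}$ drops $x$. Over $S \coloneqq F^{-n}\big((\Zp^{\HT})_{p^{k+1}=0}\big)$ let $J \to W$ be the Cartier--Witt divisor pulled back along the projection $S \to \Zp^{\prism}$; by the very definition of the fibre product $S$, the twisted divisor $(F^{n})^{*}J$ is Hodge--Tate, and the projection $S \to \Zp^{\prism}$ factors through $(\Zp^{\prism})_{p^{k+1}=0}$, so $R$ is always a $\Z/p^{k+1}$-algebra. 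Thus the theorem amounts to producing, canonically over $S$, an element $x \in J$ with $\alpha(x) = p^{m}$: the dashed arrow is then the pair $(J,x)$, and the triangle commutes because forgetting $x$ recovers $J$, i.e.\ the top horizontal map.

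Working Zariski-locally on $\Spec R$, I would trivialise $J \cong W(R)$ so that $\alpha$ is multiplication by a distinguished element $d \in W(R)$ (i.e.\ $d_{0}$ nilpotent and $d_{1}\in R^{\times}$); since $w_{i}\circ F = w_{i+1}$, the condition that $(F^{n})^{*}J$ be Hodge--Tate becomes exactly $w_{n}(d) = 0$ in $R$. The heart of the argument is then the following divisibility statement: \emph{for every $\Z/p^{k+1}$-algebra $R$ with $k \ge 1$, every $n \ge 0$, and every distinguished $d \in W(R)$ with $w_{n}(d)=0$, there is a canonical $c = c_{n}(d) \in W(R)$ with $c\,d = p^{m}$, satisfying the transformation law $c_{n}(ud) = u^{-1}c_{n}(d)$ for $u \in W(R)^{\times}$.} The one genuinely arithmetic input is the identity $p^{k+1} = V(p^{k})$, valid in $W(R)$ whenever $p^{k+1}=0$ in $R$ --- this holds because each Witt coordinate of $p^{k+1}-V(p^{k}) \in W(\Z)$ is divisible by $p^{k+1}$ (a short induction on Witt coordinates), the technique of extracting such an identity being due to Devalapurkar, cf.\ \cite[Section 6.3]{Petrov}. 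Granting it, one argues by induction on $n$. For $n=0$: $w_{0}(d)=0$ gives $d = V(\tilde d)$ with $\tilde d$ unique, and $\tilde d_{0} = d_{1}$ is a unit because $d$ is distinguished, hence $\tilde d \in W(R)^{\times}$ (the kernel of $W(R)\to R$ being a nil ideal for $p$-nilpotent $R$); one sets $c_{0}(d) \coloneqq V(\tilde d^{-1}p^{k-1})$ and checks, using $FV = p$, $a\,V(b) = V(Fa\cdot b)$ and $p^{k+1}=V(p^{k})$, that $c_{0}(d)\,d = V(p^{k}) = p^{k+1}$. For $n \ge 1$: $F(d)$ is again distinguished (it is the local generator of the Cartier--Witt divisor $F^{*}J$) with $w_{n-1}(F(d)) = w_{n}(d) = 0$, so $c_{n-1}(F(d))$ exists by induction; one sets $c_{n}(d) \coloneqq V\big(c_{n-1}(F(d))\big)$ and checks, using $a\,V(b) = V(Fa\cdot b)$, $V(pb)=pV(b)$ and $p^{k+1}=V(p^{k})$, that $c_{n}(d)\,d = V\big(F(d)\,c_{n-1}(F(d))\big) = V(p^{n+k}) = p^{m}$. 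Functoriality in $R$ and the transformation law are visible from the recursion.

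Given this lemma, set $x \coloneqq c_{n}(d)\cdot e_{J}$ in the local trivialisation $J = W(R)\,e_{J}$. The transformation law $c_{n}(ud) = u^{-1}c_{n}(d)$, combined with $a\,V(b) = V(Fa\cdot b)$, shows that $x$ is independent of the trivialisation, so it glues to a canonical global element of $J$ over $S$ with $\alpha(x) = p^{m}$; and since a point of the derived fibre of $\alpha$ over $p^{m}$ is an element mapping to $p^{m}$ together with a path, the on-the-nose equality $\alpha(x)=p^{m}$ suffices (take the constant path), so no extra derived bookkeeping is needed. This defines the dashed arrow, and its compatibility as $n$ and $k$ vary is immediate from the single explicit recursive formula for $c_{n}(d)$ together with the evident compatibilities among the identities $p^{k+1}=V(p^{k})$.

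I expect the main obstacle to be conceptual rather than computational: one must recognise that ``Hodge--Tate after $F^{n}$'' translates to the clean condition $w_{n}(d)=0$, which unwinds by exactly one Verschiebung at each inductive step, and that the induction only closes because of the identity $p^{k+1}=V(p^{k})$ over $\Z/p^{k+1}$-algebras --- the sole point at which genuine information about $p$-power reductions, as opposed to formal properties of Cartier--Witt divisors, enters (and which is also why $k\ge 1$ is needed, via the factor $p^{k-1}$). The secondary technical point is globalising the witness over the line bundle $J$, i.e.\ the transformation law making $x$ canonical; this is routine once the formula for $c_{n}(d)$ is in hand.
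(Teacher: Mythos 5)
Your proof is correct and is essentially the paper's argument: the same induction on $n$ peels off one Frobenius twist at a time via the Verschiebung, powered by the Devalapurkar--Petrov identity $p^{j+1}=V(p^{j})$ over rings where $p^{j+1}=0$, and your $n=0$ step (writing a Hodge--Tate generator as $d=V(\tilde d)$ with $\tilde d$ a unit and multiplying by $p^{k-1}$) is exactly an unwinding of the Bhatt--Lurie factorization $\Zp^{\HT}\to F^{-1}(\F_p^{\prism})$ that the paper cites instead of re-deriving. The only real difference is presentational: you work in a Zariski-local trivialization of the invertible module and glue via the transformation law $c_n(ud)=u^{-1}c_n(d)$, whereas the paper uses the intrinsic Verschiebung $V\colon F^{*}I\to I$ and so never needs to trivialize.
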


\begin{cor}
  Taking $k=1$ and reducing modulo $p$, we obtain the diagram
\[
    \begin{tikzcd}
      (\Zp)^{\prism}_{p = v_1^{p^n} = 0} \ar[rr] \ar[rd] & & (\Zp)_{p=0}^\prism \\
       & (\Z/p^{n+2})_{p=0}^\prism.  \ar[ru] &
    \end{tikzcd}
\]
\end{cor}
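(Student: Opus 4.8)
The corollary is a formal consequence of \Cref{thm:prism-main}: take $k=1$, base–change along $\Z/p^{2}\to\Z/p$, and note that $F^{-n}(-)$ commutes with this reduction, so the left–hand target becomes $F^{-n}\big((\Zp^\HT)_{p=0}\big)$; this is $(\Zp^\prism)_{p=v_1^{p^n}=0}$ because $(\Zp^\HT)_{p=0}=(\Zp^\prism)_{p=v_1=0}$ and $F^{\ast}v_1$ generates the same ideal as $v_1^{p}$ (the prismatic Frobenius lifts the $p$-th power map), while the right–hand target becomes $(\Z/p^{n+2})^\prism_{p=0}$. So the whole content is \Cref{thm:prism-main}, and I would prove that as follows. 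By the explicit description of $(\Z/p^{m})^\prism$ recalled above as $\{(I,\alpha,x)\mid \alpha(x)=p^{m}\}$, giving the dashed arrow $F^{-n}\big((\Zp^\HT)_{p^{k+1}=0}\big)\to(\Z/p^{n+k+1})^\prism$ lying over $\Zp^\prism$ is the same as producing, for each Cartier--Witt divisor $(I,\alpha)$ classified by an $R$-point of the source, a functorial section $x\in I$ with $\alpha(x)=p^{n+k+1}$.

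Now I would unwind the source. Here $R$ is a $\Z/p^{k+1}$-algebra; working Zariski-locally, choose a generator $e$ of $I$ and set $d:=\alpha(e)\in W(R)$, a distinguished element. Then $F^{n\ast}(I,\alpha)$ is again a Cartier--Witt divisor, locally multiplication by $F^{n}(d)$, so the Hodge--Tate hypothesis on $F^{n\ast}(I,\alpha)$ says exactly that $F^{n}(d)$ lies in the image of the Verschiebung, say $F^{n}(d)=V(u)$ for a unique $u$. Since $F^{n}(d)$ is still distinguished and $p$ is nilpotent in $R$, one checks that $u$ must be a unit (the extra term in $\delta(V(u))=u-V(u)^{p}/p$ lies in $VW(R)+pW(R)$, which is contained in the Jacobson radical of $W(R)$). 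Applying $F$ once more and using the Witt-vector identity $FV(x)=px$ yields the key relation $F^{n+1}(d)=p\,u$.

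Two elementary Witt-vector facts then finish the construction. First, iterating $a\cdot V(b)=V\big(F(a)\,b\big)$ to push $d$ past $n+1$ Verschiebungen gives $d\cdot V^{n+1}(w)=V^{n+1}\big(F^{n+1}(d)\,w\big)$ for all $w\in W(R)$. Second, the Devalapurkar--Petrov observation $p^{j}=V(p^{j-1})$ in $W(-)$ of a $\Z/p^{j}$-algebra (cf.\ \cite[Section 6.3]{Petrov} and \cite[Example 5.15]{BLprismatization}), applied iteratively over $R$ for $j=n+k+1,\dots,k+1$, gives $p^{n+k+1}=V^{n+1}(p^{k})$ in $W(R)$. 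Setting
\[
x\;:=\;V^{n+1}\!\big(u^{-1}p^{k-1}\big)\cdot e\;\in\;I,
\]
which makes sense precisely because $k\ge 1$, one computes
\[
\alpha(x)\;=\;d\cdot V^{n+1}\!\big(u^{-1}p^{k-1}\big)\;=\;V^{n+1}\!\big(F^{n+1}(d)\,u^{-1}p^{k-1}\big)\;=\;V^{n+1}\!\big(p^{k}\big)\;=\;p^{n+k+1}.
\]
Replacing $e$ by $\lambda e$ replaces $(d,u)$ by $(\lambda d,\;F^{n+1}(\lambda)u)$, and since $\lambda\cdot V^{n+1}(w)=V^{n+1}\big(F^{n+1}(\lambda)w\big)$ the element $x$ is unchanged; hence $x$ is independent of all choices, glues to a global section, defines the dashed arrow (automatically landing in the $p^{k+1}=0$ locus, as $R$ is a $\Z/p^{k+1}$-algebra), and the triangle commutes since the composite to $\Zp^\prism$ is the forgetful map $(I,\alpha,x)\mapsto(I,\alpha)$. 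The asserted compatibilities as $n$ and $k$ vary are read off from the companion identities $p\cdot V^{n+1}(w)=V^{n+1}(p\,w)$ and $F\circ V^{n+1}=V^{n}\circ(p\cdot-)$, tracked through the maps $(\Z/p^{m})^\prism\to(\Z/p^{m+1})^\prism$ and the Frobenius on $\Zp^\prism$.

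The hard part here is not this calculation but the surrounding foundational bookkeeping: making precise — and homotopy-coherent — the derived mod $p^{j}$ reduction functors and the pullbacks $F^{-n}(-)$ as morphisms of formal stacks; confirming that ``a map to $(\Z/p^{m})^\prism$ over $\Zp^\prism$ is exactly a section $x$ with $\alpha(x)=p^{m}$'' survives in the derived/animated setting, where $\alpha$ need not be injective so that one genuinely carries $x$ as data rather than merely invoking its existence; and verifying that the Witt-vector identities used above, in particular $p^{j}=V(p^{j-1})$, hold on the nose after arbitrary derived base change. Once these points are settled, the argument is exactly the short computation displayed above.
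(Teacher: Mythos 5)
Your deduction of the corollary from \Cref{thm:prism-main} --- base-change the $k=1$ diagram to the derived locus $p=0$, identify $(\Zp^{\HT})_{p=0}$ with $(\Zp^{\prism})_{p=v_1=0}$, and use that mod $p$ the map $F$ is a Frobenius lift so that $F^{-n}$ of the locus $v_1=0$ is the locus $v_1^{p^n}=0$ --- is exactly what the paper means by ``taking $k=1$ and reducing modulo $p$,'' so that step matches. Where you genuinely diverge is in the proof of \Cref{thm:prism-main} itself. The paper argues by induction on $n$: the base case is the Bhatt--Lurie square of \Cref{prop:deRham-point} (which says $\Zp^{\HT}\subset F^{-1}(\F_p^{\prism})$), combined with the factorization $\F_p^{\prism}\to(\Z/p^k)^{\prism}$, and the inductive step is \Cref{lem:prism-contract}, which promotes a lift of $p^{m}$ to a lift of $p^{m+1}$ by applying the Verschiebung and invoking $V(p^{m})=p^{m+1}$. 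You instead unroll this induction into a single closed-form section: from the Hodge--Tate and distinguishedness hypotheses you extract $F^{n}(d)=V(u)$ with $u$ a unit, hence $F^{n+1}(d)=pu$, and you set $x=V^{n+1}(u^{-1}p^{k-1})e$, whose image under $\alpha$ is $V^{n+1}(p^{k})=p^{n+k+1}$ by the projection formula and the iterated Devalapurkar--Petrov identity. This is correct: the two Witt-vector inputs ($FV=p$ to start, and $V(p^{j-1})=p^{j}$ on $\Z/p^{j}$-algebras to iterate) are the same ones powering the paper's argument, and your check that $x$ is independent of the choice of local generator $e$ handles the gluing. What your version buys is an explicit formula for the lift of $p^{n+k+1}$ and independence from the citation to \cite[Proposition 3.6.6]{APC}, which you in effect reprove; what the paper's modular version buys is that the base case and the Verschiebung lemma are reused almost verbatim for the Nygaard-filtered and syntomic variants in \Cref{sec:Nygaard}, which a one-shot formula would not immediately give. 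The foundational caveats you flag (carrying the lift $x$ as data, derived vanishing loci, Witt identities after base change) are real but are treated at the same level of rigor in the paper's own proofs of \Cref{lem:prism-contract} and \Cref{prop:nyg-contract}.
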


Using \Cref{thm:prism-main}, we are able to prove \Cref{thm:stackmodp}(4).

\begin{proof}[Proof of \Cref{thm:stackmodp}(4)]
  Letting $I$ denote the Hodge--Tate ideal, we note that $\Zp^{\HT} = (\Zp^{\prism}) _{I=0}$ by definition, so that we may apply \Cref{thm:prism-main} to
  form the diagram
  \begin{center}
    \begin{tikzcd}
      (X^\prism)_{p^k=(F^{n+1-k})^* I = 0} \ar[d] \ar[r] & (\Zp^\prism)_{p=(F^{n+1-k})^* I = 0} \ar[d] \\
      (X_{p^{n+2}=0})^{\prism} \ar[d] \ar[r] & (\Z/p^{n+2})^\prism \ar[d] \\
      X^\prism \ar[r] & \Zp^\prism.
    \end{tikzcd}
  \end{center}
  The outer square is a pullback by definition, and the bottom square is a pullback because prismatization preserves limits of derived p-adic formal schemes.
  It follows that the top square is also a pullback, so that 
  \[(X^\prism)_{p=(F^{n+1-k})^* I = 0} \cong (X_{p^{n+2}=0})^\prism \times_{(\Z/p^{n+2})^\prism} (\Zp^\prism)_{p=(F^{n+1-k})^* I = 0}\]
  gives the desired factorization of
  \[X \mapsto (X^\prism)_{p=(F^{n+1-k})^* I = 0}\]
  through
  \[X \mapsto X_{p^{n+2}=0}.\]
\end{proof}

Our proof of \Cref{thm:prism-main} will be an induction on $n$.
The base inductive case is provided by the following result of Bhatt--Lurie:

\begin{prop} \label{prop:deRham-point}
 There is a commutative diagram:
  \begin{center}
    \begin{tikzcd}
      \Z_p^{\HT} \ar[r] \ar[d] & \Z_p ^\prism \ar[d, "F"] \\
      \F_p^{\prism} \ar[r] & \Z_p^\prism
    \end{tikzcd}
  \end{center}
In particular, the natural map $\Z_p^{\HT} \subset \Z_p^\prism$ factors through $F^{-1} ( \F_p^{\prism} )$.
\end{prop}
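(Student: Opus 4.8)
The plan is to construct the dashed map $\Zp^{\HT} \to \F_p^{\prism}$ --- equivalently, to exhibit the commuting square --- directly on functors of points; the essential inputs are the Witt-vector identity $FV = p$ and the observation that a Hodge--Tate Cartier--Witt divisor is exactly one whose structure map factors through the Verschiebung.

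First I would unwind both stacks. Since $\F_p = \Z/p$, taking $n = 1$ in the Example computing $(\Z/p^n)^{\prism}$ shows that a point of $\F_p^{\prism}(R) = (\Spf \F_p)^{\prism}(R)$ lying over a Cartier--Witt divisor $\alpha \colon I \to W(R)$ is (the data of) a section $x$ of $I$ with $\alpha(x) = p$, and the structure map $\F_p^{\prism} \to \Zp^{\prism}$ forgets $x$. So the asserted square amounts to the following: for every Hodge--Tate Cartier--Witt divisor $\alpha \colon I \to W(R)$ one must produce, naturally in $R$, a canonical section $x_0$ of the Frobenius pullback $F^{*}I$ with $(F^{*}\alpha)(x_0) = p$ in $W(R)$. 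Providing such an $x_0$ is, after unwinding the definition of $F^{-1}(-)$, precisely a factorization of $\Zp^{\HT} \hookrightarrow \Zp^{\prism}$ through $F^{-1}(\F_p^{\prism})$.

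The heart of the matter is then an algebraic computation. Writing $V \colon W(R) \to W(R)$ for the Verschiebung, one has $\ker(W(R) \to R) = V W(R)$, so the Hodge--Tate hypothesis says exactly that $\alpha$ factors through $VW(R) \subseteq W(R)$; as $V$ is injective, this yields a unique $W(R)$-linear $\bar\alpha \colon I \to W(R)$ with $\alpha = V \circ \bar\alpha$. The remaining part of the Cartier--Witt divisor condition --- that $\delta \circ \alpha$ generate the unit ideal --- then forces $\bar\alpha$ to be an \emph{isomorphism}: locally trivializing $I$ so that $\alpha$ is multiplication by $d = (d_0, d_1, \dots) \in W(R)$, the Hodge--Tate condition gives $d_0 = 0$, hence $d = V(e)$ with $e = (d_1, d_2, \dots)$, and a ghost-component computation shows $\delta(d) \equiv e \pmod{VW(R)}$; since $R$ is $p$-nilpotent, $VW(R)$ lies in the Jacobson radical of $W(R)$, so $\delta(d)$ is a unit iff $e$ is a unit iff $\bar\alpha$ is invertible. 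Granting this, I would set
\[
x_0 \;:=\; \bar\alpha^{-1}(1) \otimes 1 \;\in\; I \otimes_{W(R), F} W(R) \;=\; F^{*}I ;
\]
since $F^{*}\alpha$ sends $i \otimes 1 \mapsto F(\alpha(i))$, and $\alpha = V \circ \bar\alpha$, and $FV = p$, one computes $(F^{*}\alpha)(x_0) = F(V(1)) = p$. This $x_0$ depends on $\alpha$ only through the canonically determined $\bar\alpha$, hence is natural in $R$, and assembles to the desired map of formal stacks over $F \colon \Zp^{\prism} \to \Zp^{\prism}$.

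The main obstacle I anticipate is not this algebra but making the functor-of-points argument homotopy-coherent. The quotient $W(R)/I = \G^{\prism}(R)$ appearing in the definition of $\F_p^{\prism}$ is a genuinely \emph{animated} quotient, so a point of $\F_p^{\prism}(R)$ over $\alpha$ is really a coherent nullhomotopy of the element ``$p$'' in the animated ring $W(R)/I$, not merely a naive lift; one has to check that $x_0$ produces such a nullhomotopy, functorially and with all higher coherence --- most cleanly by matching the construction to the relevant representability statement in \cite{BLprismatization}. A secondary nuisance is semilinearity bookkeeping: $V$ is additive but not $W(R)$-linear, so some care is needed to see that $\bar\alpha$ is honestly $W(R)$-linear and that the displayed formula for $x_0$ is independent of the local trivialization used to verify $(F^{*}\alpha)(x_0) = p$.
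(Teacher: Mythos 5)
Your proposal is correct in substance, but it takes a different route from the paper: the paper simply cites \cite[Proposition 3.6.6]{APC} (after matching $\rho_{dR}$ with $\F_p^{\prism} \to \Zp^{\prism}$ via \cite[Remark 3.13]{BLprismatization}), whereas you reconstruct the content of that citation by hand. Your key observations --- that the Hodge--Tate condition means $\alpha$ factors through $VW(R)$, and that $FV=p$ then exhibits $F^*\alpha$ as a unit multiple of $p$ --- are exactly the mechanism behind the Bhatt--Lurie statement, and they match the style of the paper's own inductive step (\Cref{lem:prism-contract}), which likewise produces lifts of $p^{n+1}$ by applying $V$ to lifts of $p^n$. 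One genuine (but fixable) slip: since $V(F(a)b)=aV(b)$, the factorization $\alpha = V\circ\bar\alpha$ forces $\bar\alpha$ to be \emph{$F$-semilinear}, not $W(R)$-linear; what is $W(R)$-linear (and, as you correctly argue via $\delta(V(e))\equiv e \bmod VW(R)$, an isomorphism) is the induced map $F^*I \to W(R)$, $i\otimes a \mapsto \bar\alpha(i)a$. Consequently $x_0$ should be defined as the preimage of $1$ under this induced isomorphism (locally $d\otimes e^{-1}$ if $\alpha$ is multiplication by $d=V(e)$), rather than ``$\bar\alpha^{-1}(1)\otimes 1$'', which need not exist since $F$ is not surjective on $W(R)$ in general; with that correction $(F^*\alpha)(x_0)=FV(e)\,e^{-1}=p$ as desired. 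Your worry about animation is legitimate but benign here: since a Cartier--Witt divisor is locally multiplication by a nonzerodivisor, $W(R)/I$ is discrete and a point of $\F_p^{\prism}(R)$ over a given divisor is precisely a (necessarily unique) lift of $p$ to $I$, which is how the paper itself uses the Example and \Cref{lem:prism-contract}. The trade-off is the usual one: the citation is shorter and inherits all coherence from \cite{APC}, while your argument is self-contained and makes visible exactly where the identity $FV=p$ enters.
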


\begin{proof}
This square is stated as \cite[Proposition 3.6.6]{APC}, once one identifies \[\rho_{dR}:\Spf \Zp \to \Zp^{\prism}\] with the natural map $\F_p^{\prism} \to \Zp^{\prism}$ as in \cite[Remark 3.13]{BLprismatization}.
\end{proof}

The inductive step is provided by the following lemma:

\begin{lem} \label{lem:prism-contract}
  For each $n \geq 1$, there is a commuting diagram:
  \begin{center}
    \begin{tikzcd}
      F^{-1} (\Z/p^n)^{\prism} _{p^{n+1}=0} \ar[d, dashed] \ar[r] & (\Z/p^n)^\prism _{p^{n+1}=0} \ar[dd] \\
      (\Z/p^{n+1})^\prism _{p^{n+1}=0} \ar[d] & \\
      (\Z_p)^{\prism} _{p^{n+1}=0} \ar[r, "F"] & (\Z_p)^\prism _{p^{n+1}=0},
    \end{tikzcd}
  \end{center}
  where the outer square is a pullback.
\end{lem}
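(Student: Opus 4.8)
The plan is to build the dashed arrow directly on the functor of points, the only non-formal ingredient being a Witt-vector identity over $\Z/p^{n+1}$. First I would unwind the two stacks. For a $p$-nilpotent ring $R$ one has
\[
(\Z/p^{n+1})^\prism(R) = \{\,((I,\alpha),x) : (I,\alpha)\ \text{a Cartier--Witt divisor over}\ R,\ x\in I,\ \alpha(x)=p^{n+1}\,\},
\]
while a point of $F^{-1}(\Z/p^n)^\prism(R)$ is a Cartier--Witt divisor $(I,\alpha)$ over $R$ together with $y\in F^*I=I\otimes_{W(R),F}W(R)$ with $(F^*\alpha)(y)=p^n$; in the defining pullback, the leg to $\Z_p^\prism$ records $(I,\alpha)$ and the map to $(\Z/p^n)^\prism$ records $(F^*(I,\alpha),y)$. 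In particular the outer square of the lemma is the pullback square defining $F^{-1}(\Z/p^n)^\prism$ (over $\Z_p^\prism$ along $F$) restricted to the locus $p^{n+1}=0$, hence automatically cartesian; the whole content is to produce, functorially in $R$ and leaving $(I,\alpha)$ untouched, an element $x\in I$ with $\alpha(x)=p^{n+1}$ out of the datum $y$.

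The key step is a natural additive map $\Psi\colon F^*\mathcal I\to\mathcal I$ over $\Z_p^\prism$, where $\mathcal I$ is the line bundle $(I,\alpha)\mapsto I$. Zariski-locally pick a generator $x_0$ of $I$; then $F^*I$ is free on $x_0\otimes 1$, a general element is $x_0\otimes b$ for $b\in W(R)$, and one sets $\Psi(x_0\otimes b)\coloneqq V(b)\,x_0$, with $V$ the Witt-vector Verschiebung. Independence of the chosen generator is the projection formula $V(z)u=V\big(zF(u)\big)$: replacing $x_0$ by $ux_0$ rewrites $x_0\otimes b$ as $x_0'\otimes F(u)^{-1}b$, and $V\big(F(u)^{-1}b\big)x_0'=V\big(F(u)^{-1}b\,F(u)\big)x_0=V(b)x_0$, so the local formulas glue. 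By the same projection formula,
\[
\alpha\big(\Psi(x_0\otimes b)\big)=V(b)\,\alpha(x_0)=V\big(b\,F(\alpha(x_0))\big)=V\big((F^*\alpha)(x_0\otimes b)\big),
\]
so $\alpha\circ\Psi=V\circ(F^*\alpha)$. (The map $\Psi$ is functorial and additive but not $W(R)$-linear; morally it is the Verschiebung transported through $\alpha$ and $F^*\alpha$.)

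Finally I would define the dashed arrow to be $((I,\alpha),y)\mapsto((I,\alpha),\Psi(y))$. This visibly preserves $(I,\alpha)$, so it is a morphism over $\Z_p^\prism$; since the outer square is already cartesian, the diagram then commutes once the target is known to be a point of $(\Z/p^{n+1})^\prism$, i.e.\ once $\alpha(\Psi(y))=p^{n+1}$. By the displayed identity, $\alpha(\Psi(y))=V\big((F^*\alpha)(y)\big)=V(p^n)$, so everything reduces to the assertion that $V(p^n)=p^{n+1}$ in $W(R)$ whenever $p^{n+1}=0$ in $R$ --- which is exactly the Witt-vector identity $p^m=V(p^{m-1})$ in $W(\Z/p^m)$ applied with $m=n+1$ (cf.\ \cite[Section 6.3]{Petrov}), extended to arbitrary $\Z/p^{n+1}$-algebras by functoriality of $W$ and $V$. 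I expect this to be the one genuine obstacle: it is the sole place where $p^{n+1}=0$ is used and what pins down the exponent in the lemma, whereas everything else is formal manipulation of the stack of Cartier--Witt divisors. One further point to be careful about is that the lemma concerns derived vanishing loci, so the functor-of-points construction above --- written on classical $p$-nilpotent test rings --- must be checked to be compatible with the derived structure; given that both $\Psi$ and the Witt identity are defined by functorial formulas this should be routine, but it deserves to be spelled out.
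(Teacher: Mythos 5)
Your proposal is correct and follows essentially the same route as the paper: the paper's proof likewise takes a point $x\in F^*I$ lifting $p^n$, applies the Verschiebung map $V\colon F^*I\to I$ (whose local construction and well-definedness via the projection formula you spell out in more detail), and concludes using the identity $V(p^n)=p^{n+1}$ in $W(S)$ when $p^{n+1}=0$ in $S$, citing the same source. The extra care you take with the generator-independence of $\Psi$ and with derived test objects is sound but not a departure from the paper's argument.
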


\begin{proof}
  Let $(I \xrightarrow{\alpha} W(S), x) \in F^{-1} (\Z/p^n)^{\prism} (S)$, where $x \in F^* I$ is a lift of $p^n \in W(S)$.

  Now, we have a commutative square
  \begin{center}
    \begin{tikzcd}
      I \ar[d, "\alpha"] & F^* I \ar[l, "V"] \ar[d, "F^*\alpha"] \\
      W(S) & W(S) \ar[l, "V"],
    \end{tikzcd}
  \end{center}
  where $V$ is induced by the Witt vector Verschiebung.
  Then $V(x) \in I$ is a lift of $V(p^n) \in W(S)$. By assumption $p^{n+1} = 0$ on $S$, so that $V(p^n) = p^{n+1} \in W(S)$ (by \cite[Lemma 6.14]{Petrov}, following ideas of Devalapurkar), and $V(x)$ is the desired lift of $p^{n+1} \in W(S)$.
\end{proof}

We are now ready to prove \Cref{thm:prism-main}:

\begin{proof}[Proof of \Cref{thm:prism-main}]
	We will induct on $n \ge 0$ to construct a dashed arrow
	  \begin{center}
    \begin{tikzcd}
      F^{-n} \left( (\Zp^{\HT})_{p^{k+1} = 0} \right)\ar[rr] \ar[rd, dashed] & & (\Zp)^\prism _{p^{k+1} = 0} \\
       & (\Z/p^{n+k+1})^\prism _{p^{k+1} = 0}  \ar[ru], &
    \end{tikzcd}
  \end{center}
	for fixed $k \ge 1$.  To begin, we first construct the dashed arrow when $n=0$.  Reducing \Cref{prop:deRham-point} modulo $p^{k+1}$, there is a diagram
	\[
	\begin{tikzcd}
	(\Zp^{\HT})_{p^{k+1}=0} \ar[r] \ar[d,dashed] & (\Zp^{\prism})_{p^{k+1}=0} \\
	F^{-1} ((\F_p^{\prism})_{p^{k+1}=0}) \ar[ur]
	\end{tikzcd}
	\]
	Since the map $F^{-1} (\F_p^{\prism}) \to \Zp^{\prism}$ factors through $F^{-1} (\Z/p^k)^{\prism}$, the above commutative diagram in particular induces a diagram
	\[
	\begin{tikzcd}
	(\Zp^{\HT})_{p^{k+1}=0} \ar[d] \ar[r] & (\Zp^{\prism})_{p^{k+1}=0} \\
	F^{-1}((\mathbb{Z}/p^{k})^{\prism}_{p^{k+1}=0}) \ar[ur]
	\end{tikzcd}
	\]
	Applying \Cref{lem:prism-contract}, we conclude the desired diagram when $n=0$.
	
	Now we proceed with the inductive step.  In particular, suppose for the sake of induction that we have constructed a diagram 
		  \begin{center}
    \begin{tikzcd}
      F^{-n+1} \left( (\Zp^{\HT})_{p^{k+1} = 0} \right)\ar[rr] \ar[rd, dashed] & & (\Zp)^\prism _{p^{k+1} = 0} \\
       & (\Z/p^{n+k})^\prism _{p^{k+1} = 0}  \ar[ru], &
    \end{tikzcd}
  \end{center}
	Applying $F^{-1},$ we obtain a diagram
			  \begin{center}
    \begin{tikzcd}
      F^{-n} \left( (\Zp^{\HT})_{p^{k+1} = 0} \right)\ar[rr] \ar[rd, dashed] & & (\Zp)^\prism _{p^{k+1} = 0} \\
       & F^{-1} (\Z/p^{n+k})^\prism _{p^{k+1} = 0}  \ar[ru], &
    \end{tikzcd}
  \end{center}
	The desired diagram is then constructed by applying \Cref{lem:prism-contract}.
\end{proof}

\section{Crystallinity for syntomification}\label{sec:Nygaard}

The argument we give for \Cref{thm:stackmodp}(1-3) here is very much analogous to the argument given for \Cref{thm:stackmodp}(4) in the previous section, though we must deal with more complicated formal stacks.

\subsection{Background on Nygaardification and syntomification}

Here, we recall some background on the Nygaard-filtered prismatization from \cite{Drinfeld,fgauge}.
Let $W_S$ denote the Witt ring scheme over a base scheme $S$.

\begin{dfn}
  A (relatively) affine $W_S$-module $M$ is said to be \emph{admissible} if there exists an $S$-line bundle $L_M$, an invertible $W_S$-module scheme $M'$, and a short exact sequence
  \[0 \to L_M ^\# \to M \to F_* M' \to 0.\]
  By \cite[Lemma 3.12.7]{Drinfeld}, $L_M$, $M'$ and the short exact sequence may be recovered functorially from $M$.
\end{dfn}

\begin{exm} \label{exm:defineF'}
  Any invertible $W_S$-module $I$
  may be tensored with the exact sequence
  \[0 \to \G_a ^\# \to W_S \xrightarrow{F} F_* W_S \to 0 \]
  to obtain an exact sequence
  \[0 \to I \otimes_{W_S} \G_a ^\# \to I \to I \otimes_{W_S} F_* W_S \to 0.\]
  Since $I \otimes_{W_S} F_* W_S \cong F_* (I \otimes_{W_S,F} W_S)$, $I$ is an admissible $W_S$-module.
\end{exm}

\begin{exm} \label{exm:start-dR}
  Given a map $d : I \to W_S$, we may define an admissible $W_S$-module $M$ as a pullback in the following diagram:
  \begin{center}
    \begin{tikzcd}
      0 \ar[r] & \G_a ^\# \ar[r] \ar[d,"="] & M \ar[r] \ar[d] & F_* I \ar[r] \ar[d,"F_* d"] & 0 \\
      0 \ar[r] & \G_a ^\# \ar[r] & W_S \ar[r,"F"] & F_* W_S \ar[r] & 0.
    \end{tikzcd}
  \end{center}
\end{exm}

\begin{rmk}
  It is not true in general that if $M$ is an admissible $W_S$-module, then $F^* M = M \otimes_{W_S, F} W_S$ is an admissible $W_S$-module. Correspondingly, the Nygaard-filtered prismatization of $\Z_p$ will \emph{not} admit a lift of the mod $p$ Frobenius.

  If $S$ is an $\F_p$-scheme, then we may consider $\Fr_S ^* M$, which is an admissible $W_S$-module.
\end{rmk}

\begin{lem}
  Admissible $W_S$-modules $M$ are affine commutative flat group schemes over $S$.
\end{lem}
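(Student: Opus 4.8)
The plan is to read off every claim from the short exact sequence
\[0 \to L_M^{\#} \to M \to F_* M' \to 0\]
attached to $M$ by admissibility, using that $M$ is already a \emph{(relatively) affine} $W_S$-module. Being relatively affine, $M$ is representable by a scheme affine over $S$; being a module over the ring scheme $W_S$, $M$ is in particular an abelian, hence commutative, group scheme over $S$. So the only substantive assertion is that $M$ is flat over $S$, and I would obtain it by showing that the two outer terms of the sequence above are flat $S$-group schemes and that the class of affine flat $S$-group schemes is closed under the extensions that arise here (re-deriving affineness of the middle term along the way, for free).

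First I would handle the two ends. Zariski-locally on $S$ the line bundle $L_M$ is trivial, so there $L_M^{\#} \cong \G_a^{\#}$, the relative spectrum over $S$ of the free divided-power algebra on one generator; as that algebra is a free module, $\G_a^{\#}$ is affine and flat over $S$, and it is a commutative group scheme (see \cite{APC, Drinfeld}). Since affineness and flatness over $S$ are Zariski-local on $S$, $L_M^{\#}$ is an affine, flat, commutative $S$-group scheme. Similarly, by \Cref{exm:defineF'} an invertible $W_S$-module $M'$ is Zariski-locally on $S$ isomorphic to $W_S$ itself; the Witt ring scheme $W_S$ is the relative spectrum over $S$ of a polynomial algebra on countably many variables, hence affine and flat over $S$, and it is a commutative ring scheme. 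Passing from $M'$ to $F_* M'$ changes only the $W_S$-module structure, not the underlying $S$-scheme (contrast $\Fr_S^*$, which is defined only over $\F_p$), so $F_* M'$ inherits all three properties and is an affine, flat, commutative $S$-group scheme.

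Next comes the extension step. By \cite[Lemma 3.12.7]{Drinfeld} the sequence above is short exact as a sequence of sheaves of $W_S$-modules for the fppf topology, so the projection $q \colon M \to F_* M'$ realizes $M$ as an $(L_M^{\#})$-torsor over $F_* M'$ in the fppf topology. After pulling back along an fppf cover of $F_* M'$ that trivializes the torsor, $q$ becomes the base change of the affine flat structure map $L_M^{\#} \to S$; since affineness and flatness of a morphism descend along fppf covers of the target, $q$ is affine and flat. Composing with the affine flat map $F_* M' \to S$ shows $M \to S$ is affine and flat, which together with the commutativity noted above completes the argument.

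I expect the real weight of the proof to lie in the two facts one imports rather than in the bookkeeping. The first is the fppf short-exactness of the defining sequence --- i.e.\ that $q$ genuinely is an fppf $(L_M^{\#})$-torsor --- which is exactly what licenses descent of affineness and flatness along $q$, and which rests on \cite[Lemma 3.12.7]{Drinfeld} (compare the exact sequences of \Cref{exm:defineF'} and \Cref{exm:start-dR}). The second is the structural input that $\G_a^{\#}$ is the spectrum of a free divided-power algebra and that $W_S$ is the spectrum of a polynomial algebra over $S$, which is what makes the two ends $S$-flat. Granting these, the only genuinely new observation is the elementary one that an extension of an affine flat $S$-group scheme by an affine flat $S$-group scheme --- presented concretely as an fppf torsor --- is again affine and flat over $S$.
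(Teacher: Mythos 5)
Your proof is correct and follows essentially the same route as the paper: reduce to flatness, trivialize Zariski-locally so the sequence becomes $0 \to \G_a^\# \to M \to F_* W_S \to 0$, and deduce flatness of $M$ from flatness of the two ends. The paper leaves the extension step implicit ("this follows from flatness of $\G_a^\#$ and $F_* W_S$"), whereas you spell it out via the torsor/descent argument, which is exactly the right justification.
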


\begin{proof}
  We just need to prove the flatness of $M$.
  Zariski locally, $M$ lies in a short exact sequence
  \[0 \to \G_a ^\# \to M \to F_* W_S \to 0,\]
  so this follows from flatness of $\G_a ^\#$ and $F_* W_S$.
  %The affineness and flatness are what need to be checked.
  %These may be checked fpqc locally on $S$, so that it is true when $M$ is an invertible $W$-module and we may assume $M$ fits into a pullback square (CITE):
  %\begin{center}
    %\begin{tikzcd}
      %M \ar[r] \ar[d] & F_* I \ar[d] \\
      %W \ar[r, "F"] & F_* W 
    %\end{tikzcd}
  %\end{center}
  %Flatness and affineness of $M$ now follow from flatness and affiness of $W \to S$ and $F$.
\end{proof}

Applying \cite[Section IV.3.4]{DG}, we obtain the following corollary:

\begin{cor} \label{cor:V-exists}
  For $\F_p$-schemes $S$, admissible $W_S$-modules are equipped with natural Frobenius and Verschiebung homomorphisms
  \[F : M \to \Fr_S ^* M, \, \, V : \Fr_S ^* M \to M\]
  satisfying the relation $FV = VF = p$.
\end{cor}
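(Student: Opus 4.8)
The plan is to obtain $F$ and $V$ by specializing the general formalism of Frobenius and Verschiebung homomorphisms for commutative flat affine group schemes over an $\F_p$-base, which is precisely the content of \cite[Section IV.3.4]{DG}. First I would recall the hypotheses that have already been verified: by the preceding lemma an admissible $W_S$-module $M$ is an affine, commutative, flat $S$-group scheme, and $S$ is an $\F_p$-scheme by assumption. The relative Frobenius $F : M \to \Fr_S ^* M$ then exists for formal reasons, as the unique $S$-group homomorphism through which the absolute Frobenius of $M$ factors via the defining cartesian square for $\Fr_S ^* M = M \times_{S,\Fr_S} S$; no flatness is needed for this part.

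The Verschiebung $V : \Fr_S ^* M \to M$ is where the real input lies, and here I would cite \cite[Section IV.3.4]{DG} directly: for a commutative group scheme flat over a base of characteristic $p$, the standard construction using $p$-fold symmetric powers and a norm map yields a group homomorphism $V : \Fr_S^* M \to M$, and loc.\ cit.\ also establishes the identities $V \circ F = [p]_M$ on $M$ and $F \circ V = [p]_{\Fr_S^* M}$ on $\Fr_S^* M$, which is what the displayed relation $FV = VF = p$ abbreviates. Naturality of both $F$ and $V$ in $M$ is immediate from the functoriality of each ingredient (the absolute Frobenius, the symmetric powers, and the norm map).

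The one point deserving care---and what I would regard as the main, if essentially bookkeeping, obstacle---is that admissible $W_S$-modules are not of finite type over $S$: they sit in the canonical short exact sequence $0 \to L_M ^\# \to M \to F_* M' \to 0$ with both outer terms of infinite type. So I would either confirm that the constructions of \cite[Section IV.3.4]{DG} are stated at a level of generality covering this case (the only hypothesis genuinely used being flatness, which the preceding lemma supplies), or, failing that, construct $F$ and $V$ termwise on this short exact sequence using the evident Frobenius and Verschiebung on $\G_a^\#$ and on $F_* W_S$ together with functoriality, and then verify $FV = VF = p$ one summand at a time.
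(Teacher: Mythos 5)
Your proposal matches the paper's argument exactly: the paper's entire justification is the preceding lemma (admissible $W_S$-modules are affine, commutative, flat group schemes over $S$) followed by a direct appeal to \cite[Section IV.3.4]{DG}, which is precisely your main route. Your additional remarks on the formal existence of the relative Frobenius and the finite-type caveat are sensible elaborations but not part of the paper's (one-line) proof.
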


\begin{dfn}
  A \emph{filtered Cartier--Witt divisor} over $S$ consists of an admissible $W_S$-module $M$ and a $W_S$-module map $d : M \to W_S$ which fits into a map of short exact sequences
  \begin{center}
    \begin{tikzcd}
      0 \ar[r] & L_M ^\# \ar[r] \ar[d,"d^\#"] & M \ar[r] \ar[d,"d"] & F_* M' \ar[r] \ar[d,"F_* d'"] & 0 \\
      0 \ar[r] & \G_a ^\# \ar[r] & W_S \ar[r,"F"] & F_* W_S \ar[r] & 0,
    \end{tikzcd}
  \end{center}
  where $d' : M' \to W_S$ is a Cartier--Witt divisor.
\end{dfn}

\begin{dfn}
  We define $\Zp ^\Nyg$ to be the formal stack whose functor of points sends a $p$-nilpotent scheme $S$ to its underlying groupoid of filtered Cartier--Witt divisors, so that
  \[\Zp^\Nyg (S) = \{(M, M \to W_S)\}.\]
\end{dfn}

\begin{dfn}
  The assignment $(d : M \to W_S) \mapsto (d' : M' \to W_S)$ in the above definition defines a morphism of stacks
  \[F' : \Zp ^\Nyg \to \Zp ^\prism.\]
\end{dfn}

\begin{dfn}
  By \cite[Lemma 3.12.4(ii)]{Drinfeld}, the map $d^\# : L_M ^\# \to \G_a ^\#$ corresponds to a unique map $a(d) : L_M \to \G_a$
  The assignment $(d : M \to W_S) \mapsto (a(d) : L_M \to \G_a)$ determines a morphism of stacks
  \[a : \Zp^\Nyg \to \A^1/\G_m.\]
\end{dfn}

\begin{dfn}
  We have the animated ring stack $\G^\Nyg$ over $\Zp ^\Nyg$ given by the animated quotient $W_S / M$ of fpqc sheaves.\footnote{Any filtered Cartier--Witt divisor is a quasi-ideal by \cite[Lemma 3.12.12]{Drinfeld}.}
  Given a derived $p$-adic formal scheme $X$, we define its Nygaard-filtered prismatization $X^\Nyg \to \Spf \Zp^{\Nyg}$ via
  $X^\Nyg (R) = X (\G^\Nyg (R))$.
\end{dfn}

\begin{exm}
  For example, $(\Z/p^n)^\Nyg$ is the fpqc sheafification of the functor $R \mapsto \{(M, d : M \to W_R, x \in M(R) \vert d(x) = p^n \in W(R))\}$.
\end{exm}

\begin{dfn}
  By \Cref{exm:defineF'}, any Cartier--Witt divisor is also a filtered Cartier--Witt divisor. This determines a morphism
  $j_{HT} : \Zp ^\prism \to \Zp^\Nyg,$
  which turns out to be an open immersion, and whose composition with
  $F' : \Zp ^\Nyg \to \Zp ^\prism$
  is the Frobenius map on $\Zp^\prism$.
\end{dfn}

\begin{dfn}
  Given a Cartier--Witt divisor $ d : I \to W_S$, we may construct a filtered Cartier--Witt divisor by taking $M$ to be a pullback in the following diagram:
  \begin{center}
    \begin{tikzcd}
      0 \ar[r] & \G_a ^\# \ar[r] \ar[d, "\mathrm{id}"] & M \ar[r] \ar[d] & F_* I \ar[r] \ar[d, "F_* d"] & 0 \\
      0 \ar[r] & \G_a ^\# \ar[r] & W_S \ar[r,"F"] & F_* W_S \ar[r] & 0
    \end{tikzcd}
  \end{center}
  By \Cref{exm:start-dR}, this determines a morphism
  \[j_{dR} : \Zp ^\prism \to \Zp^\Nyg.\]
  This identifies $\Zp ^\prism$ with $(\Zp^\Nyg)_{a \neq 0}$, the fiber of
  $a : \Zp^\Nyg \to \A^1 / \G_m$ over $\G_m / \G_m$.
\end{dfn}

\begin{dfn} \label{dfn:NygBundles}
  Pulling back $\O_{\Zp ^\prism} \{1\}$ along $F' : \Zp ^\Nyg \to \Zp ^\prism$
  and $\O_{\A^1/\G_m} (1)$ along $a : \Zp ^\Nyg \to \A^1 / \G_m$,
  we obtain a bigraded family of line bundles on $\Zp ^\Nyg$:
  $(F') ^* \O_{\Zp ^\prism} \{i\} \otimes a^* \O_{\A^1/\G_m} (j)$.

  We set
  \[\O_{\Zp ^\Nyg} \{1\} \coloneqq (F') ^* \O_{\Zp ^\prism} \{1\} \otimes a^* \O_{\A^1/\G_m} (-1).\]
\end{dfn}

\begin{dfn}
  Let $S$ be an $\F_p$-scheme and $M$ be an admissible $W_S$-module.
  Then the Frobenius induces a $W_S$-linear map
  $F : M \to F_* (\Fr_S ^* M)$.
  The map $F$ kills $L_M ^\#$, hence factors through a map 
  $F_* M' \to F_* (\Fr_S ^* M),$
  which gives rise to a map
  $f_M : M' \to \Fr_S ^* M.$

  This determines a natural map $\phi_M : L_{M'} \to L_{\Fr_S ^* M} \cong L_M ^{\otimes p},$
  i.e. a map of line bundles
  \[\mu : (F')^{*} \mathcal{I}_{p=0} \to a^* \O_{\A^1/ \G_m, p=0} (-p).\]

  We define $v_1$ to be the section of $\O_{(\Zp)^\Nyg_{p=0}} \{p-1\} \cong (F')^*  \mathcal{I}_{p=0} ^{-1} \otimes a^* \O_{\A^1/ \G_m, p=0} (1-p)$ corresponding to the map
  \[a \mu : (F')^{*} \mathcal{I}_{p=0} \to t^* \O_{\A^1/ \G_m, p=0} (1-p).\]
\end{dfn}

\begin{rmk} \label{rmk:v1}
  The open immersion $j_{HT, p=0} : (\Zp) ^\prism _{p =0} \to (\Zp) _{p=0} ^\Nyg$ identifies $(\Zp)_{p=0} ^\prism$ with $(\Zp ^\Nyg)_{p=0, \mu \neq 0}$.

  When pulled back along $j_{HT}$, $a$ becomes identified with the canonical map $\mathcal{I} \to \O_{\Zp ^{\prism}}$.

  Similarly, when pulled back along $j_{dR, p = 0}$, $\mu$ becomes identified with $\mathcal{I} \to \O_{\Zp ^{\prism}}$.

  It follows that $v_1 = a \mu$ pulls back along both $j_{HT, p=0}$ and $j_{dR, p=0}$ to the section $v_1$ of $\O_{(\Zp)^{\Nyg} _{p=0}} \{p-1\}$ identified in \Cref{dfn:prism-v1}.
\end{rmk}

%Finally, we have the comparison theorem, which will hopefully appear in future work of BHATT-LURIE. \todo{wat to do}

\subsection{Drinfeld's description of $(\Zp^{\Nyg})_{p=a^{p}\mu=0}$}

In this section, we recall a compact presentation of $(\Zp^{\Nyg})_{p=a^{p}\mu=0}$ due to Drinfeld \cite[Section 7.4]{Drinfeld}.

\begin{rec}
  Recall the stack $\A^1 / \G_m$, whose points are given by
  $\A^1 / \G_m (S) = (\L, a : \L \to \O_S),$ where $\L$ is a line bundle on $S$.
\end{rec}

\begin{dfn} \label{dfn:T}
  We define a stack $T$ over $(\A^1 / \G_m)_{p=0}$ by
  $T (S) = (\L, a : \L \to \O_S, u : \O_S \to \L^{\otimes p}), a^p u = 0).$
\end{dfn}

\begin{dfn} \label{dfn:Gmt}
  We define a group scheme $\G_{m,a} ^\#$ over $(\A^1 / \G_m)_{p=0}$ by
  \[\G_{m,a}^\# (S) = \left\{ \alpha \in (\L^{\otimes p})^\# (S) \vert (1-(a^p)^\#) (\alpha) \in \G_m ^\# (S) \right\}\]
  with group operation
  $\alpha_1 * \alpha_2 = \alpha_1 + \alpha_2 - [a^p] (\alpha_1) \alpha_2.$
  %\[
  %\G_{m,t} (S) =
  %\left\{ \begin{pmatrix} 1 & \alpha \\ 0 & 1-[t^{\otimes p}] (\alpha)  \end{pmatrix}, \alpha \in (\L^{\otimes p})^{\#} \text{ such that } 1-[t^{\otimes p}] (\alpha) \in \G_m ^{\#} \right\}.
  %\]
\end{dfn}

\begin{dfn}
  There is a map $T \to \Zp^\Nyg$ over $\A^1 / \G_m$ given by sending $(\L, a, u)$ to the admissible module $M$ defined by the following pullback square:
  \begin{center}
    \begin{tikzcd}
      0 \ar[r] & \L^{\#} \ar[r] \ar[d,"\cong"] & M \ar[r] \ar[d] & F_* W_S \ar[r] \ar[d, "{F_* [u]}"] & 0 \\
  
      0 \ar[r] & \L^{\#} \ar[r] & {[}\L{]} \ar[r, "F"] & {[}\L{]} \otimes_{W_S} F_* W_S \ar[r] & 0,
    \end{tikzcd}
  \end{center}
  which is made into a filtered Cartier--Witt divisor via the map $\xi : M \to [\L] \oplus F_* W_S \xrightarrow{[a], V} W_S$, which fits into the diagram:
  \begin{center}
    \begin{tikzcd}
      0 \ar[r] & \L^{\#} \ar[r] \ar[d,"a^\#"] & M \ar[r] \ar[d,"d"] & F_* W_S \ar[r] \ar[d, "p"] & 0 \\

      0 \ar[r] & \G_a ^{\#} \ar[r] & W_S \ar[r] & F_* W_S \ar[r] & 0.
    \end{tikzcd}
  \end{center}
  Here $[\L]$ is the Teichm\"uller lift of the line bundle $\L$ to an invertible $W_S$-module.
\end{dfn}

Endow the morphism $T \to \Zp ^\Nyg$ with the action of $\G_{m,t} ^\#$ which is trivial on $T$ and $\Zp ^{\Nyg}$ and acts on the filtered Cartier--Witt divisor $d : M \to W_S$ via the action on $M \hookrightarrow [\L] \oplus F_* W_S$ given by
\[ \alpha \cdot (x,y) = (x + V(\alpha y), (1-[a^{\otimes p}] (\alpha)) y).  \]
 
\begin{prop}[{{\cite[Corollary 7.4.6]{Drinfeld}}}]
  The induced morphism $T / \G_{m,t} ^\# \to \Zp ^\Nyg$ identifies the source with $(\Zp^{\Nyg}) _{p=a^p \mu=0}$.
\end{prop}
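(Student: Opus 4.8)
The plan is to check three things: that the morphism $T \to \Zp^\Nyg$ is invariant under the (trivial‑on‑$T$) action of the group scheme $\G_{m,a}^\#$ of \Cref{dfn:Gmt}, hence factors through the quotient stack $T/\G_{m,a}^\#$; that its image lies in the substack $(\Zp^\Nyg)_{p=a^p\mu=0}$; and that the resulting morphism $T/\G_{m,a}^\# \to (\Zp^\Nyg)_{p=a^p\mu=0}$ is an equivalence. Since all of the stacks in sight are fpqc sheaves and everything is compatible with the projection to $(\A^1/\G_m)_{p=0}$, the last point may be verified fpqc‑locally on an affine $\F_p$‑scheme $S$; after such a localization $\G_{m,a}^\#$‑torsors are trivial, so it is enough to prove (a) that every filtered Cartier--Witt divisor over $S$ lying in the locus $a^p\mu=0$ is fpqc‑locally isomorphic to one produced by the construction, and (b) that the automorphism sheaf of a filtered Cartier--Witt divisor so produced is exactly $\G_{m,a}^\#$, acting by the displayed formula $\alpha\cdot(x,y) = (x+V(\alpha y),\,(1-[a^{\otimes p}](\alpha))y)$.

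For the first two points together with (b): vanishing of $p$ holds because $S$ is an $\F_p$‑scheme, and vanishing of $a^p\mu$ follows by computing $\mu$ on the explicit admissible module $M$ --- the Frobenius $F\colon M \to F_*(\Fr_S^*M)$ kills $\L^\#$, and the induced map $f_M\colon M'=W_S \to \Fr_S^*M$ recovers on line bundles exactly the Teichm\"uller gluing datum, so that $\mu(M)$ is identified with $u\colon \O_S \to \L^{\otimes p}$, whence $a^p\mu(M)=a^{\otimes p}\circ u=0$ by the defining equation of $T$. Read in reverse, this computation shows that in (a) the triple must be $(\L,a,u)=(L_M,\,a(d),\,\mu(M))$, so it is uniquely recovered from the filtered Cartier--Witt divisor. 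Equivariance is a direct check that $(x,y)\mapsto(x+V(\alpha y),\,(1-[a^{\otimes p}](\alpha))y)$ respects the extension $0\to\L^\#\to M\to F_*W_S\to 0$ and commutes with $\xi=([a],V)\colon M\to W_S$, using $FV=VF=p$ from \Cref{cor:V-exists}; this operator is invertible precisely when $(1-(a^p)^\#)(\alpha)\in\G_m^\#$, and composing two such operators produces the twisted group law $\alpha_1*\alpha_2=\alpha_1+\alpha_2-[a^p](\alpha_1)\alpha_2$ of \Cref{dfn:Gmt}. The same bookkeeping proves (b): an automorphism of $M$ compatible with $d$ is forced to induce the identity on the sub‑object $\L^\#$ and on the quotient $F_*W_S$, hence differs from the identity by a self‑map factoring through $\Hom(F_*W_S,\L^\#)$ that is killed by $d$ and invertible, and the collection of these is exactly $\G_{m,a}^\#$.

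The heart of the matter, and the step I expect to be the main obstacle, is the essential surjectivity (a). Given a filtered Cartier--Witt divisor $d\colon M\to W_S$ over an $\F_p$‑scheme $S$ with $a(d)^p\mu(M)=0$, one must show that fpqc‑locally on $S$ the prism‑side Cartier--Witt divisor $d'\colon M'\to W_S$, i.e.\ the image of $d$ under $F'$, is isomorphic to the crystalline divisor $p\colon W_S\to W_S$ underlying the construction $T\to\Zp^\Nyg$. This is where the hypothesis $a^p\mu=0$ must be used essentially: combined with the fact that Frobenius collapses the mod $p$ prismatization onto its de Rham point --- the mod $p$ reduction of \Cref{prop:deRham-point}, applied along $F'$ --- it pins $d'$ down to this form. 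Once $d'$ is normalized, the quotient map $M\to F_*W_S=F_*M'$ is determined (being compatible with $F_*d'$), the only remaining freedom is the gluing, which is encoded by a single map $u\colon\O_S\to\L^{\otimes p}$ (necessarily with $a^p u=0$), and $M$ is then reconstructed as the displayed pullback square. Combining this local normal form with the automorphism computation (b) yields that $T/\G_{m,a}^\# \to (\Zp^\Nyg)_{p=a^p\mu=0}$ is an equivalence.
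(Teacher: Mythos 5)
First, a remark on what you are being compared against: the paper does not prove this statement at all --- it is imported verbatim as \cite[Corollary 7.4.6]{Drinfeld}. So the only meaningful question is whether your blind argument actually works. Your overall architecture (factor the map through the quotient, then check fpqc-locally that the map is essentially surjective and that the automorphism sheaf of a point is exactly $\G_{m,a}^{\#}$) is the right shape and is roughly how Drinfeld proceeds, but two of your key steps have genuine problems.

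Your automorphism computation (b) is incorrect. You claim an automorphism of $(M,d)$ must induce the identity on the quotient $F_*M' = F_*W_S$, so that the automorphism group is the square-zero collection $\{\mathrm{id}+\psi\}$ with $\psi$ factoring through $\Hom(F_*W_S,\L^{\#})$. But the automorphism group of the underlying Cartier--Witt divisor $(W_S,\,p)$ over an $\F_p$-algebra is not trivial: a unit $u\in W(S)^{\times}$ commutes with $p=V(1)$ exactly when $V(F(u)-1)=0$, i.e. $u\in (W^{\times})[F]$, which is the nontrivial group scheme appearing in \Cref{prop:syn-base}. Consistently with this, the group $\G_{m,a}^{\#}$ you are trying to recover acts on the quotient coordinate $y$ by the unit $1-[a^{\otimes p}](\alpha)$, which is not the identity wherever $a\neq 0$; and its group law $\alpha_1\ast\alpha_2=\alpha_1+\alpha_2-[a^{p}](\alpha_1)\alpha_2$ is not the additive law that your ``$\mathrm{id}+\psi$ with $\psi^2=0$'' description would force. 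Your description is only correct over the closed locus $a=0$; on the open part $a\neq 0$ (which is nonempty in $\{a^p\mu=0\}$, e.g. the $j_{dR}$-image of the Hodge--Tate locus) you are missing the entire ``multiplicative'' part of the stabilizer, so you have not identified the isotropy with $\G_{m,a}^{\#}$.

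The essential surjectivity step (a), which you correctly flag as the heart of the matter, is asserted rather than proved. \Cref{prop:deRham-point} concerns the restriction of $F$ to the honest Hodge--Tate locus $\{a=0\}\subset \Zp^{\prism}$; what you need is control of $F'$ on $\{a^p\mu=0\}\subset\Zp^{\Nyg}$, which on the chart $j_{HT}$ is the $p$-th infinitesimal neighborhood of the Hodge--Tate locus (so the proposition does not apply as stated), and which moreover contains the locus $a=\mu=0$ lying in neither chart $j_{HT}$ nor $j_{dR}$. To pin $d'$ down you must show (i) that for any filtered Cartier--Witt divisor in this locus the composite $M'\to W_S\to \G_a$ vanishes, using $a^p\mu=0$ in an essential way at all points including $a=\mu=0$, and (ii) that over an $\F_p$-scheme every Hodge--Tate Cartier--Witt divisor is fpqc-locally isomorphic to $(W_S,\,V(1))=(W_S,\,p)$ (extracting an $F$-preimage of a unit fpqc-locally). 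Neither is supplied, and the subsequent classification of the remaining extension data by a single $u\colon\O_S\to\L^{\otimes p}$ with $a^pu=0$ is likewise asserted. As it stands the argument is a plausible outline of Drinfeld's proof with the two decisive verifications missing or wrong.
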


\begin{rmk}
  If we let $S \hookrightarrow T$ denote the closed substack cut out by the replacing the equation $a^p \mu = 0$ with $a \mu =0$, then we find that $S / \G_{m,t} ^\#$ identifies with $(\Zp^{\Nyg}) _{p=v_1=0}$.

  Additionally, $S \to \Z_p ^{\Nyg}$ may be identified with $(\F_p)^{\Nyg} _{p=0} \to \Zp ^\Nyg$, as follows from examining \cite[Section 5.4]{fgauge}.
\end{rmk}

%
%\begin{rmk}
%There is an equivalence
%\[(F')^{-1}(\Zp^{\HT})_{p=0} = (\Zp^{\Nyg})_{p=t^p\mu=0}\] 
%\end{rmk}
%
%
%\subsection{Trying to understand Andy/Drinfeld}
%
%\begin{rec}
%  Recall the stack $\A^1 / \G_m$, whose points are given by
%  $\A^1 / \G_m (S) = (\L, a : \L \to \O_S).$
%\end{rec}
%
%\begin{dfn}
%We define a stack $\mathcal{S}$ over $\A^1 / \G_m$ by
%\[\mathcal{S}(S) = (\L,a:\L \to \O_S, \mu:\O_S \to \L^{\otimes p},a \mu = 0).\] 
%\end{dfn}
%
%
%Note that there are two substacks of $\mathcal{S}$, defined by the equations $a \ne 0$ and $\mu \ne 0$, which are isomorphic to $\Spec(\mathbb{F}_p)$  and $\Spec(\mathbb{F}_p)/\mu_p$.
%
%\begin{dfn}
%  There is a map $T \to \Zp^\Nyg$ over $\A^1 / \G_m$ given by sending $(\L, t, u)$ to the admissible module $M$ defined by the following pullback square:
%  \begin{center}
%    \begin{tikzcd}
%      0 \ar[r] & \L^{\#} \ar[r] \ar[d,"\cong"] & M \ar[r] \ar[d] & F_* W_S \ar[r] \ar[d, "{F_* [u]}"] & 0 \\
%  
%      0 \ar[r] & \L^{\#} \ar[r] & {[}\L{]} \ar[r, "F"] & {[}\L{]} \otimes_{W_S} F_* W_S \ar[r] & 0,
%    \end{tikzcd}
%  \end{center}
%  which is made into a filtered Cartier--Witt divisor via the map $\xi : M \to [\L] \oplus F_* W_S \xrightarrow{[t], V} W_S$, which fits into the diagram:
%  \begin{center}
%    \begin{tikzcd}
%      0 \ar[r] & \L^{\#} \ar[r] \ar[d,"t"] & M \ar[r] \ar[d,"\xi"] & F_* W_S \ar[r] \ar[d, "p"] & 0 \\
%
%      0 \ar[r] & \G_a ^{\#} \ar[r] & W_S \ar[r] & F_* W_S \ar[r] & 0.
%    \end{tikzcd}
%  \end{center}
%\end{dfn}

\subsection{Syntomificiation}

\begin{dfn}
  We define the syntomification $X^\Syn$ of a derived $p$-adic formal scheme $X$ via the pushout square:
  \begin{center}
    \begin{tikzcd}[column sep = large, row sep = large]
      X^\prism \coprod X^\prism \ar[d, "\Delta"] \ar[r,"{(j_{HT}, j_{dR})}"] & X^{\Nyg} \ar[d, "j_\Nyg"] \\
      X^\prism \ar[r,"j_{\prism}"] & X^\Syn.
    \end{tikzcd}
  \end{center}
\end{dfn}

\begin{dfn}
  The line bundles $\O_{\Zp ^\Nyg} \{1\}$ and $\O_{\Zp ^\prism} \{1\}$ glue to a line bundle $\O_{\Zp ^\syn}$ on $\Zp ^\syn$.
  Moreover, the sections $v_1$ of $\O_{(\Zp) ^\Nyg _{p =0}} \{p-1\}$ and $\O_{(\Zp) ^\prism _{p =0}} \{p-1\}$ glue to give a section $v_1$ of $\O_{(\Zp) ^\syn _{p =0}} \{p-1\}$.
\end{dfn}

Finally, the following theorem compares Nygaard filtered prismatic cohomology and syntomic cohomology to the coherent cohomology of the Nygaardification and syntomifications, and will appear in forthcoming work of Bhatt--Lurie (see also \cite[Theorem 5.5.10 and Remark 5.5.18]{fgauge}).

\begin{thm}[\cite{fgaugeBL}] \label{thm:synandnyg-comparison}
  Let $X$ be a qcqs $p$-quasisyntomic $p$-adic formal scheme.
  Then there is a natural isomorphism
  \[\Zp (i) (X) \cong R\Gamma(X^\syn, \O_{X^{\syn}} \{i\}).\]
  	and a natural isomorphism
  \[\N^{\geq i} \prism_{X} \{i\} \cong R\Gamma(X^\Nyg, \O_{X^{\Nyg}} \{i\}).\]
\end{thm}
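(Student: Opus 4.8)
The plan is to derive the syntomic comparison from the Nygaard comparison by means of the pushout square defining $X^\syn$, so that the real content lies in the second isomorphism. Both sides of each asserted equivalence are functors of $X$ satisfying quasisyntomic descent: for $\Zp(i)$ and $\N^{\geq i}\prism\{i\}$ this is part of their definition following \cite{BMS}, and for the coherent cohomology of $X^\syn$ and $X^\Nyg$ it follows from flat descent of quasi-coherent sheaves along the cover of $X$ by spectra of quasiregular semiperfectoid rings, together with the fact that $(-)^\prism$, $(-)^\Nyg$ and $(-)^\syn$ carry the associated descent diagrams to colimit diagrams; moreover both sides satisfy Zariski descent in $X$. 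Hence I would reduce to $X = \Spf R$ with $R$ a $p$-complete quasiregular semiperfectoid ring, where $\prism_R$ and $\N^{\geq i}\prism_R$ are concentrated in degree zero and admit explicit descriptions.

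For the Nygaard comparison $\N^{\geq i}\prism_X\{i\} \cong R\Gamma(X^\Nyg,\O_{X^\Nyg}\{i\})$ on such $R$, I would use that $\Zp^\Nyg$ lives over $\A^1/\G_m$ via $a$, and that Drinfeld's presentation exhibits $(\Zp^\Nyg)_{p=a^p\mu=0}$ as $T/\G_{m,a}^\#$ for the explicit map $T \to (\A^1/\G_m)_{p=0}$ of \Cref{dfn:T}. Combined with the identifications $(\Zp^\Nyg)_{a\neq 0} = j_{dR}(\Zp^\prism)$ and $(\Zp^\Nyg)_{p=v_1=0} \simeq (\F_p)^\Nyg_{p=0}$, this gives enough control over $X^\Nyg$ to compute $R\Gamma(X^\Nyg,\O_{X^\Nyg}\{i\})$: its restriction along the open locus $a \neq 0$ recovers $R\Gamma(X^\prism,\O_{X^\prism}\{i\}) \cong \prism_X\{i\}$ by \Cref{thm:prism-comparison}, while its $a$-weight-graded pieces recover (appropriate shifts and twists of) the conjugate-filtered Hodge--Tate cohomology, matching the known description of $\gr_\Nyg\prism_X\{i\}$. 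Assembling these, one identifies the whole object with $\N^{\geq i}\prism_X\{i\}$. For $R$ quasiregular semiperfectoid everything here is concentrated in a single cohomological degree, which keeps the bookkeeping tractable.

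Granting the Nygaard comparison, the syntomic comparison is formal. Applying $R\Gamma(-,\O\{i\})$ to the pushout square defining $X^\syn$ yields a pullback square of complexes, hence
\[R\Gamma(X^\syn,\O_{X^\syn}\{i\}) \simeq \fib\!\left( R\Gamma(X^\Nyg,\O_{X^\Nyg}\{i\}) \xrightarrow{j_{HT}^* - j_{dR}^*} R\Gamma(X^\prism,\O_{X^\prism}\{i\}) \right).\]
Under the identifications already in hand, $j_{dR}^*$ becomes the canonical map $\can : \N^{\geq i}\prism_X\{i\} \to \prism_X\{i\}$, since $j_{dR}$ identifies $\Zp^\prism$ with the $a$-invertible locus of $\Zp^\Nyg$ (i.e.\ the colimit of the $a$-filtration), compatibly with the twist via the formula $\O_{\Zp^\Nyg}\{1\} = (F')^*\O_{\Zp^\prism}\{1\} \otimes a^*\O_{\A^1/\G_m}(-1)$; and $j_{HT}^*$ becomes the divided Frobenius $\varphi_i : \N^{\geq i}\prism_X\{i\} \to \prism_X\{i\}$, since $F' \circ j_{HT} = F$ and $\O\{1\}$ is pulled back along $F'$, so that $j_{HT}^*$ is the prismatic Frobenius precomposed with $\can$, with the extra tensor factor $a^*\O_{\A^1/\G_m}(-1)$ (equivalently, the powers of $v_1 = a\mu$) realizing the division of Frobenius by $p^i$ along the Nygaard filtration. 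Since $\fib(\varphi_i - \can) = \fib(\can - \varphi_i) = \Zp(i)(X)$ by definition of syntomic cohomology, this completes the proof, with naturality in $X$ automatic from the constructions.

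The main obstacle is the Nygaard comparison, and within it the identification of the $a$-weight-graded pieces of $R\Gamma(X^\Nyg,\O\{i\})$ with $\gr_\Nyg\prism_X\{i\}$ through Drinfeld's presentation of $(\Zp^\Nyg)_{p=a^p\mu=0}$, together with a careful treatment of Nygaard-completeness versus the naive $a$-adic filtration. This is precisely the content of the forthcoming \cite{fgaugeBL} (cf.\ \cite[Theorem 5.5.10 and Remark 5.5.18]{fgauge}), to which we defer; everything else — the descent reductions, the pushout manipulation, and the matching of $j_{HT}^*, j_{dR}^*$ with $\varphi_i$ and $\can$ — is formal once that input is available.
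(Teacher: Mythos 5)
The paper does not actually prove this statement: it is quoted from forthcoming work of Bhatt--Lurie (with \cite[Theorem 5.5.10 and Remark 5.5.18]{fgauge} as the public reference), so there is no internal argument to compare against. Your outline — quasisyntomic descent to the quasiregular semiperfectoid case, the formal passage from the pushout defining $X^{\syn}$ to the equalizer of $j_{HT}^*$ and $j_{dR}^*$ on $R\Gamma(X^{\Nyg},\O\{i\})$, and the identification of these restrictions with $\varphi_i$ and $\can$ — is a reasonable sketch of how that reference proceeds, and, like the paper, you correctly locate all of the genuine content (the Nygaard comparison itself and the descent properties of $(-)^{\Nyg}$) in the cited work rather than claiming to supply it.
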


\subsection{The base case}

The main purpose of this subsection is to produce and study the following diagram, which serves as a base case to our later inductive proof of \Cref{thm:mainmodp}(1-3) and \Cref{thm:stackmodp}(1-3).

\begin{cnstr}\label{cnstr:nygfactor}
There is a commutative diagram
\[
\begin{tikzcd}
 (\Zp)^{\Nyg}_{p=v_1=0} \ar[rr] \ar[dd,"F"] & & (\Zp)_{p=a^{p}\mu=0}^{\Nyg} \ar[ld,dashed] \ar[dd,"F"]  \ar[r] & (\Zp)_{p=0}^{\Nyg} \ar[dd,"F"]\\ 
 &(\mathbb{F}_p)^{\Nyg}_{p=0} \ar[ld,dashed] \\
 (\Zp)^{\Nyg}_{p=v_1=0} \ar[rr] & & (\Zp)_{p=a^{p}\mu=0}^{\Nyg} \ar[r] & (\Zp)_{p=0}^{\Nyg},
\end{tikzcd}
\]
such that the composite map $(\mathbb{F}_p)^{\Nyg}_{p=0} \to (\mathbb{Z}_p)^{\Nyg}_{p=0}$ is the mod $p$ reduction of the functor $(\--)^{\Nyg}$ applied to the ring homomorphism $\mathbb{Z}_p \to \mathbb{F}_p$.
\end{cnstr}

\begin{proof}
Suppose $\mathcal{G}$ is a group scheme over $\mathbb{F}_p$ on which the Frobenius map is constant at the identity. Then, if $\mathcal{G}$ acts trivially on a stack $\mathcal{T}$, there is a natural diagram
\[
\begin{tikzcd}
\mathcal{T} \arrow{d}{F} & \mathcal{T}/\mathcal{G} \arrow{d}{F}  \arrow{l} \\
\mathcal{T} \arrow{r} &  \mathcal{T}/\mathcal{G}.
\end{tikzcd}
\]

%Suppose $\mathcal{G}$ is a group scheme over $\mathbb{F}_p$, and let $\mathcal{G}[F]$ denote the kernel of Frobenius on $\mathcal{G}$.  Then, if $\mathcal{G}[F]$ acts trivially on a stack $\mathcal{T}$, there is a natural diagram
%\[
%\begin{tikzcd}
%\mathcal{T} \arrow{d}{F} & \mathcal{T}/\mathcal{G}[F] \arrow{d}{F}  \arrow{l} \\
%\mathcal{T} \arrow{r} &  \mathcal{T}/\mathcal{G}[F].
%\end{tikzcd}
%\] 

  In the case that $\mathcal{G} = \G_{m,t} ^\#$ and $\mathcal{T} = T$ are as in \Cref{dfn:Gmt} and \Cref{dfn:T}, we obtain a diagram
\[
\begin{tikzcd}
T \arrow{d}{F} & (\Zp)_{p=a^{p}\mu=0}^{\Nyg} \arrow{d}{F}  \arrow{l} \\
T \arrow{r} & (\Zp)_{p=a^{p}\mu=0}^{\Nyg} .
\end{tikzcd}
\] 
Similarly, we have
\[
\begin{tikzcd}
S \arrow{d}{F} & (\Zp)_{p=v_1=0}^{\Nyg} \arrow{d}{F}  \arrow{l} \\
S \arrow{r} & (\Zp)_{p=v_1=0}^{\Nyg} .
\end{tikzcd}
\] 

  Using the identification of $S \to \Z_p ^{\Nyg}$ with $(\F_p)^{\Nyg} _{p=0} \to \Zp^{\Nyg}$, it therefore suffices to note that there is a further factorization,
\[
\begin{tikzcd}
S \arrow[dashed]{rd} & T \arrow{d}{F} \arrow[dashed]{l}  \\
& T
\end{tikzcd}
\]
  coming from the factorization of graded rings
\[
\begin{tikzcd}
  \F_p [\mu,a]/(\mu a) \arrow[dashed]{r} & \F_p [\mu,a]/(\mu a^p) \\
  &\F_p [\mu,a]/(\mu a^p). \arrow[dashed]{lu} \arrow{u}{F}
\end{tikzcd}
\]
\end{proof}

In order to understand prismatization or syntomification, and not only Nygaardification, it is necessary to comment on the restrictions of the above diagram under the open immersions 
\[j_{HT},j_{dR}:\Zp^{\prism} \to \Zp^{\Nyg}.\]
We do so in the remainder of this section.

\begin{prop}\label{prop:restoprism}
The restriction of the subdiagram
\[
\begin{tikzcd}
   & (\Zp)_{p=a^{p}\mu=0}^{\Nyg} \ar[ld] \ar[dd,"F"]  \ar[r] & (\Zp)_{p=0}^{\Nyg} \ar[dd,"F"]\\ 
 (\mathbb{F}_p)^{\Nyg}_{p=0} \ar[rd] \\
  & (\Zp)_{p=a^{p}\mu=0}^{\Nyg} \ar[r] & (\Zp)_{p=0}^{\Nyg},
\end{tikzcd}
\]
under $j_{HT}$ is a diagram 
\[
\begin{tikzcd}
   & (\Zp)_{p=v_1^p=0}^{\prism} \ar[ld] \ar[dd,"F"]  \ar[r] & (\Zp)_{p=0}^{\prism} \ar[dd,"F"]\\ 
 (\mathbb{F}_p)^{\prism}_{p=0} \ar[rd] \\
  & (\Zp)_{p=v_1^p=0}^{\prism} \ar[r] & (\Zp)_{p=0}^{\prism}.
\end{tikzcd}
\]
\end{prop}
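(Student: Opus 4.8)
The plan is to apply the base-change functor $j_{HT}^{*}$ along the open immersion $j_{HT}\colon\Zp^{\prism}\hookrightarrow\Zp^{\Nyg}$ to the subdiagram of \Cref{cnstr:nygfactor} appearing in the statement. Because $j_{HT}$ is an open immersion, $j_{HT}^{*}$ is exact; in particular it preserves the commutativity of all the squares and triangles involved, so it carries the given diagram to \emph{some} commuting diagram of the stated shape, and it then suffices to recognize each vertex and each edge of the image.

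For the vertices I would argue as follows. By \Cref{rmk:v1}, $j_{HT,\,p=0}$ identifies $(\Zp)^{\prism}_{p=0}$ with the open locus $\{\mu\neq 0\}$ of $(\Zp)^{\Nyg}_{p=0}$, under which the section $v_1=a\mu$ goes to the class $v_1$ of \Cref{dfn:prism-v1}; hence $(\Zp)^{\Nyg}_{p=0}$ restricts to $(\Zp)^{\prism}_{p=0}$. On $\{\mu\neq 0\}$ the section $\mu$ is invertible, so multiplication by the unit $\mu^{p-1}$ exhibits an isomorphism between the derived quotient by $a^{p}\mu$ and the derived quotient by $a^{p}\mu^{p}=(a\mu)^{p}=v_1^{p}$; this isomorphism is equivariant for the $\G_{m,t}^{\#}$-action of \Cref{dfn:Gmt}, which alters only the $M\hookrightarrow[\L]\oplus F_{*}W_S$ coordinates and leaves $\L$, $a$, and $\mu$ fixed, so it descends to the quotient stacks. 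Thus $(\Zp)^{\Nyg}_{p=a^{p}\mu=0}$ restricts to $(\Zp)^{\prism}_{p=v_1^{p}=0}$. Finally, for any derived $p$-adic formal scheme $X$ the square
\[
\begin{tikzcd}
X^{\prism}\ar[r]\ar[d] & X^{\Nyg}\ar[d]\\
\Zp^{\prism}\ar[r,"j_{HT}"] & \Zp^{\Nyg}
\end{tikzcd}
\]
is Cartesian, since by \Cref{exm:defineF'} the ring stack $\G^{\Nyg}$ restricts along $j_{HT}$ to $\G^{\prism}$; taking $X=\F_p$ and intersecting with $p=0$ shows that $(\F_p)^{\Nyg}_{p=0}$ restricts to $(\F_p)^{\prism}_{p=0}$.

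For the edges, the horizontal maps to $(\Zp)^{\Nyg}_{p=0}$ and the diagonal maps into and out of $(\F_p)^{\Nyg}_{p=0}$ restrict to the evident morphisms by naturality of the identifications above. For the vertical $F$'s, recall from the proof of \Cref{cnstr:nygfactor} that they are built functorially from the absolute Frobenius of $T$ together with the triviality of the Frobenius on $\G_{m,t}^{\#}$; this construction is compatible with restriction along the open immersion $j_{HT,\,p=0}$, producing on $(\Zp)^{\prism}_{p=v_1^{p}=0}$ and $(\Zp)^{\prism}_{p=0}$ the morphisms induced by absolute Frobenius (which preserves derived vanishing loci, so is defined on $(\Zp)^{\prism}_{p=v_1^{p}=0}$). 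Since the prismatic Frobenius $F\colon\Zp^{\prism}\to\Zp^{\prism}$ is a lift of the mod $p$ Frobenius, these coincide with its restrictions, which is the required identification. Assembling everything yields the asserted diagram.

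The one place where care is needed is the comparison between $a^{p}\mu$ and $v_1^{p}=a^{p}\mu^{p}$ on $\{\mu\neq 0\}$, and specifically the verification that the comparison isomorphism is $\G_{m,t}^{\#}$-equivariant so that it descends to the quotient stack; as indicated above, this is immediate from the explicit shape of Drinfeld's action. Apart from this bookkeeping point, the proof is purely a matter of transporting the structures already recorded in \Cref{rmk:v1} and \Cref{exm:defineF'} through the exact functor $j_{HT}^{*}$, so I do not anticipate any serious difficulty.
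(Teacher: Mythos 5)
Your proposal is correct and follows essentially the same route as the paper's (very terse) proof: the two key inputs are that $j_{HT,p=0}$ identifies $(\Zp)^{\Nyg}_{p=a^p\mu=0}$ with $(\Zp)^{\prism}_{p=v_1^p=0}$ (because $\mu$ is invertible on the image of $j_{HT}$, so the derived vanishing loci of $a^p\mu$ and $a^p\mu^p=v_1^p$ agree), and that the maps of \Cref{cnstr:nygfactor} are compatible with inverting $\mu$. Your extra verifications (the Cartesian square identifying $j_{HT}^*(\F_p)^{\Nyg}$ with $(\F_p)^{\prism}$, and the matching of the restricted Frobenii with the prismatic Frobenius) are correct fillings-in of details the paper leaves implicit; the worry about $\G_{m,t}^\#$-equivariance is harmless but unnecessary, since $a$, $\mu$, and $v_1$ are already sections of line bundles on the quotient stack itself.
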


\begin{proof}
This follows from the facts that the maps in \Cref{cnstr:nygfactor} are compatible with inverting $\mu$, and that the restriction of $(\Zp)^{\Nyg}_{p=a^p \mu=0}$ along $j_{HT}$ is $(\Zp)^{\prism}_{p=v_1^p=0}$.
\end{proof}

%\begin{rmk}
%Recall that $v_1$ is a unit multiple of $a \mu$ on $(\Zp)^{\Nyg}_{p=0}$.  In particular, there is a natural map 
%\[(\Zp)^{\Nyg}_{p=v_1=0} = (\Zp)^{\Nyg}_{p=a \mu=0} \to (\Zp)^{\Nyg}_{p=a^p \mu=0}.\]
%combines with Proposition ???  to produce a commutative diagram
%\[
%\begin{tikzcd}
%& (\Zp)_{p=v_1=0}^{\Nyg} \ar[ld] \ar[d,"F"]  \ar[r] & (\Zp)_{p=0}^{\Nyg} \ar[d,"F"]\\
%(\mathbb{F}_p)^{\Nyg}_{p=0} \ar[r] & (\Zp)_{p=v_1=0}^{\Nyg} \ar[r] & (\Zp)_{p=0}^{\Nyg}.
%\end{tikzcd}
%\]
%\end{rmk}

\begin{prop} \label{prop:syn-base}
The restrictions of the subdiagram 
\[
\begin{tikzcd}
  (\Zp)^{\Nyg}_{p=v_1=0} \ar[d,"F"]  \ar[r] & (\Zp)_{p=0}^{\Nyg} \ar[d,"F"]\\
  (\mathbb{F}_p)^{\Nyg}_{p=0} \ar[r] & (\Zp)_{p=0}^{\Nyg},
\end{tikzcd}
\]
along $j_{HT}$ and $j_{dR}$ are given by
\[
\begin{tikzcd}
\Spec(\mathbb{F}_p) / (W[F] \times \mu_p) \ar[r] \ar[d] & (\Zp)_{p=0}^{\prism} \ar[d,"F"] \\
\Spec(\mathbb{F}_p) \ar[r] & (\Zp)_{p=0}^{\prism},
\end{tikzcd}
\]
and
\[
\begin{tikzcd}
\Spec(\mathbb{F}_p) / ((W^{\times})[F]) \ar[r] \ar[d] & (\Zp)_{p=0}^{\prism} \ar[d,"F"] \\
\Spec(\mathbb{F}_p) \ar[r] & (\Zp)_{p=0}^{\prism},
\end{tikzcd}
\]
respectively.  These diagrams are identified under the isomorphism $W[F] \times \mu_p \cong (W^{\times})[F]$ of \cite[Lemma 3.3.4]{Drinfeld}. Thus, there exists a diagram
\[
\begin{tikzcd}
(\Zp)^{\Syn}_{p=v_1=0} \ar[r] \ar[d] & (\Zp)_{p=0}^{\Syn} \ar[d,"F"] \\
(\mathbb{F}_p)^{\Syn}_{p=0} \ar[r] & (\Zp)_{p=0}^{\Syn}
\end{tikzcd}
\]
\end{prop}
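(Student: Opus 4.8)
The plan is to compute the two restrictions one at a time by localizing Drinfeld's presentation of $(\Zp^{\Nyg})_{p=v_1=0}$, and then to obtain the syntomic square by gluing via the pushout formula for $(-)^{\Syn}$. Recall from the remark following \cite[Corollary 7.4.6]{Drinfeld} that $(\Zp^{\Nyg})_{p=v_1=0} = S/\G_{m,t}^\#$, where $S \hookrightarrow T$ is the closed substack cut out by $a\mu = 0$, and that the map $S \to \Zp^{\Nyg}$ is identified with $(\F_p)^{\Nyg}_{p=0} \to \Zp^{\Nyg}$ (this is where \cite[Section 5.4]{fgauge} enters). By \Cref{rmk:v1}, $j_{HT}$ identifies $(\Zp^{\prism})_{p=0}$ with the open locus $\mu \neq 0$ of $(\Zp^{\Nyg})_{p=0}$ and $j_{HT}^* a$ with the canonical section $\mathcal{I}_{p=0} \to \O$, while dually $j_{dR}$ identifies $(\Zp^{\prism})_{p=0}$ with the locus $a \neq 0$ and $j_{dR}^* \mu$ with $\mathcal{I}_{p=0} \to \O$. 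So restricting a diagram along $j_{HT}$ (resp.\ $j_{dR}$) just inverts $\mu$ (resp.\ $a$) in the Drinfeld presentation, and in either case the identity $v_1 = a\mu$ forces $v_1$ to become the canonical Hodge--Tate section of $\O\{p-1\}$.

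Restricting along $j_{HT}$: inverting $\mu$ and using $a\mu = 0$ forces $a = 0$, so $(\Zp^{\Nyg})_{p=v_1=0}|_{j_{HT}} = (\Zp^{\prism})_{p=v_1=0} = (\Zp^{\HT})_{p=0}$. Unwinding $S/\G_{m,t}^\#$ over the locus $\mu \neq 0$ — the surviving data is a line bundle $\L$ with a trivialization of $\L^{\otimes p}$, contributing a factor $\mu_p$, together with the residual $\G_{m,t}^\#|_{a=0}$-isotropy, which is the Frobenius kernel $W[F]$ — identifies this with $\Spec(\F_p)/(W[F]\times\mu_p)$. The $(\F_p)^{\Nyg}_{p=0}$-corner restricts to $(\F_p)^{\prism}_{p=0}$, which under the identification of $\rho_{dR}$ with $\F_p^{\prism} \to \Zp^{\prism}$ (cf.\ \Cref{prop:deRham-point}) is $\Spec\F_p$; the left-hand map restricts exactly as in \Cref{prop:restoprism}; and the vertical Frobenii restrict to the Frobenius of $(\Zp^{\prism})_{p=0}$ since $F' \circ j_{HT}$ is the prismatic Frobenius. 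This yields the first square.

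Restricting along $j_{dR}$: inverting $a$ and using $a\mu = 0$ now forces $\mu = 0$, so again the top-left corner is $(\Zp^{\HT})_{p=0} = (\F_p)^{\prism}_{p=0}$, but Drinfeld's presentation now exhibits the residual isotropy as $\G_{m,t}^\#|_{a \neq 0} \cong (W^\times)[F]$, so the corner is $\Spec(\F_p)/((W^\times)[F])$; the remaining entries and maps restrict as before. The two squares agree because the isomorphism $W[F]\times\mu_p \cong (W^\times)[F]$ of \cite[Lemma 3.3.4]{Drinfeld} is precisely the comparison of the two presentations of $(\F_p)^{\prism}_{p=0} = (\Zp^{\HT})_{p=0}$ coming from $j_{HT}$ and from $j_{dR}$, and one checks from the construction that this identification intertwines the two maps down to $(\Zp^{\prism})_{p=0}$ and the two Frobenii.

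Finally, for $X = \Zp$ and $X = \F_p$ (reduced modulo $p$, resp.\ modulo $(p,v_1)$) the syntomification $X^{\Syn}$ is the pushout of $X^{\prism} \xleftarrow{\Delta} X^{\prism} \amalg X^{\prism} \xrightarrow{(j_{HT},j_{dR})} X^{\Nyg}$; since we have now computed the $j_{HT}$- and $j_{dR}$-restrictions of the square of \Cref{cnstr:nygfactor} and identified them over their common prismatic piece, forming the pushout of these restricted diagrams along $\Delta$ produces the asserted square for $(\Zp)^{\Syn}$, whose bottom row is the map $(\F_p)^{\Syn}_{p=0} \to (\Zp)^{\Syn}_{p=0}$ induced by $\Zp \to \F_p$. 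The main obstacle is the content of the middle two paragraphs: carefully unwinding the $T/\G_{m,t}^\#$-presentation after localizing at $\mu$ or at $a$ so as to pin down the residual isotropy group schemes $W[F]$, $\mu_p$, $(W^\times)[F]$, and verifying that the two resulting descriptions of $(\F_p)^{\prism}_{p=0}$ are compatible with the structure maps and with Frobenius — essentially a matter of tracing through \cite[Section 5.4]{fgauge} and \cite[Section 7.4 and Lemma 3.3.4]{Drinfeld}.
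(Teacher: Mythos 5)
Your argument is correct and amounts to the same approach as the paper: the paper's proof is a one-line citation to \cite[Lemma 7.5.2 and Remark 7.5.3]{Drinfeld}, and what you have written is precisely an unwinding of those results from the $T/\G_{m,a}^{\#}$-presentation, localizing at $\mu$ (where the residual isotropy is $(\L^{\otimes p})^{\#}\times \mu_p \cong W[F]\times\mu_p$) and at $a$ (where $\alpha \mapsto 1-[a^p]\alpha$ identifies the isotropy with $(W^{\times})[F]$), followed by the gluing via the pushout defining $(-)^{\Syn}$. The one step you defer --- that the isomorphism of \cite[Lemma 3.3.4]{Drinfeld} intertwines the two structure maps and Frobenii --- is exactly the content of the cited Lemma 7.5.2/Remark 7.5.3, so nothing is missing relative to the paper's own proof.
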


\begin{proof}
This is a consequence of \cite[Lemma 7.5.2 and Remark 7.5.3]{Drinfeld}.
\end{proof}

\subsection{A factorization via Verschiebung}

Our goal in this subsection is to construct the diagram of the following proposition, which will be the key tool powering our inductive understanding of $\Z/p^{n+1}$ in terms of $\Z/p^n$:

\begin{prop} \label{prop:nyg-contract}
  There is a commuting diagram:
  \begin{center}
    \begin{tikzcd}
      F^{-1} (\Z/p^n)^{\Nyg} _{p=0} \ar[dashed,d] \ar[r] & (\Z/p^n)^\Nyg _{p=0} \ar[dd] \\
      (\Z/p^{n+1})^\Nyg _{p=0} \ar[d] & \\
      (\Z_p)^{\Nyg} _{p=0} \ar[r, "F"] & (\Z_p)^\Nyg _{p=0},
    \end{tikzcd}
  \end{center}
  where the outer square is a pullback.
  Moreover, the restrictions of this diagram under the two open immersions
  \[j_{HT}, j_{dR} : (\Z_p)^\prism _{p=0} \hookrightarrow (\Z_p)^\Nyg _{p=0}\]
	are isomorphic, so that there exists a diagram
  \begin{center}
    \begin{tikzcd}
      F^{-1} (\Z/p^n)^{\Syn} _{p=0} \ar[d] \ar[r] & (\Z/p^n)^\Syn _{p=0} \ar[dd] \\
      (\Z/p^{n+1})^\Syn _{p=0} \ar[d] & \\
      (\Z_p)^{\Syn} _{p=0} \ar[r, "F"] & (\Z_p)^\Syn _{p=0}.
    \end{tikzcd}
  \end{center}
\end{prop}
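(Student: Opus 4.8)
The plan is to run the argument in parallel with that of \Cref{lem:prism-contract}, replacing Cartier--Witt divisors by filtered Cartier--Witt divisors and the Witt-vector Verschiebung on line bundles by the Verschiebung on admissible $W_S$-modules supplied by \Cref{cor:V-exists}; the syntomic half then follows by restricting along $j_{HT}$ and $j_{dR}$ and gluing.

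First I would construct the dashed arrow in the first diagram. Working fpqc-locally on an $\F_p$-scheme $S$, a point of $F^{-1}(\Z/p^n)^\Nyg_{p=0}(S)$ amounts to a filtered Cartier--Witt divisor $d : M \to W_S$ together with an element $x \in (\Fr_S^* M)(S)$ lifting $p^n \in W(S)$ along $\Fr_S^* d : \Fr_S^* M \to \Fr_S^* W_S \cong W_S$; here $\Fr_S^* M$ is admissible by the remark following \Cref{cor:V-exists}, and $\Fr_S^* d$ is again a filtered Cartier--Witt divisor, since $\Fr_S^*$ sends the defining map of short exact sequences to one of the same shape and, on invertible $W_S$-modules over an $\F_p$-scheme, agrees with the prismatic Frobenius $I \mapsto F^* I$. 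By naturality of the Verschiebung in \Cref{cor:V-exists}, the square
\[
\begin{tikzcd}
\Fr_S^* M \ar[r,"V"] \ar[d,"{\Fr_S^* d}"'] & M \ar[d,"d"] \\
W_S \ar[r,"V"] & W_S
\end{tikzcd}
\]
commutes, the lower $V$ being the Witt-vector Verschiebung. Hence $V(x) \in M(S)$ satisfies $d(V(x)) = V(p^n) = p^{n+1}$, the last identity being \cite[Lemma 6.14]{Petrov} (following Devalapurkar), which applies since $p = 0$, a fortiori $p^{n+1} = 0$, on $S$. Thus $(M, d, V(x))$ is a point of $(\Z/p^{n+1})^\Nyg_{p=0}(S)$, naturally in $S$, giving the dashed arrow. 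Its further image in $(\Zp)^\Nyg_{p=0}$ forgets $V(x)$ and recovers $(M,d)$, which is exactly the image of the original point under the $F$-leg of the defining pullback square for $F^{-1}(\Z/p^n)^\Nyg_{p=0}$; so the outer square is precisely that defining pullback, hence a pullback.

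For the syntomic statement I would restrict the first diagram along the two open immersions $j_{HT}, j_{dR} : (\Zp)^\prism_{p=0} \hookrightarrow (\Zp)^\Nyg_{p=0}$. A direct computation gives $j_{HT}^*(\Z/p^m)^\Nyg_{p=0} \cong (\Z/p^m)^\prism_{p=0} \cong j_{dR}^*(\Z/p^m)^\Nyg_{p=0}$: under $j_{HT}$ the admissible module attached to $I \xrightarrow{\alpha} W_S$ is $M = I$ itself (\Cref{exm:defineF'}), while under $j_{dR}$ it is $W_S \times_{F_* W_S} F_* I$, on whose $S$-points $d_M$ picks out the Witt coordinate, so $d_M(x) = p^m$ forces $\alpha(y) = p^m$ via $F(p^m) = p^m$. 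Tracing the Verschiebung square through these identifications --- using $\Fr_S^* \circ j_{dR} \simeq j_{dR} \circ F$ on the prismatic side and the fact that $\Fr_S^* M \cong F^* I$ carries the module Verschiebung to the Witt-vector Verschiebung of line bundles --- shows that both restrictions of the first diagram coincide with the mod $p$ reduction of the diagram of \Cref{lem:prism-contract}, compatibly: the two restrictions of the absolute Frobenius on $(\Zp)^\Nyg_{p=0}$ are both the mod $p$ prismatic Frobenius, by naturality of absolute Frobenius. Since $(-)^\Syn$ is the pushout of $(-)^\prism \xleftarrow{\Delta} (-)^\prism \coprod (-)^\prism \xrightarrow{(j_{HT},j_{dR})} (-)^\Nyg$, gluing the first diagram along this common prismatic restriction produces the desired syntomic diagram.

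The hard part will be the bookkeeping in the first step: checking that $\Fr_S^*$ preserves filtered Cartier--Witt divisors and, more delicately, that the Verschiebung of \Cref{cor:V-exists} is compatible both with the structure map $d : M \to W_S$ and with the sub-$L_M^\#$/quotient-$F_* M'$ decomposition, so that $V(x)$ is an honest $S$-point of $M$ and the displayed square commutes on the nose. Identifying $\Fr_S^*$ on Cartier--Witt data with the prismatic Frobenius over $\F_p$-schemes is what keeps this compatible with \Cref{lem:prism-contract}; the remaining input, that the $j_{dR}$-restriction is explicitly the diagram of \Cref{lem:prism-contract}, relies on Drinfeld's analysis (\cite[Section 7.5]{Drinfeld}), as already invoked in \Cref{prop:syn-base}.
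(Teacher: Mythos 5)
Your proposal is correct and follows essentially the same route as the paper: the dashed arrow is built by applying the Verschiebung of \Cref{cor:V-exists} to a lift $x$ of $p^n$ and using $V(p^n)=p^{n+1}$ when $p=0$, and the agreement of the $j_{HT}$- and $j_{dR}$-restrictions is checked via the explicit pullback description of $j_{dR}$ and naturality of $V$ (the paper records this as a commutative cube). No substantive differences.
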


%Before we prove \Cref{prop:nyg-contract}, we record the following observations:

%\begin{lem}
%  Let $M$ denote an admissible $W_S$-module. Then $M$ is a (relatively) affine commutative flat group scheme over $S$.
%\end{lem}

%\begin{proof}
%  The affineness and flatness are what need to be checked.
%  These may be checked fpqc locally on $S$, so that it is true when $M$ is an invertible $W$-module and we may assume $M$ fits into a pullback square:
%  \begin{center}
%    \begin{tikzcd}
%      M \ar[r] \ar[d] & F_* I \ar[d] \\
%      W \ar[r, "F"] & F_* W 
%    \end{tikzcd}
%  \end{center}
%  Flatness and affineness of $M$ now follow from flatness and affiness of $W \to S$ and $F$.
%\end{proof}

%\begin{cor}
%  If $S$ lies over $\F_p$, then there is a natural Verschiebung map
%  $V_S : \Fr_S ^* M \to M$.
%\end{cor}

\begin{proof}%[Proof of \Cref{prop:nyg-contract}]
	We start by constructing the dashed morphism.
  For a test $\mathbb{F}_p$-algebra $R$, an object of $F^{-1}(\Z/p^n)(R)$ is a tuple $(M,d:M \to W_R,x \in \Fr_R ^*M(R))$ such that $(\Fr_R ^*d)(x)=p^n$.  
	
  Now, using \Cref{cor:V-exists}, we have a commutative square
  \begin{center}
    \begin{tikzcd}
      M(R) \ar[d,"d"] & \Fr_R ^*M(R) \ar[l, "V"] \ar[d,"\Fr_R ^*d"] \\
      W(R) & W(R) \ar[l, "V"].
    \end{tikzcd}
  \end{center}
  In particular, $V(x) \in M(R)$ defines a lift of $V(p^n) \in W(R)$. By the assumption $p=0$ on $R$, $V(p^n) = p^{n+1} \in W(R)$, and we may take $V(x)$ as the desired lift of $p^{n+1} \in W(R)$.

  What remains is to check that the pullbacks under $j_{HT}$ and $j_{dR}$ agree.
  %Note that, since this is a diagram of stacks, we need not only to check that the morphisms agree after pullback, but also the $2$-morphism witnessing the commutativity of the diagram.

  Pulling back under $j_{HT}$, it is immediate that this agrees with the diagram constructed in \Cref{lem:prism-contract}.
  It therefore remains to check that we recover the same diagram if we pull back using $j_{dR}$.

%  The proof is exactly the same, using the Verschiebung map for admissible $W$-modules.  Andy suggests to see French version Gabriel and Demazure (Section 4.3.4).

  Starting with a Cartier--Witt divisor $d : I \to W$, its image under $j_{dR}$ is given by the pullback
\[
\begin{tikzcd}
  M \arrow{r} \arrow{d} & F_*I \arrow{d}{F_* d} \\
W \arrow{r}{F} & F_*W.
\end{tikzcd}
\]
  To understand how $j_{dR} : R^{\prism} \to R^{\Nyg}$ works for general $R$, we need to recall the isomorphism $j_{dR} ^* \G^{\Nyg} \cong \G^{\prism} \cong j_{HT} ^* \G^{\Nyg}$,
  which is induced via the ring isomorphism $W/M \cong F_* W / F_* I$ coming from the above pullback square.

  For $R = \Z/p^n$, this may be understood as follows: using the above pullback square, lifts of $p^n$ from $F_* W$ to $F_* I$ correspond to lifts of $p^n$ from $W$ to $M$.
  What we need to check is that the first part of this proof, which takes $x \in F^* M(R)$, a lift of $p^n$ to $V(x) \in M(R)$, a lift of $p^{n+1}$, is compatible with this correspondence.

  This holds by the naturality of $V$, which implies that we have a commutative cube:
\[\begin{tikzcd}[row sep={40,between origins}, column sep={40,between origins}]
      & \Fr_R ^*M \ar{rr}\ar{dd}\ar{dl} & & F_*\Fr_R ^*I \ar{dd}\ar{dl} \\
    \Fr_R ^*W \ar[crossing over]{rr} \ar{dd} & & F_*\Fr_R ^*W \\
      & M  \ar{rr} \ar{dl} & &  F_*I, \ar{dl} \\
    W \ar{rr} && F_*W \ar[from=uu,crossing over]
\end{tikzcd}\]
where the vertical maps are given by $V$.
%
%  Pulling back along $F$, we obtain a pullback square:
%\[
%\begin{tikzcd}
%F^*M \arrow{r} \arrow{d} & F_*F^*I \arrow{d} \\
%F^*W \arrow{r} & F_*F^* W,
%\end{tikzcd}
%\]
%under which 
%
%To understand how $j_{dR}$ works for $R^{\prism} \to R^{\Nyg}$, when $R$ is more general than $\mathbb{Z}_p$:  Given the extra data of a map $R \to W/M \cong F_*W / F_*I=W/I$, where the last equivalence is one of rings but not of $W$-algebras.
%
%
%There is a pullback square
%\[
%\begin{tikzcd}
%F^*M \arrow{r} \arrow{d} & F_*F^*I \arrow{d} \\
%F^*W \arrow{r} & F_*F^* W
%\end{tikzcd}
%\]
%which implies that a lift of $p^n$ from $F^*W$ to $F^*M$ is the same data as a lift of $p^n$ from $F_*F^*W$ to $F_*F^*I$.
%
%Now, by naturality of $V$, we have a commuting cube
%
%\[\begin{tikzcd}[row sep={40,between origins}, column sep={40,between origins}]
%      & F^*M \ar{rr}\ar{dd}\ar{dl} & & F_*F^*I \ar{dd}\ar{dl} \\
%    F^*W \ar[crossing over]{rr} \ar{dd} & & F_*F^*W \\
%      & M  \ar{rr} \ar{dl} & &  F_*I \ar{dl} \\
%    W \ar{rr} && F_*W \ar[from=uu,crossing over]
%\end{tikzcd}\]
%where the vertical map are $V$.

%  To observe the compatibility with $j_{HT}$ and $j_{dR}$, it suffices to look at the examples of $(I \to W, x)$ and a pullback
%  \begin{center}
%    \begin{tikzcd}
%      M \ar[d] \ar[r] & F_* I \ar[d] \\
%      W \ar[r,"F"] & F_* W 
%    \end{tikzcd}
%  \end{center}
%  comma $(p^n, x)$, and just see the compatibility.
\end{proof}

\subsection{Crystallinity}

\begin{thm}\label{thm:univexample}
  For integers $n \geq 0$, there exist compatible commutative diagrams
  \begin{center}
    \begin{tikzcd}
      (\Zp)^{\syn}_{p = v_1^{p^n} = 0} \ar[rr] \ar[rd] & & (\Zp)_{p=0}^\syn \\
       & (\Z/p^{n+2})_{p=0}^\syn,  \ar[ru] &
    \end{tikzcd}
  \end{center}
	  \begin{center}
    \begin{tikzcd}
      (\Zp)^{\Nyg}_{p = v_1^{p^n} = 0} \ar[rr] \ar[rd] & & (\Zp)_{p=0}^\Nyg \\
       & (\Z/p^{n+2})_{p=0}^\Nyg,  \ar[ru] &
    \end{tikzcd}
  \end{center}
	and
	  \begin{center}
    \begin{tikzcd}
      (\Zp)^{\prism}_{p = v_1^{p^n} = 0} \ar[rr] \ar[rd] & & (\Zp)_{p=0}^\prism \\
       & (\Z/p^{n+2})_{p=0}^\prism.  \ar[ru] &
    \end{tikzcd}
  \end{center}
\end{thm}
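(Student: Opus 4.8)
The plan is to mimic exactly the inductive argument that proved \Cref{thm:prism-main}, but now at the level of the syntomification (and simultaneously the Nygaardification and prismatization), using \Cref{prop:nyg-contract} in place of \Cref{lem:prism-contract} and \Cref{cnstr:nygfactor} together with \Cref{prop:syn-base} in place of \Cref{prop:deRham-point}. More precisely, for each fixed $n \geq 0$ I will construct compatible commutative diagrams
\[
\begin{tikzcd}
F^{-n}\big((\Zp)^{\syn}_{p=v_1=0}\big) \ar[rr] \ar[rd,dashed] & & (\Zp)^{\syn}_{p=0} \\
& (\Z/p^{n+1})^{\syn}_{p=0} \ar[ru] &
\end{tikzcd}
\]
and likewise for $(\--)^{\Nyg}$ and $(\--)^{\prism}$, all compatible with one another under $j_{\prism}$, $j_{\Nyg}$, $j_{HT}$, $j_{dR}$. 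The theorem as stated then follows by taking $n' = n$ and observing that $(\Zp)^{\syn}_{p=v_1^{p^n}=0}$ maps to $F^{-n}\big((\Zp)^{\syn}_{p=v_1=0}\big)$: indeed, $v_1$ is a section of a line bundle whose $n$-fold Frobenius pullback trivializes the discrepancy, so that the locus where $v_1^{p^n}=0$ after reducing mod $p$ is the preimage under $F^n$ of the locus where $v_1=0$. This is the same bookkeeping already used to deduce the Corollary after \Cref{thm:prism-main}; I will spell out the $\syn$/$\Nyg$ analogue using the line bundle $\O_{\Zp^{\syn}}\{p-1\}$ and the compatibility of $v_1$ across the pieces recorded in \Cref{rmk:v1} and the definitions in \Cref{sec:Nygaard}.

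For the base case $n=0$, I would start from \Cref{cnstr:nygfactor}, whose middle column gives the factorization of $(\Zp)^{\Nyg}_{p=v_1=0} \to (\Zp)^{\Nyg}_{p=0}$ through $F^{-1}\big((\F_p)^{\Nyg}_{p=0}\big)$, and combine this with \Cref{prop:nyg-contract} for $n=0$, which factors $F^{-1}(\F_p)^{\Nyg}_{p=0} = F^{-1}(\Z/p)^{\Nyg}_{p=0}$ (note $\F_p = \Z/p$) through $(\Z/p^{2})^{\Nyg}_{p=0}$. Composing produces the desired $n=0$ diagram for Nygaardification. To get the prismatic and syntomic versions of the base case, I restrict along $j_{HT}$ and $j_{dR}$: the prismatic restriction is handled by \Cref{prop:restoprism} and \Cref{prop:syn-base}, and the syntomic version is obtained by gluing, exactly as \Cref{prop:syn-base} glues the two restrictions into a diagram over $(\Zp)^{\syn}_{p=0}$. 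One must check that the $j_{HT}$ and $j_{dR}$ restrictions of the composite base-case diagram agree, but this is precisely the content of the last sentences of \Cref{prop:nyg-contract} and \Cref{prop:syn-base}.

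For the inductive step, assume the diagram for $n-1$ is constructed; apply $F^{-1}$ to it to obtain a factorization of $F^{-n}\big((\Zp)^{\syn}_{p=v_1=0}\big) \to (\Zp)^{\syn}_{p=0}$ through $F^{-1}(\Z/p^{n})^{\syn}_{p=0}$, then postcompose with the factorization of $F^{-1}(\Z/p^{n})^{\syn}_{p=0}$ through $(\Z/p^{n+1})^{\syn}_{p=0}$ supplied by \Cref{prop:nyg-contract}. Compatibility as $n$ varies follows because all the maps involved --- the ones in \Cref{cnstr:nygfactor} and those in \Cref{prop:nyg-contract} --- were constructed compatibly, and applying $F^{-1}$ is functorial; I would phrase the induction so that the coherence data is carried along automatically, as in the proof of \Cref{thm:prism-main}. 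I expect the main obstacle to be purely organizational rather than conceptual: keeping straight the three simultaneous diagrams ($\prism$, $\Nyg$, $\syn$) and their mutual compatibilities under the four open immersions and the gluing square defining $X^{\Syn}$, and in particular making sure that the $n=0$ factorization through $(\Z/p^{2})$ is matched across $j_{HT}$ and $j_{dR}$ before one glues. Once the $n=0$ case is pinned down compatibly on both charts, the induction runs identically to the prismatic one, and the final translation from the statement with exponent $p^n$ on $v_1$ to the statement with exponent $1$ on $F^{-n}$ is the same line-bundle computation as before.
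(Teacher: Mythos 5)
Your proposal is correct and follows essentially the same route as the paper: the base case combines \Cref{cnstr:nygfactor} with \Cref{prop:restoprism} and \Cref{prop:syn-base}, the inductive step is the Verschiebung factorization of \Cref{prop:nyg-contract}, and the agreement of the $j_{HT}$- and $j_{dR}$-restrictions is exactly what permits gluing to the syntomification. The only slip is an off-by-one in your displayed induction hypothesis and inductive step, which should land in $(\Z/p^{n+2})^{\syn}_{p=0}$ rather than $(\Z/p^{n+1})^{\syn}_{p=0}$ (your base case correctly produces $(\Z/p^{2})^{\syn}_{p=0}$ at $n=0$, so the step should pass from $(\Z/p^{n+1})$ to $(\Z/p^{n+2})$); with that indexing corrected the argument matches the paper's proof.
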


\begin{proof}
We present the proof for syntomification, noting that the arguments for prizmatization and for Nygaardification are perfectly analogous. Our goal is to induct on $n \ge 0$ to construct a dashed arrow
	  \begin{center}
    \begin{tikzcd}
      F^{-n} \left( (\Zp)^{\syn}_{p=v_1=0} \right)\ar[rr] \ar[rd, dashed] & & (\Zp)_{p=0}^\syn \\
       & (\Z/p^{n+2})_{p=0}^\syn  \ar[ru]. &
    \end{tikzcd}
  \end{center}
	To begin, we first construct the dashed arrow when $n=0$.  By the last sentence of \Cref{prop:syn-base}, there is a diagram
	\[
	\begin{tikzcd}
	(\Zp)^{\syn}_{p=v_1=0} \ar[d] \ar[r] & (\Zp)^{\syn}_{p=0} \\
	F^{-1}((\mathbb{F}_p)^{\syn}_{p=0}) \ar[ur]
	\end{tikzcd}
	\]
	The case $n=0$ thus follows from the last sentence of \Cref{prop:nyg-contract}.
		
	Now we proceed with the inductive step.  In particular, suppose for the sake of induction that we have constructed a diagram 
	
		  \begin{center}
    \begin{tikzcd}
      F^{-n+1} \left( (\Zp)^{\syn}_{p=v_1=0} \right)\ar[rr] \ar[rd, dashed] & & (\Zp)_{p=0}^\syn \\
       & (\Z/p^{n+1})_{p=0}^\syn  \ar[ru]. &
    \end{tikzcd}
  \end{center}
	
	Applying $F^{-1},$ we obtain a diagram
			  \begin{center}
    \begin{tikzcd}
      F^{-n} \left( (\Zp)^{\syn}_{p=v_1=0} \right)\ar[rr] \ar[rd, dashed] & & (\Zp)_{p=0}^\syn \\
       & F^{-1} (\Z/p^{n+1})_{p=0}^\syn \ar[ru], &
    \end{tikzcd}
  \end{center}
	The desired diagram is then constructed by applying the last sentence of \Cref{prop:nyg-contract}.
\end{proof}

For prismatization and Nygaardification individually, but not syntomification, we can improve the above result:

\begin{thm} \label{thm:a-improvement}
 For each $n \geq 0$, there exists a commutative diagram
  \begin{center}
    \begin{tikzcd}
      (\Zp)^{\Nyg}_{p = (a^p \mu)^{p^n} = 0} \ar[rr] \ar[rd] & & (\Zp)_{p=0}^\Nyg \\
       & (\Z/p^{n+2})_{p=0}^\Nyg  \ar[ru] &
    \end{tikzcd}
  \end{center}
	which restricts, under $j_{HT}$, to a commutative diagram
  \begin{center}
    \begin{tikzcd}
      (\Zp^{\prism})_{p = v_1^{p^{n+1}} = 0} \ar[rr] \ar[rd] & & (\Zp)_{p=0}^\prism \\
       & (\Z/p^{n+2})_{p=0}^\prism  \ar[ru] &
    \end{tikzcd}
  \end{center}
\end{thm}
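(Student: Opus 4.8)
The plan is to rerun the proof of \Cref{thm:univexample}, but only for the Nygaardification and the prismatization (never descending to the syntomification) and with a stronger base case. The proof of \Cref{thm:univexample} began by passing from \Cref{cnstr:nygfactor} to the smaller locus $(\Zp)^{\Nyg}_{p=v_1=0}$ via \Cref{prop:syn-base}; here one should instead retain the larger locus $(\Zp)^{\Nyg}_{p=a^p\mu=0}$ that appears directly in \Cref{cnstr:nygfactor}, noting that $v_1 = a\mu$ and $(a^p\mu)^{p^n} = a^{p^{n+1}}\mu^{p^n}$ is divisible by $v_1^{p^n} = a^{p^n}\mu^{p^n}$.

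Fix $n \ge 0$. Exactly as in the proof of \Cref{thm:univexample}, I would induct on $n$ to produce, compatibly in $n$, a dashed arrow
\[
\begin{tikzcd}
F^{-n}((\Zp)^{\Nyg}_{p=a^p\mu=0}) \ar[rr] \ar[rd,dashed] & & (\Zp)^{\Nyg}_{p=0} \\
 & (\Z/p^{n+2})^{\Nyg}_{p=0} \ar[ru] &
\end{tikzcd}
\]
in which the horizontal arrow is the canonical map. For $n=0$: the commuting square of \Cref{cnstr:nygfactor} relating $(\Zp)^{\Nyg}_{p=a^p\mu=0}$, $(\mathbb{F}_p)^{\Nyg}_{p=0} = (\Z/p)^{\Nyg}_{p=0}$ and $(\Zp)^{\Nyg}_{p=0}$ factors the canonical map $(\Zp)^{\Nyg}_{p=a^p\mu=0} \to (\Zp)^{\Nyg}_{p=0}$ through $F^{-1}((\Z/p)^{\Nyg}_{p=0})$, by the universal property of the pullback along $F$; composing with the dashed arrow of \Cref{prop:nyg-contract} for $m=1$ yields the factorization through $(\Z/p^{2})^{\Nyg}_{p=0}$. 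The inductive step is identical in form to the one in \Cref{thm:univexample}: apply $F^{-1}$ to the diagram for $n-1$ and then compose with the dashed arrow of \Cref{prop:nyg-contract} for $m=n+1$. (That the horizontal composite stays canonical follows, as there, from the fact that the dashed arrow of \Cref{prop:nyg-contract} followed by the forgetful map $(\Z/p^{m+1})^{\Nyg}_{p=0} \to (\Zp)^{\Nyg}_{p=0}$ recovers the projection defining $F^{-1}$, since Verschiebung does not alter the underlying filtered Cartier--Witt divisor.) To conclude, one notes that the mod $p$ Frobenius on $(\Zp)^{\Nyg}_{p=0}$ pulls the section $a^p\mu$ back to $(a^p\mu)^p$, so that $F^{-n}((\Zp)^{\Nyg}_{p=a^p\mu=0}) \cong (\Zp)^{\Nyg}_{p=(a^p\mu)^{p^n}=0}$, which gives the first diagram in the statement.

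For the second diagram I would restrict the entire construction along $j_{HT}$. The base case restricts correctly by \Cref{prop:restoprism}, and the inductive step restricts correctly by the $j_{HT}$-part of \Cref{prop:nyg-contract}, i.e.\ by \Cref{lem:prism-contract}; compatibility of $j_{HT}$ with the mod $p$ Frobenii then lets one pass $F^{-n}$ through the restriction. To identify the restricted left-hand term, recall from \Cref{rmk:v1} that $j_{HT}$ sends $\mu$ to a trivialization of its line bundle and $a$ to (a representative of) the prismatic class $v_1$ of \Cref{dfn:prism-v1}; hence $j_{HT}^{*}$ carries $(\Zp)^{\Nyg}_{p=(a^p\mu)^{p^n}=0}$ to $(\Zp^{\prism})_{p=v_1^{p^{n+1}}=0}$. (Equivalently, \Cref{prop:restoprism} already identifies the $n=0$ restriction with $(\Zp^{\prism})_{p=v_1^{p}=0}$, and one applies $F^{-n}$.)

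There is no essential new difficulty relative to \Cref{thm:univexample}; the point requiring the most care is the computation that the $j_{HT}$-restriction produces $v_1^{p^{n+1}}$, and the parallel observation that this is precisely what obstructs running the argument over the syntomification. Indeed, the syntomification glues two copies of the prismatization, along $j_{HT}$ and along $j_{dR}$, and under $j_{dR}$ it is $a$ rather than $\mu$ that becomes a trivialization, with $\mu$ restricting to $v_1$. So the $j_{dR}$-restriction of $(\Zp)^{\Nyg}_{p=(a^p\mu)^{p^n}=0}$ is only $(\Zp^{\prism})_{p=v_1^{p^{n}}=0}$, which disagrees with its $j_{HT}$-restriction $(\Zp^{\prism})_{p=v_1^{p^{n+1}}=0}$; the two descriptions fail to glue, so the argument does not descend to $(\Zp)^{\syn}$. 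Only after replacing $a^p\mu$ by $v_1 = a\mu$, which restricts to $v_1$ under both immersions, do the restrictions agree — that is the content of \Cref{thm:univexample}.
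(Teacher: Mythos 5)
Your proposal is correct and follows essentially the same route as the paper: the same induction on $n$ with base case given by \Cref{prop:restoprism} (equivalently, the $a^p\mu$-locus part of \Cref{cnstr:nygfactor}) and inductive step given by applying $F^{-1}$ and then \Cref{prop:nyg-contract}. Your added remarks — that $F^*$ carries $a^p\mu$ to $(a^p\mu)^p$ so $F^{-n}$ of the locus is the $(a^p\mu)^{p^n}$-locus, that $j_{HT}$ turns this into $v_1^{p^{n+1}}$ while $j_{dR}$ only gives $v_1^{p^n}$, and that this mismatch is exactly why the improvement does not glue to the syntomification — are all accurate and merely make explicit what the paper leaves implicit.
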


\begin{proof}
	We run an argument similar to that of \Cref{thm:univexample}. Our goal is to induct on $n \ge 0$ to construct a dashed arrow
	\begin{center}
		\begin{tikzcd}
			F^{-n} \left( (\Zp)^{\Nyg}_{p=(a^p\mu)=0} \right)\ar[rr] \ar[rd, dashed] & & (\Zp)_{p=0}^\Nyg \\
			& (\Z/p^{n+2})_{p=0}^\Nyg  \ar[ru]. &
		\end{tikzcd}
	\end{center}
	%compatible with restriction along $j_{HT}$, since, the locus $a^p\mu=0$ becomes $v_1^p=0$ along this restriction.
	To begin, we first construct the dashed arrow when $n=0$.  By \Cref{prop:restoprism}, there is a diagram
	\[
	\begin{tikzcd}
		(\Zp)^{\Nyg}_{p=a^p\mu=0} \ar[d] \ar[r] & (\Zp)^{\Nyg}_{p=0} \\
		F^{-1}((\mathbb{F}_p)^{\Nyg}_{p=0}) \ar[ur]
	\end{tikzcd}
	\]
	compatible with restriction to the prismatization.
	Combining this with \Cref{prop:nyg-contract} gives the case $n=0$.
	
	Now we proceed with the inductive step.  In particular, suppose for the sake of induction that we have constructed a diagram 
	
	\begin{center}
		\begin{tikzcd}
			F^{-n+1} \left( (\Zp)^{\Nyg}_{p=a^p\mu=0} \right)\ar[rr] \ar[rd, dashed] & & (\Zp)_{p=0}^\Nyg \\
			& (\Z/p^{n+1})_{p=0}^\Nyg  \ar[ru]. &
		\end{tikzcd}
	\end{center}
	
	Applying $F^{-1},$ we obtain a diagram
	\begin{center}
		\begin{tikzcd}
			F^{-n} \left( (\Zp)^{\Nyg}_{p=a^p\mu=0} \right)\ar[rr] \ar[rd, dashed] & & (\Zp)_{p=0}^\Nyg \\
			& F^{-1} (\Z/p^{n+1})_{p=0}^\Nyg \ar[ru], &
		\end{tikzcd}
	\end{center}
	The desired diagram is then constructed by applying \Cref{prop:nyg-contract}.
\end{proof}

%The following corollaries may be seen as two of the main results of the paper:
%
%\begin{cor}\label{thm:cryssyntomic}
%The functor taking an animated ring $R$ to the mod $(p,v_1^{p^n})$ derived syntomic cohomology of $R$ factors through the functor $R \mapsto R/p^n$.
%\end{cor}

\begin{proof}[Proof of \Cref{thm:stackmodp}(1)-(3)]
%  We claim that it is enough to prove \Cref{thm:stackmodp}.
%  Indeed, if we assume \Cref{thm:stackmodp}, then it follows from \Cref{thm:prism-comparison} and \Cref{thm:synandnyg-comparison} that \Cref{thm:mainmodp} holds when restricted to the category of $p$-complete rings $R$ such that both $R$ and $R/p^{n+2}$ are $p$-quasisyntomic.
%  In particular, it holds for the category of $p$-completed polynomial $\Zp$-algebras.
%  %that 
%  %, by taking global sections of the Breuil-Kisin line bundles, we obtain \Cref{thm:main} for $p$-completed polynomial $\Zp$-algebras over by \Cref{thm:prism-comparison} and \Cref{thm:synandnyg-comparison}, because $p$-completed polynomial $\Zp$-algebras are quasi-syntomic.
%%
%%
%  %, and global sections of $(-)^{\prism}, (-)^{\Nyg}, (-)^{\Syn}$ recover prismatic cohomology, Nygaard-filtered prismatic cohomology, and syntomic cohomology respectively by \Cref{thm:prism-comparison} and \Cref{thm:synandnyg-comparison}).  Since $p$-adic completions of polynomial $\mathbb{Z}_p$-algebras are quasi-syntomic, the rersult follows for the full subcategory consisting of these . Since these functors commute with sifted colimits and are left-Kan extended from the case of polynomial algebras over $\Zp$, the result folllows.
%  Since all of the functors under consideration are left Kan extended from this subcategory, we obtain the full statement of \Cref{thm:main}.

  First we prove \Cref{thm:stackmodp}(2).
  The proof of \Cref{thm:stackmodp}(3) is exactly analogous, except that we substitute \Cref{thm:a-improvement} for \Cref{thm:univexample}.

Using \Cref{thm:univexample}, we may produce a diagram of stacks:
  \begin{center}
    \begin{tikzcd}
      (X^\Nyg)_{p=v_1 ^{p^n} = 0} \ar[d] \ar[r] & (\Zp^\Nyg)_{p=v_1 ^{p^n} = 0} \ar[d] \\
      (X_{p^{n+2=0}}) ^\Nyg \ar[d] \ar[r] & (\Z/p^{n+2})^\Nyg \ar[d] \\
      (X^\Nyg) \ar[r] & (\Zp^\Nyg).
    \end{tikzcd}
  \end{center}
  
  The outer square is a pullback by definition, and the bottom square is a pullback because $(-)^{\Nyg}$ preserves limits of derived p-adic formal schemes.
  It follows that the top square is also a pullback, so that the formula $(X_{p^{n+2}=0}) ^\Nyg\times_{(\ZZ/p^{n+2})^{\Nyg}}(\ZZ_p^{\Nyg})_{p=v_1^{p^n}=0}$ gives a functorial description of $(X^{\Nyg})_{p=v_1^{p^n}=0}$ only depending on $X_{p^{n+2}=0}$.
  
  Finally, to prove \Cref{thm:stackmodp}(1), we note that by \Cref{thm:univexample}, the functorial descriptions of $(X^{\Nyg})_{p=v_1^{p^n}=0}$ and $(X^{\prism})_{p=v_1^{p^n}=0}$ in terms of $X_{p^{n+2}=0}$ are compatible with restriction along $j_{HT}$ and $j_{dR}$. We may thus glue these loci together to obtain $(X^{\Syn})_{p=v_1^{p^n}=0}$ functorially.
\end{proof}

%\begin{cor}
%The functor taking an animated ring $R$ to the mod $(p,v_1^{p^{n+1}})$ derived prismatic cohomology of $R$ factors through the functor $R \mapsto R/p^n$.
%\end{cor}
%
%\begin{rmk}
%The functor taking an animated ring $R$ to the mod $(p,v_1^{p^{n+1}})$ derived syntomic cohomology of $R$ \emph{does not} factor through the functor $R \mapsto R/p^n$.
%\end{rmk}

\section{Prismatization and syntomification modulo larger powers $p$} \label{sec:rapid}
\label{section:largerpowersofp}
%We will say that a tower is \emph{rapidly converging} if there exist it is rapidly converging for some unspecified rate $a$ and constant $N$.%  Similarly, we may say that a tower is \emph{rapidly converging at rate $a$} while leaving the constant unspecified.

In the previous sections, we proved crystallinity of prismatization and syntomification after reduction modulo $(p,v_1^{p^n})$. In the second part of this section, by induction on $c$, we obtain crystallinity results modulo $(p^c,v_1^{p^n})$ when $n\geq c-1$. This will be an application of some general results about rapid convergence of towers that we prove in the first part of this section.

\subsection{Rapid convergence of towers}

A tower $X$ of objects in a category $\mathcal{C}$ is a diagram
\[\{\cdots \to X_{i+1} \to X_{i} \to X_{i-1} \to \cdots \to X_0,\}\]
indexed by integers $i \ge 0$, i.e an object in $\cC^{\NN_{\geq}}$, where $\NN_{\geq}$ is the poset of natural numbers with a map $j \to i$ if $j\geq i$.

Given a tower $X$, and $n \in \NN$, we use $X[n]$ to denote the shifted tower given by precomposition with $+n:\NN_{\geq} \to \NN_{\geq}$, and $\tau^n$ to denote the natural transformation $X[n+k] \to X[k]$ as well as the maps $X_{k+n} \to X_k$. For an object $W \in \cC$ we use $W_{\const}$ to denote the constant tower.
\begin{dfn}\label{dfn:rapidconvergence}
	For $r \geq 0, N\geq0$, we say that a tower $X$ is \textit{rapidly converging} at rate $r$ and constant $N$ to $W$ if there are maps of towers $X[N+r] \to W_{\const} \to X[N]$ such that the composite as well as the composite $W_{\const}[r] \to X[N+r] \to W_{\const}$ are homotopic to $\tau^r$.
\end{dfn}

\begin{rmk}
It follows from the definitions that a rapidly converging tower $X$ is pro-constant at $W$. In particular, its limit exists and is equal to $W$.
\end{rmk}

In this section, we study stability properties of rapid convergence. We first note that the assignment $n \mapsto \Map(X[n],Y)$ makes $\cC^{\NN_{\geq}}$ naturally enriched over $\Spc^{\NN_{\leq}}$, i.e there is an increasingly filtered mapping space, which we denote $\Map(X,Y)_*, * \in \NN_{\leq}$. One way to see this is as follows: without loss of generality, $\cC$ can be assumed to be small, since $\Cat$ is presentable. In this case, it suffices to show that the category of presheaves $\Psh(\cC^{\NN_{\geq}})$ is naturally enriched, since then $\cC^{\NN_{\geq}}$ is an enriched subcategory. But $\NN_{\geq}$ is symmetric monoidal, and so it acts on $\cC^{\NN_{\geq}}$. Thus $\Psh(\cC^{\NN_{\geq}})$ is acted on by $\Psh(\NN_{\geq}) = \Spc^{\NN_{\leq}}$, which by \cite[Theorem 1.2]{heine2023equivalence} makes it $\Spc^{\NN_{\leq}}$-enriched. %More concretely, the enrichment associates to objects $x,y \in \cC^{\NN_{\geq}}$ the increasingly filtered space $\Map(x,y)_\bullet, \bullet \in \NN_{\leq}$, where $\Map(x,y)_n=\Map(x[n],y)$, where $y[n]$ is obtained from $y$ by precomposition with $+n: \NN_{\geq} \to \NN_{\geq}$.

There is a symmetric monoidal functor $\Spc^{\NN_{\leq}} \to \Spc$ given by taking the colimit, and so we can use this to give $\cC^{\NN_{\geq}}$ a \textit{different} category structure, which we denote $\cC^{\NN_{\geq}}_{\ev}$. The category $\cC^{\NN_{\geq}}_{\ev}$ can also be described as the Dwyer--Kan localization of $\cC^{\NN_{\geq}}$ at the maps $\tau:X[1] \to X$. There is a functor $\cC^{\NN_{\geq}} \to \cC^{\NN_{\geq}}_{\ev}$, which is enriched in the sense that it maps the colimit of the filtered mapping space of the source (isomorphically) onto the mapping space of the target. We say a map in $\cC^{\NN_{\geq}}$ is a \textit{shifted equivalence} if it is an equivalence in $\cC^{\NN_{\geq}}_{\ev}$.

From this point of view, we can reinterpret rapid convergence at rate $r$ and constant $N$ as saying that $X[N]$ is shifted equivalent to a constant tower $W$, and the data of the equivalence (i.e the maps, composition, and homotopies) can be lifted to filtration $\leq r$.

The notion of rapid convergence is almost equivalent to the following weaker notion.

\begin{dfn}\label{dfn:weakrapidconv}
	We say that a tower $X$ in $\cC$ is \textit{weakly rapidly converging} at rate $r$ and constant $N$ if it is rapidly converging at rate $r$ and constant $N$ as a tower in the homotopy category $h\cC$.
	
	Equivalently, this means there is an object $W$, and maps $f:X_{N+r}\to W$, $g:W_{\const} \to X[N]$ such that $f \circ g_r$ is homotopic to the identity, and the composite $X_{i+r} \xrightarrow{f\tau^{i-N}} W \xrightarrow{g_{i-N}} X_{i}$ is homotopic to $\tau^r$ for each $i \geq N$.
\end{dfn}

It is clear that any rapidly converging tower is weakly rapidly converging at the same rate and constant. The converse is true after possibly changing the rate and constant.
\begin{prop}\label{prop:weakimpliesstrongrapidconv}
	Let $X_\bullet \in \cC^{\NN_{\geq}}$. Then if $X$ is weakly rapidly converging at rate $r\geq1$ and constant $N$, it is rapidly converging at rate $2r-1$ and constant $N'=N+r-1$.
\end{prop}

\begin{proof}
	
	%After replacing $X$ with $X[N]$, we may assume that $N=0$. 
	Note that the data of a weakly rapidly converging tower gives us the diagram below:\footnote{In case $r=1$, the parts of the diagram labelled $X_{N'}$ and $X_{N'+r-1}$ are identified.}
	
	\[\begin{tikzcd}[column sep=10]
		\cdots & {X_{N'+3r-1}} & \cdots & {X_{N'+2r-1}} & \cdots & {X_{N'+r-1}} & \cdots & {X_{N'}} \\
		\cdots & W && W && W
		\arrow[from=1-1, to=1-2]
		\arrow[from=1-2, to=1-3]
		\arrow[from=1-2, to=2-4]
		\arrow[from=1-3, to=1-4]
		\arrow[from=1-4, to=1-5]
		\arrow[from=1-4, to=2-6]
		\arrow[from=1-5, to=1-6]
		\arrow[from=1-6, to=1-7]
		\arrow[from=1-7, to=1-8]
		\arrow[from=2-1, to=2-2]
		\arrow["g", from=2-2, to=1-2]
		\arrow["\simeq"', from=2-2, to=2-4]
		\arrow["g", from=2-4, to=1-4]
		\arrow["\simeq"', from=2-4, to=2-6]
		\arrow[from=2-6, to=1-6]
	\end{tikzcd}\]
	
	Let $J$ be the shape of the diagram above where we don't include the inverses of the maps between the copies of $W$. Then $J$ is a poset. There are towers $x:\NN_{\geq} \to J$ sending $i$ to the spot of $X_{N'+i}$ and $y:\NN_{\geq} \to J$ sending $i$ to the spot of the copy of $W$ below $X_{(i-i\%r)+N'+r-1}$. Then there are maps $y \to x$ and $x[2r-1] \to y$ such that both composites are $\tau^{2r-1}$. Post-composing this with the functor $J \to \cC$ shows the desired result since $x$ is sent to $X[N']$ and $y$ is sent to $W_{\const}$.
\end{proof}

The following lemmas are immediate from the definition:

\begin{lem} \label{lem:rapconvfunctor}
	Let $X$ be a rapidly converging tower in $\cC$ and let $F:\cC \to \cD$ be a functor. Then $F(X)$ is rapidly converging in $\cD$ with the same rate and constant.
\end{lem}

\begin{lem}\label{lemma:rapidconvergeproduct}
	Let $\cC_i, i \in I$ be  a family of categories over some index set $I$. Then if $X_{n,i}$ are a family of (weakly) rapidly converging towers in $\cC_i$ with rate $r$ and constant $N$, then so is $(X_{n,i})$ in $\prod_{i}\cC_i$.
\end{lem}

The following lemma shows that rapid convergence glues well.

\begin{lem}\label{lem:rapconvorientedpullback}
	Let $\cC_1 \xrightarrow{F} \cD \xleftarrow{G} \cC_2$ be a cospan of $\infty$-categories. Let $(X,Y, F(X) \xrightarrow{f} G(Y))$ be a tower in the oriented pullback $\cC_1\vec{\times}_{\cD}\cC_2$. Suppose $X$ and $Y$ are rapidly converging at rates $r,r'$ and constants $N,N'$ respectively with $N'+r'\geq N$. Then $(X,Y,f)$ is rapidly converging with rate $r+r'$ and constant $N'$.
\end{lem}

\begin{proof}
	Let $N'' = N'+r'$. By considering the diagram
	
	\[\begin{tikzcd}
		{F(X)[N''+r]} & {F(W)_{\const}} & {F(X)[N'']} \\
		&& {G(Y[N''])} & {G(W'_{\const})} & {G(Y[N'])}
		\arrow[from=1-1, to=1-2]
		\arrow["{\tau^r}"{description}, bend left = 30, from=1-1, to=1-3]
		\arrow[from=1-2, to=1-3]
		\arrow[from=1-3, to=2-3]
		\arrow[from=2-3, to=2-4]
		\arrow["{\tau^{r'}}"{description}, bend right = 30, from=2-3, to=2-5]
		\arrow[from=2-4, to=2-5]
	\end{tikzcd}\]
	we see that in $(\cC_1\vec{\times}_{\cD}\cC_2)^{\NN_{\geq}}$, the map $$(X[N''+r],Y[N'+r'],\tau^{r}f) \to (X[N''],Y[N'],\tau^{r'}f)$$ given by $(\tau^r,\tau^{r'},\tau^r\circ \tau^{r'}\simeq \tau^{r'}\circ \tau^r)$ factors through the object $(W_{\const},W'_{\const},(\lim f)_{\const})$. After pre and post-composing this with appropriate powers of $\tau$ on the source and target, we learn that $\tau^{r+r'}$ on $(X[N''+r],Y[N''+r],f)$ factors through a constant tower. To see the other composite is the identity, this can be checked in $(\cC_1\vec{\times}_{\cD}\cC_2)^{\NN_{\geq}}_{\ev}$ since the tower is constant. However the  factorization consists of inverse shifted equivalences, so this follows.
	%	After replacing $X,Y$ with $X[N]$ and $Y[N]$, it suffices to assume that $N=0$. Note we get a map $F(W) \to W'$ by taking the limit of the map $F(X) \to Y$. From the assumptions, we can produce a sequence of three squares below.
	%	\[\begin{tikzcd}
		%		{F(X[2r])} & F(W) & {F(X[r])} & F(W) \\
		%		{Y[2r]} & {W'} & {Y[r]} & {W'}
		%		\arrow[from=1-1, to=1-2]
		%		\arrow[from=1-1, to=2-1]
		%		\arrow[from=1-2, to=1-3]
		%		\arrow[from=1-2, to=2-2]
		%		\arrow[from=1-3, to=1-4]
		%		\arrow[from=1-3, to=2-3]
		%		\arrow[from=1-4, to=2-4]
		%		\arrow[from=2-1, to=2-2]
		%		\arrow[from=2-2, to=2-3]
		%		\arrow[from=2-3, to=2-4]
		%	\end{tikzcd}\]
	%	The middle square commutes by naturality of limits, and we claim the left square also commutes. To see this, since the composite of the middle and right square is the identity, it suffices to check that the composite of all three squares commutes. But this follows from a diagram chase, using that the composite of the first two and last two squares as well as the middle square commute. Thus we have produced the desired data witnessing rapid convergence in the lax limit category.
\end{proof}

\begin{lem}\label{lem:rapidconvpullback}
	Let $X\rightarrow Y \leftarrow Z$ be a cospan of towers in $\cC$ such that $X,Z$ rapidly converge at rate $r$ with constant $N$ and $Y$ converges at rate $r'$ with constant $N'$ with $N'+r'\geq N$. Then the pullback $X\times_YZ$ rapidly converges at rate $r+r'$ and constant $N'$.
\end{lem}

\begin{proof}
  We note that the category $(\cC\times \cC) \vec{\times}_{\cC\times \cC}\cC$ is the category of cospan diagrams in $\cC$. Thus the result follows by applying \Cref{lem:rapconvorientedpullback} and \Cref{lem:rapconvfunctor}.
\end{proof}

\subsection{Larger powers of p}

\begin{lem}\label{lem:rapidconvergencefromcrystallinity}
	The tower of animated rings $\ZZ/p^{\bullet}\otimes \ZZ/p^r$ is rapidly converging at rate $r$ and constant $0$ to $\ZZ/p^r$.
\end{lem}

\begin{proof}
	The tower we are considering can be written out as the pushout of the span of towers, where the left and right side are constant, the left map sends $y$ to $0$, and the right map sends $y$ to $1$.
	
	\begin{center}
		\begin{tikzcd}
			\ZZ/p^{r} & \ZZ/p^{r}[p^\bullet y]\ar[l]\ar[r] &\ZZ/p^{r}
		\end{tikzcd}
	\end{center}
	
	The span is clearly rapidly converging at rate $r$ and constant $0$ to the constant span of $\ZZ/p^{r}$, so applying the colimit functor we are done.
\end{proof}

\begin{lem} \label{lem:v1lifting}
  The class $v_1 ^{p^n} \in H^0(\F_p (p^n(p-1)) (\Z_p))$ lifts to a class in $H^0 (\Z/p^{n+1} (p^n(p-1)) (\Z_p)$.

  Moreover, the class $(a^p \mu)^{p^n} \in H^0 ((\Zp)^{\Nyg}_{p=0}; \O\{p^n(p-1)\} \otimes a^* \O(p^n(p-1)))$ lifts to $H^0 ((\Zp)^{\Nyg}_{p^n=0}; \O\{p^n(p-1)\} \otimes a^* \O(p^n(p-1)))$.
\end{lem}

\begin{proof}
  This follows from the Bockstein spectral sequence, since the differentials are derivations and the $E_1$-page consists of $\F_p$-vector spaces.
\end{proof}

\begin{thm}\label{thm:stack}
	  Fix $c, k,n\geq0$. Then we have that the functors
	\begin{enumerate}
		\item \[X \mapsto (X^\syn)_{p^c=v_1^{p^n}=0} \]
		\item \[X \mapsto (X^\Nyg)_{p^c=v_1^{p^n}=0} \]
		\item \[X \mapsto (X^\prism)_{p^c=v_1^{p^{n+1}}=0}\]
		\item \[X \mapsto (X^\prism)_{p^{ck}=(F^{n+1-k})^* I=0}\]
		\item \[X \mapsto (X^\Nyg)_{p^{c}=(a^p\mu)^{p^n}=0}\]
	\end{enumerate}
  factor through the functor $X \mapsto X_{p^{c(n+2)}=0}$, where $X_{p^{c(n+2)}=0}$ is regarded as derived $\Z/p^{c(n+2)}$-scheme. In $(1),(2),(5)$ we require that $n\geq c-1$, and for $(3)$, we require $n+1\geq c-1$.
	Here, $X$ is a derived $p$-adic formal scheme, and all of the vanishing loci are taken in a derived sense.
\end{thm}

\begin{proof}
  We prove the result by induction on $c\geq1$,\footnote{The case $c=0$ is trivial.} with the base case being \Cref{thm:stackmodp}. We just indicate the proof for $(1)$ as the others are essentially identical. The hypothesis $n\geq c-1$ guarantees that $v_1^{p^n}$ exists modulo $p^c$ by \Cref{lem:v1lifting}. The inductive hypothesis gives that the functor
			\[X \mapsto (X^\syn)_{p^{i}=v_1^{p^n}=0} \]
      factors through $X \mapsto X_{p^{(n+2)i}=0}$ for $i<c$. Thus we learn that the tower (in the opposite category of stacks) $(\Spf(\ZZ/p^{\bullet})^\syn)_{p^{i}=v_1^{p^n}=0}$ is obtained by applying a functor to the tower $\ZZ/p^{\bullet}\otimes \ZZ/p^{i(n+2)}$, so that by applying \Cref{lem:rapidconvergencefromcrystallinity}, it is rapidly converging at rate $i(n+2)$ and constant $0$.
			
			Now the tower $$(\Spf(\ZZ/p^{\bullet})^\syn)_{p^c=v_1^{p^n}=0}$$ can be written as the pullback of the cospan of towers
      $$(\Spf(\ZZ/p^{\bullet})^\syn)_{p^{c-1}=v_1^{p^n}=0}\rightarrow(\Spf(\ZZ/p^{\bullet})^\syn)_{p=v_1^{p^n}=0, p^{c-1}=0}\leftarrow(\Spf(\ZZ/p^{\bullet})^\syn)_{p=v_1^{p^n}=0},$$
			and so by \Cref{lem:rapidconvpullback} we learn that it is rapidly converging with rate $c(n+2)$ and constant $0$ to $(\Spf(\ZZ_p)^\syn)_{p^c=v_1^{p^n}=0}$. In particular, we obtain a section $$\sigma:(\Spf(\ZZ_p)^\syn)_{p^c=v_1^{p^n}=0} \to (\Spf(\ZZ_p/(p^{c(n+2)}))^\syn)_{p^c=v_1^{p^n}=0}. $$
			
			Now we can construct the desired functor giving the factorization as the pullback of the cospan
			$$(\Spf(\ZZ_p)^\syn)_{p^c=v_1^{p^n}=0} \xrightarrow{\sigma} (\Spf(\ZZ_p/(p^{c(n+2)}))^\syn)_{p^c=v_1^{p^n}=0} \leftarrow (X^\syn)_{p^c=v_1^{p^n}=0}. \qedhere$$
\end{proof}

Exactly as in \Cref{thm:mainmodp}, we deduce crystallinity for syntomic and derived prismatic cohomology:

\begin{cor} \label{thm:main}
	Let $c,k,n\geq0$.
	The following functors on $\CAlg_p ^\Ani$ factor through the mod $p^{c(n+2)}$ reduction functor $\CAlg_p ^\Ani \to \CAlg_{\Z/p^{c(n+2)}} ^\Ani$:
	\begin{enumerate}
		\item mod $(p^c,v_1 ^{p^n})$ syntomic cohomology
		\[R \mapsto \ZZ/p^c (\ast) (R) /v_1 ^{p^n}\]
		\item mod $(p^c,v_1 ^{p^n})$ Nygaard-filtered derived prismatic cohomology
		\[R \mapsto N^{\geq \ast} \prism_{R} \{\ast\} /(p^c, v_1^{p^{n}})\]
		\item mod $(p^c,v_1 ^{p^{n+1}})$ derived prismatic cohomology
		\[R \mapsto \prism_{R} \{\ast\} /(p^c, v_1^{p^{n+1}})\]
		\item mod $(p^{kc}, (F^{n+1-k})^* I)$ derived prismatic cohomology, where $k \geq 1$, $F$ is the Frobenius, and $I$ is the Hodge-Tate ideal
		\[R \mapsto \prism_{R} \{\ast\} /(p^{kc}, (F^{n+1-k})^* I)\]
		\item mod $(p^{c}, (a^p\mu)^{p^n})$ Nygaard-filtered derived prismatic cohomology
		\[R \mapsto \N^{\geq \ast}\prism_{R} \{\ast\} /(p^{c}, (a^p\mu)^{p^n})\]
	\end{enumerate}
	
  In the cases $(1),(2),(5)$, we require that $n\geq c-1$, and for $(3)$ we require that $n+1\geq c-1$.
	%
	%  Given any $n \geq 0$, the mod $(p,v_1 ^{p^n})$ derived syntomic cohomology functor
	%  \[\F_p (\bullet) / v_1 ^{p^n} : \CAlg_p^\Ani \to \CAlg_p^{\Ani,\gr}\]
	%  factors through the functor which takes $R$ to its (derived) quotient $R/p^{n+2}$.
	%
	%  In other words, we construct a commutative diagram
	%  \begin{center}
		%    \begin{tikzcd}
			%      \CAlg_p^\Ani \ar[rr,"A \mapsto \F_p (\bullet) (A) / v_1 ^{p^n}"] \ar[dr, "A \mapsto A/p^{n+2}"] & & \CAlg_p ^{\Ani,\gr} \\
			%      & \Calg_{\Z/p^{n+2}} ^\Ani. \ar[ur] &
			%    \end{tikzcd}
		%  \end{center}
\end{cor}

\section{\'Etale cohomology of generic fibers}\label{sec:generic}
In this section, we explain how our results may be upgraded from mod $(p^c, v_1 ^{p^n})$ syntomic cohomology to mod $p^c$ syntomic cohomology if we make assumptions on the smoothness and dimension of our $p$-adic formal scheme.
By considering the \'etale realization of syntomic cohomology, we deduce conditions for the when the mod $p^c$ \'etale cohomology of the generic fiber of a $p$-adic formal schemes may be read off from its mod $p^d$ reduction.

\subsection{Recovering mod $p^c$ syntomic cohomology}
Our first result gives conditions on when the mod $p^c$ syntomic cohomology of an $F$-smooth $p$-adic formal scheme may be recovered from its mod $p^{n}$ reduction.
Via the \'etale comparison theorem, this also allows us to recover the mod $p^c$-\'etale cohomology of the generic fiber as a graded abelian group.
Before we state our theorem, we need a definition.

\begin{dfn}\label{dfn:affcohdim}
	Let $X$ be a $p$-adic formal scheme. Let $C_X$ be the category of Zariski sheaves of abelian groups $\mathfrak{F}$ such that $H^i(\mathfrak{F}|_{U})=0$ for $i>0$ on all affine opens $U \subset X$. Then we say the \textit{affine cohomological dimension} of $X$ is the smallest $d\in \NN\cup \{\infty\}$ such that $H^i(\mathfrak{F})=0$ for all $i>d, \mathfrak{F} \in C_X$.
\end{dfn}

\begin{exm}\label{exm:affcohdim}
	The affine cohomological dimension of $X$ is always at most the Zariski cohomological dimension, which is given by the Krull dimension of the underlying topological space\footnote{This is equal to the topological space associated to $X_{p=0}$.} by \cite[Theorem 4.5]{Scheid}. However, it can be significantly smaller--for example, it vanishes when $X$ is affine.
\end{exm}

We will also require some constants.

\begin{dfn}\label{dfn:functions}
	Given an integer $d$, we define:
  \begin{align*}
    &e_c(d) \coloneqq 1+\lceil \frac {d+2}{p^{c-1}(p-1)}\rceil. \\
    &e'_c(d) \coloneqq 2+\lceil \frac {2d+3}{p^{c-1}(p-1)}\rceil. \\
    &b_c(d) \coloneqq c(\lceil\log_p(e_c(d))\rceil + 2). \\
    &b'_c(d) \coloneqq c(\lceil\log_p(e'_c(d))\rceil + 2). 
  \end{align*}
\end{dfn}

\begin{thm} \label{thm:generic}
  Let $X$ denote a $p$-torsionfree $F$-smooth qcqs $p$-adic formal scheme of affine cohomological dimension $d$. %\footnote{Here, we mean the dimension of the underlying space of $X$. This is equal to the dimension of the scheme $X_{p=0}$.}
  Then the syntomic and \'etale mod $p^c$ cohomology groups\footnote{Here, $X_\eta$ is the adic generic fiber of $X$, i.e. $X \times_{\Spa(\Z_p, \Z_p)} \Spa(\Q_p, \Z_p)$ in the category of (pre-)adic spaces.}
  \[H^i_{\mathrm{syn}}(X;\ZZ/p^c(j)) \text{ and } H_{\et}^i(X_\eta; \ZZ/p^c(j))\]
  may be recovered functorially as graded $\ZZ/p^c[v_1^{p^{c-1}}]$-modules from $X_{p^{b_c(d)} = 0}$.

  More precisely, these functors factor through the essential image of the functor $X \mapsto X_{p^{b_c(d)}=0}$ from such $X$ into $\ZZ/p^{b_c(d)}$-schemes.
  Similarly, the cohomology rings can be recovered functorially from $X_{p^{b'_c(d)} = 0}$.
%  In particular, if $X_1$ and $X_2$ as above satisfy $(X_1)_{p^{b(d)} = 0} \cong (X_2)_{p^{b(d)} = 0}$, then $b_{i,j} (X_1;\F_p) = b_{i,j} (X_2;\F_p)$.
\end{thm}

\begin{rmk}
	In the case $X = \Spf A$ is affine and $(p,c) \neq (2,1)$, \Cref{thm:generic} state that the mod $p^c$ syntomic cohomology and the \'etale cohomology of the generic fiber only depend on $X_{p^{3c}=0}$. If also $(p,c) \neq (3,1),(2,2)$, then the ring structure can also be recovered.

	Analyzing the proof shows that the ranks of the mod $2$ \'etale cohomology groups can be recovered from $X_{2^3=0}$, but we do not know if the \'etale cohomology itself can be functorially recovered in this case.
\end{rmk}

\begin{rmk}
  For the technical defininition of $F$-smooth, we refer the reader to \cite{BM}.
  For example, $X$ could be a smooth $p$-adic formal scheme of relative dimension $d$ over $\Spf \O_K$ for a local or perfectoid field $K$.
\end{rmk}

Beyond crystallinity, the main input to the theorem of this section is a pair of vanishing lines for the reduced syntomic cohomology of $F$-smooth $p$-adic formal schemes.

\begin{lem} \label{lem:vanishing-lines}
  Let $X$ denote a $p$-torsion free $F$-smooth qcqs $p$-adic formal scheme of affine cohomological dimension $d$, and let $c \geq 1$. Then:
  \begin{enumerate}
    \item The complex $\Z/p^c (j) (X)$ is $(-j-d-1)$-connective.
    \item The complex $\Z/p^c (j) (X) / v_1 ^{p^{c-1}}$ is $(-j+p^{c-1}(p-1))$-coconnective.
        \item The graded complex $\Z/p^c (*) (X)\otimes_{\ZZ/p^c[v_1]} \Z/p^c (*) (X)$ is $(-j-2(d-1))$-connective in degree $j$.
    \item The graded complex $(\Z/p^c (*) (X)\otimes_{\ZZ/p^c[v_1]} \Z/p^c (*) (X))/ v_1 ^{p^{c-1}}$ is $(-j+2p^{c-1}(p-1))$-coconnective in degree $j$.
  \end{enumerate}
\end{lem}

\begin{proof}
  For $X = \Spf A$ affine, part (1) is an immediate consequence of \cite[Theorem 5.1(i)]{BeilFib}. Since syntomic cohomology is a Zariski sheaf, part (1) follows from the definition of affine cohomological dimension.

  On the other hand, we may reduce part (2) immediately to the case $X = \Spf (A)$ is affine.
  Then it is shown in \cite[Proof of Proposition 5.2]{BM} that the fiber of $\FF_p(j)(A) \xrightarrow{v_1} \FF_p(j+p-1)(A)$ is $(-j-1)$-coconnective.
  Using this, we see that $\fib \left(\Z/p^c(j)(A) \xrightarrow{v_1^{p^{c-1}}} \Z/p^c(j+p^{c-1}(p-1))(A) \right)$ is also $(-j-1)$-coconnective, from which we deduce that
  $\Z/p^c (A) / v_1 ^{p^{c-1}}$ is $(-j+p^{c-1}(p-1))$-coconnective, as desired.
  
  Part (3) is an immediate consequence of (1). To see (4), we first observe that it is true in the case $c=1$ by part (2), since $$(\FF_p (*) (X)\otimes_{\FF_p[v_1]}\FF_p (*) (X))/ v_1 \cong (\FF_p (j) (X) / v_1) \otimes_{\FF_p} (\FF_p (j) (X)/v_1)$$ and we are tensoring over a field. The general statement follows by realizing the complex as built from this via extensions. 
\end{proof}

\begin{qst}\label{rmk:tor-affine}
%	\Cref{lem:quotientrecoverymodv1} shows the mod $p$ syntomic cohomology of $A$ can only have simple $v_1$-torsion.
 If $X = \Spf A$ is affine, $p$-torsionfree and $F$-smooth, then \Cref{lem:vanishing-lines} implies the mod $p$ syntomic cohomology of $X$ has at most simple $v_1$-torsion.
	Is there such an $X$ such that the mod $p$ syntomic cohomology of $X$ has $v_1$-torsion?
\end{qst}

To apply this result to prove the theorem, we will use the following lemma in elementary algebra:

\begin{lem}\label{lem:recovermodout}
	Let $b\leq b'$ be integers.	Let $R$ be a discrete associative ring and let $R[x]$ be the graded polynomial algebra over $R$ with $x$ in grading $i$. Let $\cC\subset \Mod_{\Sp^{\ZZ}}(R[x])$ be the full subcategory of $x$-complete graded modules such that the cofiber $C/x$ lies in $[b-j,b'-j]$ in grading $j$.

  Then the functor taking an object $M \in \cC$ and recording $\pi_k(M)_j$ with its $x$-action for each $k$ and $j$ factors through $(-)/x^{e}:\cC \to \Mod_{\Sp^{\ZZ}}(R[x]) \to \Mod_{\Sp^{\ZZ}}(R[x]/x^e)$ if $e>\frac{b'-b}{i}$.
\end{lem}

\begin{proof}
	The long exact sequence on homotopy groups gives
	$$\pi_{k+1}(M/x)_{j}\to \pi_k(M)_{j-i}\xrightarrow{x} \pi_{k}(M)_{j} \to \pi_{k}(M/x)_{j}.$$
  So we learn that, when $k > b'-j$, multiplication by $x$ gives an isomorphism $\pi_k(M)_{j-i}\cong \pi_k(M)_j$.
  It therefore suffices to recover $\pi_k(M)_j$ for $k\leq b'-j$, along with its action of $x$.
	
	On the other hand, we also have the exact sequence
	$$\pi_k(M)_{j-ie}\xrightarrow{x^d} \pi_{k}(M)_{j} \to \pi_{k}(M/x^e)_j \to \pi_{k-1}(M)_{j-ie}$$
	whence we learn that $\pi_{k}(M)_{j} \to \pi_{k}(M/x^e)_j$ is an isomorphism if $k<b-j+ie$.
  Choosing $e$ such that $b'-j<b-j+ie$, i.e.\ such that $e > \frac{b'-b}{i}$, we obtain the desired result.
%	The assumptions imply that multiplication by $x$ is an isomorphism from $\pi_i(M[x^{-1}]_j)$ as soon as $j$
%	We have a short exact sequence
%	
%	$$ 0 \to \pi_i(M[x^{-1}])/x^d \to \pi_i(M_j)/x^d \to 0$$
\end{proof}

\begin{cor} \label{cor:recovery}
  Let $X$ denote a $p$-torsion free $F$-smooth qcqs $p$-adic formal scheme of affine cohomological dimension $d$, and let $c \geq 1$.
  Then the syntomic cohomology $H^i _{\mathrm{syn}} (X, \Z/p^c (j))$
  may be functorially recovered from the graded $\Z/p^c [v_1 ^{p^{c-1}}]/ v_1 ^{p^{c-1}e_c (d)}$-module $\Z/p^c (j) (X)/ v_1 ^{p^{c-1}e_c (d)}$
  as a graded $\Z/p^c [v_1 ^{p^{c-1}}]$-module.

  Morever, it may be functorially recovered from the graded $\Z/p^c [v_1 ^{p^{c-1}}]/ v_1 ^{p^{c-1}e'_c(d)}$-algebra $\Z/p^c (j) (X)/ v_1 ^{p^{c-1}e'_c(d)}$
  as a graded $\Z/p^c [v_1 ^{p^{c-1}}]$-algebra.
\end{cor}

\begin{proof}
  From \Cref{lem:vanishing-lines}, it follows that $\Z/p^c (j) (X)$ lies in $[-j-d-1, -j + p^{c-1} (p-1)]$.
  Taking $x$ to be $v_1 ^{p^{c-1}}$ in \Cref{lem:recovermodout}, we see that we obtain the first result if $e > \frac{p^{c-1} (p-1) + d+1}{p^{c-1}(p-1)} = 1 + \frac{d+1}{p^{c-1}(p-1)}$.
  The minimal such choice is $e = e_c (d)$, and yields the first statement of the corollary.

  For the second statement, let us use $R = \oplus_j \Z/p^c (j) (X)$ and $v = v_1 ^{p^{c-1}}$ to simplify notation. Then we have a multiplication map $(R/v^{e'_c(d)})\otimes_{\Z/p^c [v]/ v^{e'_c(d)}}(R/v^{e'_c(d)}) \to R/v^{e'_c(d)}$, which is the mod $v ^{e'_c(d)}$-reduction of the map $R\otimes_{\ZZ/p^c[v]} R \to R$. Moreover, both $(R\otimes_{\ZZ/p^c}R)/v$ and $R/v$ are both $(-j-2(d-1))$-connective and $(-j+2p^{c-1}(p-1))$-coconnective by \Cref{lem:vanishing-lines}, so we may apply \Cref{lem:recovermodout} to $(R/v^{e'_c(d)})\otimes_{\Z/p^c [v]/ v^{e'_c(d)}}(R/v^{e'_c(d)}) \to R/v^{e'_c(d)}$ to obtain the statement.
\end{proof}

\begin{proof}[Proof of \Cref{thm:generic}]
  Combining \Cref{cor:recovery} with \Cref{thm:mainintro}, we find that we may recover $H^i _{\mathrm{syn}} (X; \Z/p^c (j))$ as a graded $\Z/p^c [v_1 ^{p^{c-1}}]$-module from $X_{p^{b_c (d)} = 0}$.
  Similarly, we find that we may recover it as a graded $\Z/p^c [v_1 ^{p^{c-1}}]$-algebra from $X_{p^{b'_c (d)} = 0}$.

  To obtain the \'etale cohomology, we note that by the \'etale comparison theorem \cite[Theorem 8.3.1]{APC} \cite[Theorem 5.1]{BM}, the \'etale cohomology is obtained from the syntomic cohomology by inverting $v_1 ^{p^{c-1}}$.
\end{proof}

\subsection{Recovering the mod $p^c$ $F$-gauge}
The previous subsection explained how one can functorially recover \'etale cohomology groups.  Those results are strong in that they depend only on the affine cohomological dimension of $X$, and in particular hold for all affine schemes independent of dimension.  However, the results of the previous subsection are not sufficient to functorially recover \'etale cohomology \emph{complexes}.

In this subsection, we prove some more refined results in the case that the $p$-adic formal scheme $X$ is smooth over $\mathbb{Z}_p$ of relative dimension $d$.  Note now that $d$ refers to the dimension of $X$ over $\mathbb{Z}_p$, and not to its affine cohomological dimension.  We conjecture that similar results could be proved for $X$ smooth over more general $p$-adic rings of integers $\mathcal{O}_K$, but working over $\mathbb{Z}_p$ will allow us to quote useful theorems from \cite{Vologodsky}.

\begin{thm} \label{thm:FL}
  Let $\pi : X \to \Spf \Zp$ denote a smooth $p$-adic formal scheme of relative dimension $d$, and let $b_c ''(d) = \max\left(c+1, \lceil \log_p \left( \frac{d+1}{p-1} \right) + 2 \rceil \right)$.
  Then the associated mod $p^c$ $F$-gauge $R\pi ^\syn _* \mathcal{O}_{(X^\syn)_{p^c=0}} \in \mathcal{D} ((\Zp ^{\syn})_{p=0})$ may be functorially recovered from $X_{p^{cb''_c (d)} = 0}$.
\end{thm}

\begin{rmk}
  For example, this means that the mod $p$ $F$-gauge may be recovered from $X_{p^2=0}$ when $d \leq p-2$, and may be recovered from $X_{p^3 = 0}$ when $d \leq p^2-p-1$.
  The mod $p^2$ $F$-gauge may be recovered from $X_{p^6=0}$ when $d \leq p^2-p-1$.
\end{rmk}

Taking the \'etale realization, we obtain the following corollary:

\begin{cor}
  Given a smooth $p$-adic formal scheme $\pi : X \to \Spf \Zp$ of relative dimension $d$, the mod $p^c$ \'etale cohomology complex of the generic fiber
	\[R\Gamma_{\et} (X_{\eta, \mathbb{C}_p}, \mathbb{Z}/p^c)\]
  may be functorially recovered from $X_{p^{b''_c (d)} = 0}$ as a continuous $\Gal(\Q_p)$-representation.%  Functoriality in particular means that we recover the $\mathrm{Gal}(\mathbb{Q}_p)$ action on this complex. %the complex as a continuous representation of $\mathrm{Gal}(\mathbb{Q}_p)$.
\end{cor}

\begin{rmk}
  Examining the proof of \Cref{thm:FL}, we see that we have actually proven something stronger.
  Namely, given $d \geq 0$, there is a cohomology theory on smooth morphisms $\overline{X} \to \Spec (\Z/p^{cb''_c(d)})$ of relative dimension $\leq d$ taking values in $\mathcal{D} ((\Zp ^{\syn})_{p^c=0})$ that recovers the mod $p^c$ $F$-gauge of $X$ when $\overline{X} = X_{p^{cb''_c(d)}=0}$.

  In particular, taking the \'etale realization, we find that the ``mod $p^c$ \'etale cohomology of the generic fiber'' may be defined without the assumption that $\overline{X}$ lifts to a formal scheme over $\Zp$! %\todo{Just noting that there are indeed smooth projective surfaces over $\Z/p^n$ that cannot be lifted to $\Zp$. See Vakil's Murphy's Law paper. See also the MO question ``What is an example of a smooth variety over a finite field Fp which does not lift to Zp?''. I wonder what their etale cohomology is?}
\end{rmk}

To prove this theorem, we proceed in two steps.
First, we recall the notion of the \emph{Hodge--Tate weights} of an $F$-gauge, and note that in the context of \Cref{thm:FL}, the Hodge--Tate weights of $R\pi_* \mathcal{O}_{X^\syn}$ lie in the interval $[0,d]$.
Second, we prove that the categories of mod $p^c$ $F$-gauges on $\Zp$ with Hodge--Tate weights in the range $[0,i(p-1)-1]$ is unaffected by further modding out by $v_1 ^i$, extending an argument of Terentiuk, Vologodsky and Xu \cite{Vologodsky}.

\begin{dfn}[{\cite[Remark 5.3.14]{fgauge}}]
  Given a prismatic $F$-gauge $\mathcal{F} \in \mathcal{D} (\Z_p ^{\syn})$, we can consider its pullback $\mathcal{F}\vert_{B\G_m}$ along the composition
  \[B\G_m \hookrightarrow \A^1 / \G_m \xrightarrow{i_{dR}} \Z_p ^\Nyg \to \Z_p ^\syn\]
  Then $\mathcal{F}\vert_{B\G_m}$ may be regarded as a graded $\Z_p$-module $\bigoplus_i M_i$, and
  the \emph{Hodge--Tate weights} of $\mathcal{F}$ are the set $\{i \in \mathbb{Z} \vert M_i \neq 0\}$.\footnote{If $\mathcal{F} = R\pi_*\mathcal{O}_{X^\syn}$ for $X$ as in \Cref{thm:FL}, then it follows from \cite[Remark 5.3.14]{fgauge} that $\mathcal{F}\vert_{B\G_m}$ encodes the derived Hodge cohomology of $X$.}

  Let $\mathcal{D}_{[e,f]} (\Z_p ^\syn) \subset \mathcal{D} (\Z_p ^{\syn})$ denote the full subcategory of $F$-gauges with Hodge--Tate weights in the interval $[e,f]$.
\end{dfn}

\begin{exm}
  The $F$-gauge $\mathcal{O} \{i\}$ has Hodge--Tate weight $-i$.
\end{exm}

\begin{prop} \label{prop:where-weight}
  Let $\pi : X \to \Spf \Zp$ denote a smooth $p$-adic formal scheme of relative dimension $d$ over $\mathbb{Z}_p$. Then the Hodge--Tate weights of its $F$-gauge $R\pi ^\syn _* \mathcal{O}_{X^\syn}$ lie in $[0,d]$.
\end{prop}

\begin{proof}
  It suffices to show that the Hodge cohomology $H^*(X;\Omega_{X/\Z_p}^i)$ of $X$ vanishes for $i \not\in [0,d]$, which follows from the fact that $\Omega^i _{X / \Z_p}$ vanishes for $i$ outside of this range.
\end{proof}

Next we prove the following proposition, which is a mild generalization of \cite[(5.5)]{Vologodsky}.

\begin{prop} \label{prop:weight-equiv}
  Suppose $i$ is divisible by $p^{c-1}$, so that $v_1^i$ is well-defined modulo $p^c$.
  The pullback $\mathcal{D} _{[0,i(p-1)-1]} ((\Zp ^\syn)_{p^c = 0}) \to \mathcal{D} _{[0,i(p-1) -1]} ((\Zp ^\syn)_{p^c = v_1 ^i = 0})$ is an equivalence of categories.
\end{prop}

For the proof of this, we need the following result from \cite{Vologodsky}:

\begin{lem} \cite[Lemma 5.7]{Vologodsky} \label{lem:vol}
 Let $n \in \mathbb{Z}$. For any $E \in \mathcal{D}_{[n, \infty)} ((\Z_p ^\syn)_{p=v_1 = 0})$ and $E' \in \mathcal{D}_{(-\infty, n-1]} ((\Z_p ^\syn)_{p=v_1 = 0})$, we have $\RHom (E', E) = 0$.
\end{lem}

We will also use the following:
\begin{lem} (cf. \cite[(5.9) \& (5.10)]{Vologodsky}) \label{lem:vol2}
%\begin{enumerate}
%
%\item For each $j\ge 2$, the natural inclusion $\alpha_j:(\Z_p ^\syn)_{p=v_1 ^{j-1} = 0} \hookrightarrow (\Z_p ^\syn)_{p=v_1 ^j = 0}$ is a square-zero extension with ideal sheaf $\mathcal{O}_{(\Z_p ^\syn)_{p=v_1 = 0}} \{-(j-1)(p-1)\}$.
  Suppose $i$ is divisible by $p^{c-1}$, so that $v_1^i$ exists modulo $p^c$.  Then, for each $j\ge 2$, the natural inclusion $\alpha_j:(\Z_p ^\syn)_{p^c=v_1 ^{i(j-1)} = 0} \hookrightarrow (\Z_p ^\syn)_{p^c=v_1 ^{ij} = 0}$ is a square-zero extension with ideal sheaf $\mathcal{O}_{(\Z_p ^\syn)_{p^c=v_1^i= 0}} \{-(j-1)i(p-1)\}$.
%
%\item Suppose $i$ is such that $v_1^i$ exists modulo $p^c$, for some $c \ge 2$.  Then the natural inclusion $\beta_e:(\Z_p ^\syn)_{p^{c-1}=v_1 ^{i} = 0} \hookrightarrow (\Z_p ^\syn)_{p^c=v_1 ^i = 0}$ is a square-zero extension with ideal sheaf $\mathcal{O}_{(\Z_p ^\syn)_{p=v_1^i= 0}}$.
%\end{enumerate}
\end{lem}

\begin{cor}
Let $n \in \mathbb{Z}$, and suppose $v_1^i$ is well-defined modulo $p^c$. For any $E \in \mathcal{D}_{[n, \infty)} ((\Z_p ^\syn)_{p^c=v_1^i = 0})$ and $E' \in \mathcal{D}_{(-\infty, n-1]} ((\Z_p ^\syn)_{p^c=v_1^i= 0})$, we have $\RHom(E',E)=0$.
\end{cor}

\begin{proof}[Proof of \Cref{prop:weight-equiv}]
  Consider $\alpha_j : (\Zp ^\syn)_{p^c = v_1 ^{i(j-1)} = 0} \hookrightarrow (\Zp ^\syn)_{p^c = v_1^{ij} = 0}$.
  We prove that
  \[\alpha_j ^* : \mathcal{D} _{[0,i(p-1) -1]} ((\Zp ^\syn)_{p^c = v_1 ^{ij} = 0}) \to \mathcal{D} _{[0,i(p-1) -1]} ((\Zp ^\syn)_{p^c = v_1 ^{i(j-1)} = 0})\]
  is an equivalence for all $j > i$.  
  Taking the limit and using the fact that $(\Z_p ^\syn)_{p^c=0}$ is complete along the $v_1^i = 0$ locus, we obtain the desired statement.

  First, we prove full faithfulness. Let $\mathcal{F}, \mathcal{F}' \in \mathcal{D} _{[0,i(p-1) -1]} ((\Zp ^\syn)_{p^c = v_1 ^{ij} = 0})$.
  Then we want to show that
  \[\RHom (\mathcal{F}, \mathcal{F}') \to \RHom (\alpha_j^* \mathcal{F}, \alpha_j ^* \mathcal{F}') \simeq \RHom (\mathcal{F}, \alpha_{j*} \alpha_j ^* \mathcal{F}')\]
  is an equivalence.
  To start, we note that by \Cref{lem:vol2} the fiber of $\mathcal{F}' \to \alpha_{j*} \alpha_j ^* \mathcal{F}'$ may be identified with $\mathcal{F}' \otimes\mathcal{O}_{(\Z_p ^\syn)_{p^c=v_1^i = 0}} \{-(j-1)i(p-1)\}$, so it suffices to show that
  $\RHom (\mathcal{F}, \mathcal{F}' \otimes \mathcal{O}_{(\Z_p ^\syn)_{p^c=v_1^i = 0}} \{-(j-1)i(p-1)\}) = 0$.
  By $p$-adically filtering $\mathcal{O}_{(\Z_p ^\syn)_{p^c=v_1^i = 0}} \{-(j-1)i(p-1)\}$, we may reduce to the case that $c = 1$.
  Similarly, by $v_1$-adically filtering $\mathcal{O}_{(\Z_p ^\syn)_{p=v_1^i = 0}} \{-(j-1)i(p-1)\}$, we obtain an associated graded with pieces $\mathcal{O}_{(\Z_p ^\syn)_{p=v_1 = 0}} \{-(j-1)i(p-1)\}, \dots, \mathcal{O}_{(\Z_p ^\syn)_{p=v_1 = 0}} \{-(ij-1)(p-1)\}$.

  Now, $\mathcal{F}$ has Hodge--Tate weights $\leq i(p-1) -1$, while $\mathcal{F}' \otimes \mathcal{O}_{(\Z_p ^\syn)_{p=v_1 = 0}} \{-(j-1)i(p-1)\}), \dots, \mathcal{F}' \otimes \mathcal{O}_{(\Z_p ^\syn)_{p=v_1 = 0}} \{-(ij-1)(p-1)\})$ have Hodge--Tate weights $\geq (j-1)i(p-1) \geq i (p-1)$.
  It therefore follows from \Cref{lem:vol} that the mapping spectrum is $0$, as desired.

  Finally, we prove essential surjectivity. By deformation theory (see \cite[Lemma 5.8]{Vologodsky}), it suffices to show that
  for $\mathcal{F} \in \mathcal{D} _{[0,i(p-1) -1]} ((\Zp ^\syn)_{p^c = v_1 ^{i(j-1)} = 0})$,
  $\mathrm{Ext}^2 (\mathcal{F}, \mathcal{F}' \otimes \mathcal{O}_{(\Z_p ^\syn)_{p=v_1 = 0}} \{-(j-1)i(p-1)\}) = 0$.
  But this follows from applying $\pi_{-2}$ to the $\RHom$ statement above.
\end{proof}

Finally, we prove \Cref{thm:FL}.

\begin{proof}[Proof of \Cref{thm:FL}]
  By \Cref{prop:where-weight}, we have $R\pi^{\Syn} _* \mathcal{O}_{(X^\syn)_{p^c=0}} \in \mathcal{D} _{[0,d]} ((\Zp ^\syn)_{p = 0})$.
  It therefore follows from \Cref{prop:weight-equiv} that $R\pi^{\Syn} _* \mathcal{O}_{(X^\syn)_{p^c=0}}$ is functorially determined by its mod $v_1^i v_1^{kp^{c-1}}$ reduction whenever $d \leq kp^{c-1}(p-1)-1$, i.e. whenever $\frac{d+1}{p-1} \leq kp^{c-1}$.
  Now, it follows from \Cref{thm:stack}(1) that the mod $v_1 ^{p^{n-2}}$ reduction of $R\pi^{\Syn} _* \mathcal{O}_{(X^\syn)_{p^c=0}}$ is functorially determined by $X_{p^{cn} = 0}$.
  The smallest integer $n$ for which $\frac{d+1}{p-1} \leq p^{n-2}$ and $p^{n-2}$ is divisible by $p^{c-1}$ is $b_c ''(d) = \max\left(c+1, \lceil \log_p \left( \frac{d+1}{p-1} \right) + 2 \rceil \right)$, as desired.
\end{proof}

%\section{The tower $\mathrm{TR}(\mathbb{Z}/p^n)$}
%\input{almostTRzpn.tex}

%\section{Pro-constancy for $\THH$-bounded below rings}
%\input{ZtoS.tex}

\section{The multiplicative $p$-adic continuity of $\mathrm{TR}$}\label{sec:trcont}

Throughout this section, we fix an $m\geq 1$ and $c \geq 2$ so that $\mathbb{S}/p^c$ is an $\mathbb{E}_m$-algebra Moore spectrum and that $\mathbb{S}/p^{i}$ is a tower of $\mathbb{E}_{m+1}$-algebras for $i\geq b$. For any choice of $m$, such choices can be made by \cite{burklund2022multiplicative}. Our main result is the following:

\begin{thm} \label{thm:THHcrysmain}
For each $k \ge 0$, let $q = 2^k-1$. Then the tower of $\mathbb{E}_m$-$\tau_{\leq q}^{\cyc}\SP$-algebras in cyclotomic spectra
\[\{i \mapsto \THH(\mathbb{S}/p^{\max(i,b)})\otimes (\tau_{\leq q}^{\cyc}\mathbb{S})/p^c)\}\]
is rapidly converging to $\tau^{\mathrm{cyc}}_{\le q} (\mathbb{S})/p^c$ at rate $2^k(2c+1)$ and constant $b$.
\end{thm}

Tensoring with $\THH(R)$, applying $\TR$, and truncating (using \cite{antieau2021cartier}), we get:
\begin{cor}
	Let $R$ be an $\EE_{m+1}$-algebra with $\THH(R)$ bounded below.
	For each $k \ge 0$, let $q = 2^k-1$. Then the tower of $\mathbb{E}_m$-algebras in topological Cartier modules
	\[\{i \mapsto \tau_{\leq q}(\TR(R/p^{\max(i,b)})/p^c)\}\]
	is rapidly converging to $\tau_{\leq q}(\TR(R)/p^c)$ at rate $2^k(2c+1)$ and constant $b$.
\end{cor}

We note that we don't try to optimize the rate of convergence in our proof.

The key ingredient in the proof of \Cref{thm:THHcrysmain} is the following proposition.

%\begin{prop} \label{prop:THHcryspizero}
%The tower of cyclotomic spectra %for $n\geq 2c+1$
%\[ \{\THH(\mathbb{S}/p^n) 
%\otimes
%(\pi_0^{\cyc} \mathbb{S}) /p^c \}\]%_{n \ge ?}\]
%is rapidly converging at rate $2c+1$ and constant $0$ to $(\pi_0^{\cyc} \mathbb{S})/p^c$.
%\end{prop}
%
%We note that the natural map $\pi_0^{\cyc} \mathbb{S}_p \to \pi_0^{\cyc} \THH(\mathbb{Z}_p)$ is an equivalence, since both $\TR(\mathbb{Z}_p)$ and $\TR(\mathbb{S}_p)$ have bottom homotopy group $W(\mathbb{Z}_p)$.  Thus, we may rewrite the above proposition in the following more convenient form:

\begin{prop}\label{prop:THHbasecase}
 For any $m\geq 0$, the tower
  \[\{\THH(\mathbb{Z}/p^{n})\otimes_{\THH(\ZZ_p)} (\pi_0^{\cyc} \THH(\mathbb{Z}_p))/p^c \}\]
  is rapidly converging at rate $2c+1$ and constant $0$ to $(\pi_0^{\cyc} \THH(\mathbb{Z}_p)) / p^c$, as a tower of $\mathbb{E}_m$-$\pi_0^{\cyc}{\SP}$-algebras in cyclotomic spectra.
\end{prop}

%As a corollary, we obtain rapid convergence at the level of cyclotomic spectra.
%\begin{cor} \label{cor:main-unstructured-rapid-convergence}
%	For any fixed integer $q \ge 0$, as $n$ varies the tower of cyclotomic spectra
%	\[\{\tau^{\mathrm{cyc}}_{\le q} (\mathbb{S}) /p^c \otimes \THH(\mathbb{S}/p^n) \}\]
%	rapidly converges as a tower of $\tau_{\leq q}^{\cyc}\SP$-modules in cyclotomic spectra at rate $(q+1)(2c+1)$ and constant $0$.
%\end{cor}
%
%\begin{proof}
%	We prove this by induction on $q$ that, for $A=\tau^{\mathrm{cyc}}_{\le q} (\mathbb{S})$, the tower $\{A \otimes \THH(\mathbb{S}/p^n) / p^c\}$ is rapidly converging in the stably symmetric monoidal, idempotent-complete $\infty$-category of cyclotomic spectra.  
%	
%	The base case $q=0$ follows from \Cref{prop:THHbasecase}. The inductive argument follows from \Cref{lem:rapconvorientedpullback}, the fiber sequence 
%	$\Sigma^{q} \pi_q^{\heartsuit} \mathbb{S} 
%	\to \tau^{\mathrm{cyc}}_{\le q} (\mathbb{S}) 
%	\to \tau^{\mathrm{cyc}}_{\le q-1} (\mathbb{S})$,
%	and the fact that $\pi_q^{\heartsuit} \mathbb{S}$ is a $\pi_0^{\cyc} \mathbb{S}$-module in cyclotomic spectra.
%\end{proof}
We prove the main theorem of the section as a consequence of the above proposition:

\begin{proof}[Proof of \Cref{thm:THHcrysmain}] We prove by induction on $k\geq 0$ that if $q = 2^k-1$, the tower $$ \THH(\SP/p^n) \otimes R \otimes \SP/p^c$$ rapidly converges at rate $(q+1)(2c+1)$ and constant $b$ for every $\EE_m$-$\tau_{\leq q}\SP$-algebra $R$.
  Note that it suffices to prove this in the case $R = \tau_{\leq q} \Ss$, as for general $R$ we can base change along the unit map $\tau_{\leq q} \Ss \to R$.

  For $q=0$, the result follows from \Cref{prop:THHbasecase} and the equivalences of $\EE_{m}$-$\pi_0^{\cyc}\SP$-algebras in cyclotomic spectra for $n\geq b$:
  \begin{align*}
    \THH(\mathbb{Z}/p^{n}) \otimes_{\THH(\ZZ_p)} (\pi_0^{\cyc} \THH(\mathbb{Z}_p))/p^c
    &\simeq \THH(\Ss/p^{n}) \otimes (\pi_0^{\cyc} \THH(\mathbb{Z}_p))/p^c \\
    &\simeq \THH(\Ss/p^n) \otimes \pi_0 ^{\cyc} \Ss \otimes \Ss/p^c.
  \end{align*}
  In the second equivalence, we use the natural equivalence $\pi_0 ^{\cyc} \Ss_p \cong \pi_0 ^{\cyc} \THH(\Ss_p) \cong \pi_0 ^{\cyc} \THH(\Z_p)$.

	For the inductive step, consider the cospan
	$$\tau_{\leq \frac{q-1}2}^{\cyc}\SP \oplus \Sigma \tau_{\leq q}^{\cyc}\tau_{\geq \frac {q+1}2}^{\cyc}\SP \leftarrow \tau_{\leq {\frac {q-1}2}}^{\cyc}\SP \rightarrow \tau_{\leq \frac{q-1}2}^{\cyc}\SP \oplus \Sigma \tau_{\leq q}^{\cyc}\tau_{\geq \frac {q+1}2}^{\cyc}\SP$$ realizing $\tau_{\leq q}^{\cyc}\SP$ as a square-zero extension of $\tau_{\leq \frac{q-1}{2}}^{\cyc}\SP$. Note that each term in the pullback is individually an $\EE_{\infty}$-$\tau_{\leq {\frac {q-1}2}}^{\cyc}\SP$-algebra. After tensoring this cospan with $\THH(\SP/p^{\max(n,b)})\otimes \SP/p^c$, it follows from the induction hypothesis that each term in the cospan is rapidly converging of rate $2^{k-1}(2c+1)$ with constant $b$. It follows from \Cref{lem:rapconvorientedpullback} that the pullback is rapidly converging at rate $2^{k}(2c+1)$ with constant $b$, which is the desired statement.
\end{proof}

Before giving the proof of \Cref{prop:THHbasecase}, we establish a few lemmas. If $R$ is a discrete commutative ring, we denote by $W(R)$ the ring of $p$-typical Witt vectors on $R$, and by $\mathbb{W}(R)$ the ring of big Witt vectors on $R$.

\begin{lem}\label{lem:wittdivis}
Suppose $a,n$ are integers such that $n\ge 2c+1$.  Then any element in the kernel of the map
\[\mathbb{W}(\mathbb{Z}/p^{n+1}) \to \mathbb{W}(\mathbb{Z}/p^{n})\]
is divisible by $p^c$.  In particular, any element in the kernel of the map
\[W(\mathbb{Z}/p^{n+1}) \to W(\mathbb{Z}/p^{n})\]
is divisible by $p^c$.
%\todo{This is probably suboptimal, but explicit... do we care?  Anyone is welcome to try to make it more optimal if they care}
\end{lem}

\begin{proof}
For $\mathbb{Z}_{(p)}$-algebras $R$, $W(R)$ is a functorial summand of $\mathbb{W}(R)$ \cite[14.21]{HazewinkelWitt}, so the second assertion of the lemma follows from the first.  To prove the first sentence of the lemma, recall that as an abelian group $\mathbb{W}(R)$ is isomorphic to the multiplicative monoid of power series 
\[\mathbb{W}(R) \cong (1+R\llbracket t \rrbracket)^{\times}.\]
From this point of view, we need to check that any power series $1+r_1 t + r_2 t^2 + \cdots$, with each $r_i$ a multiple of $p^{n}$ in $\mathbb{Z}/p^{n+1}$, is the $(p^c)$th power of some other power series.

In fact, the power series $(1+\frac{r_1}{p^c} t + \frac{r_2}{p^c} t^2 + \cdots)$ will do.  The fact that $n \ge 2c+1$ ensures that any product $(\frac{r_i}{p^c})(\frac{r_j}{p^c})$ is trivial modulo $p^{n+1}$, so
\[\left(1+\frac{r_1}{p^c} t + \frac{r_2}{p^c} t^2 + \cdots\right)^{p^c} = (1+r_1 t + r_2 t^2 + \cdots)\]
as desired.
\end{proof}

\begin{rmk} \label{rmk:wittdivis}
As a corollary of the above lemma, as long as $n \ge 2c+1$ every element in the kernel of the surjective map
\[W(\mathbb{Z}_p) \to W(\mathbb{Z}/p^n)\]
is divisible by $p^c$.
\end{rmk}

\begin{lem}\label{lem:retbase}
For $n \ge 2c+1$, there exists a retraction of $\mathbb{E}_m$-$\pi_0^{\cyc}\SP/p^c$-algebras in cyclotomic spectra
\[
\begin{tikzcd}
  (\pi_0^{\cyc}\THH(\mathbb{Z}_p))/p^c \arrow{r} &   \THH(\mathbb{Z}/p^n) \otimes_{\THH(\mathbb{Z}_p)} (\pi_0^{\cyc}\THH(\mathbb{Z}_p))/p^c  \arrow[dashed, shift right=1ex]{l}
\end{tikzcd}
\]
\end{lem}

\begin{proof}
  Note that $\pi_0^{\cyc}\THH(\mathbb{Z}_p)$ is a cyclotomic spectrum with $\TR$ the discrete algebra in $p$-typical Cartier modules $W(\mathbb{Z}_p)$.  Since $W(\mathbb{Z}_p)$ is $p$-torsion free, $(\pi_0^{\cyc}\THH(\mathbb{Z}_p))/p^c$ is also discrete.

To produce the desired retraction it therefore suffices to produce one of the form
\[
\begin{tikzcd}
  \pi_0^{\cyc}\THH(\mathbb{Z}_p)/p^c \arrow{r} & \tau^{\mathrm{cyc}}_{\le 0} \left(\THH(\mathbb{Z}/p^n) \otimes_{\THH(\mathbb{Z}_p)} (\pi_0^{\cyc}\THH(\mathbb{Z}_p))/p^c \right)  \arrow[dashed, shift right=1ex]{l}
\end{tikzcd}
\] 
Translating to $p$-typical Cartier modules \cite{antieau2021cartier}, we must prove that the map $W(\mathbb{Z}_p) \to W(\mathbb{Z}/p^n)$ admits a section after taking the underived quotient by $p^c$.  By \Cref{lem:wittdivis}, after taking the underived quotient by $p^c$ this map becomes an isomorphism.
\end{proof}

\begin{cor} \label{cor:Zalgrapidconv}
Suppose $n \ge 2c+1$ and let $m\geq 0$. Then the functor 
  \[R \mapsto  \THH(R) \otimes_{\THH(\mathbb{Z}_p)} (\pi_0^{\cyc} \THH(\mathbb{Z}_p))/p^c,\]
from $\mathbb{E}_{m+1}$-$\mathbb{Z}_p$-algebras to $\mathbb{E}_{m}$-algebras in $\pi_0^{\cyc}\SP$-modules in cyclotomic spectra,
factors through the functor 
\[R \mapsto R/p^n\]
from $\mathbb{E}_{m+1}$-$\mathbb{Z}_p$-algebras to $\mathbb{E}_{m+1}$-$\mathbb{Z}/p^n$-algebras.
\end{cor}

\begin{proof}
%This functor is given by tensoring the $\mathbb{E}_m$-$\THH(\mathbb{Z}_p)$-algebra $\THH(R)$ along the $\mathbb{E}_m$-algebra map $\THH(\mathbb{Z}_p) \to \THH(\mathbb{Z}_p) / p^c$. 
  We may consider the functor on $\EE_m$-$\ZZ/p^n$-algebras $$R \mapsto \THH(R) \otimes_{\THH(\ZZ/p^n)} (\pi_0^{\cyc}\THH(\ZZ_p))/p^c$$
  where the map ${\THH(\ZZ/p^n)}\to (\pi_0^{\cyc}\THH(\ZZ_p))/p^c$ comes from the retraction in \Cref{lem:retbase}. Because $\THH(R/p^n) \cong \THH(R)\otimes_{\THH(\ZZ_p)}\THH(\ZZ/p^n)$, the fact that the map in \Cref{lem:retbase} is a retraction gives the desired factorization of functors.
\end{proof}

\begin{proof}[Proof of \Cref{prop:THHbasecase}]
  Applying \Cref{cor:Zalgrapidconv}, we see that it suffices to prove that the tower of $\mathbb{E}_{m+1}$-$\mathbb{Z}_p/p^{2c+1}$-algebras
$(\mathbb{Z}/p^{\bullet}) / p^{2c+1}$ is rapidly converging at rate $2c+1$ and constant $0$.
  This is a consequence of \Cref{lem:rapidconvergencefromcrystallinity}.
\end{proof}

\section{Crystallinity for topological Hochschild homology and algebraic $K$-theory}\label{sec:crysthhk}
In this section, we use \Cref{thm:THHcrysmain} to prove the generalized crystallinity result \Cref{thm:general-crys}, here proven as \Cref{thm:genmooregeneralcrystallinity} and \Cref{cor:ktheorycrys}.

\begin{prop}\label{prop:genmooreretract}
  Suppose that we are given $q = 2^k-1$, a $\mathbb{E}_{m}$-algebra structure on $\Ss/p^c$, a tower of $\mathbb{E}_{m+1}$-algebras
  \[\dots \to \Ss/p^{b+2} \to \Ss/p^{b+1} \to \Ss/p^{b},\]
  and an $\mathbb{E}_{m+1}$-map
  \[\Ss/p^{b+i_0} \to W = \Ss / (p^{b+i_0}, \dots, v_n ^{i_n}).\]
  If
  $i_0 \geq (q+1)(2c+1)$
  and
  $i_\ell \geq \frac{q+2}{2p^\ell-2},$
  for all $1 \leq \ell \leq n$,
  then the map
  \[(\tau^{\mathrm{cyc}}_{\le q}\mathbb{S})/p^c \to \THH(\mathbb{S}/(p^{b+i_0}, v_1 ^{i_1}, \dots, v_n ^{i_n})) \otimes (\tau^{\mathrm{cyc}}_{\le q}\mathbb{S})/p^c \]
  of $\mathbb{E}_{m}$-rings in cyclotomic spectra admits a retraction.
\end{prop}

\begin{proof}
  By \Cref{thm:THHcrysmain}, we know that the tower of $\mathbb{E}_{m}$-rings in cyclotomic spectra
    \[\{i \mapsto \THH(\mathbb{S}/p^{\mathrm{max}(i,b)}) \otimes (\tau^{\mathrm{cyc}}_{\le q}\mathbb{S})/p^c \}_i\]
  is rapidly converging at rate $(q+1)(2c+1)$ and constant $b$.
  Thus the map
  \[\tau^{\mathrm{cyc}}_{\le q}\mathbb{S} / p^c \to \THH(\mathbb{S}/p^{b+i_0}) \otimes (\tau^{\mathrm{cyc}}_{\le q}\mathbb{S})/p^c\]
  admits a cyclotomic $\mathbb{E}_{m}$-retraction for any $i_0 \geq (q+1)(2c+1)$ by definition.

  To show that 
  \[(\tau^{\mathrm{cyc}}_{\le q}\mathbb{S})/p^c \to \THH(\mathbb{S}/(p^{b+i_0}, v_1 ^{i_1}, \dots, v_n ^{i_n})) \otimes (\tau^{\mathrm{cyc}}_{\le q}\mathbb{S})/p^c \]
  admits an $\mathbb{E}_{m}$-retraction in cyclotomic spectra, we first note that because $(\tau^{\mathrm{cyc}}_{\le q}\mathbb{S})/p^c$
  is $(q+1)$-truncated in the cyclotomic $t$-structure, we may replace $\THH(\mathbb{S}/(p^{b+i_0}, v_1 ^{i_1}, \dots, v_n ^{i_n}))$ by $\tau_{\leq q+1} ^{\mathrm{cyc}} \THH(\mathbb{S}/(p^{b+i_0}, v_1 ^{i_1}, \dots, v_n ^{i_n}))$.
  Now, the fiber of the map
  \[\Ss/p^{b+i_0} \to \Ss / (p^{b+i_0}, \dots, v_n ^{i_n})\]
  is $\min_{1 \leq \ell \leq n} (2i_\ell (p^{\ell}-1))$-connective, which implies that the fiber of
  \[\THH(\Ss/p^{b+i_0}) \to \THH(\Ss / (p^{b+i_0}, \dots, v_n ^{i_n}))\]
  is $\min_{1 \leq \ell \leq n} (2i_\ell (p^{\ell}-1))$-connective in the cyclotomic $t$-structure.
  In particular, this map induces an isomorphism after applying $\tau^{\mathrm{cyc}}_{\leq q+1}$ if we assume that $\min_{1 \leq \ell \leq n} (2i_\ell (p^{\ell}-1)) \geq q+2$.
  In conclusion, we require the bounds
  $i_0 \geq (q+1)(2c+1)$
  and
  $2i_\ell (p^{\ell}-1) \geq q+2$
  for $1 \leq \ell \leq n$,
  and these are exactly what appear in the statement of the proposition.
\end{proof}

%	In the proof of the theorem below, it is convenient to use a standard $t$-structure on module categories in cyclotomic spectra, which we now define.
%	For $B$ an $\EE_1$-ring in cyclotomic spectra, the category $\Mod_{\CycSp}(B)$ has a standard $t$-structure where the connective objects are those of the form $B\otimes X$ for $X$ a connective cyclotomic spectrum. An object is coconnective if it is coconnective on underlying, which by \cite[Theorem 3.21]{antieau2021cartier} can be checked after applying $\TR$.

\begin{thm}\label{thm:genmooregeneralcrystallinity}
	Let $c,b,q=2^k-1, m\geq 1,\SP/p^c, W=\Ss / (p^{b+i_0}, \dots, v_n ^{i_n})$ be as in \Cref{prop:genmooreretract}.
	Suppose that $R$ is an $\mathbb{E}_{m+1}$-algebra for $m\geq1$, $n \ge 0$ an integer, and let $\Ss/p^c \to V$ denote an $\EE_{m}$-map where $V$ is a type $n+2$-complex such that $\TR(R) \otimes  V$ is $q$-truncated.
	
	Then the functor $\Alg_{\EE_m}(\Mod(R))\to \Alg_{\EE_{m-1}}(\Mod_{\CycSp}((\tau^{\cyc}_{\leq q}\SP)/p^c))$ 
	\[A \mapsto \THH(A) \otimes V\]
	factors through the functor $\Alg_{\EE_m}(\Mod(R))\to \Alg_{\EE_m}(\Mod(R\otimes W))$
	\[A \mapsto A \otimes W.\]
\end{thm}

\begin{proof}
	The functor $A \mapsto \THH(A)\otimes V$ in the theorem statement factors through $\Alg_{\EE_{m-1}}(\Mod_{\CycSp}(\THH(R)\otimes V))$.
	Since $\TR(R) \otimes V$ is $q$-truncated as a spectrum, there is an $\EE_m$-algebra map given as the composite $(\tau_{\le q}^{\cyc}\SP)/p^c \to \tau_{\leq q}^{\cyc} (\SP/p^c) \to \tau_{\geq0}(\THH(R)\otimes V) \to \THH(R)\otimes V$ and hence a forgetful functor $$\Alg_{\EE_{m-1}}(\Mod_{\CycSp}(\THH(R)\otimes V)) \to \Alg_{\EE_{m-1}}(\Mod_{\CycSp}((\tau_{\leq q}^{\mathrm{cyc}} \Ss)/p^c).$$
	
	Thus using the retraction in \Cref{prop:genmooreretract}, we learn that there is a natural equivalence of $\EE_{m-1}$-$(\tau_{\le q}^{\cyc}\SP)/p^c$-algebras
  $$(\THH(A\otimes W)\otimes V)\otimes_{\THH(W)\otimes (\tau_{\le q}^{\cyc}\SP)/p^c}(\tau_{\le q}^{\cyc}\SP)/p^c \cong \THH(A)\otimes V$$
	Since the left hand side only depends on $A\otimes W$ in $\Alg_{\EE_m}(\Mod(R\otimes W))$, we are done.
\end{proof}

\begin{rmk}
	In the above theorem, given an $\EE_{m+1}$-algebra $R$ satisfying the height $n$ Lichtenbaum--Quillen property \cite[Definition 4.2]{telescope}, it is always possible to find $V,W,c,b,q$ to make the above theorem apply using \cite{burklund2022multiplicative}.
	
	As some examples, \Cref{thm:genmooregeneralcrystallinity} applies in case of the $\EE_3$-ring $R=\BP\langle n-1 \rangle$ by \cite{hahn2022redshift}.
\end{rmk}

We are also above to prove a version of crystallinity for algebraic $K$-theory. In this case, we need to restrict our input to connective algebras.

\begin{cor} \label{cor:ktheorycrys}
	Let $c,b,q=2^k-1, m\geq 1,\SP/p^c, W=\Ss / (p^{b+i_0}, \dots, v_n ^{i_n})$ be as in \Cref{prop:genmooreretract}.
	Suppose that $R$ is a connective $\mathbb{E}_{m+1}$-algebra for $m\geq2$, $n \ge 0$ an integer, and let $\Ss/p^c \to V$ denote an $\EE_{m}$-map where $V$ is a type $n+2$-complex such that $\TR(R) \otimes  V$ is $q$-truncated. Let $\Alg_{\EE_m}'(\Mod(R)_{\geq 0})$ be the category of $p$-complete connective $\EE_m$-$R$-algebras such that either $p$ is nilpotent, or $\pi_0$ is commutative\footnote{This is automatic for $m\geq2$}.
	
  Then the functor $\Alg_{\EE_m}' (\Mod(R)_{\geq 0})\to \Alg_{\EE_{m-1}} (\mathrm{Sp}_{\geq 0})$ 
	\[A \mapsto K(A) \otimes V\]
  factors through the functor $\Alg_{\EE_m}(\Mod(R)_{\geq 0})\to \Alg_{\EE_m}(\Mod(R\otimes W)_{\geq 0})$
	\[A \mapsto A \otimes W.\]
\end{cor}

\begin{proof}
  Consider the diagram:
  \begin{center}
    \begin{tikzcd}
      K(A) \otimes V \ar[r] \ar[d] & \TC(A) \otimes V \ar[d] \\
      K(A \otimes W) \otimes V \ar[r] \ar[d] & \TC(A \otimes W) \otimes V \ar[d] \\
      K(\pi_0 (A) / p) \otimes V \ar[r] & \TC(\pi_0 (A) / p) \otimes V
    \end{tikzcd}
  \end{center}
  Here, we have used the fact that $\pi_0 (A) / p \to \pi_0 (A \otimes W) / p$ is an isomorphism.
  Now, the bottom square is Cartesian by \cite{DGM} and the outer square is Cartesian by \cite{DGM} and in the case $\pi_0$ is commutative also by \cite[Theorem 1.5]{CMM}, since $A$ is $p$-complete, hence $\pi_0 A$ is derived $p$-complete, hence $(p) \subset \pi_0A$ is Henselian.
  As a consequence, the upper square is also Cartesian.

  Now, the middle horizontal arrow clearly factors through $A \mapsto A \otimes W$.
  It therefore suffices to show that the top right vertical arrow factors through $A \mapsto A \otimes W$.
  It follows from \Cref{thm:genmooregeneralcrystallinity} that this arrow only depends on the map $A \otimes W \to A \otimes W \otimes W$ of $\E_m$-algebras over $R \otimes W$.
  We conclude by noting that, up to transposing the two factors of $W$ in the target, this may be identified with the map $B \to B \otimes W$ that exists for any $\E_m$-$(R \otimes W)$-algebra $B$.
\end{proof}

\section{Invariants of $R \langle \epsilon \rangle$}\label{sec:epsilon}
By \Cref{thm:mainintro}, we can compute the mod $(p,v_1^{p^n})$ syntomic cohomology of any animated ring $R$ as a functor of the (derived) quotient $R/p^{n+2}$.  If $R=\mathbb{Z}/p^m$ for $m \ge n+2$, then $R/p^{n+2} \cong \mathbb{Z} \langle \epsilon \rangle / p^{n+2}$, where $\mathbb{Z} \langle \epsilon \rangle$ is defined as follows:

\begin{dfn}
We let $\mathbb{Z} \langle \epsilon \rangle$ denote $\mathbb{Z} \otimes_{\mathbb{Z}[x]} \mathbb{Z}$, where $x \mapsto 0$ along both augmentations.  In other words, $\mathbb{Z} \langle \epsilon \rangle$ is the free animated ring on a degree $1$ class.
\end{dfn}

In particular, for $m \ge n+2$, the mod $(p,v_1^{p^n})$ syntomic cohomology of $\mathbb{Z}/p^m$ is the same as the mod $(p,v_1^{p^n})$ syntomic cohomology of $\mathbb{Z} \langle \epsilon \rangle$.  We will spend the rest of the paper computing the mod $(p,v_1^{p^n})$ syntomic cohomology of $\mathbb{Z} \langle \epsilon \rangle$ and drawing consequences for algebraic $K$-theory.

To calculate the syntomic cohomology of $\mathbb{Z} \langle \epsilon \rangle$ by quasi-syntomic descent, in this section we will understand the syntomic cohomology of $R \langle \epsilon \rangle = R \otimes_{\mathbb{Z}} \mathbb{Z}\langle \epsilon \rangle$ for quasiregular semiperfectoid rings $R$. We will first recall a classical decomposition of $\THH(R \langle \epsilon \rangle)$ in the category of cyclotomic spectra, which straightforwardly implies the following theorem:

\begin{thm} \label{thm:classical-tc-epsilon}
For any animated ring $R$, there is an equivalence:
\[\displaystyle \TC(R \langle \epsilon \rangle)_p^{\wedge} \simeq \TC(R)_p^{\wedge} \oplus \bigoplus_{p \nmid \ell} \TR (R;\Sigma^{2\ell} R)_p^{\wedge}.\]
Here, the sum ranges over all positive integers $\ell$ not divisible by $p$, and $\TR (R;\Sigma^{2 \ell} R)$ refers to $\langle p \rangle$-polygonic $\TR$ with coefficients in the bimodule $\Sigma^{2\ell} R$.
\end{thm}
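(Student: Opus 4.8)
The plan is to deduce \Cref{thm:classical-tc-epsilon} from a classical cyclotomic splitting of $\THH$ of a square-zero extension. First I would identify $\Z\langle\epsilon\rangle$ with the trivial square-zero extension $\Z\oplus\Sigma\Z$: its homotopy ring is the exterior algebra $\Lambda_{\Z}(\epsilon)$ on a degree-$1$ class, and the multiplication $\Sigma\Z\otimes_{\Z}\Sigma\Z=\Sigma^{2}\Z\to\Z\langle\epsilon\rangle$ is forced to vanish for connectivity reasons, so $\Z\langle\epsilon\rangle\simeq\Z\oplus\Sigma\Z$ as animated rings. Tensoring over $\Z$ with $R$ gives $R\langle\epsilon\rangle\simeq R\oplus\Sigma R$, the trivial square-zero extension of $R$ by the invertible $R$-bimodule $M=\Sigma R$. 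It therefore suffices to compute $p$-completed $\TC$ of an arbitrary trivial square-zero extension $R\oplus M$ with $M=\Sigma R$.

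Next I would invoke the Lindenstrauss--McCarthy decomposition, in the form of a splitting of cyclotomic spectra
\[
\THH(R\oplus M)\;\simeq\;\bigoplus_{n\ge 0}\bigl(\THH(R;M^{\otimes_R n})\bigr)_{hC_n},
\]
where $C_n\subset S^{1}$ acts on $M^{\otimes_R n}$ by cyclically permuting (and rotating) the tensor factors, and the $n=0$ summand is $\THH(R)$ with its standard cyclotomic structure. The key structural input is that the cyclotomic Frobenius respects the weight grading: for $p\mid n$ it identifies a suitable $C_p$-Tate construction on the weight-$n$ summand with the weight-$(n/p)$ summand, and it carries the collection of weights prime to $p$ to itself. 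Consequently, after $p$-completion the positive-weight part of $\THH(R\oplus M)$ splits, as a cyclotomic spectrum, into a direct sum indexed by the integers $\ell\ge 1$ with $p\nmid\ell$; the $\ell$-th piece $X_\ell$ is the subobject supported on the weights $\{\ell p^{k}:k\ge 0\}$, which the Frobenius threads together into a single $\langle p\rangle$-polygonic spectrum.

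Then I would read off $\TC$ summand by summand. Using the Nikolaus--Scholze formula $\TC(-;p)=\fib\bigl((-)^{hS^{1}}\to((-)^{tC_p})^{hS^{1}}\bigr)$, the weight-$0$ summand contributes $\TC(R)^{\wedge}_{p}$. For a fixed $\ell$ with $p\nmid\ell$, the polygonic spectrum $X_\ell$ is, by its very construction, an instance of the datum to which the theory of polygonic spectra (Krause--McCandless--Nikolaus) attaches $\langle p\rangle$-polygonic $\TR$ with coefficients in a bimodule: its Frobenius tower $\cdots\to\bigl(\THH(R;M^{\otimes_R\ell p})\bigr)_{hC_{\ell p}}\to\bigl(\THH(R;M^{\otimes_R\ell})\bigr)_{hC_\ell}$ is the tower whose limit computes $\TR(R;N)$, and one gets $\TC(X_\ell;p)\simeq\TR(R;N)^{\wedge}_{p}$ for the relevant bimodule $N$. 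Tracking the suspensions that accumulate --- one for each of the $\ell$ tensor factors $\Sigma R$ in $M^{\otimes_R\ell}$, together with $\ell$ further suspensions coming from the cyclic-word/$S^{1}$-rotation direction encoded in the polygonic structure --- identifies $N$ with $\Sigma^{2\ell}R$. Finally, as a spectrum $\THH(R;M^{\otimes_R n})\simeq\Sigma^{n}\THH(R)$ is $n$-connective, and $\TR$ of a connective ring with connective coefficients is connective, with connectivity growing in $\ell$; hence in each degree only finitely many summands contribute, $\TC(-;p)$ commutes with the direct sum, and summing over $\ell$ yields the asserted equivalence.

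The step I expect to be the main obstacle is this last identification: reconciling the cyclotomic structure maps produced by the Lindenstrauss--McCarthy splitting with the structure maps defining $\langle p\rangle$-polygonic $\TR$ with coefficients, and in particular pinning down the total suspension on the coefficient bimodule to be exactly $\Sigma^{2\ell}$ rather than off by one. The reduction to the trivial square-zero extension, the identification of the weight-$0$ summand, and the interchange of $\TC$ with the direct sum are all formal once the classical splitting and the polygonic machinery are in hand.
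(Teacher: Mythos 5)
Your argument follows the paper's proof essentially verbatim: identify $R\langle\epsilon\rangle$ with the trivial square-zero extension $R\oplus\Sigma R$, apply the Lindenstrauss--McCarthy splitting of $\THH$ with its weight-multiplying cyclotomic Frobenius, group the weights into the orbits $\{\ell p^k\}_{k\ge 0}$ with $p\nmid\ell$, recognize each orbit's contribution to $\TC$ as $\langle p\rangle$-polygonic $\TR(R;\Sigma^{2\ell}R)$, and use the growing connectivity to commute $\TC$ past the sum. The only cosmetic differences are that the paper states the splitting in the coinduced form $\THH(R)\otimes\Hom((S^1/C_k)_+,(\Sigma^{2}R)^{\otimes k})$, so the coefficient $\Sigma^{2\ell}R$ and the Frobenius on each summand are pinned down on the nose by \cite[Lemma 4.8.1]{SamDGM} and \cite[Theorem 6.31]{polygonicspectra} (the suspension bookkeeping you flag as the main obstacle), and that it establishes the square-zero identification via the universal property of the free animated $R$-algebra on a degree-one class rather than by a connectivity argument.
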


\begin{rmk}
Since any animated ring $R$ is connective, for each $\ell > 0$ the homotopy groups of $\TR (R;\Sigma^{2\ell} R)_p^{\wedge}$ are concentrated in degrees $2\ell-1$ and above. In particular, the infinite sum appearing in \Cref{thm:classical-tc-epsilon} is also an infinite product, and therefore $p$-complete.
\end{rmk}

Next, we observe that the above decomposition interacts well with the Bhatt--Morrow--Scholze motivic filtration.  In particular:

\begin{thm} \label{thm:mot-fil-epsilon-main}
Suppose that $R$ is a qrsp ring.  Then, functorially in $R$, $\fil^*_{\mot} \TC(R \langle \epsilon \rangle)_p^{\wedge}$ is the equalizer of the diagram of filtered $\mathbb{E}_{\infty}$-rings 
\[
\begin{tikzcd}[column sep=huge]
\tau_{\ge 2*} \TC^{-}(R\langle \epsilon \rangle)_p^{\wedge} \arrow[shift left]{r}{\tau_{\ge 2*} \varphi^{hS^1}} \arrow[shift right, "\tau_{\ge 2*} \mathrm{can}" below]{r} & \tau_{\ge 2*} \TP(R\langle \epsilon \rangle)_p^{\wedge}.
\end{tikzcd}
\]
Explicitly, defining
\[\fil^* _{\mot} \TR(R;\Sigma^{2\ell} R)_p^{\wedge}\]
to be the limit of the diagram
\[
\begin{tikzcd}[column sep=-14ex]
& \tau_{\ge 2*} \left(\THH(R;(\Sigma^{2\ell}R)^{\otimes p})^{tC_p}\right)_p^{\wedge} && \tau_{\ge 2*} \left(\THH(R;(\Sigma^{2\ell}R)^{\otimes p^2})^{tC_{p^2}}\right)_p^{\wedge} && \cdots \\
\tau_{\ge 2*}\THH(R;\Sigma^{2\ell} R)_p^{\wedge} \arrow{ur}{\varphi} & & \tau_{\ge 2*} \left(\THH(R;(\Sigma^{2\ell}R)^{\otimes p})^{hC_p}\right)_p^{\wedge} \arrow{ur}{\varphi^{hC_p}} \arrow{ul}{\mathrm{can}} && \tau_{\ge 2*} \left(\THH(R;(\Sigma^{2\ell}R)^{\otimes p^2})^{hC_{p^2}}\right)_p^{\wedge} \arrow{ur}{\varphi^{hC_{p^2}}} \arrow{ul}{\mathrm{can}}
\end{tikzcd}
\]
of nonunital filtered $\mathbb{E}_{\infty}$-rings, there is a decomposition
\[\fil^*_{\mot} \TC(R \langle \epsilon \rangle)_p^{\wedge} \cong \fil^*_{\mot} \TC(R)_p^{\wedge} \oplus \bigoplus_{p \nmid \ell} \fil^* _{\mot} \TR(R;\Sigma^{2\ell} R)_p^{\wedge}.\]
\end{thm}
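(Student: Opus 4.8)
The plan is to upgrade the classical Lindenstrauss--McCarthy splitting of $\THH(R\langle\epsilon\rangle)$ --- the input to \Cref{thm:classical-tc-epsilon} --- to a splitting of the Bhatt--Morrow--Scholze motivic filtration, and then to identify each summand. Write $R\langle\epsilon\rangle\simeq R\oplus\Sigma R$ as the trivial square-zero extension by the bimodule $\Sigma R$, so that $\THH(R\langle\epsilon\rangle)$ carries a weight grading $\bigoplus_{w\ge 0}\THH(R\langle\epsilon\rangle)_{(w)}$ with $\THH(R\langle\epsilon\rangle)_{(0)}=\THH(R)$ and with $\THH(R\langle\epsilon\rangle)_{(w)}$, for $w\ge 1$, an $S^1$-equivariant form of $\THH(R;(\Sigma R)^{\otimes_R w})$. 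The cyclotomic Frobenius $\varphi_p$ carries weight $w$ into weight $pw$, so while the weight grading itself is not a decomposition of cyclotomic spectra, the coarsening into the weight-$0$ summand $\THH(R)$ and, for each $\ell$ with $p\nmid\ell$, the block $X^{(\ell)}\coloneqq\bigoplus_{j\ge 0}\THH(R\langle\epsilon\rangle)_{(\ell p^j)}$ is a decomposition of $p$-complete, uniformly bounded-below cyclotomic spectra (the infinite sum over $\ell$ is also a product, as $X^{(\ell)}$ is $\ell$-connective). Applying $\TC$ recovers \Cref{thm:classical-tc-epsilon}, with $\TC(X^{(\ell)})_p^\wedge\simeq\TR(R;\Sigma^{2\ell}R)_p^\wedge$; the task is to carry this out with motivic filtrations present throughout.

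The mechanism that makes the filtered refinement go through is an evenness statement. Since $R$ is qrsp, $\THH(R)_p^\wedge$ is concentrated in even degrees, and hence so are $(\THH(R)_p^\wedge)^{hC_{p^k}}$ and $(\THH(R)_p^\wedge)^{tC_{p^k}}$ --- for the fixed-point and Tate constructions this is because $H^*(C_{p^k};-)$ and $\widehat H^*(C_{p^k};-)$ are concentrated in even cohomological degrees, leaving no room for differentials in the relevant spectral sequences. Consequently every spectrum in the tower defining $\fil^*_{\mot}\TR(R;\Sigma^{2\ell}R)$ in the statement --- namely $(\THH(R;(\Sigma^{2\ell}R)^{\otimes p^k})^{hC_{p^k}})_p^\wedge\simeq\Sigma^{2\ell p^k}(\THH(R)^{hC_{p^k}})_p^\wedge$ and its Tate analogue --- is even, precisely because the bimodule enters through the \emph{even} suspension $\Sigma^{2\ell}$. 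On an even object the motivic filtration is the double-speed Postnikov filtration $\tau_{\ge 2*}$, and since $\tau_{\ge 2n}$ is a right adjoint it commutes with the limit defining the tower; so the tower really does compute $\fil^*_{\mot}\TR(R;\Sigma^{2\ell}R)$, with convergence ensured by the increasing connectivity of $\Sigma^{2\ell p^k}\THH(R)$.

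With this in place I would argue as follows. The retraction $R\to R\langle\epsilon\rangle\to R$ splits off $\fil^*_{\mot}\TC(R)_p^\wedge$ --- which, since $R$ is qrsp, is the equalizer of $\tau_{\ge 2*}\TC^-(R)_p^\wedge\rightrightarrows\tau_{\ge 2*}\TP(R)_p^\wedge$ by Bhatt--Morrow--Scholze --- as a filtered summand of $\fil^*_{\mot}\TC(R\langle\epsilon\rangle)_p^\wedge$. The complementary summand is the motivic filtration of the reduced part $\widetilde{\TC}(R\langle\epsilon\rangle)_p^\wedge$, which inherits the $\mathbb{G}_m$-action scaling $\epsilon$; since the motivic filtration is $\mathbb{G}_m$-equivariant it decomposes over the weights, and because $\varphi_p$ multiplies weights by $p$ it decomposes further as $\bigoplus_{p\nmid\ell}\fil^*_{\mot}\TC(X^{(\ell)})_p^\wedge$. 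It then remains to identify each $\fil^*_{\mot}\TC(X^{(\ell)})_p^\wedge$ with the tower $\fil^*_{\mot}\TR(R;\Sigma^{2\ell}R)_p^\wedge$ of the statement: this is the filtered refinement of the classical identification $\TC(X^{(\ell)})\simeq\TR(R;\Sigma^{2\ell}R)$, obtained by unrolling the weight-shifting Frobenius $\varphi_p\colon\THH(R\langle\epsilon\rangle)_{(\ell p^j)}\to\THH(R\langle\epsilon\rangle)_{(\ell p^{j+1})}^{tC_p}$ into the genuine $C_{p^k}$-fixed-point tower. All of that reorganization consists of limits, with which $\tau_{\ge 2n}$ commutes, so once the second paragraph guarantees that the terms in sight are even --- so that their motivic filtrations are the corresponding double-speed Postnikov filtrations --- the identification refines verbatim. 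Reassembling the summands and using additivity of $\tau_{\ge 2*}$ once more shows that the total coincides with the equalizer of $\tau_{\ge 2*}\TC^-(R\langle\epsilon\rangle)_p^\wedge\rightrightarrows\tau_{\ge 2*}\TP(R\langle\epsilon\rangle)_p^\wedge$, as claimed.

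The main obstacle is twofold. First, one must install the Lindenstrauss--McCarthy splitting with enough precision --- ideally in the language of cyclotomic, or $\langle p\rangle$-polygonic, spectra --- that its compatibility with $\varphi$, $\mathrm{can}$, and the suspension/Breuil--Kisin bookkeeping (in particular the appearance of the \emph{even} twist $2\ell$ rather than $\ell$) is beyond doubt, since the whole argument leans on it. Second, and more subtly, the individual filtered objects $\tau_{\ge 2*}\TC^-(R\langle\epsilon\rangle)_p^\wedge$ and $\tau_{\ge 2*}\TP(R\langle\epsilon\rangle)_p^\wedge$ are \emph{not} the motivic filtrations on $\TC^-(R\langle\epsilon\rangle)_p^\wedge$ and $\TP(R\langle\epsilon\rangle)_p^\wedge$: these spectra carry odd homotopy coming from the odd-weight blocks (e.g.\ the weight-one summand $\Sigma\TC^-(R)_p^\wedge$). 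One must therefore check that passing to the equalizer cancels exactly these odd contributions --- which happens because $\varphi_p$ is strictly weight-by-weight triangular against $\mathrm{can}$, so that the equalizer reassembles into the even $\TR$-towers --- so that the $\tau_{\ge 2*}$-equalizer nonetheless reproduces the genuine motivic filtration on $\TC(R\langle\epsilon\rangle)_p^\wedge$ defined by quasisyntomic descent. Pinning this cancellation down, compatibly with the tower presentation of each $\fil^*_{\mot}\TR(R;\Sigma^{2\ell}R)$ and with the filtered $\mathbb{E}_\infty$-structure, is where the real work lies.
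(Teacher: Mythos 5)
Your overall architecture (Lindenstrauss--McCarthy splitting, reduction to double-speed Postnikov towers built from $R$ alone) matches the paper's, but the step you yourself flag as ``where the real work lies'' is exactly the content of the theorem, and the mechanism you propose for it cannot close. You assert that $\tau_{\ge 2*}\TC^{-}(R\langle\epsilon\rangle)_p^{\wedge}$ is \emph{not} the motivic filtration on $\TC^{-}(R\langle\epsilon\rangle)_p^{\wedge}$ because the latter carries odd homotopy, and you then try to recover the theorem through an unexecuted ``cancellation at the equalizer.'' This rests on a misconception: the identification $\fil^*_{\mot}\simeq\tau_{\ge 2*}$ does not require the associated graded to be even, only that $\pi_*\gr^i$ be concentrated in degrees $2i$ and $2i+1$ (Adams filtrations $0$ and $-1$). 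The paper proves in \Cref{thm:mot-doublespeed} that $\fil^*_{\mot}\THH(R\langle\epsilon\rangle)_p^{\wedge}$, $\fil^*_{\mot}\TC^{-}(R\langle\epsilon\rangle)_p^{\wedge}$ and $\fil^*_{\mot}\TP(R\langle\epsilon\rangle)_p^{\wedge}$ \emph{are} the double-speed Postnikov filtrations; the odd classes (multiples of $\epsilon$) sit in Adams filtration $-1$ and are absorbed by $\tau_{[2i,2i+1]}$. Once this is known, the first display of the theorem is immediate from the definition of $\fil^*_{\mot}\TC$ as an equalizer, and the decomposition into $\TR$-towers follows by applying $\tau_{\ge 2*}$ (which commutes with the relevant limits of uniformly bounded-below objects) to the classical splitting --- no cancellation argument is needed.

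The missing ingredient is therefore a computation of the motivic associated graded of $\THH(R\langle\epsilon\rangle)$, and your evenness strategy gives no access to it: the motivic filtration is defined by quasisyntomic descent from the ring $R\langle\epsilon\rangle$ (which is not qrsp), not termwise on the weight blocks, so knowing that $\Sigma^{2\ell p^k}\THH(R)^{hC_{p^k}}$ is even says nothing a priori about $\gr^*_{\mot}$ of anything. (Moreover, the weight-$w$ block of $\THH(R\langle\epsilon\rangle)$ is $\THH(R)\otimes\Hom((S^1/C_w)_+,(\Sigma^2R)^{\otimes w})$, which is not even, and your justification that $H^*(C_{p^k};-)$ is concentrated in even degrees fails for torsion coefficient modules, which do occur for $\pi_*\THH$ of a general qrsp ring.) The paper's route is to resolve $R\langle\epsilon\rangle$ by the bar construction $|R[x_1,\dots,x_\bullet]|$, compute $\pi_*\gr^*_{\mot}\THH$ of each polynomial stage by HKR, and identify the resulting $\mathrm{Tor}$ as $(\pi_*\gr^*_{\mot}\THH(R))\langle\epsilon\rangle\otimes\Gamma(\sigma\epsilon)$ with $\epsilon$ in Adams filtration $-1$ and $\sigma\epsilon^{(k)}$ in filtration $0$ (\Cref{prop:THHmodzero}); the abstract \Cref{lem:identify-doublespeed} then forces $\fil^*_{\mot}=\tau_{\ge 2*}$, and the algebraic homotopy fixed point and Tate spectral sequences propagate the Adams-filtration bound to $\TC^{-}$ and $\TP$. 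Some version of this computation is unavoidable in your approach as well.
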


The content of the above theorem is the identification of the motivic filtration on
$\TC^{-}(R\langle \epsilon \rangle)_p^{\wedge}$
and
$\TP (R\langle \epsilon \rangle)_p^{\wedge}$
with the double-speed Postnikov filtration when $R$ is qrsp.
This is not formal: while $R$ is qrsp, $R\langle \epsilon \rangle$ is certainly not!
This allows us to relate the motivic filtration on $\TC(R\langle \epsilon \rangle)_p ^{\wedge}$, which is defined in terms of $R\langle \epsilon \rangle$, to a filtration on $\TR(R; \Sigma ^{2\ell} R)_p ^{\wedge}$ that is defined in terms of $R$.
Descending from qrsp rings to $p$-quasisyntomic rings, and then left Kan extending from $p$-completed polynomial rings to $p$-complete animated rings, we obtain the following corollary:

\begin{cor} \label{cor:mot-fil-epsilon-main}
  Let $R$ denote a $p$-complete animated ring. Define
  \[\fil^* _{\mot} \TR(R;\Sigma^{2\ell} R)_p^{\wedge}\]
  to be the limit of the diagram
  \[
\begin{tikzcd}[column sep=-14ex]
  & \fil^*_{\mot} \left(\THH(R;(\Sigma^{2\ell}R)^{\otimes p})^{tC_p}\right)_p^{\wedge} && \fil^*_{\mot} \left(\THH(R;(\Sigma^{2\ell}R)^{\otimes p^2})^{tC_{p^2}}\right)_p^{\wedge} && \cdots \\
\fil^*_{\mot} \THH(R;\Sigma^{2\ell} R)_p^{\wedge} \arrow{ur}{\varphi} & & \fil^*_{\mot} \left(\THH(R;(\Sigma^{2\ell}R)^{\otimes p})^{hC_p}\right)_p^{\wedge} \arrow{ur}{\varphi^{hC_p}} \arrow{ul}{\mathrm{can}} && \fil^*_{\mot} \left(\THH(R;(\Sigma^{2\ell}R)^{\otimes p^2})^{hC_{p^2}}\right)_p^{\wedge} \arrow{ur}{\varphi^{hC_{p^2}}} \arrow{ul}{\mathrm{can}}
\end{tikzcd}
  \]
  Then there is a decomposition
  \[\fil^*_{\mot} \TC(R \langle \epsilon \rangle)_p^{\wedge} \cong \fil^*_{\mot} \TC(R)_p^{\wedge} \oplus \bigoplus_{p \nmid \ell} \fil^* _{\mot} \TR(R;\Sigma^{2\ell} R)_p^{\wedge}.\]
\end{cor}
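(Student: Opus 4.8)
The plan is to obtain \Cref{cor:mot-fil-epsilon-main} from \Cref{thm:mot-fil-epsilon-main} in the two steps advertised above: quasisyntomic descent from qrsp rings to $p$-completed polynomial $\mathbb{Z}$-algebras, followed by left Kan extension from those to all $p$-complete animated rings. Two features make this go. First, all of the motivic filtrations in sight --- on $\THH(R;M)_p^{\wedge}$, on $(\THH(R;M)^{hC_{p^k}})_p^{\wedge}$ and $(\THH(R;M)^{tC_{p^k}})_p^{\wedge}$ for $M=(\Sigma^{2\ell}R)^{\otimes p^k}$, and on $\TC^{-}(R\langle\epsilon\rangle)_p^{\wedge}$, $\TP(R\langle\epsilon\rangle)_p^{\wedge}$, $\TC(R\langle\epsilon\rangle)_p^{\wedge}$ --- are by construction double-speed Postnikov filtrations (or fixed points, or equalizers, thereof, as in \Cref{thm:mot-fil-epsilon-main}) when $R$ is qrsp, and for general $p$-quasisyntomic $R$ are defined from the qrsp case by quasisyntomic descent. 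Second, $\THH(R\langle\epsilon\rangle)_p^{\wedge}\simeq(\THH(R)\otimes_{\THH(\mathbb{Z})}\THH(\mathbb{Z}\langle\epsilon\rangle))_p^{\wedge}$, so $\THH(R\langle\epsilon\rangle)_p^{\wedge}$ is the quasisyntomic sheaf $\THH(R)_p^{\wedge}$ tensored with the fixed, uniformly bounded below spectrum $\THH(\mathbb{Z}\langle\epsilon\rangle)_p^{\wedge}$ --- and by the connectivity bounds noted after \Cref{thm:classical-tc-epsilon} this tensor product, as well as $(-)^{hS^1}$ and $(-)^{tS^1}$ applied to it, commute with the totalizations governing quasisyntomic descent.

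Granting this, first I would fix a $p$-quasisyntomic ring $R$, choose a quasisyntomic cover $R\to R^0$ by a qrsp ring, and let $R^{\bullet}$ be its \v{C}ech nerve, so that each $R^i$ is qrsp. Applying \Cref{thm:mot-fil-epsilon-main} levelwise and invoking the functoriality asserted there produces a levelwise equivalence of cosimplicial filtered $\mathbb{E}_{\infty}$-rings; taking $\lim_{\Delta}$ then yields the decomposition over $R$, provided each of the three functors commutes with that limit. For $R\mapsto\fil^{*}_{\mot}\TC(R)_p^{\wedge}$ this is the quasisyntomic descent of \cite{BMS}. For $R\mapsto\fil^{*}_{\mot}\TR(R;\Sigma^{2\ell}R)_p^{\wedge}$, its defining diagram is obtained from the quasisyntomic sheaf $\fil^{*}_{\mot}\THH(R;(\Sigma^{2\ell}R)^{\otimes p^k})_p^{\wedge}$ by applying $(-)^{hC_{p^k}}$ and $(-)^{tC_{p^k}}$ --- limits, which preserve sheaves --- and then taking a further limit, so it too is a quasisyntomic sheaf, and limits commute with limits. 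For $R\mapsto\fil^{*}_{\mot}\TC(R\langle\epsilon\rangle)_p^{\wedge}$, one rewrites it via \Cref{thm:mot-fil-epsilon-main} as the equalizer of the double-speed Postnikov filtrations on $\TC^{-}(R\langle\epsilon\rangle)_p^{\wedge}$ and $\TP(R\langle\epsilon\rangle)_p^{\wedge}$; by the second feature above these are quasisyntomic sheaves in $R$, and propagating the qrsp identification by descent shows the same for their motivic-filtered versions and hence for the equalizer. This establishes the decomposition for all $p$-quasisyntomic $R$, in particular for $p$-completed polynomial $\mathbb{Z}$-algebras.

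It remains to left Kan extend from $p$-completed polynomial $\mathbb{Z}$-algebras. The functor $R\mapsto\fil^{*}_{\mot}\TC(R)_p^{\wedge}$ is left Kan extended from that subcategory by definition, $R\mapsto\fil^{*}_{\mot}\TR(R;\Sigma^{2\ell}R)_p^{\wedge}$ is assembled through a convergent limit tower each of whose stages is left Kan extended from it, and $R\mapsto\fil^{*}_{\mot}\TC(R\langle\epsilon\rangle)_p^{\wedge}$ is left Kan extended because $\THH(R\langle\epsilon\rangle)_p^{\wedge}$ is the left-Kan-extended functor $\THH(R)_p^{\wedge}$ tensored by a fixed uniformly bounded below spectrum, so that the formation of $\TC^{-}$, $\TP$, $\TC$ and their motivic filtrations commutes with the relevant colimits. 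Since direct sums commute with left Kan extension, the decomposition obtained for $p$-completed polynomial $\mathbb{Z}$-algebras extends to all $p$-complete animated rings, which is \Cref{cor:mot-fil-epsilon-main}.

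The step I expect to be the main obstacle is justifying, in the descent and Kan-extension steps, that $\fil^{*}_{\mot}\TC^{-}(R\langle\epsilon\rangle)_p^{\wedge}$ and $\fil^{*}_{\mot}\TP(R\langle\epsilon\rangle)_p^{\wedge}$ behave as sheaves, and are left Kan extended, in the variable $R$ --- despite $R\langle\epsilon\rangle$ itself being neither qrsp nor even quasisyntomic. This is exactly the place where the non-formal input of \Cref{thm:mot-fil-epsilon-main} --- the identification of the motivic filtration on these objects with the double-speed Postnikov filtration --- is indispensable: it trades a description phrased in terms of $R\langle\epsilon\rangle$ for one that is manifestly functorial, sheafy, and stable under left Kan extension in $R$ alone.
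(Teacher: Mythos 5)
Your proposal is correct and follows exactly the route the paper takes: the paper's entire justification is the one-sentence remark preceding the corollary (``Descending from qrsp rings to $p$-quasisyntomic rings, and then left Kan extend\dots''), with \Cref{thm:mot-fil-epsilon-main} as the sole non-formal input. You have simply spelled out the descent and Kan-extension steps in more detail than the authors chose to, and you correctly locate the crux in the same place they do, namely that the double-speed Postnikov identification of \Cref{thm:mot-fil-epsilon-main} is what makes the filtration on $\TC^{-}(R\langle\epsilon\rangle)_p^{\wedge}$ and $\TP(R\langle\epsilon\rangle)_p^{\wedge}$ functorial and sheafy in $R$ alone.
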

%
%\begin{rmk}
%Since \Cref{thm:mot-fil-epsilon-main} is functorial in qrsp rings, it determines the motivic filtration on $\TC(R \langle \epsilon \rangle)_p^{\wedge}$ for any $p$-quasisyntomic $R$, such as $R=\mathbb{Z}$.
%\end{rmk}

\subsection{The cyclotomic spectrum $\THH(R \langle \epsilon \rangle)$}

In order to identify $\THH(R \langle \epsilon \rangle)$ as a cyclotomic spectrum, it will be helpful to note the following alternative definition of the ring $R \langle \epsilon \rangle$:

\begin{lem} \label{lem:square-zero}
For any animated ring $R$, $R \langle \epsilon \rangle$ is equivalent to the split square-zero extension of $R$ by the bimodule $\Sigma R$.
\end{lem}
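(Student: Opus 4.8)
The plan is to recognize $R\langle\epsilon\rangle$ as the free animated $R$-algebra $\mathrm{LSym}_R(\Sigma R)$ on the module $\Sigma R$, and then to observe that this free algebra happens to coincide with the split square-zero extension because all higher derived symmetric powers of a line bundle placed in degree $1$ vanish. First I would rewrite the definition: by base change, $R\langle\epsilon\rangle = R\otimes_{\Z}(\Z\otimes_{\Z[x]}\Z) \simeq R\otimes_{R[x]}R$, the pushout of $R \xleftarrow{\ \epsilon\ } R[x] \xrightarrow{\ \epsilon\ } R$ in animated $R$-algebras, where $\epsilon$ is the augmentation sending $x$ to $0$. Now $R[x] = \mathrm{LSym}_R(R)$ is the free animated $R$-algebra on the rank-one free module $R$ placed in homological degree $0$, we have $R = \mathrm{LSym}_R(0)$, and under the adjunction $\mathrm{LSym}_R \dashv (\text{forget})$ the augmentation $\epsilon$ is $\mathrm{LSym}_R$ applied to the zero map $R\to 0$ of connective $R$-modules. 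Since $\mathrm{LSym}_R$ preserves colimits,
\[R\langle\epsilon\rangle \simeq \mathrm{LSym}_R\bigl(0\sqcup_R 0\bigr) = \mathrm{LSym}_R(\Sigma R),\]
using that the pushout of $0 \leftarrow R \to 0$ in connective $R$-modules is the suspension $\Sigma R$.

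Next I would compute $\mathrm{LSym}_R(\Sigma R) = \bigoplus_{n\ge 0}\mathrm{LSym}^n_R(\Sigma R)$. The one genuinely nonformal input is that $\mathrm{LSym}^n_R(\Sigma R) = 0$ for $n\ge 2$. This follows from the Illusie décalage isomorphism $\mathrm{LSym}^n_R(\Sigma R) \simeq \Sigma^n\,\mathrm{L}\Lambda^n_R(R) = \Sigma^n\Lambda^n_R(R)$, which vanishes since $R$ has rank one; alternatively, one reduces to $R=\Z$ and notes that $\Z\otimes^{\mathbb L}_{\Z[x]}\Z$, computed from the Koszul resolution $(\Z[x]\xrightarrow{\cdot x}\Z[x])$ of $\Z$, has underlying complex $(\Z\xrightarrow{0}\Z)$ and hence homotopy concentrated in degrees $0$ and $1$, which forces each $n$-connective direct summand $\mathrm{LSym}^n_R(\Sigma R)$ with $n\ge 2$ to vanish. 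Consequently $R\langle\epsilon\rangle \simeq R\oplus\Sigma R$ as $R$-modules, the augmentation ideal (the fiber of $R\langle\epsilon\rangle\to\pi_0 = R$) is precisely the summand $\mathrm{LSym}^1_R(\Sigma R)=\Sigma R$, and the multiplication $\Sigma R\otimes_R\Sigma R \to R\langle\epsilon\rangle$ factors through $\mathrm{LSym}^2_R(\Sigma R)=0$.

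To finish, I would identify this with the split square-zero extension: the $R$-algebra map $\mathrm{LSym}_R(\Sigma R) \to R\oplus\Sigma R$ into the split square-zero extension that is adjoint to the summand inclusion $\Sigma R\hookrightarrow R\oplus\Sigma R$ (available since the cotangent fiber of the free algebra $\mathrm{LSym}_R(\Sigma R)$ is $\Sigma R$) is an isomorphism on $\pi_0$ and on $\pi_1$ by inspection, and both sides have homotopy concentrated in degrees $0$ and $1$, so it is an equivalence of animated $R$-algebras; forgetting to $\mathbb{E}_\infty$- and then $\mathbb{E}_1$-rings yields the stated equivalence with the split square-zero extension of $R$ by the bimodule $\Sigma R$. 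I expect the only real obstacle to be the vanishing of the higher derived symmetric powers $\mathrm{LSym}^n_R(\Sigma R)$ for $n\ge 2$ — equivalently, the $2$-truncatedness of the free animated ring on a degree-one class — but this is immediate from the Koszul complex over $\Z$, so in the end nothing here should be difficult.
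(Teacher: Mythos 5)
Your proof is correct and follows essentially the same route as the paper's: both identify $R\langle\epsilon\rangle$ as the free animated $R$-algebra on a degree-one class, use that universal property to produce the comparison map to the split square-zero extension, and verify it is an equivalence via the (collapsing) Koszul/Tor computation of $\Z\otimes^{\mathbb L}_{\Z[x]}\Z$. Your extra packaging through $\mathrm{LSym}^n_R(\Sigma R)$ and d\'ecalage is a pleasant refinement — it makes the vanishing of the multiplication on the augmentation ideal visible from the weight grading rather than just from matching homotopy rings — but the substance is the same.
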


\begin{proof}
The homotopy groups of $R \langle \epsilon \rangle = R \otimes_{R[x]} R$ may be calculated by a $\mathrm{Tor}$ spectral sequences, which immediately collapses to give the formula $\pi_*R \langle \epsilon \rangle \cong \Lambda_{\pi_*R}(\epsilon)$, where $|\epsilon|=1$.  This homotopy ring is the same as that of the split square-zero extension $R \oplus \Sigma R$, so it remains only to find a map inducing an isomorphism.  To produce such a map, we simply note that $R \langle \epsilon \rangle$ is the free animated $R$-algebra on a degree $1$ class, so there is an $R$-algebra map that sends $\epsilon$ to the corresponding degree $1$ class in the split square-zero extension.
\end{proof}

In light of \Cref{lem:square-zero}, we may understand $\THH(R\langle \epsilon \rangle)$ in terms of the literature on $\THH$ of square-zero extensions, which is a cornerstone of the proof of the Dundas--Goodwillie--McCarthy theorem.  In particular, we recall from \cite[Proposition 4.5.1]{SamDGM} (see also \cite{LindenstraussMcCarthy}) that there is a natural equivalence of $S^1$-equivariant spectra: 
\[\THH(R \langle \epsilon \rangle) \simeq \THH(R) \oplus \bigoplus_{k \ge 1}\mathrm{Ind}_{C_n} ^{S^1} \THH(R; (\Sigma^{2}R)^{\otimes k}),\]
%\[\THH(R \langle \epsilon \rangle) \simeq \THH(R) \oplus \bigoplus_{k \ge 1} \THH(R) \otimes \Hom((S^1/C_k)_+,(\Sigma^{2}R)^{\otimes k})\]
where $\mathrm{Ind}_{C_n} ^{S^1}$ is the \emph{right} adjoint to the forgetful functor.

\begin{rmk}
Since $R$ is connective, for each $k \ge 1$ the spectrum underlying 
\[\mathrm{Ind}_{C_n} ^{S^1} \THH(R; (\Sigma^{2}R)^{\otimes k})\]
%\[\THH(R) \otimes \Hom((S^1/C_k)_+,(\Sigma^{2} R)^{\otimes k})\]
is $(2k-1)$-connective.  In particular, the infinite direct sum above is also an infinite product.
\end{rmk}

By the previous remark,
\[\THH(R \langle \epsilon \rangle)^{tC_p} \simeq \THH(R)^{tC_p} \times \prod_{k \ge 1} \left( \mathrm{Ind}_{C_n} ^{S^1} \THH(R; (\Sigma^{2}R)^{\otimes k}) \right)^{tC_p}.\]

In terms of the above direct product decomposition above, the cyclotomic Frobenius is given by a direct product of the following maps \cite[Lemma 4.8.1]{SamDGM}:

\begin{enumerate}
\item The cyclotomic Frobenius 
\[\THH(R) \to \THH(R)^{tC_p}\]
\item For each integer $k \ge 1$, the Frobenius
  \[\mathrm{Ind}_{C_n} ^{S^1} \THH(R; (\Sigma^{2}R)^{\otimes k}) \to \left( \mathrm{Ind}_{C_n} ^{S^1} \THH(R; (\Sigma^{2}R)^{\otimes k}) \right)^{tC_p},\]
    %\THH(R) \otimes \Hom((S^1/C_k)_+,(\Sigma^{2} R)^{\otimes k}) \to \left( \THH(R) \otimes \Hom((S^1/C_{pk})_+,(\Sigma^{2}R)^{\otimes pk}) \right)^{tC_p},\]
obtained by setting $M=\Sigma^2R$ in the construction of \cite[Theorem 6.31]{polygonicspectra}.
\end{enumerate}

%\begin{rmk}
%Recall from \cite[Theorem 6.31]{polygonicspectra} that, if $R$ is a ring and $M$ a bimodule, there is a Frobenius map
%\[\THH(R;M^{\otimes k}) \to \THH(R;M^{\otimes pk})^{tC_p}.\]
%\end{rmk}

We also have the following definitions:
\begin{dfn}
The $\langle p \rangle$-polygonic $\mathrm{TR}$ of $R$ with coefficients in $M$, denoted by $\TR(R;M)_p^{\wedge}$, is the limit of the diagram
\[
\begin{tikzcd}[column sep=-10ex]
& \left(\THH(R;(M)^{\otimes p})^{tC_p}\right)_p^{\wedge} && \left(\THH(R;(M)^{\otimes p^2})^{tC_{p^2}}\right)_p^{\wedge} && \cdots \\
\THH(R;M)_p^{\wedge} \arrow{ur}{\varphi} & & \left(\THH(R;(M)^{\otimes p})^{hC_p}\right)_p^{\wedge} \arrow{ur}{\varphi^{hC_p}} \arrow{ul}{\mathrm{can}} &&  \left(\THH(R;(M)^{\otimes p^2})^{hC_{p^2}}\right)_p^{\wedge} \arrow{ur}{\varphi^{hC_{p^2}}} \arrow{ul}{\mathrm{can}}
\end{tikzcd}
\]
\end{dfn}

Based on this definition, it is clear that
\[\mathrm{TC}(R \langle \epsilon \rangle)_p^{\wedge} \simeq  \mathrm{TC}(R)_p^{\wedge} \times \prod_{p \nmid \ell} \TR(R;\Sigma^{2\ell} R)_p^{\wedge} \simeq  \mathrm{TC}(R)_p^{\wedge} \oplus \bigoplus_{p \nmid \ell} \TR(R;\Sigma^{2\ell} R)_p^{\wedge}.\]

We also define truncated variants of $\TR$ (which we use in \Cref{sec:compute}) $\TR^{[n]}(R;M)_p^{\wedge}$, as the fiber of the map %\todo{is this indexing standard though?}

$$\prod_0^{n}(\THH(R;(M)^{\otimes p^i}))^{hC_{p^i}} \xrightarrow{\can-\varphi^{hC_{p^i}}} \prod_1^{n}(\THH(R;(M)^{\otimes p^i})^{tC_{p^i}}$$

Similarly, we can define motivic filtrations $\fil_{\mot}^*\TR^{[n]}(R;M)_p^{\wedge}$ and associated gradeds $\gr_{\mot}^*\TR^{[n]}(R;M)_p^{\wedge}$ as in \Cref{cor:mot-fil-epsilon-main} by truncating the diagram.

Note that $\lim_n\fil_{\mot}^*\TR^{[n]}(R;M)_{p}^{\wedge} = \fil_{\mot}^*\TR(R;M)_p^{\wedge}$.

\subsection{The motivic filtration}

If $R$ is a qrsp ring, then the Bhatt--Morrow--Scholze motivic filtration on $\THH(R)_p^{\wedge}$ is given by the double speed Postnikov filtration $\tau_{\ge 2*} \THH(R)_p^{\wedge}$ \cite{BMS}.  In this section, we observe that the same is true of $\THH(R \langle \epsilon \rangle)_p ^{\wedge}$. Namely,
\[\mathrm{fil}^*_{\mot} \THH(R\langle \epsilon \rangle)_p^{\wedge} = \tau_{\ge 2*} \THH(R \langle \epsilon \rangle)_p^{\wedge}.\]
In this sense $R\langle \epsilon \rangle$ behaves much like a discrete quasi-regular semi-perfectoid ring, despite the fact that $R \langle \epsilon \rangle$ is not discrete. The main theorem is as follows:

\begin{thm} \label{thm:mot-doublespeed}
Let $R$ be a qrsp ring.  Then
\[\mathrm{fil}^*_{\mot} \THH(R\langle \epsilon \rangle)_p^{\wedge} \simeq \tau_{\ge 2*} \THH(R \langle \epsilon \rangle)_p^{\wedge},\]
\[\mathrm{fil}^*_{\mot} \TC^{-}(R \langle \epsilon \rangle)_p^{\wedge} \simeq \tau_{\ge 2*} \TC^{-}(R \langle \epsilon \rangle)_p^{\wedge},\]
and
\[\mathrm{fil}^*_{\mot} \TP(R \langle \epsilon \rangle) \simeq \tau_{\ge 2*} \TP(R \langle \epsilon \rangle).\]

\end{thm}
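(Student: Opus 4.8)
The plan is to establish the assertion for $\THH$ first and then to deduce those for $\TC^{-}$ and $\TP$, which are the $S^1$-homotopy fixed points and the Tate construction of $\THH$. The only external input needed is that, when $R$ is qrsp, $\THH(R)_p^{\wedge}$, $\TC^{-}(R)_p^{\wedge}$ and $\TP(R)_p^{\wedge}$ are even and carry the double-speed Postnikov filtration as their motivic filtration \cite{BMS}.

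For $\THH$, I would use the natural-in-$R$ decomposition
\[
\THH(R\langle\epsilon\rangle)\;\simeq\;\THH(R)\oplus\bigoplus_{k\ge 1}\THH(R)\otimes\Hom\bigl((S^1/C_k)_+,(\Sigma^2R)^{\otimes k}\bigr)
\]
recalled above from \cite[Proposition 4.5.1]{SamDGM}. Using $S^1/C_k\simeq S^1$, the $k$-th summand is non-equivariantly a sum of a $2k$-fold and a $(2k-1)$-fold shift of $\THH(R)_p^{\wedge}$; more precisely it is, up to a shift, topological Hochschild homology of $R$ with coefficients in a free $R$-bimodule, so the Bhatt--Morrow--Scholze motivic filtration is defined on it and, by the qrsp case applied to $R$ (together with flat base change), equals the double-speed Postnikov filtration. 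Since the double-speed Postnikov filtration commutes with the direct sum above (which is also a product, by the connectivity estimates noted earlier), assembling the summands gives $\fil^*_{\mot}\THH(R\langle\epsilon\rangle)_p^{\wedge}\simeq\tau_{\ge 2*}\THH(R\langle\epsilon\rangle)_p^{\wedge}$.

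For $\TC^{-}$ and $\TP$ I would apply $(-)^{hS^1}$ and $(-)^{tS^1}$ to the decomposition; by connectivity these operations commute with the infinite sum (a product of increasingly connective spectra), and the $k=0$ summands are handled by \cite{BMS}. For the summands with $k\ge 1$ the key device is a shearing equivalence: since $S^1/C_k$ is a compact $S^1$-manifold with trivial tangent action, the functor $\Hom((S^1/C_k)_+,-)$ is coinduction along $C_k\hookrightarrow S^1$, hence induction up to a shift, so that applying $(-)^{hS^1}$ (resp.\ $(-)^{tS^1}$) to the $k$-th summand reduces it, up to a shift (and a twist by a representation sphere), to a $C_k$-homotopy fixed point (resp.\ $C_k$-Tate) spectrum built from $\THH(R)_p^{\wedge}$. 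Because $\THH(R)_p^{\wedge}$ is even and $C_k$ acts trivially on its homotopy, the $C_k$-homotopy fixed point and Tate spectral sequences are controlled, and one identifies the motivic filtration on each such summand with the double-speed Postnikov filtration by reducing back to the behaviour of $\TC^{-}(R)_p^{\wedge}$ and $\TP(R)_p^{\wedge}$ for the qrsp ring $R$. Summing over $k\ge 0$ yields the identities for $\TC^{-}$ and $\TP$.

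The main obstacle is exactly this last step. Since $R\langle\epsilon\rangle$ is neither qrsp nor even discrete, the equality of its motivic filtration with the double-speed Postnikov filtration is not formal, and neither $(-)^{hS^1}$ nor $(-)^{tS^1}$ preserves this property in general. What makes the argument work is the combination of the explicit Lindenstrauss--McCarthy decomposition with the shearing equivalence, which converts the circle-equivariant fixed points and Tate construction of the $\epsilon$-summands into finite-group ($C_k$) constructions on the even spectrum $\THH(R)_p^{\wedge}$, where the relevant spectral sequences degenerate for evenness reasons; the remaining care is in checking that the $S^1$-action on the $k$-th summand is the one for which the shearing equivalence applies, that the decomposition refines to a splitting of filtered objects, and that the comparison with the double-speed Postnikov filtration is verified on associated graded rather than merely on underlying spectra.
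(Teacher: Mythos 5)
There is a genuine gap, and it sits exactly at the point your proposal defers to "remaining care": you never actually compute the \emph{motivic} filtration on $\THH(R\langle\epsilon\rangle)_p^{\wedge}$. Your argument equips each Lindenstrauss--McCarthy summand $\THH(R)\otimes\Hom((S^1/C_k)_+,(\Sigma^2R)^{\otimes k})$ with a filtration built from $\fil^*_{\mot}\THH(R)$, identifies that filtration with the double-speed Postnikov filtration, and then sums. But the motivic filtration on $\THH(R\langle\epsilon\rangle)$ is defined from the animated ring $R\langle\epsilon\rangle$ itself (by quasisyntomic descent / left Kan extension), not summand-by-summand, and the Lindenstrauss--McCarthy splitting is a splitting of cyclotomic spectra with no a priori compatibility with that filtration. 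The statement that $\fil^*_{\mot}\THH(R\langle\epsilon\rangle)$ (and likewise $\fil^*_{\mot}\TC^-$, $\fil^*_{\mot}\TP$) decomposes along the splitting is precisely \Cref{thm:mot-fil-epsilon-main}, which in the paper is \emph{deduced from} \Cref{thm:mot-doublespeed}; taking it as an input is circular. A secondary problem: your degeneration argument for $\TC^-$ and $\TP$ leans on "$\THH(R)_p^{\wedge}$ is even," but the $k$-th summand is a sum of a $2k$-fold and a $(2k-1)$-fold shift, so it is not even; the correct structural fact is that the relevant associated graded is concentrated in Adams filtrations $0$ and $-1$, which is weaker than evenness and has to be extracted from the motivic associated graded, not the Postnikov one.

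The paper's route avoids the splitting entirely. It writes $R\langle\epsilon\rangle$ as the geometric realization of the simplicial ring $[n]\mapsto R[x_1,\dots,x_n]$, uses that $\fil^*_{\mot}\THH(-)$ commutes with this realization and HKR for the terms, and computes $\pi_*\gr^*_{\mot}\THH(R\langle\epsilon\rangle)$ as a $\mathrm{Tor}$ group, finding it concentrated in Adams filtrations $0$ and $-1$ (\Cref{prop:THHmodzero}). An abstract lemma (\Cref{lem:identify-doublespeed}) then says any complete filtration with associated graded in those two Adams filtrations is canonically the double-speed Postnikov filtration; the cases of $\TC^-$ and $\TP$ follow by running the algebraic homotopy fixed point and Tate spectral sequences on $\gr^*_{\mot}$, which preserve the "Adams filtrations $0$ and $-1$" property. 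If you want to salvage your approach, you would need to first prove some version of \Cref{prop:THHmodzero} (or otherwise identify $\gr^*_{\mot}$ of the actual motivic filtration); at that point the detour through the splitting and the shearing equivalence becomes unnecessary.
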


\begin{rmk}
The above theorem is stated for a single qrsp ring $R$, but in fact the identification of motivic filtrations with double-speed Postnikov filtrations is functorial in $R$.  Indeed, if $A \to B$ is any map of $\mathbb{E}_{\infty}$-rings there is a unique (up to contractible choice) lift of such a map to a filtered $\mathbb{E}_{\infty}$-ring map $\tau_{\ge 2*} A \to \tau_{\ge 2*} B$.  Thus, the above theorem identifies the motivic filtrations on $\THH(R \langle \epsilon \rangle)_p^{\wedge}$, $\TC^{-}(R \langle \epsilon \rangle)_p^{\wedge},$ and $\TP(R \langle \epsilon \rangle)_p^{\wedge}$ as sheaves on the quasisyntomic site. Similarly, this implies that the motivic filtrations on the maps
\[\varphi,\mathrm{can}:\TC^{-}(R)_p^{\wedge} \to \TP(R)_p^{\wedge}\]
are $\tau_{\ge 2*} \varphi$ and $\tau_{\ge 2*} \mathrm{can}$, respectively.
\end{rmk}

Our proof of \Cref{thm:mot-doublespeed} begins with a description of the motivic associated graded on $\THH(R \langle \epsilon \rangle)$:

\begin{prop} \label{prop:THHmodzero}
Suppose that $R$ is a qrsp ring.  Then
\[\pi_*\gr^*_{\mot} \THH(R \langle \epsilon \rangle) = (\pi_*\gr^*_{\mot} \THH(R)) \langle \epsilon \rangle \otimes \Gamma(\sigma \epsilon),\]
where $\lVert \epsilon \rVert = (1,-1)$ and $\lVert \sigma\epsilon^{(k)}  \rVert = (2k,0)$.  

As a consequence,
\[\pi_*\gr^*_{\mot} \THH(R \langle \epsilon \rangle)_p^{\wedge} = \left(\pi_*\gr^*_{\mot} \THH(R)_p^{\wedge}\right) \langle \epsilon \rangle \otimes \Gamma(\sigma \epsilon),\]
where $\lVert \epsilon \rVert = (1,-1)$, $\lVert \sigma\epsilon^{(k)}  \rVert = (2k,0)$, and all classes in $\pi_*\gr^*_{\mot} \THH(R)_p^{\wedge}$ have Adams filtration $0$.
\end{prop}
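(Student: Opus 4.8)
The plan is to reduce the statement to an application of Hochschild--Kostant--Rosenberg, exploiting that the motivic filtration on $\THH$ is symmetric monoidal and that $R\langle\epsilon\rangle$ is built from $R$ by the pushout $R\langle\epsilon\rangle\simeq R\otimes_{R[x]}R$ of polynomial $R$-algebras. First I would establish a decomposition of filtered spectra
\[
\fil^*_{\mot}\THH(R\langle\epsilon\rangle)\;\simeq\;\fil^*_{\mot}\THH(R)\otimes_{R}\fil^*_{\mathrm{HKR}}\mathrm{HH}(R\langle\epsilon\rangle/R),
\]
where $\fil^*_{\mathrm{HKR}}$ denotes the HKR (conjugate) filtration on relative Hochschild homology, with $\gr^j_{\mathrm{HKR}}\mathrm{HH}(B/R)\simeq(\wedge^j_B\mathbb{L}_{B/R})[j]$. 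Passing to associated gradeds, and using that $\gr$ of a tensor product of filtered objects is the tensor product of the gradeds, reduces the computation of $\pi_*\gr^*_{\mot}\THH(R\langle\epsilon\rangle)$ to computing $\gr^*_{\mathrm{HKR}}\mathrm{HH}(R\langle\epsilon\rangle/R)$ and $\gr^*_{\mot}\THH(R)$ separately.

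For the relative term, $R\langle\epsilon\rangle$ is by definition the free animated $R$-algebra on a degree-one class, so $\mathbb{L}_{R\langle\epsilon\rangle/R}\simeq\Sigma R\langle\epsilon\rangle$ is free of rank one on a generator in degree $1$, and Illusie's décalage gives $\wedge^j_{R\langle\epsilon\rangle}(\Sigma R\langle\epsilon\rangle)\simeq\Sigma^j\Gamma^j_{R\langle\epsilon\rangle}(R\langle\epsilon\rangle)\simeq\Sigma^j R\langle\epsilon\rangle$. Hence $\gr^j_{\mathrm{HKR}}\mathrm{HH}(R\langle\epsilon\rangle/R)\simeq\Sigma^{2j}R\langle\epsilon\rangle$, and summing over $j$ while recording the divided-power multiplication yields $\gr^*_{\mathrm{HKR}}\mathrm{HH}(R\langle\epsilon\rangle/R)\cong R\langle\epsilon\rangle\otimes_R\Gamma_R(\sigma\epsilon)$, with $\epsilon$ in degree $1$ and $\sigma\epsilon^{(k)}$ in degree $2k$; the same answer comes out of the $\mathrm{Tor}$-spectral sequence for $R\otimes_{\mathrm{HH}(R[x]/R)}R$ once one knows $\mathrm{HH}(R[x]/R)\simeq R[x]\otimes_R\Lambda_R(dx)$. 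For the other factor, since $R$ is qrsp the Bhatt--Morrow--Scholze identification of $\fil^*_{\mot}\THH(R)$ with the double-speed Postnikov filtration $\tau_{\ge 2*}\THH(R)$ shows $\gr^*_{\mot}\THH(R)$ is even, so $\pi_*\gr^*_{\mot}\THH(R)=\pi_*\THH(R)$ with the claimed $\gr$-grading and, in particular, is flat over $R$. Assembling the two computations (so that no $\mathrm{Tor}$ appears on homotopy), and reconciling the HKR weight with the normalization used in the statement---which is what produces the shifts in $\lVert\epsilon\rVert=(1,-1)$ and $\lVert\sigma\epsilon^{(k)}\rVert=(2k,0)$---gives the asserted isomorphism of bigraded rings. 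The $p$-complete statement follows by $p$-completing everything in sight, and the Adams-filtration-zero assertion is precisely that $\pi_*\gr^*_{\mot}\THH(R)_p^{\wedge}$ lies in synthetic filtration $0$, which is immediate from $\fil^*_{\mot}\THH(R)_p^{\wedge}=\tau_{\ge 2*}\THH(R)_p^{\wedge}$ being even.

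The main obstacle I anticipate is justifying the displayed decomposition, i.e.\ checking that the motivic filtration on $\THH(R\langle\epsilon\rangle)$---a priori only defined by left Kan extension, and here applied to a ring that is very far from smooth and not even discrete---genuinely splits off the HKR filtration on $\mathrm{HH}(R\langle\epsilon\rangle/R)$. The cleanest route is to write $R\langle\epsilon\rangle\simeq R\otimes_{R[x]}R$ and use that $\THH$ and $\fil^*_{\mot}\THH(-)$ are symmetric monoidal (the latter being left Kan extended from polynomial $\mathbb{Z}$-algebras, on which it is the manifestly symmetric monoidal HKR filtration), so that $\fil^*_{\mot}\THH(R\langle\epsilon\rangle)\simeq\fil^*_{\mot}\THH(R)\otimes_{\fil^*_{\mot}\THH(R[x])}\fil^*_{\mot}\THH(R)$, and then to observe that the two $\THH(R[x])$-factors contribute exactly the HKR filtration of $\mathrm{HH}(R\langle\epsilon\rangle/R)$; both sides being left Kan extended functors of $R$, it suffices to verify the identification for $R$ a polynomial $\mathbb{Z}$-algebra, where $R\langle\epsilon\rangle$ is a pushout of smooth $\mathbb{Z}$-algebras. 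A secondary but genuine point is to keep the three gradings---homotopical degree, motivic weight, and HKR weight---carefully aligned through the décalage isomorphism.
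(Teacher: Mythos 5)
Your proposal is correct and is essentially the paper's argument in different packaging: the paper resolves $R\langle\epsilon\rangle \simeq R\otimes_{R[x]}R$ by the bar construction, applies $\fil^*_{\mot}\THH$ termwise using HKR for $\THH(R[x_1,\dots,x_k])$ over $R$, and evaluates the resulting cobar complex as $\mathrm{Tor}_{\pi_*\gr^*_{\mot}\THH(R)[x]\langle\sigma x\rangle}\left(\pi_*\gr^*_{\mot}\THH(R),\pi_*\gr^*_{\mot}\THH(R)\right)$ --- which is exactly your relative tensor product $\fil^*_{\mot}\THH(R)\otimes_{\fil^*_{\mot}\THH(R[x])}\fil^*_{\mot}\THH(R)$, with your d\'ecalage computation of $\gr^*_{\mathrm{HKR}}\mathrm{HH}(R\langle\epsilon\rangle/R)$ playing the role of the explicit Tor calculation. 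One small correction: the collapse of the K\"unneth spectral sequence should be attributed to the flatness of the \emph{relative} factor (whose homotopy is levelwise finite free over $R$, namely $\Lambda_R(\epsilon)\otimes\Gamma_R(\sigma\epsilon)$), not to flatness of $\pi_*\gr^*_{\mot}\THH(R)$ over $R$, which is not needed and not clear for general qrsp $R$.
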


\begin{proof}
As an $\mathbb{E}_{\infty}$-ring spectrum,
\[R\langle \epsilon \rangle \simeq R \otimes_{R[x]} R.\]
which may be calculated as the geometric realization of the simplicial commutative ring

\[
\begin{tikzcd}
R \arrow{r} &  R[x_1] \arrow[shift left=2]{l} \arrow[shift right=2]{l} \arrow[shift left=2]{r} \arrow[shift right=2]{r} & R[x_1,x_2] \arrow[shift left=4]{l} \arrow[shift right=4]{l} \arrow{l} \arrow[shift left=4]{r} \arrow{r} \arrow[shift right=4]{r}& \cdots \arrow[shift right=6]{l} \arrow[shift right=2]{l} \arrow[shift left=2]{l} \arrow[shift left=6]{l}.
\end{tikzcd}
\]

Applying $\THH$, we see that $\THH(R \langle \epsilon \rangle)$ may be calculated as the geometric realization of the simplicial ring spectrum

\[
\begin{tikzcd}
\THH(R) \arrow{r} &  \THH(R[x_1]) \arrow[shift left=2]{l} \arrow[shift right=2]{l} \arrow[shift left=2]{r} \arrow[shift right=2]{r} & \THH(R[x_1,x_2]) \arrow[shift left=4]{l} \arrow[shift right=4]{l} \arrow{l} \arrow[shift left=4]{r} \arrow{r} \arrow[shift right=4]{r}& \cdots \arrow[shift right=6]{l} \arrow[shift right=2]{l} \arrow[shift left=2]{l} \arrow[shift left=6]{l}.
\end{tikzcd}
\]

The motivic filtration on $\THH(R \langle \epsilon \rangle)$ is then computed as the geometric realization of
\[
\begin{tikzcd}
\fil^*_{\mot} \THH(R)_p \arrow{r} & \fil_{\mot}^* \THH(R[x_1]) \arrow[shift left=2]{l} \arrow[shift right=2]{l} \arrow[shift left=2]{r} \arrow[shift right=2]{r} & \fil^*_{\mot} \THH(R[x_1,x_2]) \arrow[shift left=4]{l} \arrow[shift right=4]{l} \arrow{l} \arrow[shift left=4]{r} \arrow{r} \arrow[shift right=4]{r}& \cdots \arrow[shift right=6]{l} \arrow[shift right=2]{l} \arrow[shift left=2]{l} \arrow[shift left=6]{l}.
\end{tikzcd}
\]
In particular, we may calculate
$\pi_*\gr^*_{\mot} \THH(R)$ using the spectral sequence beginning with the cohomology of the simplicial ring
\[
\begin{tikzcd}
\pi_*\gr^*_{\mot} \THH(R) \arrow{r} & \pi_*\gr_{\mot}^* \THH(R[x_1]) \arrow[shift left=2]{l} \arrow[shift right=2]{l} \arrow[shift left=2]{r} \arrow[shift right=2]{r} & \pi_*\gr^*_{\mot} \THH(R[x_1,x_2]) \arrow[shift left=4]{l} \arrow[shift right=4]{l} \arrow{l} \arrow[shift left=4]{r} \arrow{r} \arrow[shift right=4]{r}& \cdots \arrow[shift right=6]{l} \arrow[shift right=2]{l} \arrow[shift left=2]{l} \arrow[shift left=6]{l}.
\end{tikzcd}
\]
We now note that, by the HKR theorem, 
\[\pi_*\gr^*_{\mot} \THH(R[x_1,x_2,\cdots,x_k]) \cong \pi_*\gr^*_{\mot}\THH_*(R)[x_1,x_2,\cdots,x_k]\langle \sigma x_1,\sigma x_2,\cdots,\sigma x_k \rangle.\]
The above cobar complex computes
\[\mathrm{Tor}_{\pi_*\gr^*_{\mot}\THH(R)[x] \langle \sigma x \rangle}(\pi_*\gr^*_{\mot}\THH(R),\pi_*\gr^*_{\mot}\THH(R)),\]
which is $\pi_*\gr^*_{\mot}\THH(R) \langle \epsilon \rangle \otimes \Gamma(\sigma \epsilon)$.  The class $\epsilon$ is represented by $x_1$, and therefore has Adams filtration $-1$.  The class $\sigma \epsilon$ is represented by $\sigma x_1$, and therefore has Adams filtration $0$.
\end{proof}

%\begin{cor}
%Suppose that $A$ is any quasisyntomic discrete ring.  It follows from the above that
%\[\pi_*\gr^*_{\mot} \THH(A \langle \epsilon \rangle) = (\pi_*\gr^*_{\mot} \THH(A)) \langle \epsilon \rangle \otimes \Gamma(\sigma \epsilon),\]
%where $\lVert \epsilon \rVert = (1,-1)$ and $\lVert \sigma\epsilon^{(k)}  \rVert = (2k,0)$.
%\end{cor}

To deduce \Cref{thm:mot-doublespeed}, we will apply the following lemma:

\begin{lem} \label{lem:identify-doublespeed}
Let $\mathrm{fil}^* R$ be a complete filtered spectrum, with colimit spectrum $R$, such that the bigraded homotopy groups $\pi_* \gr^* R$ are concentrated in Adams filtrations $0$ and $-1$.  Then there is a canonical identification $\tau_{\ge 2*} R \simeq \mathrm{fil}^*R$.
\end{lem}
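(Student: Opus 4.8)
The plan is to prove, for each weight $w$, that the canonical map $\mathrm{fil}^w R \to R$ coming from the colimit cocone exhibits $\mathrm{fil}^w R$ as the $2w$-connective cover $\tau_{\ge 2w} R$, and that these identifications assemble into an equivalence of filtered spectra. It is convenient to first record the reformulation of the hypothesis: for every $w$ the graded piece $\gr^w R$ is $2w$-connective and $(2w+1)$-truncated, i.e.\ $\pi_n \gr^w R = 0$ for $n \notin \{2w, 2w+1\}$. To bound the connectivity of $\mathrm{fil}^w R$, I would use completeness to write $\mathrm{fil}^w R \simeq \lim_N \mathrm{fil}^w R/\mathrm{fil}^N R$; each finite quotient $\mathrm{fil}^w R/\mathrm{fil}^N R$ is an iterated extension of $\gr^w R, \dots, \gr^{N-1} R$ and hence $2w$-connective, so the limit --- being the fibre of a self-map of a $2w$-connective product --- is $(2w-1)$-connective. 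Thus $\mathrm{fil}^v R$ is $(2v-1)$-connective for every $v$; bootstrapping, $\mathrm{fil}^{w+1} R$ is $(2w+1)$-connective, so the fibre sequence $\mathrm{fil}^{w+1} R \to \mathrm{fil}^w R \to \gr^w R$ yields $\pi_n \mathrm{fil}^w R \cong \pi_n \gr^w R$ for $n \le 2w-1$, and the right side vanishes by hypothesis. Hence $\mathrm{fil}^w R$ is $2w$-connective.

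Dually, I would use exhaustiveness to identify $\cofib(\mathrm{fil}^w R \to R)$ with $\colim_{w' \to -\infty} \mathrm{fil}^{w'} R/\mathrm{fil}^w R$ (simply $R/\mathrm{fil}^w R$ if the filtration is indexed by $\mathbb{Z}_{\ge 0}$). For $w' \le w$ the quotient $\mathrm{fil}^{w'} R/\mathrm{fil}^w R$ is an iterated extension of $\gr^{w'} R, \dots, \gr^{w-1} R$, each of which is $(2w-1)$-truncated since $\gr^k R$ is $(2k+1)$-truncated and $2k+1 \le 2w-1$ when $k \le w-1$; hence each such quotient, and therefore the filtered colimit, is $(2w-1)$-truncated.

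Together these two points say exactly that $\mathrm{fil}^w R \to R$ is the $2w$-connective cover of $R$. Since $\mathrm{fil}^w R$ is $2w$-connective and the fibre of $\tau_{\ge 2w} R \to R$ is $\Sigma^{-1}\tau_{< 2w} R$, which is $(2w-2)$-truncated, the space of lifts of $\mathrm{fil}^w R \to R$ through $\tau_{\ge 2w} R \to R$ is contractible; since these lifting problems vary coherently in $w$, they assemble to a canonical map of filtered spectra $\mathrm{fil}^* R \to \tau_{\ge 2*} R$ over the constant filtered spectrum $R$. On the weight-$w$ term this is a map of $2w$-connective covers of $R$ living over $R$, hence an equivalence, which gives the claimed canonical identification $\tau_{\ge 2*} R \simeq \mathrm{fil}^* R$. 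The only non-formal point is the boundary of the connectivity estimate --- the vanishing of $\pi_{2w-1} \mathrm{fil}^w R$ --- and this is precisely where the full strength of the hypothesis ($\gr^w R$ being $(2w+1)$-truncated, not just $2w$-connective) together with completeness is needed, via the bootstrap.
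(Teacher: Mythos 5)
Your proof is correct and follows the same strategy as the paper's (much terser) argument: use completeness to deduce that $\mathrm{fil}^w R$ is $2w$-connective, then check that the canonical map to $R$ factors through $\tau_{\ge 2*}R$ as an equivalence. Your write-up simply supplies the details the paper leaves implicit — the limit/bootstrap argument for connectivity, the truncatedness of the cofibre via exhaustiveness, and the contractibility of the space of lifts making the identification canonical.
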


\begin{proof}
By assumption, for each integer $k$, $\gr^k R$ has homotopy groups concentrated in degrees $2k$ and $2k+1$. Since the filtered spectrum is complete, it follows that the $k$-th filtered piece is $2k$-connective. The canonical map from $\mathrm{fil}^*R$ to $R$ with the constant filtration factors through $\tau_{\geq2*}R$, and the resulting map is easily seen to be an equivalence.
\end{proof}

\begin{proof}[Proof of \Cref{thm:mot-doublespeed}]
By combining \Cref{prop:THHmodzero} with \Cref{lem:identify-doublespeed}, we deduce immediately that 
  \[\fil^*_{\mot} \THH(R \langle \epsilon \rangle)_p ^{\wedge} \simeq \tau_{\ge 2*} \THH(R \langle \epsilon \rangle)_p ^{\wedge}.\]
There is an algebraic homotopy fixed point spectral sequence
  \[\pi_*(\gr^*_{\mot} \THH(R \langle \epsilon \rangle))_p ^{\wedge} [t] \implies \pi_* \gr^*_{\mot} \TC^{-}(R)_p ^{\wedge},\]
  $\lVert t \rVert = (-2,0)$, from which one learns that the bigraded homotopy groups $\pi_* \gr^*_{\mot} \TC^{-}(R)_p ^{\wedge}$ are concentrated in Adams filtration $0$ and $-1$.  Similarly, the algebraic Tate spectral sequence
  \[\pi_*(\gr^*_{\mot} \THH(R \langle \epsilon \rangle))_p ^{\wedge} [t^{\pm 1}] \implies \pi_* \gr^*_{\mot} \TP(R)_p ^{\wedge}\]
  implies that $\pi_* \gr^*_{\mot} \TP(R)_p ^{\wedge}$ is concentrated in Adams filtrations $0$ and $-1$.  Applying \Cref{lem:identify-doublespeed}, we conclude the result.
\end{proof}

\section{The mod $p$ syntomic cohomology of $\mathbb{Z}_p\langle \epsilon \rangle$}\label{sec:compute}

In this section, we compute the mod $p$ syntomic cohomology of $\Zp \langle \epsilon \rangle$.
Our strategy will be to use the associated graded of \Cref{cor:mot-fil-epsilon-main}
to reduce this to the computation of the homotopy groups of 
\[\gr^* _{\mot} \left(\THH(\Zp;(\Sigma^{2\ell}\Zp)^{\otimes_{\Zp} p^n})^{hC_{p^n}}\right)/p\]
for $\ell \nmid p$ and $n \geq 0$,
and the Frobenius and canonical maps between them.
%Before we start, we note that by tracing through the definitions, there is a natural equivalence of $C_{p^n}$-spectra:
%\[\THH(\Zp;(\Sigma^{2\ell}\Zp)^{\otimes_{\Zp} p^n}) \simeq \THH(\Zp) \otimes \Ss^{\ell \rho_n},\]
%where $\rho_n$ is the complex regular representation of $C_{p^n}$.

In \Cref{sec:twisted}, we use an algebraic version of homotopy fixed points spectral sequence to study the groups
\[\pi_* \gr^* _{\mot} \left(\THH(\Zp;(\Sigma^{2\ell} \Zp)^{\otimes_{\Zp} p^n})^{hC_{p^n}}\right)/p.\]
As a preliminary step, we show that there is a natural equivalence of $C_{p^n}$-spectra:
\[\THH(\Zp;(\Sigma^{2\ell}\Zp)^{\otimes_{\Zp} p^n}) \simeq \THH(\Zp) \otimes \Ss^{\ell \rho_n},\]
where $\rho_n$ is the complex regular representation of $C_{p^n}$.
%If we remove the motivic associated graded and take $\ell =0$, the form of this spectral sequence was the subject of a conjecture of B\"okstedt and Madsen \cite[Conjecture 4.3]{BokMad} that was proven by Tsalidis \cite[Theorem 6.1]{Tsalidis}.
%We carry out the computation for general $\ell$ by studying how the the twisting imparted by $\ell$ affects the computation.

Then, in \Cref{sec:v1}, we compute the Frobenius and canonical maps up to $v_1$-adic filtration, where they admit simple formulas.

Finally, in \Cref{sec:computation}, we use these two ingredients together with \Cref{cor:mot-fil-epsilon-main} to compute the mod $p$ syntomic cohomology of $\Zp \langle \epsilon \rangle$.
Along the way, we will see that the up-to-$v_1$-adic-filtration computations of \Cref{sec:v1} are sufficient to compute the full answer.

\begin{ntn}
	As notation, we will use a dot as in $\dot{+},\dot{=},\dot{\mapsto}$ to mean ``up to a unit in $\FF_p^{\times}$.'' For example, $a\dot{=}b$ means $a=cb$ for $c \in \FF_p^\times$.
\end{ntn}

\subsection{Twisted spectral sequences}\label{sec:twisted}

As above, we let $\rho_n$ denote the complex regular representation of $C_{p^n}$.

Before we start, we establish an equivalence of $C_{p^n}$-spectra:
\[\THH(\Zp;(\Sigma^{2\ell}\Zp)^{\otimes_{\Zp} p^n}) \simeq \THH(\Zp) \otimes \Ss^{\ell \rho_n}.\]
This will be an immediate consequence of \Cref{cor:polygonic-R} below.

\begin{lem} \label{lem:polygonic-sphere}
  Given any spectrum $V$, there is a natural equivalence of spectra with $C_n$-action
  \[\THH(\Ss; V^{\otimes n}) \simeq V^{\otimes n},\]
  where $V^{\otimes n}$ is equipped with the action of $C_n$ by cyclic permutations.
\end{lem}

\begin{proof}
  By definition, the $C_n$-action on $\THH(\Ss; V^{\otimes n})$ is inherited from the equivalence between $\THH(\Ss; V^{\otimes n})$ and the simplicial diagram of the form $\{(\Ss^{\otimes n})^{\otimes \bullet} \otimes V^{\otimes n}\}$, which is equipped with the cyclic permutation action of $C_n$.
  This simplicial diagram is equivalent to the constant simplicial diagram at the spectrum with $C_n$-action $V^{\otimes n}$, from which the result follows.
\end{proof}

\begin{cor} \label{cor:polygonic-R}
  Given any $\mathbb{E}_\infty$-ring $R$ and spectrum $V$, there is a natural equivalence of spectra with $C_n$-action
  \[\THH(R; (R \otimes V)^{\otimes_R n}) \simeq \THH(R) \otimes V^{\otimes n}.\]
\end{cor}

\begin{proof}
  This follows from combining \Cref{lem:polygonic-sphere} with the symmetric monoidality of polygonic THH:
  \[\THH(R \otimes_{\Ss} \Ss; (R \otimes_{\Ss} V)^{\otimes n}) \simeq \THH(R) \otimes \THH(\Ss; V^{\otimes n}) \simeq \THH(R) \otimes V^{\otimes n}.\]
\end{proof}

In the remainder of this section, we compute the homotopy groups of
\[\gr^* _{\mot} \left((\THH(\Zp) \otimes \Ss^{\ell \rho_n})^{hC_{p^n}}\right)/p.\]
using the spectral sequence coming from the Nygaard filtration.
To do this, let us first rewrite these groups as the cohomology of a quasicoherent sheaf on $\Zp^{\Nyg}$.

\begin{prop}
  There are natural equivalences
  \[\gr^i _{\mot} (\THH(\Zp) \otimes \Ss^{\ell\rho_n})^{h C_{p^n}} \simeq R\Gamma(\Zp^{\Nyg}; \left(\O/(a^* \O(-1) \otimes \mathcal{I}_n)\right) \{i\} \otimes a^* \O(\ell p^n) \otimes \bigotimes_{i=0} ^{p^n-1} \mathcal{I}_{v_p(i)}^{-\ell}) [2i].\]
  Here, we let $\mathcal{I} \to \O$ denote the Hodge--Tate divisor on $\Zp^\prism$, let $a^* \O(-1) \to \O$ denote the Nygaard divisor on $\Zp^{\Nyg}$, and let $\mathcal{I}_n \to \O$ on $\Zp^{\Nyg}$ denote $(F')^* (\mathcal{I} \otimes \cdots \otimes  (F^{n-1})^* \mathcal{I}) \to \O$.
\end{prop}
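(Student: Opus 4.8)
The plan is to match both sides with coherent cohomology on $\Zp^{\Nyg}$, starting from the comparison between the Bhatt--Morrow--Scholze motivic filtration and the Nygaard-filtered prismatization and then tracking how the finite-level fixed points $(-)^{hC_{p^n}}$ and the representation-sphere twist $\Ss^{\ell\rho_n}$ act at the level of quasicoherent sheaves. The base case is the untwisted statement at the level of $S^1$: by \cite{BMS} together with \Cref{thm:synandnyg-comparison}, one has $\gr^i_{\mot}\TC^{-}(\Zp)_p^{\wedge} \simeq R\Gamma(\Zp^{\Nyg},\O\{i\})[2i]$, and similarly for $\THH$ and $\TP$.

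To pass from $S^1$- to $C_{p^n}$-homotopy fixed points, I would use the cofiber sequence of pointed $S^1$-spaces $(S^1/C_{p^n})_+ \to \Ss^0 \to \Ss^{W}$, where $W$ is the one-dimensional complex $S^1$-representation of weight $p^n$, whose unit sphere is $S^1/C_{p^n}$. Tensoring with $\THH(\Zp)$ and taking $(-)^{hS^1}$ exhibits $\THH(\Zp)^{hC_{p^n}}$ as the cofiber of the Euler-class map $(\THH(\Zp)\otimes\Ss^{-W})^{hS^1}\to\TC^{-}(\Zp)$. On the stacky side the source is $\TC^{-}(\Zp)$ twisted by an invertible sheaf, and the key claim---which I would extract from the constructions of $\Zp^{\Nyg}$, its Frobenius $F'$, and the iterated Hodge--Tate ideals due to Drinfeld and Bhatt--Lurie---is that, up to a Breuil--Kisin twist absorbed into the bookkeeping, this source is $R\Gamma(\Zp^{\Nyg},(a^*\O(1)\otimes\mathcal{I}_n)\{i\})[2i]$ and the Euler map is induced by the canonical section $a^*\O(1)\otimes\mathcal{I}_n\to\O$ cutting out $V(a^*\O(1)\otimes\mathcal{I}_n)\subset\Zp^{\Nyg}$. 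Here the prismatic refinement is essential, since the weight-$p^n$ Euler class vanishes $p$-adically on the naive $H^*(BS^1)$ but not on $\Zp^{\Nyg}$. Granting this, passing to the cofiber of the resulting map of sheaves gives the proposition in the case $\ell=0$.

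For the twist, I would use the identification $\THH(\Zp;(\Sigma^{2\ell}\Zp)^{\otimes p^n})\simeq\THH(\Zp)\otimes\Ss^{\ell\rho_n}$ noted above and decompose $\rho_n=\bigoplus_{j\in\Z/p^n}\chi^j$ with $\chi$ a faithful character, so that $\Ss^{\ell\rho_n}\simeq\bigwedge_{j}\Ss^{\ell\chi^j}$ as $C_{p^n}$-spectra. Rerunning the previous paragraph with $\THH(\Zp)$ replaced by $\THH(\Zp)\otimes\Ss^{\ell\rho_n}$, each factor $\Ss^{\ell\chi^j}$ multiplies the resulting sheaf on $\Zp^{\Nyg}$ by the Thom twist of $\ell\chi^j$; since this invertible sheaf depends only on the order $p^{n-v_p(j)}$ of $\chi^j$, a direct computation with Drinfeld's presentation identifies it as $a^*\O(\ell)\otimes\mathcal{I}_{v_p(j)}^{-\ell}$, the trivial summand $j=0$ contributing only the orientation twist $a^*\O(\ell)$. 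Multiplying over all $j\in\Z/p^n$ produces the total twist $a^*\O(\ell p^n)\otimes\bigotimes_{j\in\Z/p^n}\mathcal{I}_{v_p(j)}^{-\ell}$, which gives the stated formula; one then checks naturality in $\ell$ and compatibility with the structure maps relating different $n$.

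The main obstacle is exactly this dictionary: pinning down, on $\Zp^{\Nyg}$, the Euler and Thom classes of $W$ and of the characters $\chi^j$ in terms of the section $a^*\O(1)\otimes\mathcal{I}_n\to\O$ and the line bundles $\mathcal{I}_{v_p(j)}$. This is not formal---it rests on the Segal-conjecture and Tate-vanishing inputs for $\THH(\Zp)$ underlying the Bhatt--Lurie comparison, together with Drinfeld's explicit presentation of $(\Zp^{\Nyg})_{p=a^p\mu=0}$ recalled above. Once the untwisted $C_p$-level case has been matched against that presentation, I expect the general case to follow by naturality and by iterating $F'$, much as in the inductive arguments of \Cref{sec:prismatize} and \Cref{sec:Nygaard}.
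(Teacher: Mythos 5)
Your proposal is correct and follows essentially the same route as the paper: the untwisted $S^1$-level comparison with $R\Gamma(\Zp^{\Nyg},\O\{i\})$, the Gysin/Euler cofiber sequence for $(S^1/C_{p^n})_+\to \Ss^0\to \Ss^{\lambda^{p^n}}$ identifying $\THH(\Zp)^{hC_{p^n}}$ with the cohomology of $\O/(a^*\O(1)\otimes\mathcal{I}_n)$, and the decomposition $\rho_n=\bigoplus_{i=0}^{p^n-1}\lambda^i$ with each character contributing the line bundle $a^*\O(\ell)\otimes\mathcal{I}_{v_p(i)}^{-\ell}$. The paper discharges the "key claim" you flag (matching the Euler/Thom classes with the divisors $a^*\O(1)$ and $\mathcal{I}_{v_p(k)}$) by the qrsp-level identification of these divisors with the ideals $(t)$ and $(d)$ together with the cited lemmas of Riggenbach--Bhatt--Lurie type, exactly as you anticipate.
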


\begin{proof}
%To begin, let $\Zp^{\Nyg, \wedge}$ denote the Nygaard completion of $\Zp^{\Nyg}$, i.e. the completion at the vanishing locus of $a : \Zp^{\Nyg} \to \A^1 / \G_m$.
To begin, we recall that we have
\[\gr^i _{\mot} \THH(\Zp)^{hS^1} \simeq R\Gamma(\Zp^{\Nyg}; \O\{i\}) [2i].\]
A priori, the left hand side is the Nygaard completion of the right hand side, but the absolute prismatic cohomology of $\Zp$ is already Nygaard-complete.

  Recall that the divisors $(F')^* \mathcal{I} \to \O$ and $a^* \O(-1) \to \O$ on $\Zp^{\Nyg}$, correspond, after pulling back to $R^{\Nyg}$ for $R$ a qrsp ring, to the ideals generated by the prismatic element $d$ and $t$, respectively.
It therefore follows from \cite[Lemma 3.1]{Noah} and quasisyntomic descent that there is an isomorphism:
\[\gr^i _{\mot} \THH(\Zp)^{hC_{p^n}} \simeq R\Gamma(\Zp^{\Nyg}; \left(\O/a^* \O(-1) \otimes (F')^* \mathcal{I} \otimes \cdots \otimes (F')^* (F^{n-1})^* \mathcal{I}\right)\{i\}) [2i],\]
i.e. an isomorphism
\[\gr^i _{\mot} \THH(\Zp)^{hC_{p^n}} \simeq R\Gamma(\Zp^{\Nyg}; \left(\O/a^* \O(-1) \otimes \mathcal{I}_n\right)\{i\}) [2i].\]

To deal with the twisting, we note that if we let $\lambda$ denote the standard complex representation of $S^1$, then there are fiber sequences
\[\Ss^{-\lambda^k} \to \Ss^0 \to \mathbb{D}(\Sigma^\infty_+ S^1 / C_k),\]
which give rise to fiber sequences
\[(\THH(R) \otimes \Ss^{-\lambda^k})^{h S^1} \to \THH(R)^{h S^1} \to \THH(R)^{h C_k} \simeq \THH(R)^{h C_{p^{v_p(k)}}}.\]

It follows that we have
  \[\gr^i _{\mot} (\THH(\Zp) \otimes \Ss^{-\lambda^k})^{hS^1} \simeq R\Gamma(\Zp^{\Nyg}; \O\{i\} \otimes a^* \O(-1) \otimes \mathcal{I}_{v_p (k)}) [2i],\]
from which we may deduce that (cf. \cite[Lemma 4.3]{Noah})
\[\gr^i _{\mot} (\THH(\Zp) \otimes \Ss^{-\lambda^k})^{h C_{p^n}} \simeq R\Gamma(\Zp^{\Nyg}; \left(\O/a^* \O(-1) \otimes \mathcal{I}_n\right) \{i\} \otimes a^* \O(-1) \otimes \mathcal{I}_{v_p(k)}) [2i].\]

Finally, writing $\rho_n$ as the restriction of $\bigoplus_{i=0}^{p^n-1} \lambda^i$ from $S^1$ to $C_{p^n}$, we conclude that 
\[\gr^i _{\mot} (\THH(\Zp) \otimes \Ss^{\ell\rho_n})^{h C_{p^n}} \simeq R\Gamma(\Zp^{\Nyg}; \left(\O/a^* \O(-1) \otimes \mathcal{I}_n\right) \{i\} \otimes a^* \O(\ell p^n) \otimes \bigotimes_{i=0} ^{p^n-1} \mathcal{I}_{v_p(i)}^{-\ell}) [2i].\]
\end{proof}

Now let us discuss spectral sequences.
\begin{cnstr}
Given a complex of quasicoherent sheaves $\mathcal{F}$ on $\Zp ^{\Nyg}$, there are spectral sequences associated to the Nygaard filtration of the form:
%\pi_* R\Gamma((\Zp ^{\Nyg}) _{a=0} ; F_{a=0}) [t] \cong 
\[\pi_* R\Gamma((\Zp ^{\Nyg}) _{a=0} ; \mathcal{F}_{a=0} \otimes (\bigoplus_{i=0} ^\infty a^* \O(-i))) \Rightarrow \pi_* R\Gamma(\Zp ^{\Nyg} ; \mathcal{F})^{\wedge} _{a}\]
and
%\pi_* R\Gamma((\Zp ^{\Nyg}) _{a=0} ; F_{a=0}) [t^{\pm 1}] \cong 
\[\pi_* R\Gamma((\Zp ^{\Nyg}) _{a=0} ; \mathcal{F}_{a=0} \otimes (\bigoplus_{i=-\infty} ^\infty a^* \O(-i))) \Rightarrow \pi_* R\Gamma(\Zp ^{\Nyg} ; \mathcal{F})^{\wedge} _{a}[a^{-1}].\]
(Indeed, to see this we push forward to $\mathbb{A}^1 / \G_m$ and use the correspondence between complexes on this stack and filtered complexes.)
These are algebraic versions of the $S^1$-homotopy fixed points and $S^1$-Tate spectral sequences on the level of $\THH$.
It follows from the definitions that the former is a truncation of the latter.
We refer to both of these spectral sequences as the \emph{Nygaard spectral sequence}.
\end{cnstr}

Next, we reduce modulo $p$ and compute these spectral sequences in the cases relevant to
$\gr^i _{\mot} (\THH(\Zp) \otimes \Ss^{\ell\rho_n})^{h C_{p^n}}.$

We start with the untwisted case, when $\ell = 0$.
By \cite[Construction 2.4]{BM}, we have the following starting point.

\begin{ntn}
  In the remainder of this section, given a complex of quasicoherent sheaves $\mathcal{F}$ on $\Z_p ^{\Nyg}$ (or $(\Z_p ^{\Nyg})_{p=0}$), we will say that an element of $H^s (X; F \otimes \mathcal{O} \{t\})$ lies in bidegree $(2t-s,s)$.
  Moreover, in the context of the Nygaard spectral sequence
  \[\pi_* R\Gamma((\Zp ^{\Nyg}) _{a=0} ; \mathcal{F}_{a=0} \otimes \mathcal{O} \{t\}_{a=0} \otimes (\bigoplus_{i=0} ^\infty a^* \O(-i))) \Rightarrow \pi_* R\Gamma(\Zp ^{\Nyg} ; \mathcal{F} \otimes \mathcal{O}\{t\})^{\wedge} _{a},\]
  we say that an element of 
  \[\pi_{-s} R\Gamma((\Zp ^{\Nyg}) _{a=0} ; \mathcal{F}_{a=0} \otimes \mathcal{O} \{t\}_{a=0} \otimes  a^* \O(-i))\]
  has tridegree $(2t-s, s, i)$.
\end{ntn}

\begin{prop} \label{prop:e2-untwisted}
  There is an isomorphism
  \[H^s ((\Zp^{\Nyg}) _{p=a=0} ; \O\{t\}_{p=a=0}) \cong \F_p [\mu] \otimes \Lambda(\lambda_1),\]
  where $\abs{\mu} = (2p,0)$ and $\abs{\lambda_1} = (2p-1, 1)$.
  %$\mu \in H^0 ((\Zp^{\Nyg}) _{p=a=0} ; \O\{p\}_{p=a=0})$ and $\lambda_1 \in H^{1} ((\Zp^{\Nyg}) _{p=a=0} ; \O\{p\}_{p=a=0})$.
\end{prop}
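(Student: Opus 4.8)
This is \cite[Construction 2.4]{BM}; I indicate the argument one would give. The plan is to reduce the assertion to a known $\THH$ computation, and then to Bökstedt periodicity.

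First, one identifies the left-hand side with the bigraded homotopy of the mod $p$ motivic associated graded of $\THH(\Zp)$. Indeed, by the comparison $\gr^i_\mot\THH(\Zp)^{hS^1}\simeq R\Gamma(\Zp^{\Nyg};\O\{i\})[2i]$ recalled above, together with the fact that the $\A^1/\G_m$-parameter $a$ on $\Zp^{\Nyg}$ is precisely the Nygaard coordinate --- so that restriction along the closed immersion $(\Zp^{\Nyg})_{a=0}\hookrightarrow\Zp^{\Nyg}$ realizes passage to the associated graded of the Nygaard filtration, compatibly with the $\A^1/\G_m$-grading --- one obtains a natural isomorphism
\[\bigoplus_i H^*\big((\Zp^{\Nyg})_{p=a=0};\O\{i\}_{p=a=0}\big)\;\cong\;\pi_*\gr^*_\mot\THH(\Zp)/p.\]
Under this isomorphism the first coordinate of the bidegree plays the role of internal homotopy degree and the second of coherent-cohomological degree (equivalently, the Adams filtration $2\cdot\mathrm{wt}-\mathrm{deg}$ of \Cref{prop:THHmodzero}). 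This is why $\mu$ and $\lambda_1$, which both have motivic weight $p$, appear in bidegrees $(2p,0)$ and $(2p-1,1)$ respectively: $\mu$ is a cohomological-degree-$0$ class and $\lambda_1$ a cohomological-degree-$1$ class.

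It therefore suffices to compute $\pi_*\gr^*_\mot\THH(\Zp)/p$ as a bigraded ring. For this I would use that the Bhatt--Morrow--Scholze motivic filtration on $\THH(-)^\wedge_p$ is a sheaf for the $p$-quasisyntomic topology: descending along a quasiregular semiperfectoid cover $\Zp\to S$ --- for instance $S=(\Zp[p^{1/p^\infty}])^\wedge_p$ --- one computes $\gr^*_\mot\THH(\Zp)/p$ as the totalization of the cosimplicial object $\gr^*_\mot\THH(S^{\otimes_{\Zp}\bullet+1})/p$. On a qrsp ring $T$ the motivic filtration is the double-speed Postnikov filtration, so $\pi_*\gr^*_\mot\THH(T)/p=\pi_*\THH(T)/p$ with motivic weight equal to half the homotopy degree, and $\THH(T)/p$ is concentrated in even degrees with $\pi_*$ a completed polynomial--divided-power algebra accessible by Bökstedt's methods. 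Running the descent spectral sequence recovers $\pi_*\THH(\Zp)/p\cong\F_p[\mu]\otimes\Lambda(\lambda_1)$ (Bökstedt, $p$ odd), and tracking the motivic weight through the descent shows that $\mu$ (homotopy degree $2p$) and $\lambda_1$ (homotopy degree $2p-1$) both acquire motivic weight $p$ and the claimed cohomological degrees.

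The step that is genuinely not formal is this last descent computation: one must check that the descent spectral sequence --- equivalently, the algebraic Nygaard / homotopy fixed point spectral sequence --- degenerates and carries no weight-shifting extensions, so that the bigraded answer is exactly $\F_p[\mu]\otimes\Lambda(\lambda_1)$ and not merely an associated graded of it. In the present untwisted situation this is clean: the $\THH$ of the qrsp rings in the cover are even, and the only classes produced, $\mu$ and $\lambda_1$, sit in a single motivic weight, leaving no room for a differential or an extension. As an alternative one can bypass $\THH$ and compute the coherent cohomology of the twisted line bundles on $(\Zp^{\Nyg})_{p=a=0}$ directly from the explicit quotient presentation of this stack implicit in Drinfeld's description \cite[Section~7.4]{Drinfeld}, which again exhibits the cohomology as a two-term complex whose cohomology ring is $\F_p[\mu]\otimes\Lambda(\lambda_1)$.
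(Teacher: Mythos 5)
Your proposal is correct and matches the paper, which simply quotes this computation from \cite[Construction 2.4]{BM} without further argument. The justification you supply --- identifying $\bigoplus_i H^*((\Zp^{\Nyg})_{p=a=0};\O\{i\})$ with the Nygaard graded pieces $N^i\prism_{\Zp}\{i\}/p$, i.e.\ with $\pi_*\gr^*_{\mot}\THH(\Zp)/p$, and then invoking the B\"okstedt-style computation of $\pi_*\THH(\Zp)/p$ with its motivic weights --- is exactly the standard argument underlying the cited construction.
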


We will also need the following lemma.

\begin{lem} \label{lem:pull-trivial}
  Let $\mathcal{L}$ denote a line bundle over $\Zp^\prism$.
  Then the pullback of $\mathcal{L}$ along the composite
  \[(\Zp ^{\Nyg}) _{a=0} \hookrightarrow \Zp ^{\Nyg} \xrightarrow{F'} \Zp ^\prism \]
  is trivial.
  In particular, we have
  \[\O\{1\} \cong a^* \O(-1)\]
  after pullback to $(\Zp ^{\Nyg}) _{a=0}$.
\end{lem}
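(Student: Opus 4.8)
The plan is to reduce the statement to a single line bundle and then exhibit a trivialization of its pullback. First, recall that the Picard group of $\Z_p^{\prism}$ is generated by the Breuil--Kisin twist $\O_{\Z_p^{\prism}}\{1\}$, i.e.\ every line bundle on $\Z_p^{\prism}$ has the form $\O\{i\}$ for some $i \in \Z$ (see \cite{APC}). Hence it suffices to show that the pullback of $\O_{\Z_p^{\prism}}\{1\}$ along the composite $g \colon (\Z_p^{\Nyg})_{a=0} \hookrightarrow \Z_p^{\Nyg} \xrightarrow{F'} \Z_p^{\prism}$ is trivial.

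Second, I would identify the restriction of $F'$ to the $a=0$ locus, the point being that $(\Z_p^{\Nyg})_{a=0}$ is a degenerate locus on which $F'$ lands in the de Rham point. Concretely: since the Nygaard filtration on $\prism_{\Z_p}$ is the $p$-adic filtration, its associated graded is $\F_p$-linear, so $(\Z_p^{\Nyg})_{a=0} = (\Z_p^{\Nyg})_{p=a=0}$; as $v_1 = a\mu$, this locus is then contained in $(\Z_p^{\Nyg})_{p=v_1=0}$, which by the remark following Drinfeld's presentation is identified with $(\F_p)^{\Nyg}_{p=0}$. Now naturality of $F'$ in the base $p$-adic formal scheme, applied to $\Spf \F_p \to \Spf \Z_p$, shows that $g$ factors through the restriction of $F' \colon (\F_p)^{\Nyg}_{p=0} \to (\Z_p^{\prism})_{p=0}$; by \Cref{prop:syn-base} the relevant restriction lands in the single point $\Spec(\F_p)/(W[F] \times \mu_p) \to (\Z_p^{\prism})_{p=0}$, and since $F' \circ j_{HT} = F$ this point is the reduction of the de Rham point precomposed with the prismatic Frobenius. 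The Breuil--Kisin twist $\O_{\Z_p^{\prism}}\{1\}$ is trivial as a line bundle when restricted to the de Rham point $\Spf \Z_p \to \Z_p^{\prism}$ (it corresponds to the free rank one module $\Z_p\{1\}$), so $g^* \O_{\Z_p^{\prism}}\{1\} \cong \O$, as desired.

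The main obstacle is making the middle step precise: one must carefully identify $(\Z_p^{\Nyg})_{a=0}$, and the restriction of $F'$ to it, in terms of Cartier--Witt divisors, using the explicit descriptions of \cite{Drinfeld}. The heart of the matter is to check that imposing $a(d)=0$ forces the Cartier--Witt divisor $d' \colon M' \to W_S$ produced by $F'$ into the principal locus spanned by the divisor $W_S \xrightarrow{V(1)} W_S$---which, once $p = 0$, is the de Rham point by Petrov's identity $V(1) = p$ (cf.\ the proof of \Cref{lem:prism-contract}). Once this structural fact is in place, triviality of the Breuil--Kisin twist at the de Rham point finishes the proof with no further computation.
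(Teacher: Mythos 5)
Your overall strategy---factor the composite through a stack on which every line bundle is trivial---is the same as the paper's, but the factorization you construct rests on a false intermediate claim. You assert that $(\Zp^{\Nyg})_{a=0} = (\Zp^{\Nyg})_{p=a=0}$, justified by saying that the Nygaard filtration on $\prism_{\Zp}$ is the $p$-adic filtration with $\F_p$-linear associated graded. This conflates the \emph{relative} Nygaard filtration over the crystalline prism $(\Zp,(p))$ (which is indeed the $p$-adic filtration) with the \emph{absolute} one, which is what $\Zp^{\Nyg}$ encodes. The $a=0$ locus is the Rees-degeneration special fiber computing the associated graded of the absolute Nygaard filtration, i.e.\ conjugate-filtered Hodge--Tate cohomology of $\Zp$; already its zeroth piece is $\Zp$, not $\F_p$, so $p$ does not vanish on $(\Zp^{\Nyg})_{a=0}$ and this locus is not an $\F_p$-stack. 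Consequently the rest of your chain (containment in $(\Zp^{\Nyg})_{p=v_1=0}$, the identification with Drinfeld's $S$, and the appeal to \Cref{prop:syn-base}) does not apply.

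The correct factorization, which is the one the paper uses (citing \cite[Proposition 5.3.7]{fgauge}), is integral: the composite $(\Zp^{\Nyg})_{a=0} \hookrightarrow \Zp^{\Nyg} \xrightarrow{F'} \Zp^{\prism}$ factors through the de Rham point $\Spf\Zp \xrightarrow{dR} \Zp^{\prism}$, classifying the Cartier--Witt divisor $p \colon W(\Zp) \to W(\Zp)$. Your closing paragraph actually gestures at the right mechanism---imposing $a(d)=0$ forces $d$ to factor through $F_*M'$ and identifies the resulting divisor with the principal divisor generated by $p$---but there is no need, and no justification, for first passing to $p=0$ in order to invoke Petrov's identity $V(1)=p$; the de Rham divisor $p \colon W_S \to W_S$ makes sense for every $p$-nilpotent $S$. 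Once the factorization through $\Spf\Zp$ is in place, every line bundle pulls back trivially because $\Pic(\Spf\Zp)$ is trivial; in particular your opening reduction to $\O\{1\}$ via the Picard group of $\Zp^{\prism}$ is unnecessary.
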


\begin{proof}
  By the proof of \cite[Proposition 5.3.7]{fgauge}, we have a commutative square
  \begin{center}
    \begin{tikzcd}
      (\Zp ^{\Nyg}) _{a=0} \ar[r] \ar[d] & \Zp^{\Nyg} \ar[d, "F'"] \\
      \mathrm{Spf} \Zp \ar[r,"dR"] & \Zp^{\prism},
    \end{tikzcd}
  \end{center}
  where the bottom horizontal map corresponds to the Cartier--Witt divisor $p : W(\Zp) \to W(\Zp)$.
  The lemma therefore follows from the fact that any line bundle over $\mathrm{Spf} \Zp$ is trivializable.

  The final statement follows from the first part of the lemma and the definition of $\O_{(\Z_p)^{\Nyg}}\{1\}$ in \Cref{dfn:NygBundles}.
\end{proof}

\begin{prop} \label{prop:sseq-untwisted}
  The Nygaard spectral sequence for the cofiber of $\mathcal{F} = \left(\O/a^* \O(-1) \otimes \mathcal{I}_n\right)\{*\}$ by $p$ takes the form
 \[\F_p [\mu, t^{\pm 1}] \otimes E(\lambda_1, u_n) \Rightarrow \pi_* \gr_{\mot}^* \THH(\Zp)^{tC_{p^n}}/p,\]
	with differentials generated under the Leibniz rule by:
	\begin{align*}
		d_{2(p+ \dots + p^{k+1})} (t^{p^k}) \doteq v_1 ^{p + \dots + p^k} t^{p^{k+1} + p^k} \lambda_1 \\
		d_{2(1+p+ \dots + p^n)} (u_n) \doteq v_1 ^{1+ \dots + p^{n-1}} t^{p^n},
	\end{align*}	
  for $0 \le k <n$, and the fact that $v_1=t\mu$ and $t^{p^{n}}$ are permanent cycles.
  In particular, the spectral sequence collapses at the $E_{2(1+\dots+p^n)+1}$-page.
  Here, we have $\abs{\mu} = (2p,0,0)$, $\abs{\lambda_1} = (2p-1, 1,0)$, $\abs{t} = (-2,0,1)$ and $\abs{u_n} = (-1,-1,0)$
\end{prop}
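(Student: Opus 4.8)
The plan is to identify the $E_2$-page and then pin down all differentials multiplicatively, feeding in a single input differential from the known $\TC^{-}(\Zp)$ computation. For the $E_2$-page I would start from \Cref{prop:e2-untwisted}, which gives $H^*((\Zp^{\Nyg})_{p=a=0};\O\{*\}_{p=a=0})\cong\F_p[\mu]\otimes\Lambda(\lambda_1)$. Restricting $\mathcal{F}/p=(\O/a^*\O(1)\otimes\mathcal{I}_n)\{*\}/p$ to the locus $\{a=0\}$, the equation of the divisor $a^*\O(1)\otimes\mathcal{I}_n\to\O$ is the product of the $a$-equation and the $\mathcal{I}_n$-equation and hence becomes $0$; meanwhile $\mathcal{I}_n$ restricts to a trivial line bundle on $(\Zp^{\Nyg})_{p=a=0}$ by \Cref{lem:pull-trivial}. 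Thus $(\mathcal{F}/p)|_{a=0}$ is the Koszul complex on the zero element, i.e. $(\O/p)_{a=0}\{*\}$ tensored with an exterior algebra on a single generator $u_n$ of cohomological degree $1$. Tensoring with $\bigoplus_{i=-\infty}^{\infty}\O(-i)$, as the recipe for the Tate spectral sequence prescribes, multiplies the cohomology by $\F_p[t^{\pm1}]$, so that $E_2\cong\F_p[\mu,t^{\pm1}]\otimes E(\lambda_1,u_n)$, with the bidegrees of $\mu,\lambda_1$ as in \Cref{prop:e2-untwisted} and those of $t,u_n$ dictated by the construction.

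Next I would dispatch the permanence of $v_1$ and the first differential. The class $v_1=t\mu$ is the mod $p$ reduction of a genuine homotopy class: the map of spectral sequences induced by the ring map $\THH(\Zp)^{hS^1}\to\THH(\Zp)^{hC_{p^n}}\to\THH(\Zp)^{tC_{p^n}}$ carries $v_1\in\pi_{2(p-1)}\gr^{p-1}_{\mot}\TC^{-}(\Zp)/p$ to a class detected by $t\mu$ on $E_2$, so $t\mu$ is a permanent cycle. By sparseness of the $E_2$-page there are no differentials $d_r$ for $2\le r<2p$, and the first nonzero differential $d_{2p}(\mu)\doteq v_1 t^{p-1}\lambda_1$ — equivalently $d_{2p}(t)\doteq t^{p+1}\lambda_1$, using $d_{2p}(v_1)=0$ — is precisely the first differential in the motivic (homotopy fixed point) spectral sequence computing $\gr_{\mot}\TC^{-}(\Zp)/p$, which is recorded in \cite{BM}; it pushes forward along the above map of multiplicative spectral sequences, giving the $k=0$ case of the asserted differentials (with the convention that the exponent $p+\dots+p^k$ is empty, hence $0$, when $k=0$).

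Computing $E_{2p+1}$ from $d_{2p}$ gives $\F_p[v_1,t^{\pm p}]\otimes E(\lambda_1,u_n)$, so the surviving powers of $t$ are exactly the $t^{pm}$; for $n\ge 2$, $t^p$ must then support $d_{2(p+p^2)}(t^p)\doteq v_1^{p}t^{p^2+p}\lambda_1$, which one verifies by comparing with the $C_{p^{n-1}}$-spectral sequence, under which this region is the $t\mapsto t^p$ rescaling of the previous situation; iterating yields $d_{2(p+\dots+p^{k+1})}(t^{p^k})\doteq v_1^{p+\dots+p^k}t^{p^{k+1}+p^k}\lambda_1$ for $0\le k\le n-1$. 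Since $t^{p^n}=(t^{p^{n-1}})^{p}$ acquires a factor of $p$ in every Leibniz-rule expression for its differential, $t^{p^n}$ is a $d_r$-cycle for every $r\le 2(p+\dots+p^n)$; after the remaining differential $d_{2(1+p+\dots+p^n)}(u_n)\doteq v_1^{1+\dots+p^{n-1}}t^{p^n}$ — forced by the boundary map of the norm cofiber sequence for $C_{p^n}$, equivalently by matching the classically known homotopy of $\THH(\Zp)^{tC_{p^n}}$ — a bidegree inspection of the resulting page shows there is no possible target for any further differential, so the spectral sequence collapses at $E_{2(1+\dots+p^n)+1}$, and in particular $t^{p^n}$ is a permanent cycle. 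I expect the main obstacle to be the inductive identification of the higher differentials $d_{2(p+\dots+p^{k+1})}(t^{p^k})$ together with the $u_n$-differential: propagating the single input differential up the tower $C_p\subset C_{p^2}\subset\cdots$ requires carefully comparing the spectral sequences for successive $C_{p^k}$ — in effect controlling the cyclotomic Frobenius on the motivic associated graded — and tracking the Koszul class $u_n$ across all the intermediate pages, after which the stated collapse and the permanence of $t^{p^n}$ follow from bidegree bookkeeping.
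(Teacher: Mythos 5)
Your identification of the $E_2$-page follows the paper's own route: restrict to $\{a=0\}$, observe that the defining map of the divisor $a^*\O(1)\otimes\mathcal{I}_n\to\O$ dies there so the two-term complex splits as $\O\oplus\left(a^*\O(1)\otimes\mathcal{I}_n\right)[1]$, and use \Cref{lem:pull-trivial} to trivialize $\mathcal{I}_n$ on that locus; combined with \Cref{prop:e2-untwisted} this gives $\F_p[\mu,t^{\pm1}]\otimes E(\lambda_1,u_n)$ exactly as in the paper. Your treatment of the permanence of $v_1=t\mu$ and of the collapse after the $u_n$-differential is also consistent with what the paper does.

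The gap is in how you obtain the differentials. For the $u_n$-differential, the paper does not appeal to topology at all: it computes that, mod $p$, $\mathcal{I}_n\cong (F')^*\mathcal{I}^{1+\dots+p^{n-1}}$ and that $\mu$ detects the prismatic element $d$ in Nygaard filtration $p$, so the attaching map $a^*\O(1)\otimes\mathcal{I}_n\to\O$ of the split two-term complex is detected by a unit multiple of $\mu^{1+\dots+p^{n-1}}$ in Nygaard filtration $1+p+\dots+p^n$; the differential $d_{2(1+\dots+p^n)}(u_n)\doteq v_1^{1+\dots+p^{n-1}}t^{p^n}$ is then literally read off from this attaching map. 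Your substitute --- that the differential is ``forced by the boundary map of the norm cofiber sequence, equivalently by matching the classically known homotopy of $\THH(\Zp)^{tC_{p^n}}$'' --- is not a derivation: this spectral sequence converges to $\pi_*\gr_{\mot}\THH(\Zp)^{tC_{p^n}}/p$, the homotopy of the \emph{motivic associated graded}, so matching against the classical homotopy of the Tate construction would require a separate degeneration/comparison argument that you have not supplied. The missing ingredient is precisely the Nygaard-filtration computation of the Euler class of the divisor.

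For the $t$-differentials, the paper simply pushes them forward from the untwisted $\O\{*\}$ spectral sequence, citing \cite[Propositions 6.32 \& 6.34]{LiuWang}. Your proposed inductive re-derivation is sketched but not carried out, and the pivotal step --- that the $C_{p^{n-1}}$-spectral sequence realizes ``the $t\mapsto t^p$ rescaling of the previous situation'' --- is not correct as stated: the restriction map between the $C_{p^n}$- and $C_{p^{n-1}}$-Tate constructions sends $t$ to $t$, not to $t^p$, so you would need to specify which comparison map (a Frobenius-type map on associated gradeds) you intend and compute its effect on $E_2$; this is nontrivial and is exactly the content you acknowledge as ``the main obstacle.'' Either carry out that comparison or, as the paper does, import these differentials from the cited computation.
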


\begin{proof}
  To start, we note that by computation on the level of qrsp's, the mod $p$ reduction of the map
  $a^* \O(-1) \otimes \mathcal{I}_n \to \O$
  is detected in Nygaard filtration $1+p+\dots+p^n$ by a unit multiple of the element $\mu^{1+p+\dots+p^{n-1}}$.
  Indeed, mod $p$ we have $\mathcal{I}_n \cong (F')^* \mathcal{I}^{1+ \dots + p^{n-1}}$ as divisors, so it suffices to note that $\mu$ detects the prismatic element $d$ mod $p$ in Nygaard filtration $p$.

  In particular, this divisor produces the zero map upon pullback to $(\Zp ^{\Nyg})_{p=a=0}$,
  so that on this locus we have 
  \[\O/a^* \O(-1) \otimes \mathcal{I}_n \cong \O \oplus (a^* \O(-1) \otimes \mathcal{I}_n) [1]\]

  Next, we note that the pullback of $(F')^* \mathcal{I}$ to $(\Zp ^{\Nyg})_{a=0}$ is trivial by \Cref{lem:pull-trivial}, so that on $(\Zp ^{\Nyg})_{p=a=0}$ we further have
  \[\O/a^* \O(-1) \otimes \mathcal{I}_n \cong \O \oplus a^* \O(-1) [1] \cong \O \oplus \O\{1\} [1].\]

  The computation of the $E_2$-page now follows from \Cref{prop:e2-untwisted}, where $u_n$ corresponds to the $\O\{1\} [1]$ factor.
  The differential on $u_n$ therefore follows from our discussion above of the attaching map corresponding to this factor.

  The differentials on $t$ are mapped forward from the Nygaard spectral sequence for the mod $p$ reduction of $\mathcal{F} = \O\{*\}$, which was done in \cite[Propositions 6.32 \& 6.34]{LiuWang}.
\end{proof}

Next we study how twisting by a representation sphere affects this spectral sequence.
We start by noting that on $(\Zp ^{\Nyg}) _{p=0}$, there is an isomorphism $(F')^* \mathcal{I} \simeq \O\{1-p\} \otimes a^* \O(1-p)$.
Furthermore using the fact that $F^* \mathcal{L} \cong \mathcal{L}^p$ mod $p$, we find that $\mathcal{I}_n \cong (F')^* \mathcal{I} ^{1 + \dots + p^{n-1}} \cong \O\{1-p^n\} \otimes a^* \O(1-p^n)$.
Finally, we conclude that
\begin{align*}
  &a^* \O(\ell p^n) \otimes \bigotimes_{i=0} ^{p^n-1} \mathcal{I}_{v_p(i)}^{-\ell} \\
  &\cong \O\{\ell \sum_{i=0} ^{p^n-1} (p^{v_p(i)} - 1) \} \otimes a^* \O( \ell \sum_{i=0} ^{p^n-1} p^{v_p(i)} ) \\
  &\cong \O\{\ell n(p^n-p^{n-1}) - \ell p^n \} \otimes a^* \O( \ell n(p^n-p^{n-1}) ),
\end{align*}
since $\sum_{i=0} ^{p^n-1} p^{v_p (i)} = n(p^n-p^{n-1})$.

Using this, we will compute the twisted spectral sequence.

\begin{thm} \label{thm:sseq-twisted}
  The Nygaard spectral sequence for $\gr _{\mot} ^* (\THH(\Zp) \otimes \Ss^{\ell \rho_n})^{hC_{p^n}}$ is of the form:
  \[\sigma \epsilon^{(\ell p^n)} (\F_p [\mu, t ^{\pm 1}] \otimes E(\lambda_1, u_n)) \Rightarrow \pi_* \gr_{\mot}^* \THH(\Zp)^{tC_{p^n}}/p,\]
  where $\abs{\sigma \epsilon ^{(\ell p^n)}} = (2\ell p^n, 0, 0)$.
  The differentials are determined by the following two facts:
  \begin{itemize}
    \item It is a module over the untwisted Nygaard spectral sequence, which was computed in \Cref{prop:sseq-untwisted}.
    \item The differentials on $\sigma \epsilon^{(\ell p^n)}$ are determined by the differentials on $t^{-\ell n(p^n-p^{n-1})}$ in the spectral sequence of \Cref{prop:sseq-untwisted} via the formula
      \[d_{m} (\sigma \epsilon^{(\ell p^n)}) = \frac{d_{m} (t^{-\ell n(p^n-p^{n-1})})}{t^{-\ell n(p^n-p^{n-1})}} \sigma \epsilon^{(\ell p^n)}.\]
  \end{itemize}
\end{thm}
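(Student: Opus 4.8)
The plan is to realise the twisted Nygaard spectral sequence as a free rank-one module over the untwisted one of \Cref{prop:sseq-untwisted}, generated by the twist-tracking class $\sigma\epsilon^{(\ell p^n)}$, and then to determine the single remaining piece of data --- the differential on $\sigma\epsilon^{(\ell p^n)}$ --- by unwinding the invertible sheaf $\mathcal{L}=a^{*}\O(\ell p^n)\otimes\bigotimes_{i=0}^{p^n-1}\mathcal{I}_{v_p(i)}^{-\ell}$ produced in the Proposition at the start of this subsection. Since the homotopy-fixed-point spectral sequence is the connective truncation of the Tate one, it suffices to argue for the Tate version.

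First I would feed in the Proposition at the start of this subsection: $\gr^i_{\mot}(\THH(\Zp)\otimes\Ss^{\ell\rho_n})^{hC_{p^n}}$ is the $[2i]$-shift of $R\Gamma$ of $(\O/a^{*}\O(1)\otimes\mathcal{I}_n)\{i\}\otimes\mathcal{L}$ on $\Zp^{\Nyg}$, where $\O/a^{*}\O(1)\otimes\mathcal{I}_n$ is exactly the sheaf whose mod $p$ Nygaard spectral sequence is computed in \Cref{prop:sseq-untwisted} and $\mathcal{L}$ is a line bundle. Tensoring with $\mathcal{L}$ is an exact equivalence preserving the Nygaard filtration, so the twisted spectral sequence is a module over the untwisted one. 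On the $E_2$-page, which is computed on $(\Zp^{\Nyg})_{p=a=0}$, the factor of $\mathcal{L}$ pulled back along $F'$ becomes trivial by \Cref{lem:pull-trivial} and the remaining $a^{*}\O$-factor is absorbed as a shift of the weight decomposition, so the twisted $E_2$-page is free of rank one over the untwisted one of \Cref{prop:sseq-untwisted} on a single generator, which we name $\sigma\epsilon^{(\ell p^n)}$; this gives the first bullet. Its tridegree $(2\ell p^n,0,0)$ is read off from $\dim_{\RR}\Ss^{\ell\rho_n}=2\ell p^n$ together with the mod $p$ shape of $\mathcal{L}$ below, which places the generator in filtration $0$ and, in the chosen normalisation of the weight grading, in weight $0$.

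For the differential on $\sigma\epsilon^{(\ell p^n)}$ I would use the mod $p$ identification recorded just above the statement, $\mathcal{L}_{p=0}\cong\O\{\ell n(p^n-p^{n-1})-\ell p^n\}\otimes a^{*}\O(\ell n(p^n-p^{n-1}))$. In the Nygaard spectral sequence the Breuil--Kisin twist factor $\O\{\,\cdot\,\}$ produces only a uniform shift of the weight grading, compatible with the internal-degree shift, and --- since the differentials of \Cref{prop:sseq-untwisted} are formulated uniformly in the weight --- leaves the differentials unchanged; the remaining factor $a^{*}\O(\ell n(p^n-p^{n-1}))$ is a shift of the Nygaard / $\A^{1}/\G_m$-weight by $\ell n(p^n-p^{n-1})$, which is exactly the shift implemented inside the untwisted spectral sequence by multiplication by $t^{-\ell n(p^n-p^{n-1})}$. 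The iterated Frobenius pullbacks $\mathcal{I}_{v_p(i)}=(F')^{*}(\dots)$ are already absorbed into the two displayed weights, and by \Cref{lem:pull-trivial} their restriction to $(\Zp^{\Nyg})_{a=0}$ is trivialisable, so trivialising them affects the spectral sequence only by unit rescalings, accounting for the $\doteq$'s. Feeding this into the module structure forces $d_m(\sigma\epsilon^{(\ell p^n)})=\frac{d_m(t^{-\ell n(p^n-p^{n-1})})}{t^{-\ell n(p^n-p^{n-1})}}\,\sigma\epsilon^{(\ell p^n)}$, which is the second bullet.

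The step I expect to be the main obstacle is the weight bookkeeping in the previous paragraph: one must carefully separate the Breuil--Kisin twist grading from the Nygaard weight, verify that $\O\{1\}$-twists are inert for the differentials while $a^{*}\O(1)$-twists correspond to powers of $t$ (in particular that, despite $\O\{1\}\cong(F')^{*}\O_{\Zp^\prism}\{1\}\otimes a^{*}\O(-1)$, the Breuil--Kisin twist does not leak into the $t$-grading), and confirm that trivialising the $(F')^{*}$-pullback line bundles over $(\Zp^{\Nyg})_{a=0}$ introduces no new differentials. Everything else --- the $E_2$-page, the module structure and the degrees --- then follows formally from \Cref{prop:e2-untwisted}, \Cref{prop:sseq-untwisted}, the Proposition at the start of this subsection, and the invertibility of $\mathcal{L}$.
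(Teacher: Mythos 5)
Your proposal is correct and follows essentially the same route as the paper: decompose the twisting line bundle mod $p$ as $\O\{\ell n(p^n-p^{n-1})-\ell p^n\}\otimes a^{*}\O(\ell n(p^n-p^{n-1}))$, observe that the Breuil--Kisin factor only shifts the spectral sequence by $(-2k,0,0)$ while the $a^{*}\O(k)$ factor moves the truncation point of the $t$-inverted spectral sequence (equivalently, shifts by $(2k,0,0)$ and twists the differential on the generator by that of $t^{-k}$), and add the two shifts to land the generator in tridegree $(2\ell p^n,0,0)$. The "weight bookkeeping" you flag as the main obstacle is exactly the content the paper's proof asserts as its two bullet facts, so no gap remains.
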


\begin{proof}
  This follows from combining the following two facts:
  \begin{itemize}
    \item Twisting a sheaf by $\O\{k\}$ simply shifts the Nygaard spectral sequence by $(-2k,0,0)$.
    \item Twisting a sheaf by $a^* \O (k)$ changes where in the Nygaard filtration we truncate: we truncate in filtrations $\geq -k$ instead of $\geq 0$. This has the effect of taking the truncation of the $t$-inverted spectral sequence spanned by $t^i$ where $i \geq -k$.
      This is isomorphic to the spectral sequence obtained by shifting the spectral sequence by $(2k, 0, 0)$ and twisting the differentials by the differential of $t^{-k}$.
  \end{itemize}
\end{proof}

Now, we use \Cref{thm:sseq-twisted} to explicitly compute the Nygaard spectral sequence for $\pi_* \gr_{\mot}^* (\THH(\Zp) \otimes \Ss^{\ell\rho_n})^{hC_{p^n}}/p$.
Our results are summarized in the following two propositions.

\begin{ntn}
  Given two nonnegative integers $a$ and $b$, we let $a \% b$ denote the unique integer between $0$ and $b-1$ which is equivalent to $a$ modulo $b$.
\end{ntn}

\begin{prop}\label{prop:hfppages}
	Let $n,\ell\geq0$. The $E_{2(p+\dots+p^{n})+1}$-page of the Nygaard spectral sequence
\[\sigma \epsilon^{(\ell p^n)} \F_p [\mu, t] \otimes E(\lambda_1, u_n) \Rightarrow \pi_* \gr_{\mot}^* (\THH(\Zp) \otimes \Ss^{\ell\rho_n})^{hC_{p^n}}/p\] is given as a $\FF_p[v_1]$-module as follows: 

\begin{align*}
  &\sigma \epsilon^{(\ell p^n)} E(u_n)\otimes \big( (t^{p^{n-1}(p-\ell\%p)},u^{p^{n-1}(\ell\%p)})\FF_p[t^{p^n},\mu^{p^n},v_1]/(v_1^{p^n}=t^{p^n}\mu^{p^n})\otimes E(\lambda_1)\\
    &\oplus \bigoplus_{k=1} ^{n-2} 
	\bigoplus_{i > p^{k+1} \vert v_p (i) = k}  \F_p [v_1] / (v_1^{p+ \dots+ p^k}) \{t^i \lambda_1\} \\
  &\oplus \bigoplus_{k=0} ^{n-2} (\bigoplus_{i=1} ^{p-1} \F_p [v_1] / (v_1 ^{p + \dots + p^k + p^k(p-i)}) \{t^{p^k i} \lambda_1\} \oplus \bigoplus_{j > 0 \vert v_p (j) = k} \F_p[v_1] /(v_1^{p+ \dots + p^{k+1}}) \{\mu^{j} \lambda_1\})
	\\
  &\oplus  \F_p [v_1] / (v_1^{p+ \dots+ p^{n-1}}) \{t^i \lambda_1 \vert i \geq p^{n}, v_p (i + \ell n p^{n-1}) = n-1 \} \\
  &\oplus \F_p [v_1] / (v_1 ^{p + \dots + p^{n-1} + p^{n-1}(p-i)}) \{t^{p^{n-1} i} \lambda_1 \vert 1 \leq i < p, v_p (p^{n-1} i +n \ell p^{n-1}) = n-1 \} \\
  &\oplus \F_p[v_1] /(v_1^{p+ \dots + p^{n}}) \{\mu^{j} \lambda_1 \vert j \geq 0, v_p ( j - \ell n p^{n-1}) = n-1\}
	\big)
\end{align*}

Here, $(t^{p^{n-1}(p-\ell\%p)},u^{p^{n-1}(\ell\%p)})\FF_p[t^{p^n},\mu^{p^n},v_1]/(v_1^{p^n}=t^{p^n}\mu^{p^n})$ refers to the sub-$\FF_p[t^{p^n},\mu^{p^n},v_1]/(v_1^{p^n}=t^{p^n}\mu^{p^n})$-module of $\FF_p[t,\mu]$ generated by $t^{p^{n-1}(p-\ell\%p)}$ and $u^{p^{n-1}(\ell\%p)}$.
\end{prop}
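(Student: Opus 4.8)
The plan is to deduce \Cref{prop:hfppages} directly from \Cref{thm:sseq-twisted}. Since the twisted Nygaard spectral sequence is, on its $E_2$-page, free of rank one over the untwisted spectral sequence of \Cref{prop:sseq-untwisted} on the generator $\sigma\epsilon^{(\ell p^n)}$, it suffices to (i) compute the $E_{2(p+\dots+p^n)+1}$-page of the untwisted spectral sequence as an $\F_p[v_1]$-module, and then (ii) record how multiplication by $\sigma\epsilon^{(\ell p^n)}$, together with the differential this class supports, shifts the answer. On the page in question only the differentials $d_{2(p+\dots+p^{k+1})}$ off the classes $t^{p^k}$, for $k=0,\dots,n-1$, have been run; the differential $d_{2(1+p+\dots+p^n)}$ off $u_n$ has not, which is exactly why $E(u_n)$ still appears as a free exterior tensor factor in the stated answer.

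First I would run the untwisted spectral sequence. Its $E_2$-page is $\F_p[\mu,t]\otimes E(\lambda_1,u_n)$ with $v_1=t\mu$ (by \Cref{prop:e2-untwisted} and \cite[Construction 2.4]{BM}, this being the nonnegative-Nygaard-degree truncation of the Tate $E_2$-page of \Cref{prop:sseq-untwisted}), so that as an $\F_p[v_1]$-module it is free on the $t$-tower $\{t^i : i\ge 0\}$ and the $\mu$-tower $\{\mu^j : j\ge 1\}$, glued along their common $v_1$-powers. The differential $d_{2(p+\dots+p^{k+1})}$ is generated via the Leibniz rule by $d_{2(p+\dots+p^{k+1})}(t^{p^k})\doteq v_1^{p+\dots+p^k}t^{p^{k+1}+p^k}\lambda_1$; at stage $k$ it eliminates the $t^i$ and $\mu^j$ with $v_p(i)=v_p(j)=k$ (outside the range killed at earlier stages) and leaves $t^{i+p^k}\lambda_1$, respectively $\mu^j\lambda_1$, as $v_1$-torsion of the orders recorded in the statement: the generic order $v_1^{p+\dots+p^k}$, together with the enlarged orders $v_1^{p+\dots+p^k+p^k(p-i)}$ and $v_1^{p+\dots+p^{k+1}}$ coming from the ``edge'' classes $t^{p^k i}\lambda_1$ ($1\le i<p$) and $\mu^j\lambda_1$ ($v_p(j)=k$). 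This is the standard $v_1$-Bockstein bookkeeping already carried out, modulo the $u_n$-differential, in \cite[Propositions 6.32 \& 6.34]{LiuWang}. After stage $n-1$ the surviving ``$t,\mu$''-classes assemble into the subalgebra generated by $t^{p^n},\mu^{p^n},v_1$, which is exactly $\F_p[t^{p^n},\mu^{p^n},v_1]/(v_1^{p^n}=t^{p^n}\mu^{p^n})$ since this relation already holds on $E_2$; it is still $v_1$-torsion-free on this page and is the source of the first summand.

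Next I would reinstate the twist. By \Cref{thm:sseq-twisted} the class $\sigma\epsilon^{(\ell p^n)}$ supports the differential attached to $t^{-\ell n(p^n-p^{n-1})}=t^{-\ell np^{n-1}(p-1)}$, and since $t^{p^n}$ is a permanent cycle only the residue of $-\ell np^{n-1}(p-1)$ modulo the relevant powers of $p$ matters; stage by stage this identifies $\sigma\epsilon^{(\ell p^n)}$ with a definite $t$-power. Feeding this into the module structure, the free summand becomes the sub-$\F_p[t^{p^n},\mu^{p^n},v_1]/(v_1^{p^n}=t^{p^n}\mu^{p^n})$-module of $\sigma\epsilon^{(\ell p^n)}\F_p[t,\mu]$ generated by $\sigma\epsilon^{(\ell p^n)}t^{p^{n-1}(p-\ell\%p)}$ and $\sigma\epsilon^{(\ell p^n)}\mu^{p^{n-1}(\ell\%p)}$ (whose exponents sum to $p^n$), while each torsion summand is the corresponding untwisted one with all congruence conditions translated by $\ell np^{n-1}$ — this produces the top-stage conditions $v_p(i+\ell np^{n-1})=n-1$, $v_p(p^{n-1}i+n\ell p^{n-1})=n-1$ and $v_p(j-\ell np^{n-1})=n-1$ appearing in the statement. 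Reading off the $\F_p[v_1]$-module structure of the assembled page then gives the claim.

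The main obstacle is purely combinatorial: keeping straight, simultaneously over all $n$ stages, which monomials $t^i,\mu^j$ support a differential, which receive one, and the precise resulting $v_1$-torsion order — in particular the edge corrections near the powers of $p$ — and then, after twisting, correctly reducing the exponent $-\ell n(p^n-p^{n-1})$ and converting it into the explicit representatives $p^{n-1}(p-\ell\%p)$ and $p^{n-1}(\ell\%p)$. I expect the cleanest route is to first settle the $\ell=0$ case completely by an induction on the stage $k$ from $0$ to $n-1$, and then argue that the twist merely shifts indices, so that no new differential pattern can arise.
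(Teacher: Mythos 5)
Your proposal is correct and is essentially the paper's own argument: the paper likewise runs the untwisted differentials one stage at a time (packaged as an induction on $n$, using that $\sigma\epsilon^{(\ell p^n)}$ is inert before $d_{2(p+\dots+p^n)}$, so that the page in question agrees with the untwisted answer for $C_{p^{n-1}}$) and lets the twist enter only at that final differential, exactly as in your step (ii). One warning: if you carry out your translation by $\ell n p^{n-1}$ consistently, the kernel of the last differential on $\sigma\epsilon^{(\ell p^n)}t^{a}\mu^{b}$ is cut out by $a-b\equiv-\ell n p^{n-1}\pmod{p^n}$, so the free summand is generated by $t^{p^{n-1}(p-(\ell n)\%p)}$ and $\mu^{p^{n-1}((\ell n)\%p)}$ — consistent with the conditions $v_p(i+\ell n p^{n-1})=n-1$, $j\equiv n\ell p^{n-1}$ appearing in the torsion summands and in \Cref{prop:hfpsstwisted} — whereas your write-up (following the displayed statement) lists the generators with $\ell\%p$, which is inconsistent with the translation rule you yourself state; make sure your final bookkeeping resolves this in favor of $(\ell n)\%p$.
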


\begin{proof}
  The differentials of this spectral sequence are determined by \Cref{thm:sseq-twisted}.
	
  Note that we may ignore the element $u_n$ since it doesn't play a role in the differentials of the spectral sequence up to this page, so we omit it from the discussion. Next, up to the $E_{2(p+\dots+p^{n+1})}$-page, the spectral sequence agrees with that converging to $\pi_* \gr_{\mot} ^* (\THH(\Zp))^{hC_{p^{n-1}}}/p$, so by induction on $n$, we may assume that the $E_{2(p+\dots+p^{n+1})}$-page is given by $\sigma \epsilon^{(\ell p^n)}\FF_p[t^{p^{n-1}},\mu^{p^{n-1}},v_1]/(v_1^{p^{n-1}}=t^{p^{n-1}}\mu^{p^{n-1}})\otimes E(\lambda_1)$ plus the second and third lines of the answer stated in the proposition. This latter part plays no role in this page of the spectral sequence since it is $t$-torsion.

  So what remains is to understand homology of the $d_{2(p+ \dots + p^{n})}$-differential on
  \newline
  $\sigma \epsilon^{(\ell p^n)}\FF_p[t^{p^{n-1}},\mu^{p^{n-1}},v_1]/(v_1^{p^{n-1}}=t^{p^{n-1}}\mu^{p^{n-1}})\otimes E(\lambda_1)$.
  This differential goes from non-multiples of $\lambda_1$ to multiples of $\lambda_1$.
  The kernel is exactly the submodule
  \newline
  $\sigma \epsilon^{(\ell p^n)} (t^{p^{n-1}(p-\ell\%p)},u^{p^{n-1}(\ell\%p)})\FF_p[t^{p^n},\mu^{p^n},v_1]/(v_1^{p^n}=t^{p^n}\mu^{p^n})$, and the image is given by $t^{p^n}v_1^{p + \dots + p^{n-1}}\lambda_1$ times the complementary summand in $\sigma \epsilon^{(\ell p^n)}\FF_p[t^{p^{n-1}},\mu^{p^{n-1}},v_1]/(v_1^{p^{n-1}}=t^{p^{n-1}}\mu^{p^{n-1}})$.
  Note that $\sigma \epsilon^{(\ell p^n)} \lambda_1 (t^{p^{n-1}(p-\ell\%p)},u^{p^{n-1}(\ell\%p)})\FF_p[t^{p^n},\mu^{p^n},v_1]/(v_1^{p^n}=t^{p^n}\mu^{p^n})$ is not in the image, so together we find that $(t^{p^{n-1}(p-\ell\%p)},u^{p^{n-1}(\ell\%p)})\FF_p[t^{p^n},\mu^{p^n},v_1]/(v_1^{p^n}=t^{p^n}\mu^{p^n})\otimes E(\lambda_1)$ lies in the homology.
	
  The $\lambda_1$-multiples in the complementary summand are isomorphic to the free $\FF_p[v_1]$-module generated by terms of the form $\lambda_1 t^{p^{n-1}i}$ for $p \nmid i+\ell , i>0$, and $\lambda_1 \mu^{p^{n-1}i}$ for $p\nmid i-k,i\geq0$. When $i\geq p$, the terms of the form $\lambda_1 t^{p^{n-1}i}$ are $v_1^{p+ \dots +p^{n-1}}$-torsion because of differentials from $t^{p^{n-1}(i-p)}$. It follows that we get a summand $ \F_p [v_1] / (v_1^{p+ \dots+ p^{n-1}}) \{t^i \lambda_1 \vert i \geq p^{n}, v_p (i + \ell n p^{n-1}) = n-1 \}$ on the $E_{2(p+\dots+p^{n+1})+1}$-page from these terms.
	
	The terms $\lambda_1 t^{p^{n-1}i}$ for $p \nmid i+\ell ,0<i<p$ are similarly $v_1 ^{p + \dots + p^{n-1} + p^{n-1}(p-i)}$-torsion due to differentials from $\mu^{p^{n-1}(p-i)}$, so we obtain the following summand of the $E_{2(p+\dots+p^{n+1})+1}$-page:
	$$ \F_p [v_1] / (v_1 ^{p + \dots + p^{n-1} + p^{n-1}(p-i)}) \{t^{p^{n-1} i} \lambda_1 \vert 1 \leq i < p, v_p (p^{n-1} i +n \ell p^{n-1}) = n-1 \}. $$%\todo{see if it is good to simplify this fomula}
	
	Finally, the terms of the form  $\mu^{p^{n-1}i}\lambda_1$ for $p \nmid i-\ell$ are $v_1^{p+ \dots + p^n}$-torsion, because of differentials from $\mu^{p^{n-1}(i+p)}$. This contributes the last summand to the $E_{2(p+\dots+p^{n+1})+1}$-page, written below, since we have accounted for all elements of the spectral sequence:
	
	$$ \oplus \F_p[v_1] /(v_1^{p+ \dots + p^{n}}) \{\mu^{j} \lambda_1 \vert j \geq 0, v_p ( j - \ell n p^{n-1}) = n-1\}).$$
\end{proof}

Finally, we run the last nontrivial differential of the Nygaard spectral sequence to compute $\pi_* \gr_{\mot} ^* (\THH(\Zp) \otimes \Ss^{\ell \rho_n})^{hC_{p^n}}/p$:

\begin{prop}\label{prop:hfpsstwisted}
  For $n,\ell\geq 0$, the $E_\infty$-page of the Nygaard spectral sequence for $\pi_* \gr_{\mot}^* (\THH(\Zp) \otimes \Ss^{\ell \rho_n})^{hC_{p^n}}/p$ is given by:
\begin{align*}
  &\sigma\epsilon^{(\ell p^n)} E(\lambda_1) \otimes (\F_p [v_1] / (v_1 ^{1+ \dots + p^{n-1}}) \{t^{i} \vert i \geq p^n,  i \equiv -n \ell p^{n-1} \mod p^n\} \\
  \oplus &\F_p [v_1] / (v_1 ^{1 + \dots + p^{n-1} + (p^n-i)}) \{t^i \vert 0 < i < p^n, i \equiv -n \ell p^{n-1} \mod p^n \} \\
  \oplus &\F_p [v_1] / (v_1 ^{1+ \dots + p^{n}}) \{\mu^{j} \vert j \geq 0, j \equiv n \ell p^{n-1} \mod p^n \}) \\
  \oplus &E(u_n) \otimes ( \bigoplus_{k=1} ^{n-2} 
   \bigoplus_{i > p^{k+1} \vert v_p (i) = k}  \F_p [v_1] / (v_1^{p+ \dots+ p^k}) \{t^i \lambda_1\} \\
   \oplus &\bigoplus_{k=0} ^{n-2} (\bigoplus_{i=1} ^{p-1} \F_p [v_1] / (v_1 ^{p + \dots + p^k + p^k(p-i)}) \{t^{p^k i} \lambda_1\} \oplus \bigoplus_{j > 0 \vert v_p (j) = k} \F_p[v_1] /(v_1^{p+ \dots + p^{k+1}}) \{\mu^{j} \lambda_1\})
  ) \\
  \oplus &E(u_n) \otimes ( \F_p [v_1] / (v_1^{p+ \dots+ p^{n-1}}) \{t^i \lambda_1 \vert i \geq p^{n}, v_p (i + \ell n p^{n-1}) = n-1 \} \\
   \oplus &\F_p [v_1] / (v_1 ^{p + \dots + p^{n-1} + p^{n-1}(p-i)}) \{t^{p^{n-1} i} \lambda_1 \vert 1 \leq i < p, v_p (p^{n-1} i +n \ell p^{n-1}) = n-1 \} \\
   \oplus &\F_p[v_1] /(v_1^{p+ \dots + p^{n}}) \{\mu^{j} \lambda_1 \vert j \geq 0, v_p ( j - \ell n p^{n-1}) = n-1\})
\end{align*}

  Furthermore there are no extension problems when computing $\pi_* \gr_{\mot}^* (\THH(\Zp) \otimes \Ss^{\ell \rho_n})^{hC_{p^n}}/p$ as an $\F_p[v_1]$-module.
\end{prop}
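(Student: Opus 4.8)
The plan is to obtain the $E_\infty$-page by running the single differential not yet accounted for on the $E_{2(p+\dots+p^n)+1}$-page of \Cref{prop:hfppages}, and then to settle the $\F_p[v_1]$-module structure of the abutment by a direct inspection of torsion orders. By \Cref{prop:sseq-untwisted} together with \Cref{thm:sseq-twisted}, the (twisted) Nygaard spectral sequence collapses at $E_{2(1+p+\dots+p^n)+1}$, and since all differentials of pages $\le 2(p+\dots+p^n)$ have been incorporated into \Cref{prop:hfppages}, the only remaining differential is $d_{2(1+p+\dots+p^n)}$; by \Cref{prop:sseq-untwisted} it is generated under the Leibniz rule by $d(u_n)$, a unit multiple of $v_1^{1+\dots+p^{n-1}}t^{p^n}$, the representation-sphere twist having already been absorbed into the congruence conditions of \Cref{prop:hfppages} via \Cref{thm:sseq-twisted}. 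Of the summands displayed on that page, the two middle blocks (indexed by $\bigoplus_{k=1}^{n-2}$ and $\bigoplus_{k=0}^{n-2}$), together with the three $t$-power-torsion $\lambda_1$-families in the first block, consist of classes annihilated by powers of both $t$ and $v_1$; they neither support nor receive $d_{2(1+p+\dots+p^n)}$, so they survive verbatim and account for the matching summands of the conclusion.

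The content is therefore the homology of $d_{2(1+p+\dots+p^n)}$ on the first summand of \Cref{prop:hfppages}, namely $\sigma\epsilon^{(\ell p^n)}E(u_n)$ tensored with the module
\[(t^{p^{n-1}(p-\ell\%p)},\mu^{p^{n-1}(\ell\%p)})\,\F_p[t^{p^n},\mu^{p^n},v_1]/(v_1^{p^n}=t^{p^n}\mu^{p^n})\otimes E(\lambda_1).\]
Using $v_1=t\mu$ and $v_1^{p^n}=t^{p^n}\mu^{p^n}$ one computes that the images $d(u_n\mu^j)$, unit multiples of $v_1^{1+\dots+p^{n-1}}t^{p^n}\mu^j$, truncate the $t$-tower of exponents $i\ge p^n$ (in the residue class $i\equiv -n\ell p^{n-1}\bmod p^n$) to $v_1$-torsion order $1+\dots+p^{n-1}$, truncate the $t$-powers with $0<i<p^n$ to the smaller order $1+\dots+p^{n-1}+(p^n-i)$, and truncate the $\mu$-tower to order $1+\dots+p^n$, while the $u_n\lambda_1$-multiples of the kernel survive. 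Reading off the $\sigma\epsilon^{(\ell p^n)}E(\lambda_1)$-part from the surviving kernel and the $E(u_n)$-part from the $\lambda_1$-multiples, and tracking the congruences $i\equiv -n\ell p^{n-1}$, $j\equiv n\ell p^{n-1}\pmod{p^n}$ throughout, yields exactly the asserted $E_\infty$-page. To rule out hidden extensions one then exhibits, for each cyclic $\F_p[v_1]$-summand, a permanent-cycle lift to $\pi_*\gr_{\mot}^*(\THH(\Zp)\otimes\Ss^{\ell\rho_n})^{hC_{p^n}}/p$ realizing precisely the indicated $v_1$-torsion order: the free summands lift along the multiplicative spectral sequence, and for a summand $\F_p[v_1]/(v_1^m)$ one checks that a lift $\tilde x$ of its generator satisfies $v_1^m\tilde x=0$ on the nose, since we have pinned down every bidegree of $E_\infty$ and the bidegree of $v_1^m\tilde x$ contains no $E_\infty$-class of strictly larger Nygaard filtration (equivalently, $v_1=t\mu$ is filtration-nonincreasing). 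Hence the associated graded $\F_p[v_1]$-module agrees with the abutment.

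The main obstacle is the case analysis in the second paragraph: keeping track of how the relation $v_1^{p^n}=t^{p^n}\mu^{p^n}$ interacts with the many $t$- and $\mu$-towers carrying their distinct $v_1$-torsion orders inherited from \Cref{prop:hfppages}, all shifted by the congruence conditions coming from the twist, is where essentially all the work lies; everything else is formal given \Cref{prop:hfppages}, \Cref{prop:sseq-untwisted}, and \Cref{thm:sseq-twisted}.
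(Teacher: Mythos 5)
The computation of the $E_\infty$-page in your first two paragraphs follows the paper's proof essentially verbatim: isolate the last differential $d_{2(1+\dots+p^n)}(u_n)\doteq v_1^{1+\dots+p^{n-1}}t^{p^n}$, observe it vanishes on the torsion summands of \Cref{prop:hfppages}, and compute the homology on the remaining summand using $v_1^{p^n}=t^{p^n}\mu^{p^n}$. That part is fine.

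The gap is in your treatment of hidden $v_1$-extensions. You assert that for a generator $\tilde x$ of a summand $\F_p[v_1]/(v_1^m)$ one has $v_1^m\tilde x=0$ on the nose ``since \dots the bidegree of $v_1^m\tilde x$ contains no $E_\infty$-class of strictly larger Nygaard filtration (equivalently, $v_1=t\mu$ is filtration-nonincreasing).'' This is false as stated. Since $|t|=-2$ and $|\mu|=2p$, the monomial $t^p\mu$ has topological degree $0$ and motivic weight $0$ but strictly positive Nygaard filtration, so in a fixed topological degree and motivic weight there are in general many surviving $E_\infty$-classes in strictly higher filtration (e.g.\ $t^{i+pl}\mu^{j+l}\lambda_1$ for various $l$, with varying $v_1$-torsion orders). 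Consequently $v_1^m\tilde x$ could a priori be nonzero and detected by some $(t^p\mu)^l(t\mu)^m t^i\mu^j\lambda_1$. The paper's argument is genuinely needed here: one corrects the representative $\tilde x$ by a class detected by $(t^p\mu)^l t^i\mu^j\lambda_1$, observes the putative new extension lives in yet higher filtration, iterates, and invokes completeness of the filtration to conclude. Moreover, in motivic weight $0$ the classes come in two species ($t^i\mu^j$ and $t^i\mu^j\lambda_1u_n$), so ``at most one class per (degree, filtration)'' fails and one must separately rule out extensions from the former to the latter; the paper does this with an extra argument (requiring $l>0$ and producing a correcting permanent cycle of the form $(t^p\mu)^{l-1}t^{p-1}t^i\mu^j\lambda_1u_n$), which your proposal does not address. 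Without these steps the ``no extension problems'' claim is unproved.
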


\begin{proof}
	By \Cref{thm:sseq-twisted}, to understand the $E_{\infty}$-page of the spectral sequence, it suffices to compute the cohomology of the $d_{2(1+\dots+p^n)}$-differential on the $E_{2(1+\dots+p^n)}$-page, which is described by \Cref{prop:hfppages}.
  By \Cref{thm:sseq-twisted}, this differential is determined by the following formulae: $d_{2(1+\dots+p^n)} (\sigma \epsilon^{(\ell p^n)} u_n) = v_1 ^{1+ \dots + p^{n-1}} t^{p^n}$
  and the fact that it is zero on any term not containing $u_n$.
  Since $v_1^{1+\dots + p^{n-1}}t^{p^n}$ acts by $0$ on all but the first summand of the result of \Cref{prop:hfppages}, this differential is $0$ on all of these summands, which therefore survive to contribute all but the first three claimed summands. 
	
	On the remaining summand $$(t^{p^{n-1}(p-\ell\%p)},u^{p^{n-1}(\ell\%p)})\FF_p[t^{p^n},\mu^{p^n},v_1]/(v_1^{p^n}=t^{p^n}\mu^{p^n})\otimes E(\lambda_1)$$, multiplication by $v_1^{1+\dots + p^{n-1}}t^{p^n}$ is injective, so we get a summand $$(t^{p^{n-1}(p-\ell\%p)},u^{p^{n-1}(\ell\%p)})\FF_p[t^{p^n},\mu^{p^n},v_1]/(v_1^{p^n}=t^{p^n}\mu^{p^n}, v_1^{1+\dots + p^{n-1}}t^{p^n})\otimes E(\lambda_1).$$ 
	We can separate this into $\FF_p[v_1]$-module summands as the submodule generated by $t^i$ for $i\geq p^n$, the submodule generated by $t^i$ for $0<i<p^n$, and ths submodule generated by $\mu^j$ for $j\geq0$. A generator of the form $t^i$ for $i\geq p^n$ is $v_1^{1+ \dots + p^{n-1}}$-torsion because of the fact that $(v_1^{1+\dots + p^{n-1}}t^{p^n})t^{p^n-i}=0$, giving the first claimed summand of the proposition. A generator of the form $t^i$ for $0<i< p^n, i \equiv -n \ell p^{n-1} \mod p^n$ is $v_1 ^{1 + \dots + p^{n-1} + (p^n-i)}$-torsion since $v_1 ^{1 + \dots + p^{n-1} + (p^n-i)}t^{i} = (v_1^{1+\dots + p^{n-1}}t^{p^n})\mu^{p^n-i}$, giving the second claimed summand. A generator of the form $\mu^i$ is $v_1^{1+\dots + p^n}$-torsion since $\mu^iv_1^{1+\dots + p^n}= \mu^i(v_1^{1+\dots + p^{n-1}}t^{p^n})\mu^{p^n}$, giving the third summand.
	
  Next, we show that there are no $v_1$-extensions.
  In tridegree of the form $(x, \pm 1, b)$, every class is of the form $t^i\mu^j\lambda_1$ or $t^i\mu^ju_n$ respectively, and we will argue identically in either case, so we focus on the former. In particular, there is at most one class detected in each topological degree and filtration. If there is a potential $v_1=t\mu$ extension problem, it is because for a class $x$ detected by some $(t^i\mu^j\lambda_1)$, some power of $(t\mu)^k(t^i\mu^j\lambda_1)$ is zero on the $E_{\infty}$-page, but $v_1^kx$ is detected in higher filtration by $(t^p\mu)^l(t\mu)^k(t^i\mu^j\lambda_1)$ for some $l$. But we can see that on the $E_2$-page this is divisible by at least as many $t\mu$s as $t^i\mu^j\lambda_1$. It follows that by changing $x$ by a class detected in the associated graded by $(t^p\mu)^l(t^i\mu^j\lambda_1)$, $v_1^kx$ will be detected in higher filtration. Continuing this way, and using that the filtration is complete, there are no extension problems in these weights.
	
  A similar argument shows that in tridegrees of the form $(x, 0, a)$, where the classes are of the form $t^i\mu^j$ or $t^i\mu^j\lambda_1u_n$, there cannot be extension problems within each type of class, and from the latter to the former. 
	
  Let's rule out the case of extensions from the former to the latter.
  Suppose there exists a class $x$ detected by $t^i\mu^j$, such that $(t\mu)^k(t^i\mu^j)$ is zero, yet $v_1 ^k x$ is nonzero and detected by a class of the form $(t^p\mu)^l(t\mu)^{k-1}t^i\mu^j\lambda_1u_n$.
  Note that $l > 0$, since an extension is otherwise impossible for filtration reasons.
  Then, by changing $x$ by an element detected by $(t^p\mu)^{l-1}t^{p-1}t^i\mu^j\lambda_1u_n$ to get an $x'$, $v_1^kx$ is detected in higher filtration.
  Note that $(t^p\mu)^{l-1}t^{p-1}t^i\mu^j\lambda_1u_n$ must be a permanent cycle, because by inspection we see that if $v_1 y$ is a permanent cycle in this spectral sequence, so is $y$.

\end{proof}

\begin{prop}\label{prop:twistedtatess}
	Let $n,\ell\geq0$. 
  The $E_\infty$-page of the Nygaard spectral sequence for $$\pi_* \gr_{\mot} ^*(\THH(\Zp) \otimes \Ss^{\ell \rho_n})^{tC_{p^n}}/p$$ is given by:
\begin{align*}
  &\sigma \epsilon^{(\ell p^n)} \F_p [v_1, t^{\pm p^n}]/(v_1 ^{1+ \dots + p^{n-1}}) \{t^{-n \ell p^{n-1}}\} \otimes E(\lambda_1) \\
  &\oplus \left( E(u_n) \otimes \left( \bigoplus_{k=1} ^{n-2} \F_p [v_1] /(v_1 ^{p+ \dots + p^{k}}) \{t^i \lambda_1 \vert v_p(i) = k \} \right) \right) \\
  &\oplus E(u_n) \otimes \F_p [v_1] / (v_1 ^{p+\dots+p^{n-1}}) \{t^i \lambda_1 \vert v_p(i+\ell n p^{n-1}) = n-1\}.
\end{align*}

  There are no extension problems when computing $\pi_*\gr^*_{\mot} (\THH(\Zp) \otimes \Ss^{\ell \rho_n})^{tC_{p^n}}/p$ as an $\F_p[v_1]$-module.
\end{prop}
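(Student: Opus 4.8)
The plan is to deduce \Cref{prop:twistedtatess} from \Cref{prop:hfpsstwisted} by inverting $t$. Recall from \Cref{prop:sseq-untwisted}, \Cref{thm:sseq-twisted}, and the discussion of algebraic homotopy fixed point and Tate spectral sequences preceding them, that the Nygaard spectral sequence computing $\pi_* \gr_{\mot}^* (\THH(\Zp) \otimes \Ss^{\ell \rho_n})^{tC_{p^n}}/p$ is obtained from the one computing $\pi_* \gr_{\mot}^* (\THH(\Zp) \otimes \Ss^{\ell \rho_n})^{hC_{p^n}}/p$ by enlarging the Nygaard truncation from filtrations $\geq 0$ to all integers; concretely this amounts to inverting the class $t$, with exactly the same differentials as in \Cref{thm:sseq-twisted}. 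Since $t$-localization is exact, commutes with passage to homology, and both spectral sequences degenerate after finitely many pages with finite-dimensional groups in each bidegree, the $E_\infty$-page of the Tate spectral sequence is the $t$-localization of the $E_\infty$-page recorded in \Cref{prop:hfpsstwisted}. So the first and main step is to invert $t$ on that page and identify the result with the stated answer.

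Carrying this out, one uses that on $E_\infty$ the invertible class is $t^{\pm p^n}$ (as $t^{p^n}$, but not $t$, is a permanent cycle) and that $\mu = v_1 t^{-1}$, and I would track the effect family by family. Every summand of \Cref{prop:hfpsstwisted} generated by a positive power of $\mu$, namely the $\mu^j\lambda_1$-families and their $u_n$-multiples in its second and third blocks, is $\mu$-power torsion and hence dies after inverting $t$. The $k = 0$ part of the second block has $v_1$-torsion order $v_1^{p^0(p-i)}$, with the leading exponent $p + \dots + p^0$ read as the empty sum; after inverting $t$ these collapse to $\F_p[v_1]/(v_1^0) = 0$, which is why the surviving index range in the statement begins at $k = 1$. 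The three summands of the first block, on the bases $t^i$ for $i \geq p^n$, $t^i$ for $0 < i < p^n$, and $\mu^j$ for $j \geq 0$, all of $t$-weight $\equiv -n\ell p^{n-1} \bmod p^n$ and with $v_1$-torsion orders differing by the correction terms $p^n - i$ and $p^n$, become identified under multiplication by $t^{\pm p^n}$ and assemble into the single $\F_p[v_1, t^{\pm p^n}]/(v_1^{1 + \dots + p^{n-1}})$-module on the generator $\sigma\epsilon^{(\ell p^n)} t^{-n\ell p^{n-1}}$ tensored with $E(\lambda_1)$, which is the first summand of the claim. Likewise the surviving $t^i\lambda_1$-families and their $u_n$-multiples, with $v_p(i) = k$ for $1 \leq k \leq n-2$ and with $v_p(i + \ell n p^{n-1}) = n-1$, have their correction terms $p^k(p-i)$ and $p^{n-1}(p-i)$ washed out by $t^{\pm p^n}$ and merge into the uniform $\F_p[v_1]/(v_1^{p + \dots + p^k})$- and $\F_p[v_1]/(v_1^{p + \dots + p^{n-1}})$-summands in the statement; matching the index conditions across this merger uses the identity $v_p(p^{n-1} i + n\ell p^{n-1}) = n - 1 + v_p(i + n\ell)$.

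Finally, I would rule out $\F_p[v_1]$-extension problems exactly as in the proof of \Cref{prop:hfpsstwisted}: in motivic weights $\pm 1$ there is at most one class in each topological degree and filtration, and a potential $v_1 = t\mu$-extension on a class detected by $t^i\mu^j\lambda_1$ or $t^i\mu^j u_n$ can always be removed by modifying the class by one detected by a $(t^p\mu)^l$-multiple of it, using completeness of the filtration; in motivic weight $0$ the only subtlety, an extension from a $t^i\mu^j$-class into a $t^i\mu^j\lambda_1 u_n$-class, is handled by the same permanent-cycle observation. Here the argument is in fact slightly easier than for \Cref{prop:hfpsstwisted} because of the $t^{\pm p^n}$-periodicity of the $E_\infty$-page. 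The main obstacle is the bookkeeping in the second step: checking that all the deficient $v_1$-torsion orders of \Cref{prop:hfpsstwisted} really do collapse to the uniform minimal orders after inverting $t$, and that the various congruence and $v_p$-conditions indexing the surviving families line up on the nose.
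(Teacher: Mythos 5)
Your proposal is correct and follows the paper's own route: the $E_\infty$-page is obtained by inverting $t$ (equivalently $t^{p^n}$) in \Cref{prop:hfpsstwisted}, and your family-by-family bookkeeping of which summands die and which merge into the periodic module is consistent with the stated answer. The only difference is in the extension problem, where you re-run the representative-modification argument of \Cref{prop:hfpsstwisted} on the Tate side, whereas the paper reduces more quickly by noting that for any fixed class it suffices to check after multiplying by a large power of $t$, where the Tate and homotopy-fixed-point spectral sequences agree and the already-established no-extension statement applies; both versions work, yours just costs a little more care with completeness of the $t$-inverted filtration.
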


\begin{proof}
	The $E_{\infty}$-page of the spectral sequence can be read off from inverting $t$ in \Cref{prop:hfpsstwisted}. To see that no extension problems happen for a particular class, it suffices to prove this after multiplying by a sufficiently large power of $t$, for which it follows again by \Cref{prop:hfpsstwisted}, since the two spectral sequences agree in high enough filtrations.
\end{proof}

Now we want to get a grip on the Frobenius maps. To understand these, we begin by noting that
\[\varphi : R\Gamma((\Zp\langle \epsilon \rangle^{\Nyg})_{p=0}, \O\{*\}) \to R\Gamma((\Zp \langle \epsilon \rangle^{\prism})_{p=0}, \O\{*\})\]
is equivalent to the map inverting $\mu$ on the source
because $j_{HT}$, which induces $\varphi$, is the inclusion of the nonvanishing locus of $\mu$.

It therefore follows that 
\[\varphi: \gr^* _{\mot} \TC^{-} (\Zp \langle \epsilon \rangle) / p \to \gr^* _{\mot} \TP (\Zp \langle \epsilon \rangle) / p\]
may be identified with the map that inverts $\mu$ and Nygaard completes.
The same is true of 
\[\varphi : \gr_{\mot}^* (\THH(\Zp) \otimes \Ss^{k\rho_{n-1}})^{hC_{p^{n-1}}}/p \to \gr_{\mot}^* (\THH(\Zp) \otimes \Ss^{k\rho_{n}})^{tC_{p^{n}}}/p\]
as it is a graded summand of the former map.
Since the corresponding prismatic complexes are already Nygaard complete, this last Frobenius map may be identified with $\mu$-inversion.

%Note that we have, for $\mathcal{F} = \O\{i+k n(p^n-p^{n-1}) - \ell p^n \} \otimes a^* \O( k n(p^n-p^{n-1}) )$,
%\begin{align*}
  %\gr_{\mot}^i (\THH(\Zp) \otimes \Ss^{k\rho_n})^{tC_{p^n}}/p
  %&\simeq R\Gamma((\Zp ^{\Nyg})_{p=0} ; \mathcal{F})^{\wedge} _{a}[a^{-1}] \\
  %&\simeq R\Gamma((\Zp ^{\Nyg})_{p=0} ; \mathcal{F})^{\wedge}[a^{-1}] \\
  %&\simeq R\Gamma((\Zp ^{\prism})_{p=0} ; \mathcal{F})^{\wedge} \\
  %&\simeq R\Gamma((\Zp ^{\Nyg})_{p=0} ; \mathcal{F})^{\wedge}[\mu^{-1}],
%\end{align*}
%sice $j_{dR} : \Zp ^{\prism} \hookrightarrow \Zp ^{\Nyg}$ is the nonvanishing locus of $a$, and $j_{HT} : (\Zp ^{\prism})_{p=0} \hookrightarrow (\Zp ^{\Nyg})_{p=0}$ is the nonvanishing locus of $\mu$ after modding out by $p$.

As a consequence, we can also compute
$\pi_* \gr_{\mot}^* (\THH(\Zp) \otimes \Ss^{\ell \rho_n})^{tC_{p^n}}/p$
using the $\mu$-inverted Nygaard spectral sequence. The result is as follows:
%
%It is also possible to compute $\pi_* \gr_{\mot}(\THH(\Zp) \otimes \Ss^{\ell \rho_n})^{tC_{p^n}}/p$ by pulling back along $j_{HT}$ instead of $j_{dR}$.....using $j_{HT} ^* a^* \O(1) \cong I$.
%
%It is also possible to compute the Tate construction using the $\mu$-inverted homotopy fixed points spectral sequence, since the $S^1$-equivariant frobenius map $\phi_p:\THH(\ZZ_p)/p \to (\THH(\ZZ_p)/p)^{tC_p}$ realizes the target as $\THH(\ZZ_p)/p[\mu^{-1}]$ \cite[Corollary 1.5]{mathew2021k}, and $((\THH(\ZZ_p)/p)^{tC_p})^{hC_{p^{n-1}}} \cong (\THH(\ZZ_p)/p)^{tC_{p^n}}$ \cite[Lemma II.4.1]{nikolaus2018topological}. For degree reasons, this must also hold at the level of the associated graded of the motivic filtration.

\begin{prop}
  Let $n,\ell \geq0$. The $E_\infty$-page of the $\mu$-inverted Nygaard spectral sequence for $\pi_*\gr_{\mot}^* (\THH(\Zp) \otimes \Ss^{\ell\rho_n})^{tC_{p^n}}/p$ is given by:
\begin{align*}
  &\sigma \epsilon^{(\ell p^{n-1})} \F_p [v_1, \mu^{\pm p^{n-1}}]/(v_1 ^{1+ \dots + p^{n-1}}) \{\mu^{(n-1) \ell p^{n-2}}\} \otimes E(\lambda_1) \\
  &\oplus \left( E(u_{n-1}) \otimes \left( \bigoplus_{k=0} ^{n-3} \F_p [v_1] /(v_1 ^{p+ \dots + p^{k} + p^{k+1}}) \{\mu^j \lambda_1 \vert v_p(j) = k \} \right) \right) \\
  &\oplus E(u_{n-1}) \otimes \F_p [v_1] / (v_1 ^{p+\dots+p^{n-1}}) \{\mu^j \lambda_1 \vert v_p(j-\ell(n-1)p^{n-2}) = n-2\}.
\end{align*}

  There are no extension problems when computing $\pi_* \gr_{\mot}(\THH(\Zp) \otimes \Ss^{\ell \rho_n})^{tC_{p^n}}/p$ as an $\F_p[v_1]$-module.
\end{prop}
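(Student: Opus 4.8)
The plan is to deduce this from the computation of the Nygaard spectral sequence for $(\THH(\Zp) \otimes \Ss^{\ell\rho_{n-1}})^{hC_{p^{n-1}}}$ carried out in \Cref{prop:hfpsstwisted}. As explained in the discussion preceding the statement, the Frobenius
\[\varphi : \gr_{\mot}^* (\THH(\Zp) \otimes \Ss^{\ell\rho_{n-1}})^{hC_{p^{n-1}}}/p \longrightarrow \gr_{\mot}^* (\THH(\Zp) \otimes \Ss^{\ell \rho_n})^{tC_{p^n}}/p\]
is identified, via the open immersion $j_{HT}$, with the map inverting $\mu$; the Nygaard completion that a priori appears is vacuous since the relevant prismatic complex is already Nygaard-complete. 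Since $\mu$-localization is exact, applying it pagewise to the Nygaard spectral sequence of $(\THH(\Zp) \otimes \Ss^{\ell\rho_{n-1}})^{hC_{p^{n-1}}}/p$ yields the $\mu$-inverted Nygaard spectral sequence for the target, and in particular its $E_\infty$-page is the $\mu$-localization of the $E_\infty$-page of \Cref{prop:hfpsstwisted} with $n$ replaced by $n-1$.

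The substance of the proof is then the bookkeeping of this localization, which I would organize using the relation $v_1 = t\mu$ valid in the associated graded ring. Every summand generated by a bounded range of $t$-powers — the $0 < i < p^{n-1}$ part of the $E(\lambda_1)$-line, and the $t^i\lambda_1$ and $t^{p^k i}\lambda_1$ families on the $u_{n-1}$-line with $1 \le i < p$ — becomes $\mu$-power-torsion and so contributes nothing after inverting $\mu$. The surviving families merge in pairs: on the $E(\lambda_1)$-line the families on $t^i$ ($i \ge p^{n-1}$) and on $\mu^j$ ($j \ge 0$) fuse into the single free $\F_p[v_1, \mu^{\pm p^{n-1}}]/(v_1^{1 + \cdots + p^{n-1}})$-module on $\mu^{(n-1)\ell p^{n-2}}$, the torsion order being the larger of the two contributing orders; on the $u_{n-1}$-line, the $t^i\lambda_1$ family with $v_p(i) = k$ fuses with the $\mu^j\lambda_1$ family with $v_p(j) = k$ to give $\F_p[v_1]/(v_1^{p + \cdots + p^k + p^{k+1}})\{\mu^j\lambda_1 \mid v_p(j) = k\}$, and the two ``top'' $t$-families fuse with $\F_p[v_1]/(v_1^{p + \cdots + p^{n-1}})\{\mu^j\lambda_1 \mid v_p(j - \ell(n-1)p^{n-2}) = n-2\}$. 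Assembling these contributions reproduces the displayed $E_\infty$-page.

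The absence of extension problems in recovering $\pi_*\gr_{\mot}(\THH(\Zp) \otimes \Ss^{\ell\rho_n})^{tC_{p^n}}/p$ as an $\F_p[v_1]$-module is the same statement already proved in \Cref{prop:twistedtatess}, and can also be seen directly by the argument used there: for any given class it suffices to verify the module structure after multiplying by a sufficiently large power of $\mu$, where the $\mu$-inverted spectral sequence agrees in high filtration with the spectral sequence of \Cref{prop:hfpsstwisted} for $n-1$. I expect the main obstacle to be the middle step: correctly identifying which $t$-indexed families of \Cref{prop:hfpsstwisted} (in level $n-1$) pair with which $\mu$-indexed families in the target, and confirming that each resulting $v_1$-torsion order is the maximum of the two being merged.
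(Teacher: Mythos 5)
Your proposal is correct and follows essentially the same route as the paper: the $E_\infty$-page is read off by inverting $\mu$ in \Cref{prop:hfpsstwisted} (applied at level $n-1$, via the identification of $\varphi$ with $\mu$-inversion), and the absence of extension problems is checked for each class after multiplying by a large power of $\mu$, where the two spectral sequences agree in high filtration. Your extra bookkeeping of which families die and which merge under localization (using $v_1 = t\mu$, so that bounded $t$-families become $\mu$-power torsion and the unbounded $t$- and $\mu$-families fuse, with the new negative-$\mu$-power generators carrying the longer torsion order) is a correct elaboration of what the paper leaves implicit.
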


\begin{proof}
	The $E_{\infty}$-page of the spectral sequence can be read off from inverting $\mu$ in \Cref{prop:hfpsstwisted}. To see that no extension problems happen for a particular class, it suffices to prove this after multiplying by a sufficiently large power of $\mu$, for which it follows again by \Cref{prop:hfpsstwisted}, since the two spectral sequences agree up to higher and higher filtrations as the power of $\mu$ gets larger.
\end{proof}

\subsection{Working up to $v_1$-adic filtration}\label{sec:v1}

We begin with the following observation.

\begin{lem}\label{lemma:1d}
  Let $n,k\geq0$. The graded $\F_p$-vector space
  \[(\pi_* \gr_{\mot}^* (\THH(\Zp) \otimes \Ss^{\ell  \rho_n})^{tC_{p^n}} /p )/v_1\]
  is at most one-dimensional in each degree.

  Furthermore, the graded $\F_p$-vector space
  \[(\pi_* \gr_{\mot}^* (\THH(\Zp) \otimes \Ss^{\ell  \rho_n})^{hC_{p^n}} /p )/v_1\]
  is at most one-dimensional in each degree with the following exception:
  if $n$ is divisible by $p$, then degree $2\ell p^n$ is two-dimensional
  with given by classes represented in the Nygaard spectral sequence by $\sigma \epsilon^{(\ell  p^n)}$ and $u_n \lambda_1 t^{p-1} \sigma \epsilon^{(\ell  p^n)}$.
\end{lem}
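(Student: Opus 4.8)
The plan is to read the statement off directly from the explicit $E_\infty$-pages in \Cref{prop:hfpsstwisted} and \Cref{prop:twistedtatess}. Those propositions also assert that there are no $v_1$-extension problems, so each of the two groups in question is, as a graded $\F_p[v_1]$-module, a direct sum of cyclic modules $\F_p[v_1]/(v_1^{m})\{g\}$ indexed by the monomial generators $g$ listed there, each $g$ being a monomial in $t$ or in $\mu$ times $\sigma\epsilon^{(\ell p^n)}$, possibly times $\lambda_1$ and/or $u_n$, and carrying no factor of $v_1$. Quotienting by $v_1$ therefore produces the graded $\F_p$-vector space with exactly one basis vector in the internal degree of each such $g$, so the lemma reduces to the combinatorial claim that these generators occupy pairwise distinct internal degrees, with the one exception recorded in the statement.

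To carry out the bookkeeping I would first record $|\mu|=2p$, $|t|=-2$, $|\lambda_1|=2p-1$, $|\sigma\epsilon^{(\ell p^n)}| = 2\ell p^n$, and $|u_n|=-1$ — the last forced by the differential $d_{2(1+p+\dots+p^n)}(u_n)\doteq v_1^{1+\dots+p^{n-1}}t^{p^n}$ of \Cref{prop:sseq-untwisted} together with $|v_1|=2p-2$ — and then separate the generators by two invariants: their cohomological degree (the number of factors among $\lambda_1,u_n$, which is $0$, $1$, or $2$ since every surviving $u_n$-class also carries $\lambda_1$), and whether they involve a positive power of $t$ or a nonnegative power of $\mu$. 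Since $|\lambda_1|$ is odd while $|u_n\lambda_1|=2p-2$ is even, a generator has odd internal degree exactly when its cohomological degree is $1$; this isolates the cohomological-degree-$1$ generators, and distinctness of the exponents within the cohomological-degree-$1$ families of \Cref{prop:hfpsstwisted} follows from the stated $p$-adic valuation conditions. Within a fixed cohomological degree, a $t$-monomial has internal degree $2\ell p^n - 2(\text{positive}) + c$ while a $\mu$-monomial has internal degree $2\ell p^n + 2p(\text{nonnegative}) + c$ for the same constant $c$, so $t$-type and $\mu$-type never coincide, and within each type the exponent is recovered from the internal degree.

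Hence a collision can only occur between a cohomological-degree-$0$ and a cohomological-degree-$2$ generator in an even internal degree. Here matching two $t$-monomials forces $i'=i+(p-1)$, which the valuation and congruence conditions of \Cref{prop:hfpsstwisted} exclude; matching two $\mu$-monomials forces $p\mid p-1$, impossible; a cohomological-degree-$0$ $t$-monomial and a cohomological-degree-$2$ $\mu$-monomial lie in disjoint degree ranges; and matching $|\sigma\epsilon^{(\ell p^n)}\mu^{j}|$ with $|\sigma\epsilon^{(\ell p^n)}t^{i}\lambda_1 u_n|$ forces $pj=p-1-i$, whose unique solution with $j\ge 0$, $i\ge 1$ is $j=0$, $i=p-1$. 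Thus the sole candidate coincidence is $\sigma\epsilon^{(\ell p^n)}=\mu^0\sigma\epsilon^{(\ell p^n)}$ against $u_n\lambda_1 t^{p-1}\sigma\epsilon^{(\ell p^n)}$, both in internal degree $2\ell p^n$. By \Cref{prop:hfpsstwisted}, $\mu^0\sigma\epsilon^{(\ell p^n)}$ survives to $E_\infty$ precisely when $0\equiv n\ell p^{n-1}\pmod{p^n}$, i.e. when $p\mid n\ell$, i.e. (as $p\nmid\ell$) when $p\mid n$; and $u_n\lambda_1 t^{p-1}\sigma\epsilon^{(\ell p^n)}$ comes from the $k=0$ term of the relevant summand, present exactly when $n\ge 2$, which is automatic once $p\mid n$ and $n>0$. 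This produces the asserted exception and shows it is the only one. In the Tate case the $\mu$-type generators are absent — after inverting $t$ they become $v_1$-divisible, hence no longer $\F_p[v_1]$-module generators — so the same analysis gives no coincidences at all, which is the first assertion.

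The main obstacle is precisely this accumulation of boundary generators near internal degree $2\ell p^n$ in the homotopy-fixed-point case: one has to check carefully which of the several families of \Cref{prop:hfpsstwisted} actually contribute a generator there and confirm that only the two recorded classes do. Everywhere else the cohomological-degree invariant together with the sign of the $t$- versus $\mu$-exponent separates the generators cleanly, so no further case analysis is required.
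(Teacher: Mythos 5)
Your proposal is correct in substance and follows essentially the same route as the paper: enumerate the $\F_p[v_1]$-module generators from the explicit $E_\infty$-pages (using the no-hidden-extension statements), separate them by the parity/``cohomological degree'' invariant, and check that the only possible degree coincidence is $\mu^0\sigma\epsilon^{(\ell p^n)}$ against $u_n\lambda_1 t^{p-1}\sigma\epsilon^{(\ell p^n)}$, which occurs exactly when $p\mid n$. (The paper phrases the separation via degrees modulo $2p$ and uses the $\mu$-inverted Nygaard spectral sequence for the Tate case, but this is the same bookkeeping.)

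One step deserves more care than you give it: your claim that the $t$-versus-$t$ collision $i'=i+p-1$ between a weight-$0$ class $t^i$ and a weight-$2$ class $u_n\lambda_1 t^{i'}$ is ``excluded by the valuation and congruence conditions'' is only correct for $n\ge 2$ (where $v_p(i)\ge n-1\ge 1$ forces $v_p(i')=0$, incompatible with the conditions on surviving $u_n\lambda_1 t^{i'}$ with $i'\ge p$). For $n=1$ the valuation condition $v_p(i'+\ell)=0$ is actually \emph{satisfied} by $i'=i+p-1$; the collision is instead excluded because the family $\{u_1\lambda_1 t^{i'}:i'\ge p\}$ has torsion order $v_1^{p+\dots+p^{n-1}}=v_1^{0}$, i.e.\ vanishes, leaving only $1\le i'\le p-1$, which is incompatible with $i'=i+p-1\ge p$. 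This is why the paper treats $n=1$ (and $n=0$) as separate cases; your argument goes through once this is accounted for.
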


\begin{proof}
  The lemma is easy for $n = 0$, so we assume $n \geq 1$.

  For the Tate fixed points, we use the $\mu$-inverted Nygaard spectral sequence.
  Then modulo $v_1$ all basis elements are of the form $u_{n-1} ^{e_1} \lambda_1 ^{e_2} \mu^j \sigma \epsilon^{(\ell p^n)}$, where $j$ is an integer and $e_1, e_2 \in \{0,1\}$.
  Moreover, we have $e_1 =0$ if $e_2 = 0$.

  Now, we have $\abs{u_{n-1} ^{e_1} \lambda_1 ^{e_2} \mu^j \sigma \epsilon ^{(\ell p^n)}} \equiv - e_1 - e_2 \mod 2p$, so that we may recover $e_1$ and $e_2$ from the degree.
  But, given a fixed $(e_1, e_2)$, it is clear that there is at most one class of the form $u_{n-1} ^{e_1} \lambda_1 ^{e_2} \mu^j \sigma \epsilon^{(\ell p^n)}$ in each degree, so we are done.

  Now we do the homotopy fixed points. We start by assuming that $n > 1$.
  In this case, modulo $v_1$ in odd degrees we have basis elements of the form
  \begin{align*}
    &\lambda_1 t^i \sigma \epsilon^{(\ell p^n)} \\
    &\lambda_1 \mu^j \sigma \epsilon^{(\ell p^n)},
  \end{align*}
  and it is clear that there is at most one of these in each odd degree.
  In even degrees, we have basis elements of the form
  \begin{align*}
    &t^{pi} \sigma \epsilon^{(\ell p^n)} \\
    &\mu^{j} \sigma \epsilon^{(\ell p^n)} \\
    &u_n \lambda_1 t^{pi} \sigma \epsilon^{(\ell p^n)} \\
    &u_n \lambda_1 \mu^{j} \sigma \epsilon^{(\ell p^n)} \\
    &u_n \lambda_1 t^{i} \sigma \epsilon^{(\ell p^n)},  i \in \{1,\dots,p-1\}.
  \end{align*}
  (That we can put the $p$ in $t^{pi}$ is where we use $n > 1$.)
  Considering degrees modulo $2p$, we see that the degrees of the first two lines do not intersect those of the second two lines.
  It is also clear that the first two and last three lines do not contain more than one elements of the same degree.
  We therefore only need to worry about the first two and last line. Since the degrees of the last line are $2\ell p^n +2p-4, \dots, 2\ell p^n$, we see that the only possible point of intersection occurs in degree $2\ell p^n$, where we can have both
  $\sigma \epsilon^{(\ell  p^n)}$ and $u_n \lambda_1 t^{p-1} \sigma \epsilon^{(\ell  p^n)}$
  if $p$ divides $n$.

  Finally, we do the case $n=1$.
  Again, the case of odd degrees is easy.
  Our even degree basis elements are:
  \begin{align*}
    &t^{i} \sigma \epsilon^{(\ell p)} \\
    &\mu^{j} \sigma \epsilon^{(\ell p)} \\
    &u_1 \lambda_1 \mu^{j} \sigma \epsilon^{(\ell p)} \\
    &u_1 \lambda_1 t^{i} \sigma \epsilon^{(\ell p)}, i \in \{1,\dots,p-1\}.
  \end{align*}
  Again, it is clear that the first two and last two lines do not have any repeated degrees.
  The first and third line don't because $\abs{t^i}$ is non-positive and $\abs{u_1 \lambda_1 \mu^j}$ is positive.
  The second and third line don't by considering the reduction modulo $2p$.
  The final line again lies in degrees $2\ell p +2p-4, \dots, 2\ell p$, which could only intersect the first two in $2\ell p$ in the same way as the $n > 1$ case.
  But this is not possible because $p$ does not divide $n=1$.
\end{proof}

\begin{ntn}
	Let $n,k\geq0$. We use $\Fil^*_{v_1}$ to denote the $v_1$-adic filtration on $\pi_*$, and $\gr^*_{v_1}$ to denote the associated graded of this filtration. Then by \Cref{lemma:1d}, the generators (as an $\F_p[v_1]$-module) of the $E_\infty$-page of the Nygaard spectral sequence lift uniquely to elements in $\gr ^*_{v_1} (\pi_*\gr_{\mot} ^* (\THH(\Zp) \otimes \Ss^{\ell  \rho_n})^{h C_{p^n}}/p)$, with the exception of $\sigma \epsilon^{(\ell p^n)}$ when $n$ is divisible by $p$, in which case the indeterminacy is generated by $u_n \lambda_1 t^{p-1} \sigma \epsilon^{(\ell p^n)}$.
  We use this to unambiguously regard these symbols as elements of $\gr^*_{v_1}( \pi_* \gr_{\mot} ^* (\THH(\Zp) \otimes \Ss^{\ell  \rho_n})^{h C_{p^n}}/p)$, with the exception of $\sigma \epsilon^{(\ell p^n)}$, for which we fix once and for all a choice of lift.

  We do similarly for the Tate construction and its Nygaard spectral sequence, without any exceptions.
\end{ntn}

Next we compute the maps $\mathrm{can}$ and $\varphi$ on the level of the $v_1$-adic associated graded of their homotopy groups. Recall that $\mathrm{can}$ is induced by $j_{dR}$ and $\varphi$ is induced by $j_{HT}$.

\begin{prop}\label{prop:grv1canphi}
  Let $n,k\geq0$. Equip $\pi_*\gr_{\mot} (\THH(\Zp) \otimes \Ss^{2\ell \rho_n})^{h C_{p^n}}/p$ and $\pi_*\gr_{\mot} (\THH(\Zp) \otimes \Ss^{2\ell \rho_n})^{t C_{p^n}}/p$ with the $v_1$-adic filtration.
  Then $can$ and $\varphi$ induce the maps
  \begin{align*}
    \can : \gr^* _{v_1} (\pi_* \gr_{\mot}(\THH(\Zp) \otimes \Ss^{2\ell \rho_n})^{h C_{p^n}}/p) &\to \gr^* _{v_1} (\pi_* \gr_{\mot}(\THH(\Zp) \otimes \Ss^{2\ell \rho_n})^{t C_{p^n}}/p) \\
    v_1^a \sigma \epsilon^{(p^n \ell)} t^i \lambda_1 ^{\epsilon_1} u_n ^{\epsilon_2} &\mapsto v_1^a \sigma \epsilon^{(p^n \ell)} t^i \lambda_1 ^{\epsilon_1} u_n ^{\epsilon_2} \\
    v_1^a \sigma \epsilon^{(p^n \ell)} \mu^j \lambda_1 ^{\epsilon_1} u_n ^{\epsilon_2} &\mapsto 0, j>0
  \end{align*}
  and
  \begin{align*}
    \varphi : \gr^* _{v_1}( \pi_* \gr_{\mot}(\THH(\Zp) \otimes \Ss^{2\ell \rho_n})^{h C_{p^n}}/p) &\to \gr^* _{v_1}( \pi_* \gr_{\mot}(\THH(\Zp) \otimes \Ss^{2k \rho_{n+1}})^{t C_{p^{n+1}}}/p) \\
    v_1^a \sigma \epsilon^{(p^n \ell)} t^i \lambda_1 ^{\epsilon_1} u_n ^{\epsilon_2} &\mapsto 0, i>0 \\
    v_1^a \sigma \epsilon^{(p^n \ell)} \mu^j \lambda_1 ^{\epsilon_1} u_n ^{\epsilon_2} &\stackrel{\cdot}{\mapsto} v_1^a \sigma \epsilon^{(p^{n+1} \ell)} t^{p^n \ell (p-1) - pj} \lambda_1 ^{\epsilon_1} u_{n+1} ^{\epsilon_2}.
  \end{align*}
\end{prop}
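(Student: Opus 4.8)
The plan is to deduce everything from the identification, recorded in the discussion preceding the statement, of $\can$ and $\varphi$ with localization maps on the Nygaard spectral sequences. Since $\can$ is induced by $j_{dR}$, the inclusion of the open locus $\{a\neq 0\}$, and $a$ corresponds to $t$ in the Nygaard spectral sequence (while $v_1 = a\mu = t\mu$ on $(\Zp)^{\Nyg}_{p=0}$), the map $\can$ is realized on spectral sequences as the inclusion of the $C_{p^n}$-homotopy fixed point spectral sequence into the $C_{p^n}$-Tate spectral sequence, i.e.\ as inverting $t$. Likewise $\varphi$ is induced by $j_{HT}$, the inclusion of $\{\mu\neq 0\}$, and is realized --- after the Frobenius twist relating $\THH(\Zp)\otimes\Ss^{\ell\rho_n}$ with $\THH(\Zp)\otimes\Ss^{\ell\rho_{n+1}}$ --- as inverting $\mu$, identifying the $\mu$-inverted level-$n$ homotopy fixed point spectral sequence with the level-$(n+1)$ Tate spectral sequence.

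First I would note that both $\can$ and $\varphi$ are $\F_p[v_1]$-linear (up to the identifications of Breuil--Kisin twists they are ring maps carrying $v_1$ to $v_1$), so they induce maps on $v_1$-adic associated gradeds, and by \Cref{lemma:1d} these gradeds are at most one-dimensional in each bidegree away from the single exceptional bidegree (handled by the fixed choice of lift). Thus each such map is determined by its effect on the named $\F_p[v_1]$-module generators of the $E_\infty$-pages of \Cref{prop:hfpsstwisted} and \Cref{prop:twistedtatess}, and those effects can be pinned down by tracking degrees. On $E_\infty$-pages, inverting $t$ fixes a class $v_1^a\sigma\epsilon^{(\ell p^n)}t^i\lambda_1^{\epsilon_1}u_n^{\epsilon_2}$ (sending it to the class of the same name in the target, which may be zero there) and, using $\mu = v_1 t^{-1}$, sends $v_1^a\sigma\epsilon^{(\ell p^n)}\mu^j\lambda_1^{\epsilon_1}u_n^{\epsilon_2}$ with $j>0$ to $v_1^{a+j}\sigma\epsilon^{(\ell p^n)}t^{-j}\lambda_1^{\epsilon_1}u_n^{\epsilon_2}$, which has $v_1$-adic filtration $\geq a+j>a$ and hence vanishes in $\gr^a_{v_1}$; this gives the stated formula for $\can$. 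Symmetrically, inverting $\mu$ sends $t^i$ with $i>0$ to $v_1^i\mu^{-i}$, which gains $v_1$-filtration and dies, while it fixes $\mu^j$; re-expressing the surviving class in the $t$-variable presentation of the level-$(n+1)$ Tate spectral sequence and matching topological degrees ($|\mu| = 2p$, $|\lambda_1| = 2p-1$, $|u_n| = -1$, $|\sigma\epsilon^{(\ell p^m)}| = 2\ell p^m$) forces it to be a unit multiple of $v_1^a\sigma\epsilon^{(\ell p^{n+1})}t^{p^n\ell(p-1)-pj}\lambda_1^{\epsilon_1}u_{n+1}^{\epsilon_2}$, with $u_n$ passing to $u_{n+1}$ under the comparison of Tate classes for $C_{p^n}$ and $C_{p^{n+1}}$. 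Finally, since \Cref{prop:hfpsstwisted} and \Cref{prop:twistedtatess} assert the absence of $v_1$-extension problems, the $v_1$-adic filtration on homotopy agrees with that read off from the $E_\infty$-page, so the $E_\infty$-level computation yields the claimed formulas on $\gr^*_{v_1}$.

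The hard part will be the bookkeeping in the $\varphi$ case: one must track how the normalizing twist $\sigma\epsilon^{(\ell p^n)}$ of \Cref{thm:sseq-twisted} (which absorbs a shift by $t^{-\ell n(p^n-p^{n-1})}$) interacts both with the Frobenius-induced change of coefficients from $\Sigma^{2\ell}\Zp$ to $(\Sigma^{2\ell}\Zp)^{\otimes p}$ and with the passage from $C_{p^n}$ to $C_{p^{n+1}}$, and in particular confirm that the $\mu$-inverted level-$n$ homotopy fixed point $E_\infty$-page, rewritten in the $t$-variable presentation of the level-$(n+1)$ Tate $E_\infty$-page as in \Cref{prop:twistedtatess}, really does identify $\mu^j\sigma\epsilon^{(\ell p^n)}$ with a unit multiple of $t^{p^n\ell(p-1)-pj}\sigma\epsilon^{(\ell p^{n+1})}$. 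The degree computation above makes this identification essentially automatic given the one-dimensionality in \Cref{lemma:1d}, but some care is needed to confirm that $\varphi$ sends a nonzero $E_\infty$-class to the named class of the same degree rather than to zero, which again reduces to the explicit $E_\infty$-page descriptions.
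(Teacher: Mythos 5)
Your proposal is correct and follows essentially the same route as the paper: reduce via the one-dimensionality of \Cref{lemma:1d} to a computation on the Nygaard $E_\infty$-pages, observe that $\can$ is $t$-inversion (so $\mu^j$-classes with $j>0$ become $v_1^j t^{-j}$ and gain $v_1$-filtration, hence die in $\gr^*_{v_1}$) and that $\varphi$ is $\mu$-inversion (so $t^i$-classes with $i>0$ die), and then translate $\mu$-inverted names to Tate names by degree matching. The only point where the paper is slightly more careful is the exceptional class $\sigma\epsilon^{(\ell p^n)}$ when $p\mid n$: the relevant fact is not merely that a lift was fixed, but that both $\can$ and $\varphi$ annihilate the indeterminacy class $u_n\lambda_1 t^{p-1}\sigma\epsilon^{(\ell p^n)}$, so the formulas are independent of the choice.
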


\begin{proof}
	Using \Cref{lemma:1d}, it is enough to check that the maps are as claimed at the level of an associated graded, with the exception of the class $\sigma \epsilon^{(\ell p^n)}$ in the case $p|n$, for which there is a potential indeterminacy of $u_n\lambda_1t^{p-1}\sigma\epsilon^{(\ell p^n)}$. On this class, both $\can$ and $\varphi$ are zero, so $\varphi$ and $\can$ of $\epsilon^{(\ell p^n)}$ are determined by the map at the level of an associated graded.
	
	For $\can$, the map of spectral sequences is as claimed since the map of spectral sequence is given by inverting $t$, so the map is given as claimed on the associated graded of the Nygaard spectral sequence.

  	If the classes in the Tate construction are named using the $\mu$-inverted Nygaard spectral sequence, then the map $\varphi$ is simply given by inverting $\mu$. To obtain the proposition, we need to transition from $\mu$-inverted names to Tate names, which can be done up to a unit in $\FF_p^{\times}$ using the fact that there is at most one class in each degree.
\end{proof}

\subsection{The computation}\label{sec:computation}

In this section, we tackle the computation.

Consider
$\varphi - \can : \Fil^* _{v_1} \prod_{i=0} ^n \pi_* (\gr_{\mot}^* (\THH(\Zp) \otimes \Ss^{\ell \rho_i})^{hC_{p^i}}/p) \to \Fil^* _{v_1} \prod_{i=1} ^n\pi_*( \gr_{\mot}^* (\THH(\Zp) \otimes \Ss^{\ell \rho_i})^{tC_{p^i}}/p)$.

First, we note that the $v_1$-adic filtrations here are finite, since each of the spectra involved has bounded $v_1$-torsion.

\begin{lem}\label{lemma:surjphican}
	Let $n,\ell\geq0$. The map $$\Fil^* _{v_1} \prod_{i=0} ^n \pi_* (\gr_{\mot}^* (\THH(\Zp) \otimes \Ss^{\ell \rho_i})^{hC_{p^i}}/p) \to \Fil^* _{v_1} \prod_{i=1} ^n\pi_*( \gr_{\mot}^* (\THH(\Zp) \otimes \Ss^{\ell \rho_i})^{tC_{p^i}}/p)$$ is surjective.
\end{lem}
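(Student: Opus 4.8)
The plan is to reduce to the associated graded of the $v_1$-adic filtration and then argue by an explicit bookkeeping with the monomial bases. First observe that, for fixed $n$, the filtration $\Fil^*_{v_1}$ appearing in the statement is finite: by \Cref{prop:hfpsstwisted} and \Cref{prop:twistedtatess} each of the finitely many modules $\pi_*\gr_{\mot}^*(\THH(\Zp)\otimes\Ss^{\ell\rho_i})^{hC_{p^i}}/p$ and $\pi_*\gr_{\mot}^*(\THH(\Zp)\otimes\Ss^{\ell\rho_i})^{tC_{p^i}}/p$ is a sum of cyclic $\F_p[v_1]$-modules whose $v_1$-torsion order is bounded (by $v_1^{1+p+\dots+p^n}$). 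Hence it suffices to prove that $\gr^*_{v_1}(\varphi-\can)$ is surjective, and by \Cref{lemma:1d} the source and target of this graded map are finite dimensional over $\F_p$ in each internal degree, so surjectivity may be checked degree by degree and we are free to work with the monomial generators of \Cref{prop:hfpsstwisted} and \Cref{prop:twistedtatess}.

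Next, record the shape of the graded map. Writing a source element as $(x_i)_{i=0}^n$, the component of $(\varphi-\can)(x)$ in $\gr^*_{v_1}\pi_*\gr^*_{\mot}(\THH(\Zp)\otimes\Ss^{\ell\rho_i})^{tC_{p^i}}/p$ is $\varphi(x_{i-1})-\can(x_i)$, and by \Cref{prop:grv1canphi} on associated graded $\can$ carries each $t^b$-monomial with $b\ge0$ to itself and annihilates each $\mu^c$-monomial with $c>0$, while $\varphi$ annihilates each $t^b$-monomial with $b>0$ and sends $v_1^a\sigma\epsilon^{(\ell p^{i-1})}\mu^c u_{i-1}^{\epsilon_1}\lambda_1^{\epsilon_2}$, up to a unit, to $v_1^a\sigma\epsilon^{(\ell p^i)}t^{p^{i-1}\ell(p-1)-pc}u_i^{\epsilon_1}\lambda_1^{\epsilon_2}$ at level $i$. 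The combinatorial input, read off the families in \Cref{prop:twistedtatess}, is that every monomial occurring in $\pi_*\gr^*_{\mot}(\THH(\Zp)\otimes\Ss^{\ell\rho_i})^{tC_{p^i}}/p$ has $t$-exponent congruent mod $p$ to $p^{i-1}\ell(p-1)$, and that any such exponent which is negative is automatically $\le p^{i-1}\ell(p-1)$ and hence of the form $p^{i-1}\ell(p-1)-pc$ with $c\ge0$; for $i\ge2$ this is immediate since $p\mid p^{i-1}$, and the case $i=1$ is checked by hand using $p^0\ell(p-1)\equiv-\ell$. In particular $\varphi$ from level $i-1$ surjects, decoration by decoration and $v_1$-degree by $v_1$-degree, onto the negative-$t$-exponent part of level $i$.

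Finally, build a preimage of a target element by processing its monomials greedily from the highest nonzero level downwards: a monomial of positive $t$-exponent at level $i$ is hit by $\can$ of the corresponding $t$-monomial at level $i$, on which $\varphi$ vanishes, so nothing else is disturbed; a monomial of negative $t$-exponent at level $i$ is hit by $\varphi$ of a suitable $\mu^c$-monomial at level $i-1$, on which $\can$ vanishes, so again nothing else is disturbed; and the finitely many monomials of $t$-exponent $0$ — which occur only when $p\mid i\ell$ — are hit by $\can$ at the cost of producing a single $t$-monomial of positive exponent one level up, which is removed at the next step. This terminates because every step other than a "$t^0$"-step strictly decreases the number of nonzero monomials while each "$t^0$"-step is immediately followed by one that does, and such steps form a chain of length at most $n$. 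The main obstacle, and the only place real care is needed, is the second move: one must confirm that the monomial descriptions of the homotopy and Tate fixed points in \Cref{prop:hfpsstwisted} and \Cref{prop:twistedtatess} are mutually compatible enough that the lift along $\varphi$ one writes down really exists with the predicted $v_1$-torsion order (this ultimately reflects the fact that $\varphi$ is an equivalence in a range of degrees, so that the torsion orders of a $\mu^c$-class and of its image $t$-class agree outside a finite range), together with a careful treatment of the exceptional two-dimensional graded piece in degree $2\ell p^i$ flagged in \Cref{lemma:1d} when $p\mid i$, where a choice of lift of $\sigma\epsilon^{(\ell p^i)}$ must be fixed as in \Cref{prop:grv1canphi}.
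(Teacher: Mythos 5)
Your proposal is correct and follows essentially the same route as the paper: reduce to the $v_1$-adic associated graded using finiteness of the filtration, then use the formulas of \Cref{prop:grv1canphi} together with the observation that $\can$ covers the nonnegative-$t$-exponent classes and $\varphi$ covers the nonpositive ones. The only difference is packaging — the paper organizes the same monomial bookkeeping by putting a staircase filtration on source and target (with $S_{-,i}$, $S_{+,i-1}$, $S_{0,i-1}$ in filtration $i$) and checking surjectivity on its associated graded, which is exactly the clean form of your greedy elimination from the top level downwards.
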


\begin{proof}
	Since the $v_1$-adic filtration is finite, it will suffice to prove surjectivity on the $v_1$-adic associated graded.
	We use the formulas in \Cref{prop:grv1canphi}. The image of $\can$ in the ($v_1$-associated graded of the) $C_{p^i}$-Tate construction is spanned by those basis elements which have a nonnegative power of $t$. The image of $\varphi$ in the $C_{p^i}$-Tate construction in particular contains every basis element with a negative power of $t$. Note also that $\varphi-\can$ is either $\varphi$ or $-\can$ unless the power of $\mu$ and $t$ is $0$. Let us write the source as the sum of three terms $\oplus_{0}^n(S_{+,i}\oplus S_{0,i} \oplus S_{-,i})$ by having $S_{+,i}$ be the summand of $\gr^*_{v_1}\pi_* \gr_{\mot}(\THH(\Zp) \otimes \Ss^{\ell \rho_i})^{hC_{p^i}}/p$ where the power of $\mu$ is positive, $S_{-,i}$ be the summand where the power of $t$ is positive, and $S_{0,i}$ be the summand where both powers are $0$. Similarly, let $T_{+,i}$ be the summand of the target where the power of $t$ is negative, $T_{0,i}$ be the summand where the power of $t$ is zero, and $T_{-,i}$ the summand where it ie positive. 
	
	Then we can put a filtration on the source and target where we filter the target so that $T_{+,i},T_{-,i},T_{0,i}$ are in filtration $i$, and filter the source so that $S_{-,i} , S_{+,i-1}, S_{0,i-1}$ are in filtration $i$. Then $\varphi-\can$ preserves this filtration, and on the associated graded, $S_{-,i}$ surjects onto $T_{-,i}$ via $-\can$, and $S_{+,i-1}\oplus S_{0,i-1}$ surjects onto $(\pi_*\gr_{\mot}(\THH(\Zp) \otimes \Ss^{\ell \rho_i})^{tC_{p^i}}/p)/T_{-,i}$ via $\varphi$, so this shows surjectivity.
\end{proof}

%We claim that from the formulas in \Cref{prop:grv1canphi} that on $\gr^* _{v_1}$, this map is surjective. Indeed, we can see from those formulas that 
%

As a consequence of \Cref{lemma:surjphican}, only the kernel of $\varphi-\can$ contributes to $\pi_* \gr^* _{\mot} \TR^{[n]} (\Zp; \Sigma^{2\ell} \Zp) / p$.

\begin{cnstr}
	We define a filtration on $\pi_* \gr_{\mot} ^* \TR^{[n]} (\Zp; \Sigma^{2\ell} \Zp) / p$ via
	
	$$\Fil^i \pi_* \gr_{\mot}^* \TR^{[n]} (\Zp; \Sigma^{2\ell} \Zp) / p \coloneqq \ker(\Fil^i _{v_1} (\varphi-\can)).$$ and similarly for $\pi_* \gr_{\mot} ^* \TR (\Zp; \Sigma^{2\ell} \Zp) / p$.
	
	We use $\gr^i \pi_* \gr_{\mot}^* \TR^{[n]} (\Zp; \Sigma^{2\ell} \Zp) / p$ to denote the associated graded of this filtration, and similarly for $\TR$.
\end{cnstr}

%Then a further consequence of \Cref{lemma:surjphican} is that
%$\gr^* \pi_* \gr_{\mot}\TR^{[n]} (\Zp; \Sigma^{2\ell} \Zp) / p \cong \ker(\gr^* _{v_1} (\varphi-\can))$.

%Finally, we use the fact that $\gr_{\mot}^* \TR (\Zp; \Sigma^{2\ell} \Zp) / p \to \gr_{\mot}^* \TR^{[n]} (\Zp; \Sigma^{2\ell} \Zp) / p$ is an equivalence in increasing degrees to transfer these facts to $\gr_{\mot}^* \TR (\Zp; \Sigma^{2\ell} \Zp) / p$.

Now, we do some legwork necessary to state the output of our main computation.

\begin{dfn} \label{dfn:tr-family-generators}
  Below, we list some mutually linearly independent families of $\gr_{v_1} ^* \prod_{i=0} ^\infty \pi_* \gr_{\mot}(\THH(\Zp) \otimes \Ss^{\ell \rho_i})^{hC_{p^i}}/p$ that lie in the kernel of $\varphi-\can$. Eventually, we will show that they form a basis for $\gr^* \pi_* \gr_{\mot}\TR (\Zp; \Sigma^{2\ell} \Zp) / p$ in a certain sense.

\begin{align}
  &\lambda_1 ^e (\sigma \epsilon^{(p^n \ell)} \mu^j \stackrel{\cdot}{+} \sigma \epsilon^{(p^{n+1} \ell)} t^{p^n\ell(p-1) -pj}) \tag{$1_{n,\ell}$} \\
  &\text{where }
  e \in \{0,1\}, 0 < j \leq p^{n-1} \ell (p-1) - p^n, j \equiv n \ell p^{n-1} \mod p^n \notag
\end{align}

\begin{align}
  &\lambda_1 ^e (\sigma \epsilon^{(p^n \ell)} \mu^j \stackrel{\cdot}{+} \sigma \epsilon^{(p^{n+1} \ell)} t^{p^n\ell(p-1) -pj} \stackrel{\cdot}{+} \delta^{j}_{p^{n-1}\ell(p-1)} \sigma \epsilon^{(p^{n+2} \ell)} t^{p^{n+1} \ell (p-1)}) \tag{$2_{n,\ell}$} \\
  &\text{where }
  e \in \{0,1\}, p^{n-1} \ell (p-1) - p^n < j \leq p^{n-1} \ell (p-1), j > 0, j \equiv n \ell p^{n-1} \mod p^n \notag
\end{align}
Here $\delta^j_l$ denotes the Kronecker delta function.

\begin{align}
  &\sigma \epsilon^{(p^n \ell)} t^i \lambda_1 u_n ^e \tag{$3_{n,\ell}$} \\
  &\text{where }
  e \in \{0,1\}, 0 < i < p, v_p (i + \ell n p^{n-1}) = 0 \notag
\end{align}

In the next two families, we let $1 \leq r \leq n$.

\begin{align}
  &\sigma \epsilon^{(p^n \ell)} \mu^j \lambda_1 u_n^e \stackrel{\cdot}{+} \sigma \epsilon^{(p^{n+1} \ell)} t^{p^n\ell(p-1) -pj} \lambda_1 u_{n+1} ^e \tag{$4_{n,\ell,r}$} \\
  &\text{where }
  e \in \{0,1\}, j > 0, v_p (j-\ell n p^{n-1}) = r-1, j \leq p^{n-1} \ell (p-1) - p^r \notag
\end{align}

\begin{align}
  &\sigma \epsilon^{(p^n \ell)} \mu^j \lambda_1 u_n^e \stackrel{\cdot}{+} \sigma \epsilon^{(p^{n+1} \ell)} t^{p^n\ell(p-1) -pj} \lambda_1 u_{n+1} ^e \stackrel{\cdot}{+} \delta^j_{p^{n-1}\ell(p-1)} \sigma \epsilon^{(p^{n+2} \ell)} t^{p^{n+1} k(p-1)} \lambda_1 u_{n+2} ^e \tag{$5_{n,\ell,r}$} \\
  &\text{where }
  e \in \{0,1\}, j > 0, v_p (j-\ell n p^{n-1}) = r-1, p^{n-1} \ell (p-1) - p^r < j \leq p^{n-1} \ell (p-1) \notag
\end{align}

The constants in the $\dot{+}$s are uniquely determined by the condition that these classes are in the kernel of $\varphi-\can$ (see \Cref{lemma:trfamilies}).
\end{dfn}

\begin{dfn} \label{dfn:trfamilies}
We let $A_{n,\ell}$, $B_{n,\ell}$, $C_{n,\ell}$, $D_{n,\ell,r}$ and $E_{n,\ell,r}$ denote the sub-$\F_p[v_1]$-modules of $\prod_{i=0} ^\infty \gr_* ^{v_1} \pi_*\gr_{\mot} (\THH(\Zp) \otimes \Ss^{\ell \rho_i})^{hC_{p^i}} /p$ generated by the families $(1_{n,\ell}), (2_{n,\ell}), (3_{n,\ell}), (4_{n,\ell,r})$ and $(5_{n,\ell,r})$, respectively.
\end{dfn}

\begin{lem}\label{lemma:trfamilies}
	$A_{n,\ell}$, $B_{n,\ell}$, $C_{n,\ell}$, $D_{n,\ell,r}$ and $E_{n,\ell,r}$ are in the kernel of $\varphi-\can$, and their elements are linearly independent in the sense that $A_{n,\ell}+ B_{n,\ell}+ C_{n,\ell}+ D_{n,\ell,r}+ E_{n,\ell,r}$ is the direct sum of the cyclic $\FF_p[v_1]$-submodules generated by each element of $(1_{n,\ell}), (2_{n,\ell}), (3_{n,\ell}), (4_{n,\ell,r})$ and $(5_{n,\ell,r})$. Moreover the $v_1$-torsion order of the generators is listed below:
	
	\begin{enumerate}
		\item[$(1_{n,\ell})$:]
		$\lambda_1 ^e (\sigma \epsilon^{(p^n \ell)} \mu^j \stackrel{\cdot}{+} \sigma \epsilon^{(p^{n+1} \ell)} t^{p^n\ell(p-1) -pj})$ is $v_1^{1+ \dots + p^n}$-torsion.
		\item[$(2_{n,\ell})$:]
		$\lambda_1 ^e (\sigma \epsilon^{(p^n \ell)} \mu^j \stackrel{\cdot}{+} \sigma \epsilon^{(p^{n+1} \ell)} t^{p^n\ell(p-1) -pj} \stackrel{\cdot}{+} \delta^{j}_{p^{n-1}\ell(p-1)} \sigma \epsilon^{(p^{n+2} \ell)} t^{p^{n+1} \ell (p-1)})$ is
    \newline
      $v_1^{1+\dots+p^n+(p^{n+1} - (p^n\ell(p-1)-pj))}$-torsion unless $j=p^n\ell(p-1)$ and $k= 1$, in which case it is $v_1^{1+\dots+p^{n+1} + p^{n+1}}$-torsion.
		\item[$(3_{n,\ell})$:]
		$\sigma \epsilon^{(p^n \ell)} t^i \lambda_1 u_n ^e$ is $v_1^{p-i}$-torsion.
		\item[$(4_{n,\ell,r})$:]
		$\sigma \epsilon^{(p^n \ell)} \mu^j \lambda_1 u_n^e \stackrel{\cdot}{+} \sigma \epsilon^{(p^{n+1} \ell)} t^{p^n\ell(p-1) -pj} \lambda_1 u_{n+1} ^e$ is $v_1^{1+ \dots + p^{r}}$-torsion.
		\item[$(5_{n,\ell,r})$:] 
		$\sigma \epsilon^{(p^n \ell)} \mu^j \lambda_1 u_n^e \stackrel{\cdot}{+} \sigma \epsilon^{(p^{n+1} \ell)} t^{p^n\ell(p-1) -pj} \lambda_1 u_{n+1} ^e \stackrel{\cdot}{+} \delta^j_{p^{n-1}\ell(p-1)} \sigma \epsilon^{(p^{n+2} \ell)} t^{p^{n+1} k(p-1)} \lambda_1 u_{n+2} ^e$ is $v_1^{1+ \dots + p^{r} + (p^{r+1} - (p^n\ell(p-1)-pj))}$-torsion unless $n=r, j=p^{n-1}\ell(p-1),k=1$ in which case it is $v_1^{1+ \dots + p^{n+1} + p^{n+1}}$-torsion.
	\end{enumerate}
\end{lem}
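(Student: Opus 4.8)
The proof is entirely combinatorial once the preparatory computations are in hand. The plan is to work on the $v_1$-adic associated graded $\gr^*_{v_1}\prod_i \pi_*\gr_{\mot}(\THH(\Zp)\otimes\Ss^{\ell\rho_i})^{hC_{p^i}}/p$, using the explicit $E_\infty$-pages of the Nygaard spectral sequences from \Cref{prop:hfpsstwisted} and \Cref{prop:twistedtatess} (plus its $\mu$-inverted variant), and the formulas for $\can$ and $\varphi$ from \Cref{prop:grv1canphi}. By \Cref{lemma:1d}, in the motivic weights that occur each bidegree contains at most one monomial, so once we have fixed lifts once and for all every $\dot{=}$ ambiguity is pinned down; this is what allows the coefficients hidden in the $\dot{+}$'s of \Cref{dfn:tr-family-generators} to be chosen unambiguously.

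First I would check the kernel assertion family by family using \Cref{prop:grv1canphi}. A generator of $(1_{n,\ell})$ is supported in the $hC_{p^n}$- and $hC_{p^{n+1}}$-levels; there $\can$ annihilates the term $\sigma\epsilon^{(p^n\ell)}\mu^j$ (as $j>0$) and is the identity on $\sigma\epsilon^{(p^{n+1}\ell)}t^{p^n\ell(p-1)-pj}$, while $\varphi$ sends $\sigma\epsilon^{(p^n\ell)}\mu^j$ to a unit times $\sigma\epsilon^{(p^{n+1}\ell)}t^{p^n\ell(p-1)-pj}$ and annihilates the positive $t$-power term — here one uses $p^n\ell(p-1)-pj\ge p^{n+1}>0$, which is exactly the range constraint. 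Hence $\varphi-\can$ vanishes at level $n$ and at level $n+2$, and the unique $\dot{+}$-coefficient makes the two contributions at level $n+1$ cancel. The families $(4_{n,\ell,r})$ are the $\lambda_1 u_n^e$-twisted analogues and go through identically. For $(2_{n,\ell})$ and $(5_{n,\ell,r})$ the only new phenomenon is the boundary value $j=p^{n-1}\ell(p-1)$, where the $t$-exponent $p^n\ell(p-1)-pj$ is $0$: then $\varphi$ of the level-$(n+1)$ term $\sigma\epsilon^{(p^{n+1}\ell)}$ is nonzero (a unit times $\sigma\epsilon^{(p^{n+2}\ell)}t^{p^{n+1}\ell(p-1)}$), and this is precisely killed by $\varphi-\can$ applied to the $\delta$-correction term, whose $t$-exponent is again positive so no level-$(n+3)$ spillover occurs. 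Finally, for $(3_{n,\ell})$ the class $\sigma\epsilon^{(p^n\ell)}t^i\lambda_1 u_n^e$ with $0<i<p$ is killed by $\varphi$ (positive $t$-exponent) and also by $\can$, because the constraint $v_p(i+\ell n p^{n-1})=0$ together with $v_p(i)=0$ places this monomial outside the surviving congruence classes of the Tate $E_\infty$-page of \Cref{prop:twistedtatess}; so it lies in $\ker(\varphi-\can)$.

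Next I would establish linear independence by reading off leading terms. Each generator has a distinguished summand in the lowest $hC_{p^i}$-level on which it is supported: $\lambda_1^e\sigma\epsilon^{(p^n\ell)}\mu^j$ for $(1_{n,\ell})$ and $(2_{n,\ell})$ (and these exhaust $\{j>0:\ j\equiv n\ell p^{n-1}\ \mathrm{mod}\ p^n\}$, the range $j\le p^{n-1}\ell(p-1)-p^n$ giving $A$ and the rest $B$); $\sigma\epsilon^{(p^n\ell)}t^i\lambda_1 u_n^e$ for $(3_{n,\ell})$; and $\sigma\epsilon^{(p^n\ell)}\mu^j\lambda_1 u_n^e$ with $v_p(j-\ell n p^{n-1})=r-1<n$ for $(4_{n,\ell,r})$ and $(5_{n,\ell,r})$. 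These monomials are pairwise distinct — $\mu$-power versus $t$-power, presence/absence of $\lambda_1$ and of $u_n$, and the dichotomy $v_p(\cdot)\ge n$ (for $A,B$) versus $v_p(\cdot)<n$ (for $D,E$) separate them — and each is a free $\FF_p[v_1]$-generator of the corresponding summand of \Cref{prop:hfpsstwisted} up to its torsion order. A correction term, being supported in strictly higher levels and always of "trailing" $t$-power shape, is never the leading term of another generator, so an $\FF_p[v_1]$-relation among the generators would force a relation among the leading terms, which is impossible; this gives the direct-sum statement, and the same leading-term bookkeeping shows the sum stays direct as $n,\ell,r$ vary.

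For the torsion orders I would use that $v_1$ acts level-wise, so the $v_1$-torsion order of a generator is the maximum of the orders of its level-components, each of which is read directly off the summand of \Cref{prop:hfpsstwisted} containing it. For $(1_{n,\ell})$ the level-$n$ term and the level-$(n+1)$ term (with $t$-exponent $\ge p^{n+1}$) both have order $v_1^{1+\dots+p^n}$; for $(3_{n,\ell})$ the single level-$n$ term sits in the $k=0$ piece of the $E(u_n)$-summand, of order $v_1^{p-i}$; for $(4_{n,\ell,r})$ and $(5_{n,\ell,r})$ one substitutes the $r$-indexed summands with $r=v_p(j-\ell n p^{n-1})+1$. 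For $(2_{n,\ell})$ the level-$n$ term has order $1+\dots+p^n$ but the level-$(n+1)$ term $t^i$ with $0\le i<p^{n+1}$ sits in $\FF_p[v_1]/(v_1^{1+\dots+p^n+(p^{n+1}-i)})$, which dominates; the genuinely exceptional case arises when $i=0$ (that is $j=p^{n-1}\ell(p-1)$) and $\ell=1$, for then the $\delta$-correction term at level $n+2$ has $t$-exponent $p^{n+1}(p-1)<p^{n+2}$, hence lies in a summand of the larger order $v_1^{1+\dots+p^{n+1}+p^{n+1}}$, which then controls the maximum; the analogous jump in $(5_{n,\ell,r})$ occurs exactly at $n=r$, $j=p^{n-1}\ell(p-1)$, $\ell=1$. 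I expect this boundary-case analysis in families $(2_{n,\ell})$ and $(5_{n,\ell,r})$ — keeping track of precisely when a correction term lands in a summand of strictly larger $v_1$-torsion order — to be the only genuinely fiddly point; everything else is a direct application of the cited propositions.
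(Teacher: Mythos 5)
Your proposal is correct and follows essentially the same route as the paper's proof: kernel membership via the telescoping cancellation ($\can$ kills the first term, $\varphi$ kills the last, and $\varphi$ of each term matches $\can$ of the next up to the units absorbed in the $\dot{+}$'s), torsion orders read off as the maximum over the level-components located in the summands of \Cref{prop:hfpsstwisted}, and independence from the fact that the constituent monomials are distinct basis elements (your leading-term formulation is just a mild repackaging of this). You also correctly identify the exceptional boundary case as $j=p^{n-1}\ell(p-1)$, $\ell=1$, which matches the paper's proof despite the typo in the lemma's statement.
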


\begin{proof}
	We first prove that the classes are in $\gr_{\mot}^* \TR (\Zp; \Sigma^{2\ell} \Zp) / p$ and check their $v_1$-torsion orders. In general, doing this is a straightforward application of \Cref{prop:grv1canphi} and \Cref{prop:hfpsstwisted}, where the first term in each sum has $\can$ vanishing, the last term has $\varphi$ vanishing, and $\varphi$ of each term is $\can$ of the next one. The $v_1$-torsion order can be read off from the maximum of the $v_1$-torsion order of each term in \Cref{prop:hfpsstwisted}. Therefore, we will just indicate where the each term of each class appears in the description of the $E_{\infty}$-page of the Nygaard spectral sequence of \Cref{prop:hfpsstwisted}.
	
	For family $(1_{n,\ell})$, the first term in the family appears in the third line of \Cref{prop:hfpsstwisted}, and the second term appears in the first line.
	
	For family $(2_{n,\ell})$, the first term in the family appears in the first line of \Cref{prop:hfpsstwisted}, and the second term appears in the second line unless $p^{n-1}\ell(p-1)=j$. In this case, the second term appears on the third line, and the third term appears on the first line if $k\geq 2$ and the second line otherwise. Note that the case $k=0$ doesn't happen because $j>0$.
	
	For family $(3_{n,\ell})$, the term appears on the fifth line of \Cref{prop:hfpsstwisted} in the first of the two summands.
	
	For family $(4_{n,\ell,r})$, the first term appears in the second part of the fifth line of \Cref{prop:hfpsstwisted} if $n>r$ and the last line if $n=r$. The second term appears in the fourth line if $n>r$ and the sixth line if $n=r$.
	
	For family $(5_{n,\ell,r})$, the first term appears in the second part of the fifth line of \Cref{prop:hfpsstwisted} if $n>r$ and the last line if $n=r$. The second term appears in the first part of the fifth line if $n>r$ and the second to last line if $n=r$. The third term appears in the third to last line if $k>1$, and in the second to last line if $k=1$ (this term appears only when $n=r$).
	
	For the claim about linear independence, we note that it is straightforward to verify that each element of each term of each family $(1_{n,\ell})\dots (5_{n,\ell,r})$ only appears once and is a basis element of the answer of \Cref{prop:hfpsstwisted} in terms of cyclic $\FF_p[v_1]$-modules, so there can be no linear relations. Note that this holds despite the fact that elements may appear to be of the same type: for example elements of the form $v_1^{1+\dots + p^{n+1}+ p^{n+1}-1}\lambda_1 \sigma\epsilon^{(p^{n+1})}t^{p^{n+1}(p-1)}$ appear to come from both family $(2_{n,\ell})$ and $(5_{n,\ell,r})$, but the former happens when $p|n+1$ and the latter when $p\nmid n+1$.
%	To see that family $(1_{n,\ell})$ and $(2_{n,\ell})$ are in the kernel of $\varphi-\can$, $\can$ vanishes on the first summand, and $\varphi$ vanishes on the last summand that is nonzero in the sum. It is then easy to see that $\varphi$ of each term in the sum is $\can$ of the next term in the sum.
%	We first explain the torsion order of the generators. The torsion order of the $(1_{n,\ell})$ comes from the third summand of \Cref{prop:hfpsstwisted}, and the torsion order for the second
\end{proof}
%It is not too hard to show that the families above determine a basis for these modules. (Need to expand on what this means.)
%Using this, we determine their isomorphism types (as $\F_p[v_1]$-modules) by noting the $v_1$-torsion levels of these elements:
%
%\begin{enumerate}
%  \item 
%    $\lambda_1 ^e (\sigma \epsilon^{(p^n \ell)} \mu^j \stackrel{\cdot}{+} \sigma \epsilon^{(p^{n+1} \ell)} t^{p^n\ell(p-1) -pj})$ is $v_1^{1+ \dots + p^n}$-torsion.
%  \item
%    $\lambda_1 ^e (\sigma \epsilon^{(p^n \ell)} \mu^j \stackrel{\cdot}{+} \sigma \epsilon^{p^{n+1} k} t^{p^n\ell(p-1) -pj} \stackrel{\cdot}{+} \delta^{j}_{p^{n-1}\ell(p-1)} \sigma \epsilon^{(p^{n+2} \ell)} t^{p^{n+1} \ell (p-1)})$ is $v_1^{1+\dots+p^n+(p^{n+1} - (p^n\ell(p-1)-pj))}$-torsion.
%  \item
%  $\sigma \epsilon^{(p^n \ell)} t^i \lambda_1 u_n ^e$ is $v_1^{p-i}$-torsion.
%  \item
%    $\sigma \epsilon^{(p^n \ell)} \mu^j \lambda_1 u_n^e \stackrel{\cdot}{+} \sigma \epsilon^{(p^{n+1} \ell)} t^{p^n\ell(p-1) -pj} \lambda_1 u_{n+1} ^e$ is $v_1^{1+ \dots + p^{r}}$-torsion.
%  \item 
%    $\sigma \epsilon^{(p^n \ell)} \mu^j \lambda_1 u_n^e \stackrel{\cdot}{+} \sigma \epsilon^{(p^{n+1} \ell)} t^{p^n\ell(p-1) -pj} \lambda_1 u_{n+1} ^e \stackrel{\cdot}{+} \delta^j_{p^{n-1}\ell(p-1)} \sigma \epsilon^{(p^{n+2} \ell)} t^{p^{n+1} k(p-1)} \lambda_1 u_{n+2} ^e$ is $v_1^{1+ \dots + p^{r} + (p^{r+1} - (p^n\ell(p-1)-pj))}$-torsion.
%\end{enumerate}

Now, we need to deal with $\gr^* (\pi_* \gr^* _{\mot} \TR^{[m]} (\Zp; \Sigma^{2\ell} \Zp) / p)$ instead of $\gr^* (\pi_* \gr^* _{\mot} \TR (\Zp; \Sigma^{2\ell} \Zp) / p)$. First, some of these families get truncated.

\begin{dfn}
We let $A_{n,\ell} ^{[m]}$, $B_{n,\ell} ^{[m]}$, $C_{n,\ell} ^{[m]}$, $D_{n,\ell,r} ^{[m]}$ and $E_{n,\ell,r} ^{[m]}$ denote the images of these in $\prod_{i=0} ^m \gr_* ^{v_1} \pi_* \gr_{\mot}^* (\THH(\Zp) \otimes \Ss^{\ell \rho_i})^{hC_{p^i}} /p$.
\end{dfn}

\begin{lem}\label{lemma:v1torsquot}
  \begin{itemize}
    \item If $m > n$, we have $A_{n,\ell} ^{[m]} = B_{n,\ell} ^{[m]} = C_{n,\ell} ^{[m]} = D_{n,\ell,r} ^{[m]} = E_{n,\ell,r} ^{[m]} = 0$.
    \item If $n \geq m$, we have $A_{n,\ell} \cong A_{n,\ell} ^{[m]}$, $C_{n,\ell} \cong C_{n,\ell} ^{[m]}$and $D_{n,\ell,r} \cong D_{n,\ell,r} ^{[m]}$.
    \item If $n > m+1$ or $n=m+1$ and either $r<n$ or $k\neq 1$, we have $B_{n,\ell} \cong B_{n,\ell} ^{[m]}$ and $E_{n,\ell,r} \cong E_{n,\ell,r} ^{[m]}$.
    \item The maps $B_{n,1} \to B_{n,1} ^{[n+1]}$ and $E_{n,1,n} \to E_{n,1,n} ^{[n+1]}$ are isomorphic to $B_{n,1} \to B_{n,1} / (v_1^{1+\dots+p^n+(p^{n+1} - (p^n (p-1)-pj))}(\lambda_1 ^e (\sigma \epsilon^{(p^n )} \mu^j \stackrel{\cdot}{+} \sigma \epsilon^{(p^{n+1} )} t^{p^n (p-1) -pj} \stackrel{\cdot}{+} \delta^{j}_{p^{n-1}(p-1)} \sigma \epsilon^{(p^{n+2})} t^{p^{n+1} (p-1)}))$ where $j=p^n(p-1)$, and 
    
    $E_{n,1,n} \to E_{n,1,n} / (v_1^{1+ \dots + p^{n} + (p^{n+1} - (p^n (p-1)-pj))}\sigma \epsilon^{(p^n)} \mu^j \lambda_1 u_n^e \stackrel{\cdot}{+} \sigma \epsilon^{(p^{n+1})} t^{p^n(p-1) -pj} \lambda_1 u_{n+1} ^e \stackrel{\cdot}{+} \delta^j_{p^{n-1}(p-1)} \sigma \epsilon^{(p^{n+2} )} t^{p^{n+1}(p-1)} \lambda_1 u_{n+2} ^e)$.
    \item The maps $B_{n,\ell} \to B_{n,\ell} ^{[n]}$ and $E_{n,\ell,r} \to E_{n,\ell,r} ^{[n]}$ are isomorphic to $B_{n,\ell} \to B_{n,\ell} / (v_1 ^{1 + \dots + p^n})$ and $E_{n,\ell,r} \to E_{n,\ell,r} / (v_1 ^{p + \dots + p^r})$, repsectively.
  \end{itemize}
\end{lem}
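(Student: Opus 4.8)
The plan is to reduce the whole lemma to bookkeeping with the explicit generators. By \Cref{lemma:trfamilies}, each of $A_{n,\ell},B_{n,\ell},C_{n,\ell},D_{n,\ell,r},E_{n,\ell,r}$ is a direct sum of cyclic $\F_p[v_1]$-modules, one for each element of the corresponding family in \Cref{dfn:tr-family-generators}, and each such element is an $\F_p^\times$-combination of terms $g_i$, where $g_i$ is supported on the factor $\gr^*_{v_1}\pi_*\gr_{\mot}(\THH(\Zp)\otimes\Ss^{\ell\rho_i})^{hC_{p^i}}/p$ of the ambient product and, up to a scalar, is one of the $\F_p[v_1]$-module generators enumerated in \Cref{prop:hfpsstwisted}. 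The projection onto $\prod_{i=0}^m$ is $\F_p[v_1]$-linear and simply discards the factors with $i>m$, sending $g=\sum_i g_i$ to $\sum_{i\le m}g_i$; this image is again a sum of distinct \Cref{prop:hfpsstwisted}-generators lying in distinct factors, so its $v_1$-torsion order is $\max\{\ord_{v_1}(g_i):i\le m\}$, the empty maximum being $0$, and the images (one per generator) still have distinct lowest-level leading terms and hence remain $\F_p[v_1]$-linearly independent. So the first step is the general principle: $A_{n,\ell}^{[m]}$, and likewise $B_{n,\ell}^{[m]},C_{n,\ell}^{[m]},D_{n,\ell,r}^{[m]},E_{n,\ell,r}^{[m]}$, is once more a direct sum of cyclic $\F_p[v_1]$-modules on the same generators, with each torsion order truncated to the surviving maximum; equivalently it is the quotient of the untruncated family that kills, generator by generator, the $v_1$-powers above that surviving maximum.

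It then remains to locate, for each family, the levels $i$ supporting a generator's terms and to identify which term carries the largest torsion order. First I would record the supports: a generator of $(3_{n,\ell})$ has its single term at level $n$; a generator of $(1_{n,\ell})$ or $(4_{n,\ell,r})$ has terms at levels $n$ and $n+1$; a generator of $(2_{n,\ell})$ or $(5_{n,\ell,r})$ has terms at levels $n$ and $n+1$, plus one at level $n+2$ precisely when the Kronecker delta $\delta^j_{p^{n-1}\ell(p-1)}$ is nonzero. Thus if $m<n$ every generator loses all of its terms, so all five truncated families vanish. Next, using the identification in the proof of \Cref{lemma:trfamilies} of where each term sits in \Cref{prop:hfpsstwisted}, I would check that for $(1_{n,\ell})$, $(3_{n,\ell})$, $(4_{n,\ell,r})$ the level-$n$ term already carries the full torsion order of its generator (for $(1_{n,\ell})$ it is $\lambda_1^e\sigma\epsilon^{(p^n\ell)}\mu^j$, of order $v_1^{1+\dots+p^n}$; for $(4_{n,\ell,r})$ it is $\sigma\epsilon^{(p^n\ell)}\mu^j\lambda_1 u_n^e$, of order $v_1^{p+\dots+p^r}$; for $(3_{n,\ell})$ there is nothing to check). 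Hence once $m\ge n$ no torsion order drops and the projection is injective on $A_{n,\ell},C_{n,\ell},D_{n,\ell,r}$, which are therefore unchanged.

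The delicate case, and where I expect the real work to lie, is $B_{n,\ell}$ and $E_{n,\ell,r}$. Generically the largest torsion order of a generator of $(2_{n,\ell})$ or $(5_{n,\ell,r})$ is carried by the level-$(n+1)$ term — the second line of \Cref{prop:hfpsstwisted}, giving $v_1^{1+\dots+p^n+(p^{n+1}-(p^n\ell(p-1)-pj))}$ and analogously for $(5_{n,\ell,r})$ — so truncating at any $m\ge n+1$ is harmless, with one exception: the special generators with $j=p^{n-1}\ell(p-1)$ and $\ell=1$ (and, in family $(5)$, additionally $r=n$), whose torsion order $v_1^{1+\dots+p^{n+1}+p^{n+1}}$ is carried by the level-$(n+2)$ term. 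The crux is to decide exactly when that level-$(n+2)$ term dominates: its $t$-exponent is $p^{n+1}\ell(p-1)$, which lies in the open interval $(0,p^{n+2})$ — and so, via the second line of \Cref{prop:hfpsstwisted}, contributes the extra $p^{n+1}$ torsion — precisely when $\ell=1$, and is $\ge p^{n+2}$ — contributing nothing beyond $v_1^{1+\dots+p^{n+1}}$ via the first line — when $\ell\ge2$. This dichotomy is the source of the side conditions $k\ne1$ and $r<n$. Granting it, truncating at $m=n+1$ changes nothing unless $\ell=1$ (and $r=n$ for $(5)$), in which case only that one generator's torsion order drops from $v_1^{1+\dots+p^{n+1}+p^{n+1}}$ to $v_1^{1+\dots+p^{n+1}}$, which is exactly the quotient asserted; and truncating at $m=n$ leaves only the level-$n$ terms, of order $v_1^{1+\dots+p^n}$ for $B$ and $v_1^{p+\dots+p^r}$ for $E$, yielding $B_{n,\ell}/(v_1^{1+\dots+p^n})$ and $E_{n,\ell,r}/(v_1^{p+\dots+p^r})$. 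All of the remaining difficulty is routine case-checking against \Cref{prop:hfpsstwisted} and \Cref{prop:grv1canphi}, in particular tracking the $\delta$-generators through $\varphi$; the conceptual content is entirely in the reduction of the first paragraph.
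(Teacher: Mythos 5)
Your argument is correct and is exactly the paper's proof, spelled out: the paper's own justification is the one-line observation that each generator of \Cref{dfn:tr-family-generators} is a sum of terms supported at explicit levels $i$ of the product, so the truncation to $\prod_{i=0}^m$ discards the terms with $i>m$ and the resulting $v_1$-torsion order is the maximum of the orders of the surviving terms, read off from \Cref{lemma:trfamilies} and \Cref{prop:hfpsstwisted}. Your reduction in the first paragraph, the tabulation of supports (level $n$ for $(3)$; levels $n,n+1$ for $(1),(4)$; levels $n,n+1$ and conditionally $n+2$ for $(2),(5)$), and the identification of the $\ell=1$ dichotomy via $p^{n+1}\ell(p-1)<p^{n+2}\iff\ell=1$ are precisely the intended bookkeeping. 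One caveat: the conclusions you reach ($A_{n,\ell}^{[m]}=0$ when $m<n$, and $A_{n,\ell}\cong A_{n,\ell}^{[m]}$ when $m\geq n$, etc.) have the inequalities reversed relative to the printed statement; but the printed statement is inconsistent with the definition of the truncated families as images in $\prod_{i=0}^m$ and with their use in the proof of \Cref{thm:computation} (where the kernel of $A_{m,\ell}^{[m]}\to A_{m,\ell}^{[m-1]}$ is nontrivial and the summands with $n\le m$ are nonzero), so the roles of $m$ and $n$ are evidently transposed there and your version is the correct one. The same remark applies to the condition ``$k\neq 1$,'' which should read $\ell\neq 1$, as you have it.
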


\begin{proof}
	This is a straightforward consequence of \Cref{lemma:trfamilies}, by looking at the $v_1$-torsion orders of the elements in the sums making up the families $(1_{n,\ell})\dots (5_{n,\ell,r})$, and seeing for which $i$ they are detected in $\gr_* ^{v_1} \pi_* \gr_{\mot}^* (\THH(\Zp) \otimes \Ss^{\ell \rho_i})^{hC_{p^i}} /p$.
\end{proof}

There are also two additional families in $\gr^* (\pi_* \gr^* _{\mot} \TR^{[n]} (\Zp; \Sigma^{2\ell} \Zp) / p)$:

\begin{dfn}
\begin{align}
  &\lambda_1 ^e \sigma \epsilon^{(p^n \ell)} \mu^j \tag{$6_{n,\ell}$} \\
  &\text{where }
  e \in \{0,1\}, j > p^{n-1} \ell (p-1), j \equiv n \ell p^{n-1} \mod p^n \notag
\end{align}

\begin{align}
  &\sigma \epsilon^{(p^n \ell)} \mu^j \lambda_1 u_n^e \tag{$7_{n,\ell,r}$} \\
  &\text{where }
  e \in \{0,1\}, j > p^{n-1} \ell (p-1), v_p (j-\ell n p^{n-1}) = r-1 \notag
\end{align}
\end{dfn}

\begin{dfn}
We let $F_{n,\ell} ^{[n]}$ and $G_{n,\ell,r} ^{[n]}$ denote the $\F_p [v_1]$-modules generated by $(6_{n,\ell})$ and $(7_{n,\ell,r})$ in $\prod_{i=0} ^m \gr_* ^{v_1} \pi_* \gr_{\mot}^* (\THH(\Zp) \otimes \Ss^{\ell \rho_i})^{hC_{p^i}} /p$.
\end{dfn}

\begin{lem}\label{lemma:FG}
  $F_{n,\ell}^{[n]}$ and $G_{n,\ell,r}^{[n]}$ lie in $\gr^{*} (\pi_*\gr_{\mot} ^* \TR^{[n]} (\Zp; \Sigma^{2\ell} \Zp) / p)$, and their elements are linearly independent in the sense that $A_{n,\ell}^{[n]}+ B_{n,\ell}^{[n]}+ C_{n,\ell}^{[n]}+ D_{n,\ell,r}^{[n]}+ E_{n,\ell,r}^{[n]}+ F_{n,\ell}^{[n]} + G_{n,\ell,r}^{[n]}$ is the direct sum of the cyclic $\FF_p[v_1]$-submodules generated by each element of $(1_{n,\ell}), (2_{n,\ell}), (3_{n,\ell}), (4_{n,\ell,r}), (5_{n,\ell}), (6_{n,\ell})$ and $(7_{n,\ell,r})$. Moreover the $v_1$-torsion order of the classes are listed in \Cref{lemma:v1torsquot} and below:
	\begin{enumerate}
		\item[$(6_{n,\ell})$] $\lambda_1 ^e \sigma \epsilon^{(p^n \ell)} \mu^j$ is $v_1 ^{1+ \dots + p^n}$-torsion
		\item[$(7_{n,\ell,r})$] $\sigma \epsilon^{(p^n \ell)} \mu^j \lambda_1 u_n^e$ is $v_1 ^{p + \dots + p^r}$-torsion.
	\end{enumerate}
\end{lem}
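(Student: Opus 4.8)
The plan is to argue exactly as in the proof of \Cref{lemma:trfamilies}, using the formulas of \Cref{prop:grv1canphi} for $\can$ and $\varphi$ on $v_1$-associated graded together with the explicit $E_\infty$-pages of \Cref{prop:hfpsstwisted}. First I would note that, unlike the families $(1_{n,\ell})$--$(5_{n,\ell,r})$, the classes $(6_{n,\ell})$ and $(7_{n,\ell,r})$ are \emph{single} terms, concentrated in the top factor $i=n$ of $\bigoplus_{i=0}^{n}\gr^*_{v_1}\pi_*\gr_{\mot}^*(\THH(\Zp)\otimes\Ss^{\ell\rho_i})^{hC_{p^i}}/p$, and that every such class carries a strictly positive power of $\mu$ (since $j>p^{n-1}\ell(p-1)\ge 0$). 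By \Cref{prop:grv1canphi} the map $\can$ kills every class with a positive $\mu$-power, while in the truncated complex $\prod_{i=0}^{n}(\cdots)^{hC_{p^i}}\to\prod_{i=1}^{n}(\cdots)^{tC_{p^i}}$ defining $\TR^{[n]}$ the Frobenius out of the $i=n$ factor would land in the $C_{p^{n+1}}$-Tate construction, which is simply not present in the target. Hence $\varphi-\can$ annihilates $(6_{n,\ell})$ and $(7_{n,\ell,r})$ in this truncated complex. Combined with the surjectivity of \Cref{lemma:surjphican} and the definition of $\Fil^*\pi_*\gr_{\mot}^*\TR^{[n]}$, a standard filtration-degreewise snake-lemma argument identifies $\gr^*(\pi_*\gr_{\mot}^*\TR^{[n]}(\Zp;\Sigma^{2\ell}\Zp)/p)$ with $\ker(\gr^*_{v_1}(\varphi-\can))$, so that $(6_{n,\ell})$ and $(7_{n,\ell,r})$ define classes therein; that is, $F_{n,\ell}^{[n]}$ and $G_{n,\ell,r}^{[n]}$ land in $\gr^*(\pi_*\gr_{\mot}^*\TR^{[n]}(\Zp;\Sigma^{2\ell}\Zp)/p)$.

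Next I would read off the $v_1$-torsion orders by locating the classes inside the $E_\infty$-page of \Cref{prop:hfpsstwisted}: the class $\lambda_1^e\sigma\epsilon^{(p^n\ell)}\mu^j$ sits in the $E(\lambda_1)$-multiple of the $\F_p[v_1]/(v_1^{1+\cdots+p^n})\{\mu^j\}$-summand, so is $v_1^{1+\cdots+p^n}$-torsion; and the class $\sigma\epsilon^{(p^n\ell)}\mu^j\lambda_1 u_n^e$ sits, when $r=n$, in the summand $\F_p[v_1]/(v_1^{p+\cdots+p^{n}})\{\mu^j\lambda_1\}$, and, when $r<n$, in the summand $\F_p[v_1]/(v_1^{p+\cdots+p^{r}})\{\mu^j\lambda_1\}$ coming from the $k=r-1$ block of the $E(u_n)$-part, so is $v_1^{p+\cdots+p^{r}}$-torsion in either case. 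This gives the torsion orders claimed for $(6_{n,\ell})$ and $(7_{n,\ell,r})$; those for $A^{[n]}$--$E^{[n]}$ were already recorded in \Cref{lemma:trfamilies} and \Cref{lemma:v1torsquot}.

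For the linear-independence assertion I would, as in \Cref{lemma:trfamilies}, exhibit a bijection between the elements occurring in the seven families and the displayed $\FF_p[v_1]$-module generators of $\bigoplus_{i=0}^{n}\gr^*_{v_1}\pi_*\gr_{\mot}^*(\THH(\Zp)\otimes\Ss^{\ell\rho_i})^{hC_{p^i}}/p$. Concretely: in the top factor $i=n$, the generators without $u_n$ of the form $\mu^j\lambda_1^e$ are partitioned among the first terms of $(1_{n,\ell})$, of $(2_{n,\ell})$, and of $(6_{n,\ell})$ according as $0<j\le p^{n-1}\ell(p-1)-p^n$, $p^{n-1}\ell(p-1)-p^n<j\le p^{n-1}\ell(p-1)$, or $j>p^{n-1}\ell(p-1)$; the generators $\mu^j\lambda_1 u_n^e$ are partitioned (over $r$, by the condition $v_p(j-\ell n p^{n-1})=r-1$ and the same three ranges of $j$) among the first terms of $(4_{n,\ell,r})$, of $(5_{n,\ell,r})$, and of $(7_{n,\ell,r})$; the generators $t^i\lambda_1 u_n^e$ with $0<i<p$ are accounted for by $(3_{n,\ell})$; and all remaining generators — those of the form $t^i$ or $t^i\lambda_1 u_n$ with $i\neq 0$, in the top factor and in the lower factors $i<n$ — are accounted for by the leading and non-leading terms of $A^{[n]}$--$E^{[n]}$ exactly as in \Cref{lemma:trfamilies}. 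Since each generator is hit exactly once, the sum of the cyclic submodules generated by the family elements is direct, which is the desired statement; and since $F_{n,\ell}^{[n]}$ and $G_{n,\ell,r}^{[n]}$ are precisely the submodules they generate, they are direct sums of the corresponding cyclic $\FF_p[v_1]$-modules with the stated torsion orders.

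I expect the only real difficulty to be the combinatorial bookkeeping of the last paragraph: checking that the numerical conditions defining the seven families tile the summands of \Cref{prop:hfpsstwisted} without overlap, paying attention to the edge cases $r=n$ versus $r<n$, small $\ell$ (for which some of the $j$-ranges are empty), and $p\mid n$ (where \Cref{lemma:1d} produced an extra class in degree $2\ell p^n$, which affects only the choice of lift of $\sigma\epsilon^{(p^n\ell)}$ and not the counting). There is no conceptual obstacle beyond what was already handled for \Cref{lemma:trfamilies}; in fact $(6_{n,\ell})$ and $(7_{n,\ell,r})$ are easier than $(1_{n,\ell})$--$(5_{n,\ell,r})$ precisely because the would-be $\varphi$-continuation of the family is truncated away at level $n$, leaving a single term in the kernel.
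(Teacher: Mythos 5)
Your proposal is correct and follows essentially the same route as the paper's proof: membership in $\TR^{[n]}$ because the classes carry a positive power of $\mu$ (so $\can$ kills them) and the Frobenius out of the top level $i=n$ has no target in the truncated complex; torsion orders read off from the $\{\mu^j\}$- and $\{\mu^j\lambda_1\}$-summands of \Cref{prop:hfpsstwisted}; and directness of the sum because no basis generator of the $E_\infty$-page is repeated among the family elements, exactly as in \Cref{lemma:trfamilies}. If anything, your location of $(7_{n,\ell,r})$ in the last line of \Cref{prop:hfpsstwisted} when $r=n$ versus the $k=r-1$ block when $r<n$ is slightly more careful than the paper's one-line citation.
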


\begin{proof}
	To see that the classes in $(6_{n,\ell})$ and $(7_{n,\ell,r})$ are in $\TR^{[n]}$, we just need to see they are in the kernel of the map $\can$, which is true because they are a multiple of $\mu$. The $v_1$-torsion orders can be read off from the fact that $(6_{n,\ell})$ appears in the family of \Cref{prop:hfpsstwisted} on the third line and $(7_{n,\ell,r})$ appears on the second part of the fifth line. Linear independence of the classes follows as in \Cref{lemma:trfamilies}, because basis elements are not repeated among elements.
\end{proof}

Our main theorem computes $\gr^{*} (\pi_*\gr_{\mot} ^* \TR^{[m]} (\Zp; \Sigma^{2\ell} \Zp) / p)$ and $\gr^{*} (\pi_*\gr_{\mot} ^* \TR (\Zp; \Sigma^{2\ell} \Zp) / p)$ as $\F_p [v_1]$-modules.

\begin{thm} \label{thm:computation}
  There are isomorphisms:
  \begin{align*}
    \gr^* \pi_*\gr_{\mot}^* \TR^{[m]} (\Zp; \Sigma^{2\ell} \Zp)/p \cong \bigoplus_{n=0} ^m \left( A_{n,\ell} ^{[m]} \oplus B_{n,\ell} ^{[m]} \oplus C_{n,\ell} ^{[m]} \oplus \bigoplus_{r=1} ^n \left(D_{n,\ell,r} ^{[m]} \oplus E_{n,\ell,r} ^{[m]} \right) \right) \oplus F_{m,\ell} ^{[m]} \oplus \bigoplus_{r=1} ^m \left( G_{m,\ell,r} ^{[m]} \right)
  \end{align*}
  for all $m \geq 0$ and
  \begin{align*}
    \gr^* \pi_*\gr_{\mot}^* \TR (\Zp; \Sigma^{2\ell} \Zp)/p \cong \bigoplus_{n=0} ^\infty \left( A_{n,\ell} \oplus B_{n,\ell} \oplus C_{n,\ell} \oplus \bigoplus_{r=1} ^n \left(D_{n,\ell,r} \oplus E_{n,\ell,r} \right) \right)
  \end{align*}
\end{thm}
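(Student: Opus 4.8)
The plan is to convert the theorem into a purely combinatorial analysis of $\ker(\varphi-\can)$ on the level of the $v_1$-adic associated gradeds of the homotopy fixed point and Tate terms, for which \Cref{prop:hfpsstwisted} and \Cref{prop:twistedtatess} already provide explicit $\F_p[v_1]$-module descriptions with distinguished monomial generators and \Cref{prop:grv1canphi} records the effect of $\can$ and $\varphi$.

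First I would reduce to the $v_1$-adic associated graded. By \Cref{lemma:surjphican} the map $\varphi-\can$ is surjective on $v_1$-adic associated gradeds; since in each homotopy degree only finitely many levels $i$ contribute (automatic for $\TR^{[m]}$, and for $\TR$ because a chain supported near level $n$ lives in degrees bounded below in terms of $n$), the relevant $v_1$-adic filtrations are degreewise finite, hence complete and exhaustive, so $\varphi-\can$ is a strict surjection and $\gr^*_{v_1}\ker(\varphi-\can)=\ker(\gr^*_{v_1}(\varphi-\can))$. Applying $\gr_{\mot}$, $(-)/p$ and $\pi_*$ to the fiber sequence defining $\TR^{[m]}$ (resp.\ $\TR$) and using this surjectivity yields a short exact sequence exhibiting $\pi_*\gr_{\mot}^*\TR^{[m]}/p$ as $\ker(\varphi-\can)$ inside $\prod_{i=0}^{m}\pi_*\gr_{\mot}^*(\THH(\Zp)\otimes\Ss^{\ell\rho_i})^{hC_{p^i}}/p$ equipped with its induced $v_1$-adic filtration, which is by construction the filtration defining $\gr^*\pi_*\gr_{\mot}^*\TR^{[m]}/p$. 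So it suffices to compute $\ker(\gr^*_{v_1}(\varphi-\can))$ as an $\F_p[v_1]$-module.

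The combinatorial heart is as follows. By \Cref{prop:grv1canphi}, in the monomial bases of \Cref{prop:hfpsstwisted} and \Cref{prop:twistedtatess}, $\can$ sends a $t$-monomial (including $t^0$) to the same monomial in the Tate term and kills positive powers of $\mu$, while $\varphi$ kills positive powers of $t$ and sends a $\mu^j$-monomial at level $n$ to a unit multiple of the $t^{p^n\ell(p-1)-pj}$-monomial at level $n+1$; this is compatible with the $\lambda_1$- and $u$-decorations (with $u_n\mapsto u_{n+1}$). Hence the source monomials (over all levels $\le m$) break into \emph{chains}, the connected components of the graph joining each $b$ to $\can(b)$ and $\varphi(b)$, and $\ker(\gr^*_{v_1}(\varphi-\can))$ is the direct sum over chains of the intersection of the kernel with each chain's span; this intersection is either zero or a single cyclic $\F_p[v_1]$-module generated by a $\dot{+}$-combination of the chain's monomials (the scalars forced up to $\F_p^\times$ by the kernel condition), and it is nonzero precisely when the first monomial of the chain has vanishing $\can$ (a positive power of $\mu$, or a level-$0$ class) and the last has vanishing $\varphi$ (a positive power of $t$, or a level-$m$ class). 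I would then enumerate the chains: classifying source monomials at level $n$ by type ($t$ versus $\mu$) and by the congruence and $p$-valuation data of \Cref{prop:hfpsstwisted}, and comparing $p^n\ell(p-1)-pj$ to the thresholds $p^{n-1}\ell(p-1)-p^n$ and $p^{n-1}\ell(p-1)$, one checks that the chains terminating at a positive $t$-power reproduce exactly the generators of $A_{n,\ell},B_{n,\ell},C_{n,\ell},D_{n,\ell,r},E_{n,\ell,r}$ of \Cref{dfn:tr-family-generators}--\Cref{dfn:trfamilies} (the three-term families with $\delta$-terms arising precisely when a chain passes through a $t^0$-monomial and continues), whereas a chain beginning at a $\mu^j$ with $j>p^{n-1}\ell(p-1)$ has $\varphi$-image a negative power of $t$ that no $\can$ can cancel, so it contributes nothing unless $n=m$ is the top truncation level, in which case it survives as $F_{m,\ell}^{[m]}$, resp.\ $G_{m,\ell,r}^{[m]}$. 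Since every source monomial lies in exactly one chain, this identifies $\ker(\gr^*_{v_1}(\varphi-\can))$ with the sum of all listed families, and combining with the linear independence and the $v_1$-torsion orders recorded in \Cref{lemma:trfamilies}, \Cref{lemma:FG} and \Cref{lemma:v1torsquot} gives the direct sum decomposition of the theorem. For $\TR$ one runs the identical argument with no top level: the $F$ and $G$ families disappear, the remaining families are the untruncated ones, and the infinite direct sum is legitimate by the degreewise finiteness noted above.

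I expect the main obstacle to be the bookkeeping in the last step: verifying that the chain through each monomial terminates exactly where the family definitions predict, that all the congruence and valuation side conditions ($j\equiv n\ell p^{n-1}\bmod p^n$, $v_p(j-\ell n p^{n-1})=r-1$, and so on) line up level by level, and --- most delicately --- that the boundary chains, for which $p^n\ell(p-1)-pj$ lands in the window $[0,p^{n+1})$, are correctly matched with the three-term families and their shifted $v_1$-torsion orders, and that the top-level behavior distinguishing $\TR^{[m]}$ from $\TR$ is handled without over- or under-counting.
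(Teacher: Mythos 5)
Your proposal is correct in substance but organizes the argument differently from the paper. Both proofs reduce, via \Cref{lemma:surjphican} and finiteness of the $v_1$-adic filtration, to computing $\ker(\gr^*_{v_1}(\varphi-\can))$ using the monomial bases of \Cref{prop:hfpsstwisted} and \Cref{prop:twistedtatess} and the formulas of \Cref{prop:grv1canphi}. From there the paper inducts on $m$: it writes $\gr^*_{\mot}\TR^{[m]}$ as a pullback of $\gr^*_{\mot}\TR^{[m-1]}$ and the level-$m$ homotopy fixed points over the level-$m$ Tate construction, checks that $F^{[m-1]}_{m-1,\ell}\oplus\bigoplus_r G^{[m-1]}_{m-1,\ell,r}$ hits exactly the negative $t$-powers in the Tate target, and then matches the kernel of $\can$ at level $m$ with the kernel of the truncation map between the level-$m$ and level-$(m-1)$ family sums (\Cref{lemma:v1torsquot}); the $\TR$ statement is then a $\varprojlim$ with $\varprojlim^1$-vanishing. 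You instead do a single global "zig-zag" decomposition of all source monomials into chains under $\can$ and $\varphi$, with each contributing family arising as a chain type and the three-term $\delta$-families arising from chains passing through $t^0$. Your route makes the shape of the families more transparent and handles $\TR$ directly without a limit argument, at the cost of concentrating all the congruence/valuation bookkeeping into one global case analysis rather than one level at a time. One caveat: your stated criterion for when a chain's first monomial has vanishing $\can$ (``a positive power of $\mu$, or a level-$0$ class'') does not literally cover family $(3_{n,\ell})$, whose generators $\sigma\epsilon^{(p^n\ell)}t^i\lambda_1 u_n^e$ with $0<i<p$ have $\can=0$ only because the corresponding class is absent from the Tate $E_\infty$-page of \Cref{prop:twistedtatess}; your general framework (intersect the kernel with each chain's span) accommodates this, but the enumeration step must use vanishing in the target, not just the sign of the exponent.
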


Before we prove this theorem, we use it to show that we can get rid of the associated graded.
%When combined with the following proposition, this determines $\pi_* \TR (\Zp; \Sigma^{2\ell} \Zp)/p$ as a graded $\F_p [v_1]$-module.

\begin{prop}
  There is a (noncanonical) isomorphism of graded $\F_p [v_1]$-modules
  \[\pi_* \gr_{\mot}^* \TR (\Zp; \Sigma^{2\ell} \Zp)/p \cong \gr^* \pi_*\gr_{\mot}^* \TR (\Zp; \Sigma^{2\ell} \Zp)/p.\]
\end{prop}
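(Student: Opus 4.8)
The plan is to show that the finite decreasing filtration $\Fil^\bullet$ on $M \coloneqq \pi_*\gr_{\mot}^* \TR(\Zp;\Sigma^{2\ell}\Zp)/p$ constructed in \Cref{cnstr} splits $\F_p[v_1]$-linearly, using \Cref{thm:computation} to pin down $\gr^\bullet M$. First I would record that the filtration is finite and separated: each $\gr_{\mot}^*(\THH(\Zp)\otimes\Ss^{\ell\rho_i})^{hC_{p^i}}/p$ has bounded $v_1$-torsion by \Cref{prop:hfpsstwisted}, so the $v_1$-adic filtrations on the source and target of $\varphi-\can$ are finite, hence so is $\Fil^\bullet M = \ker(\Fil^\bullet_{v_1}(\varphi-\can)) = M\cap \Fil^\bullet_{v_1}(\prod_i \pi_*\gr_{\mot}^*(\THH(\Zp)\otimes\Ss^{\ell\rho_i})^{hC_{p^i}}/p)$. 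In particular $\Fil^\bullet M$ is exhaustive, and the degreewise dimensions of $M$ and of $\gr^\bullet M$ agree.

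The heart of the argument is to promote each cyclic generator of $\gr^\bullet M$ to an honest element of $M$. By \Cref{thm:computation}, $\gr^\bullet M$ is a direct sum of cyclic modules $\F_p[v_1]/v_1^{k_\alpha}\cdot\bar g_\alpha$ with $\bar g_\alpha$ running over the families $(1_{n,\ell}),\dots,(5_{n,\ell,r})$, and by \Cref{lemma:trfamilies} the $v_1$-torsion order of $\bar g_\alpha$ is exactly $v_1^{k_\alpha}$. Each $\bar g_\alpha$ is a $\dot+$-sum of the named classes $\sigma\epsilon^{(\cdots)}\mu^j$ and $\sigma\epsilon^{(\cdots)}t^i$ living in finitely many of the factors $\pi_*\gr_{\mot}^*(\THH(\Zp)\otimes\Ss^{\ell\rho_i})^{hC_{p^i}}/p$; using that those groups have no $v_1$-extensions (\Cref{prop:hfpsstwisted}) I would choose honest representatives of each summand and form the corresponding honest $\dot+$-combination $x_\alpha$. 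By \Cref{prop:grv1canphi}, $(\varphi-\can)(x_\alpha)$ has image $0$ in the $v_1$-adic associated graded, so it lies in strictly deeper $v_1$-filtration; applying the surjectivity of $\varphi-\can$ on $v_1$-adic filtered pieces (\Cref{lemma:surjphican}), one corrects $x_\alpha$ by an element of suitably deep filtration, and since the $v_1$-adic filtration is finite this terminates in a genuine element $g_\alpha\in\ker(\varphi-\can)=M$ lifting $\bar g_\alpha$. The one-dimensionality input of \Cref{lemma:1d} in weights $\lambda_1$- and $u_n$-degree $\pm1$ (and near-one-dimensionality in degree $0$) leaves no room for either $(\varphi-\can)(g_\alpha)$ or $v_1^{k_\alpha}g_\alpha$ to be a nonzero class in deeper filtration, so in fact $g_\alpha\in M$ and $v_1^{k_\alpha}g_\alpha=0$, i.e. $g_\alpha$ has the same $v_1$-torsion order as $\bar g_\alpha$.

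Assembling the $g_\alpha$ gives an $\F_p[v_1]$-module map $\Psi\colon \bigoplus_\alpha \F_p[v_1]/v_1^{k_\alpha}\to M$ (well-defined because $v_1^{k_\alpha}g_\alpha=0$). Filtering the source by assigning $v_1^j\bar e_\alpha$ the filtration degree $i_\alpha+j$, where $i_\alpha$ is the filtration degree of $\bar g_\alpha$ in $\gr^\bullet M$, makes $\Psi$ a filtered map whose associated graded is, after composing with the identification of \Cref{thm:computation}, the evident isomorphism $\bigoplus_\alpha\F_p[v_1]/v_1^{k_\alpha}\xrightarrow{\ \sim\ }\bigoplus(A_{n,\ell}\oplus B_{n,\ell}\oplus C_{n,\ell}\oplus\bigoplus_r(D_{n,\ell,r}\oplus E_{n,\ell,r}))$ (here the torsion-order matching is what prevents any summand from collapsing). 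Since $\Fil^\bullet M$ is finite, exhaustive and separated and $\gr(\Psi)$ is an isomorphism, a standard induction on filtration degree shows $\Psi$ is an isomorphism, giving the asserted noncanonical identification $M\cong\gr^\bullet M$. The same argument, with the families of \Cref{lemma:trfamilies} supplemented by those of \Cref{lemma:FG}, handles $\TR^{[m]}$ in place of $\TR$.

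The main obstacle is the lifting step: showing that the "telescoping" kernel classes of \Cref{dfn:tr-family-generators}, which are defined only on the $v_1$-adic associated graded, lift to honest elements of $\ker(\varphi-\can)$ without increasing their $v_1$-torsion order — equivalently, that the up-to-$v_1$-adic-filtration formulas of \Cref{prop:grv1canphi} already determine $\can$ and $\varphi$ on the nose for the purposes of this kernel. This is precisely the "happy coincidence" flagged in the introduction, and it is exactly what \Cref{lemma:1d} and \Cref{lemma:trfamilies} are engineered to supply; once they are in hand, the passage from $\gr^\bullet M$ to $M$ is the formal filtered-splitting argument above.
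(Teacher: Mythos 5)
Your overall strategy is genuinely different from the paper's. You lift the cyclic generators of $\gr^\bullet M$ (where $M = \pi_*\gr_{\mot}^*\TR(\Zp;\Sigma^{2\ell}\Zp)/p$) one at a time to honest elements of $\ker(\varphi-\can)$ and assemble a splitting map, whereas the paper proves the single identity $\Fil^i = \Fil^i_{v_1}$ and then reads off the isomorphism from the structure theory of graded $\F_p[v_1]$-modules. The paper's mechanism is the observation that $v_1 : \gr^* \to \gr^{*+1}$ is surjective (immediate from \Cref{thm:computation}, since every generator sits in $v_1$-adic filtration $0$), which together with finiteness of the filtration forces $v_1 : \Fil^* \to \Fil^{*+1}$ to be surjective, hence $\Fil^i = v_1^i M = \Fil^i_{v_1}$ by induction on $i$; the identification $M \cong \gr_{v_1}M = \gr M$ is then automatic and no hidden-extension analysis is needed.

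The gap in your argument is the assertion that $v_1^{k_\alpha}g_\alpha = 0$ on the nose, i.e.\ that there are no hidden $v_1$-extensions from $\gr^\bullet M$ to $M$. This is the entire content of the proposition, and \Cref{lemma:1d} does not supply it: that lemma constrains the individual factors $\pi_*\gr_{\mot}^*(\THH(\Zp)\otimes\Ss^{\ell\rho_i})^{hC_{p^i}}/p$, not the kernel $M$. Concretely, your lift $g_\alpha$ is the chosen representative $x_\alpha$ minus correction terms produced via \Cref{lemma:surjphican}; those corrections lie in $v_1$-adic filtration $\geq 1$ of the product and have components of the form $v_1^m z$ for generators $z$ of the individual factors in lower topological degree, and nothing in \Cref{lemma:1d} or \Cref{lemma:trfamilies} (which computes torsion orders only in the associated graded) prevents $v_1^{k_\alpha+m}z \neq 0$. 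Equivalently, a priori $v_1^{k_\alpha}g_\alpha$ could be a nonzero element detected by $v_1^{j}\bar g_\beta$ for some other generator $\bar g_\beta$ in lower degree with $k_\beta > j \geq k_\alpha+1$; ruling this out directly would require a degree-by-degree comparison across the families $(1_{n,\ell})$--$(5_{n,\ell,r})$ that you do not carry out. The repair is exactly the paper's surjectivity observation: once one knows $\Fil^{k_\alpha+1} = v_1^{k_\alpha+1}M$, one writes $v_1^{k_\alpha}g_\alpha = v_1^{k_\alpha+1}y$ and replaces $g_\alpha$ by $g_\alpha - v_1 y$. With that input your splitting argument goes through, but as written the crucial step is unsupported.
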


\begin{proof}
  In fact, we show that $\Fil^i \pi_* \TR (\Zp; \Sigma^{2\ell} \Zp)/p = \Fil^i _{v_1} \pi_* \TR (\Zp; \Sigma^{2\ell} \Zp)/p$, from which the claim follows immediately.
  In the remainder of this proof, we will write $\Fil^i$ and $\Fil^i _{v_1}$ for these graded modules.

  By definition, $\Fil^i _{v_1} \subseteq \Fil^i$.
  %We need to show that $\Fil^i \subseteq \Fil^i_{v_1}$.
  %In other words, that if $v_1x$ in $\gr_{v_1} ^* \prod_{l=0} ^\infty \pi_* \gr_{\mot}(\THH(\Zp) \otimes \Ss^{\ell \rho_l})^{hC_{p^l}}/p$ is in the kernel of $\varphi-\can$, then there is a $y$ such that $v_1y=0$, and $x-y$ is in the kernel of $\varphi-\can$. Write $x$ as a sum of basis elements. Using  \Cref{prop:grv1canphi}, we see that with the exception of the classes in \Cref{prop:hfpsstwisted} in the second line, the first part of the fifth line, and the seventh line, all basis elements for $\varphi$ and $\can$ are nonzero, $\varphi$ and $\can$ preserve the $v_1$-torsion order of the basis element (and linear independence in the sense of \Cref{lemma:trfamilies}). For the exceptional family of classes, we note that they appear as $v_1$ multiples of elements in families $(2_{n,\ell})$ and $(5_{n,\ell,r})$ respectively, so that by dividing the appropriate basis element by $v_1$ for each such term, and adding on the other term when these basis elements appearing, we obtain the desired $y$.\todo{this line is subtle, make sure I didn't mess up.}
  It therefore suffices to show that $\Fil^i \subset \Fil^i _{v_1}$.
  Inspecting the description of  of $\gr^* \pi_*\gr_{\mot}^* \TR (\Zp; \Sigma^{2\ell} \Zp)/p$ given in \Cref{thm:computation}, we see that $v_1 : \gr^* \to \gr^{*+1}$ is surjective.
  Since $\Fil^*$ is a finite filtration in each degree\footnote{To prove this, we note that $\pi_* \gr_{\mot}^* (\THH(\Zp) \otimes \Ss^{\ell \rho_m})^{h C_{p^m}}/p$ has bounded $v_1$-torsion and $\pi_* \gr_{\mot}^* \TR (\Zp; \Sigma^{2\ell} \Zp)/p \to \pi_* \gr_{\mot}^* \TR^{[m]} (\Zp; \Sigma^{2\ell} \Zp)/p$ is an isomorphism in an increasing range of degrees.}, this implies that $v_1 : \Fil^* \to \Fil^{*+1}$ is surjective, so that $\Fil^i \subset \Fil^i _{v_1}$ by induction on $i$, as desired.
\end{proof}

\begin{proof}[Proof of \Cref{thm:computation}]
  Our proof is by induction on $m$; the base case $m=0$ is easy since $\gr^* _{\mot} \TR^{[0]} (\Zp; \Sigma^{2\ell} \Zp) \simeq \gr^* _{\mot} \THH(\Zp; \Sigma^{2\ell} \Zp)$.

  By definition, there are pullback squares:
  \begin{center}
    \begin{tikzcd}
      \gr_{\mot} ^* \TR^{[m]} (\Zp; \Sigma^{2\ell} \Zp) \ar[r] \ar[d] & \gr_{\mot} ^* (\THH(\Zp) \otimes \Ss^{\ell \rho_m})^{h C_{p^m}} \ar[d, "\can"] \\
      \gr_{\mot} ^* \TR^{[m-1]} (\Zp; \Sigma^{2\ell} \Zp) \ar[r, "\varphi"] & \gr_{\mot} ^* (\THH(\Zp) \otimes \Ss^{\ell \rho_m})^{t C_{p^m}},
    \end{tikzcd}
  \end{center}
  where $\varphi: \gr_{\mot} ^* \TR^{[m-1]} (\Zp; \Sigma^{2\ell} \Zp) \to \gr_{\mot} ^* (\THH(\Zp) \otimes \Ss^{\ell \rho_m})^{t C_{p^m}}$ is the composite
  \[\gr_{\mot} ^* \TR^{[m-1]} (\Zp; \Sigma^{2\ell} \Zp) \to \gr_{\mot} ^* (\THH(\Zp) \otimes \Ss^{\ell \rho_{m-1}})^{h C_{p^{m-1}}} \xrightarrow{\varphi} \gr_{\mot} ^* (\THH(\Zp) \otimes \Ss^{\ell \rho_{m}})^{t C_{p^{m}}}.\]
  Modding out by $p$ and using \Cref{lemma:surjphican}, we see that
  \begin{center}
    \begin{tikzcd}
      \gr^* \pi_* \gr_{\mot} ^* \TR^{[m]} (\Zp; \Sigma^{2\ell} \Zp)/p \ar[r] \ar[d] & \gr^* _{v_1} \pi_* \gr_{\mot} ^* (\THH(\Zp) \otimes \Ss^{\ell \rho_m})^{h C_{p^m}}/p \ar[d, "\can"] \\
      \gr^* \pi_* \gr_{\mot} ^* \TR^{[m-1]} (\Zp; \Sigma^{2\ell} \Zp)/p \ar[r, "\varphi"] & \gr^* _{v_1} \pi_* \gr_{\mot} ^* (\THH(\Zp) \otimes \Ss^{\ell \rho_m})^{t C_{p^m}}/p
    \end{tikzcd}
  \end{center}
  is a pullback of graded $\F_p [v_1]$-modules.

  First, we note that $F^{[m-1]} _{m-1,\ell} \oplus \bigoplus_{r=1} ^{m-1} \left( G^{[m-1]} _{m-1,\ell,r} \right)$ maps isomorphically onto the basis elements of $\gr_{v_1} ^* \pi_* \gr_{\mot}^* (\THH(\Zp) \otimes \Ss^{\ell \rho_m})^{tC_{p^m}}/p$ which contain a negative power of $t$. More specifically, $F^{[m-1]} _{m-1,\ell}$ hits elements in the first summand listed in \Cref{prop:twistedtatess}, and $G^{[m-1]} _{m-1,\ell,r}$ hits elements in the second line if $r<m-1$ and the third if $r=m-1$.

  On the other hand, the map
  $$\can : \gr_{v_1} ^* \pi_* \gr_{\mot}^* (\THH(\Zp) \otimes \Ss^{\ell \rho_m})^{h C_{p^m}} \to \gr_{v_1} ^* \pi_* \gr_{\mot}^* (\THH(\Zp) \otimes \Ss^{\ell \rho_m})^{t C_{p^m}}$$
  surjects onto the basis elements which contain a nonnegative power of $t$.
  Moreover, we can easily compute the kernel of this map, to be generated by all elements with a positive power of $\mu$ (i.e in the third summand, the second part of the fifth line, and the last line of \Cref{prop:hfpsstwisted}), and the $v_1$-multiples of elements from the second, first part of the fifth, and the second to last lines which are sent to $0$. Note that in the case $k=0$ in the first part of the fifth line, the whole summand is in the kernel.
  
  From \Cref{lemma:trfamilies} and \Cref{lemma:FG} there is an inclusion
  \begin{align*}
     \bigoplus_{n=0} ^m \left( A_{n,\ell} ^{[m]} \oplus B_{n,\ell} ^{[m]} \oplus C_{n,\ell} ^{[m]} \oplus \bigoplus_{r=1} ^n \left(D_{n,\ell,r} ^{[m]} \oplus E_{n,\ell,r} ^{[m]} \right) \right) \oplus F_{m,\ell} ^{[m]} \oplus \bigoplus_{r=1} ^m \left( G_{m,\ell,r} ^{[m]} \right) \\
     \subseteq \gr^* \pi_* \gr_{\mot} ^* \TR^{[m]} (\Zp; \Sigma^{2\ell} \Zp)/p.
  \end{align*}

  To check that it is an isomorphism, it then suffices to check that the kernel of the map
  \begin{align*}
    \bigoplus_{n=0} ^m \left( A_{n,\ell} ^{[m]} \oplus B_{n,\ell} ^{[m]} \oplus C_{n,\ell} ^{[m]} \oplus \bigoplus_{r=1} ^n \left(D_{n,\ell,r} ^{[m]} \oplus E_{n,\ell,r} ^{[m]} \right) \right) \oplus F_{m,\ell} ^{[m]} \oplus \bigoplus_{r=1} ^m \left( G_{m,\ell,r} ^{[m]} \right) \to \\
    \bigoplus_{n=0} ^{m-1} \left( A_{n,\ell} ^{[m-1]} \oplus B_{n,\ell} ^{[m-1]} \oplus C_{n,\ell} ^{[m-1]} \oplus \bigoplus_{r=1} ^n \left(D_{n,\ell,r} ^{[m-1]} \oplus E_{n,\ell,r} ^{[m-1]} \right) \right) \oplus F_{m-1,\ell} ^{[m-1]} \oplus \bigoplus_{r=1} ^{m-1} \left( G_{m-1,\ell,r} ^{[m-1]} \right)
  \end{align*}
  maps isomorphically onto the kernel of
  $$\can : \gr_{v_1} ^* \pi_*\gr_{\mot}^* (\THH(\Zp) \otimes \Ss^{\ell \rho_m})^{h C_{p^m}} \to \gr_{v_1} ^* \pi_* \gr_{\mot}^* (\THH(\Zp) \otimes \Ss^{\ell \rho_m})^{t C_{p^m}}.$$
  
  We explain how to match up the terms, where the reader may refer to \Cref{lemma:v1torsquot} for a description of the kernels between each summand as $m$ vaies.
  
  To do this, the terms with positive power of $\mu$ in the kernel of the canonical map coming from the third summand of \Cref{prop:hfpsstwisted} correspond to the kernel of $A_{m,\ell}^{[m]} \to A_{m,\ell}^{[m-1]}$, the kernel of $B_{m,\ell}^{[m]} \to B_{m,\ell}^{[m-1]}$, and $F_{m,\ell}^{[m]} \to 0$. The terms with positive powers of $\mu$ in the kernel of the canonical map from the second part of the fifth line and the last line of \Cref{prop:hfpsstwisted} correspond to the kernel of $D_{m,\ell,r}^{[m]} \to D_{m,\ell,r}^{[m-1]}$ and the kernel of $E_{m,\ell,r}^{[m]} \to E_{m,\ell,r}^{[m-1]}$ in the cases $r<m$ and $r=m$ respectively, and $G_{m,\ell,r}^{[m]} \to 0$.
  
  The $v_1$-multiples of elements from the second line in the kernel of the canonical map correspond to the kernel of $B_{m,\ell}^{[m+1]} \to B_{m,\ell}^{[m]}$, elements from the first part of the fifth and second to last lines correspond to the kernels of $E_{m,\ell,r}^{[m+1]} \to E_{m,\ell,r}^{[m]}$ in the case $1\leq r<m$ and $r=m$ respectively. In the case $k=0$ in the first part of the fifth line, this corresponds to the kernel of $C_{m,\ell}^{[m]} \to C_{m,\ell}^{[m-1]}$.

  Finally, to obtain the claim for $\gr^{*}\pi_*\gr_{\mot}^* \TR (\Zp; \Sigma^{2\ell} \Zp)/p$ from $\gr^{*}\pi_*\gr_{\mot}^* \TR^{[m]} (\Zp; \Sigma^{2\ell} \Zp)/p$, we use the equivalence
  $\gr^{*}\pi_*\gr_{\mot}^* \TR (\Zp; \Sigma^{2\ell} \Zp)/p \cong \varprojlim_{m} \gr^{*}\pi_*\gr_{\mot}^* \TR^{[m]} (\Zp; \Sigma^{2\ell} \Zp)/p$
  along with the fact that 
  $\gr^{*}\pi_*\gr_{\mot} ^* \TR^{[m]} (\Zp; \Sigma^{2\ell} \Zp)/p \to \gr^{*}\pi_*\gr_{\mot} ^* \TR^{[m-1]} (\Zp; \Sigma^{2\ell} \Zp)/p$
  is an equivalence in a range increasing in $m$\footnote{Because $\can : \pi_*\gr_{\mot}^* (\THH(\Zp) \otimes \Ss^{\ell \rho_m})^{hC_{p^m}} / p \to \pi_* \gr_{\mot}^* (\THH(\Zp) \otimes \Ss^{\ell \rho_m})^{tC_{p^m}} / p$ is, by inspection.} to deduce that the $\varprojlim^1$-terms vanish.
\end{proof}

\begin{cor}\label{cor:tczpeps}
  The bigraded $\F_p [v_1]$-module $\pi_*\gr_{\mot}^* \TC(\ZZ_p\langle \epsilon\rangle)/p$ is isomorphic to
  \[\pi_* \gr_{\mot}^* \TC(\ZZ_p)/p \oplus \bigoplus_{p \nmid \ell>0} \pi_* \gr_{\mot}^* \TR (\ZZ_p;\Sigma^{2\ell} \ZZ_p)/p,\]
  where for each value of $\ell$, the homotopy groups are given by \Cref{thm:computation}.

  If $k\leq p^{n-2}$, then the mod $(p,v_1^k)$-syntomic cohomology of $\ZZ_p/p^n$ is isomorphic to $\coker_{v_1^k}(\pi_* \gr_{\mot}^*\TC(\ZZ_p\langle \epsilon\rangle)) \oplus \ker_{v_1^k}\pi_* \gr_{\mot}^* \TC(\ZZ_p\langle \epsilon\rangle)[(2p-2)k-1,1]$, where $[(2p-2)k-1,1]$ is the shift determined by $\pi_* \gr_{\mot}^* \TC(\ZZ_p\langle \epsilon\rangle)[(2p-2)k-1,1] = \pi_{*-(2p-2)k+1} \gr_{\mot}^{*-(2p-2)k} \TC(\ZZ_p\langle \epsilon\rangle)$.
\end{cor}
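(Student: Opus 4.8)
The plan is to obtain both halves of the corollary by assembling results already established. For the first isomorphism, I would apply $\pi_*\gr_{\mot}^*(-)/p$ to the splitting of \Cref{cor:mot-fil-epsilon-main}, giving
\[\pi_*\gr_{\mot}^*\TC(\Zp\langle\epsilon\rangle)/p \cong \pi_*\gr_{\mot}^*\TC(\Zp)/p \oplus \bigoplus_{p\nmid\ell>0}\pi_*\gr_{\mot}^*\TR(\Zp;\Sigma^{2\ell}\Zp)/p,\]
and then quote \Cref{thm:computation} for each summand $\pi_*\gr_{\mot}^*\TR(\Zp;\Sigma^{2\ell}\Zp)/p$, invoking the proposition immediately following \Cref{thm:computation} (which identifies this graded $\F_p[v_1]$-module with its $v_1$-adic associated graded) to remove the last remaining associated graded. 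Since all of these invariants are $p$-complete, there is no difference between $\ZZ\langle\epsilon\rangle$ and $\Zp\langle\epsilon\rangle$.

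For the second assertion I would first reduce the syntomic cohomology of $\ZZ/p^n$ to that of $\Zp\langle\epsilon\rangle$. Because $k\le p^{n-2}$, reduction modulo $v_1^k$ factors through reduction modulo $v_1^{p^{n-2}}$, so $\F_p(*)(R)/v_1^k\simeq\bigl(\F_p(*)(R)/v_1^{p^{n-2}}\bigr)/v_1^k$ functorially in $R$. Applying the corollary of \Cref{thm:main} with $n-2$ in the role of its parameter (legitimate since $n\ge(n-2)+2$) yields $\F_p(*)(\ZZ/p^n)/v_1^{p^{n-2}}\simeq\F_p(*)(\Zp\langle\epsilon\rangle)/v_1^{p^{n-2}}$ as modules over $\F_p(*)$, hence $\F_p(*)(\ZZ/p^n)/(p,v_1^k)\simeq\F_p(*)(\Zp\langle\epsilon\rangle)/(p,v_1^k)$. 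Since $\gr_{\mot}^i\TC(\Zp\langle\epsilon\rangle)\simeq\F_p(i)(\Zp\langle\epsilon\rangle)[2i]$, computing this in all weights amounts to computing $\pi_*$ of $\bigoplus_i\gr_{\mot}^i\TC(\Zp\langle\epsilon\rangle)/(p,v_1^k)$.

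To carry out this last step, I would run the $v_1^k$-Bockstein once: the cofiber sequence of the self-map $v_1^k$ on $\bigoplus_i\gr_{\mot}^i\TC(\Zp\langle\epsilon\rangle)/p$ has cofiber computing the object above. Writing $M\coloneqq\pi_*\gr_{\mot}^*\TC(\Zp\langle\epsilon\rangle)/p$ and noting that each $\gr_{\mot}^i\TC$ is a complex of $\Zp$-modules, $M$ is an $\F_p$-vector space in every bidegree, so the associated long exact sequence breaks into short exact sequences of $\F_p$-vector spaces, which split; this exhibits the mod $(p,v_1^k)$ syntomic cohomology as a sum of suitably regraded copies of $\ker(v_1^k\colon M\to M)$ and $\coker(v_1^k\colon M\to M)$. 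Recording the bidegree of $v_1^k$ (topological degree $2p-2$, motivic weight $p-1$), the extra cohomological degree introduced by the connecting map, and the translation between the $\gr_{\mot}$-grading on $\TC$ and the bigrading used for syntomic cohomology, one reads off that the shift is $[(2p-2)k+1,1]$ on the cokernel factor, giving the stated formula.

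The only genuinely new inputs — the crystallinity theorem \Cref{thm:main}, used to pass from $\ZZ/p^n$ to $\Zp\langle\epsilon\rangle$, and the explicit computation \Cref{thm:computation} — are already in hand, so nothing here is conceptually difficult. I expect the main (mild) obstacle to be precisely the index bookkeeping in the final paragraph: placing the split summands in the correct bidegrees so that the shift comes out to be $[(2p-2)k+1,1]$, and confirming that the infinite direct sum over weights causes no trouble (it does not, since by connectivity only finitely many weights contribute in any fixed total degree, so $\bigoplus$ and $\prod$ agree and the cofiber sequence is well behaved). A secondary point to check is that $k\le p^{n-2}$ is exactly the hypothesis needed to invoke \Cref{thm:main} in the reduction step.
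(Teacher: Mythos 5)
Your proposal is correct and follows essentially the same route as the paper, whose proof is just the terse version of what you wrote: the first isomorphism is \Cref{cor:mot-fil-epsilon-main} for $R=\Z_p$ combined with \Cref{thm:computation} (plus the de-graded-ification proposition), and the second is obtained by quotienting by $v_1^k$ and invoking \Cref{thm:main}(1). One small caution: the identity $\F_p(*)(R)/v_1^k\simeq(\F_p(*)(R)/v_1^{p^{n-2}})/v_1^k$ is false if the outer quotient is read as the cofiber of the self-map $v_1^k$; it should be read as base change along $\F_p[v_1]/v_1^{p^{n-2}}\to\F_p[v_1]/v_1^k$, which is what your phrase ``as modules'' is implicitly doing and is all that is needed.
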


\begin{proof}
  The first claim follows from combining \Cref{cor:mot-fil-epsilon-main} in the case $R = \Z_p$ with \Cref{thm:computation}. The second claim follows from taking the quotient by $v_1^k$, and applying \Cref{thm:main}(1) to identify the result for $\Z_p\langle \epsilon \rangle$ with that of $\ZZ/p^n$. 
\end{proof}

\begin{rmk}\label{remark:liuwang}
  We note that the homotopy groups of the other term $\gr_{\mot}^* \TC(\ZZ_p)/p$ are given in \cite[Theorem 1.5]{LiuWang} as $\FF_p[v_1]\otimes E(\lambda_1)\otimes E(\partial) \oplus \FF_p[v_1]\{t\lambda_1,\dots,t^{p-1}\lambda_1\}$.
\end{rmk}

\section{Consequences for the algebraic $K$-theory of $\mathbb{Z}/p^n$}\label{sec:consequences}
In this section, we use our results on syntomic cohomology to draw consequences for topological cyclic homology and algebraic $K$-theory.

\subsection{Consequences for even $K$-groups}

First, we prove some results about the groups $K_{2i}(\Z/p^n)$, which vanish for $i \gg 0$ by the even vanishing theorem of \cite{kzpnother}.  We note that, as explained in \cite{kzpnother}, when displayed in Adams grading the motivic spectral sequence computing $\pi_*\TC(\Z/p^n)$ is concentrated on the $0$-line, $1$-line, and $2$-line.  Furthermore, the $0$-line is trivial above degree zero, by \cite[Corollary 2.13(ii)]{kzpnother}.  In particular, the motivic spectral sequence degenerates on the $\mathrm{E}_2=\mathrm{E}_{\infty}$-page.

\begin{thm}\label{thm:2-linesurjectivity}
  For $n \geq 2$, the maps
  \[H^2 (\F_p (i) (\Z_p)/v_1 ) \to H^2 (\F_p (i) (\Z/p^n)/v_1 )\]
  \[H^2 (\F_p (i) (\Z_p)) \to H^2 (\F_p (i) (\Z/p^n))\]
  \[H^2 (\Z_p (i) (\Z_p)) \to H^2 (\Z_p (i) (\Z/p^n))\]
  are surjective.
\end{thm}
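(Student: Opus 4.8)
The plan is to reduce all three surjectivity statements to a single computation about the map on weight-$i$ syntomic cohomology induced by $\Z_p \to \Z/p^n$, and then to extract the $H^2$ information from the explicit description of $\pi_* \gr^*_{\mot} \TC(\Z_p\langle \epsilon\rangle)/p$ obtained in \Cref{cor:tczpeps} together with \Cref{remark:liuwang}. First I would observe that, by \Cref{thm:main}(1), for $n \geq 2$ the mod $p$ syntomic cohomology $\F_p(i)(\Z/p^n)$ only depends on $\Z/p^n \otimes^{\mathbb L} \Z/p^{2} \simeq \Z\langle\epsilon\rangle/p^2$, and the ring map $\Z_p \to \Z/p^n$ induces, after applying $(-)/p^2$, the standard augmentation-type map $\Z\langle\epsilon\rangle/p^2 \to \Z/p^2$ splitting the unit. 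Hence the map $\F_p(i)(\Z_p) \to \F_p(i)(\Z/p^n)$ is identified with the map $\F_p(i)(\Z_p) \to \F_p(i)(\Z_p\langle\epsilon\rangle)$ induced by the unit $\Z_p \to \Z_p\langle\epsilon\rangle$, which by the retraction $\Z_p \to \Z_p\langle\epsilon\rangle \to \Z_p$ is a split injection with complement exactly the $\bigoplus_{p\nmid \ell} \gr^*_{\mot}\TR(\Z_p;\Sigma^{2\ell}\Z_p)/p$-summand of \Cref{cor:mot-fil-epsilon-main} / \Cref{cor:tczpeps}. So on cohomology the map in question is split injective; what must be shown is that it is also \emph{surjective} in cohomological degree $2$, i.e. that the extra summand $\bigoplus_{p\nmid\ell}\pi_*\gr^*_{\mot}\TR(\Z_p;\Sigma^{2\ell}\Z_p)/p$ contributes nothing in $H^2$ of each fixed weight.

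The key step, then, is to check that for each weight $i$, the weight-$i$ part of $\bigoplus_{p\nmid\ell}\pi_*\gr^*_{\mot}\TR(\Z_p;\Sigma^{2\ell}\Z_p)/p$ is concentrated in cohomological degrees $\le 1$ (equivalently, placed in the correct homological/Adams bidegree, has nothing on the ``$2$-line''). This I would read off directly from \Cref{thm:computation}: the generators of the families $A_{n,\ell},\dots,G^{[m]}_{n,\ell,r}$ are all of the form $v_1^a\sigma\epsilon^{(\ast)}\mu^{j}\lambda_1^{\epsilon_1}u_n^{\epsilon_2}$ (or sums of such, differing by $\varphi$–$\can$ corrections). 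Each of $\lambda_1$, $u_n$ carries Adams/cohomological degree contribution $1$, the classes $\mu$, $t$, $v_1$, $\sigma\epsilon$ carry contribution $0$, and every generator has at most one $\lambda_1$ and at most one $u_n$ with the constraint that $u_n$ only appears together with $\lambda_1$. Wait — I must be careful: a class $\lambda_1 u_n$ would naively sit in cohomological degree $2$. So the real point to verify is that such classes, when transported through the $\TR$-limit and the motivic shift $[2i]$ of \Cref{cor:mot-fil-epsilon-main}, land on the $1$-line of the motivic spectral sequence for $\TC$, not the $2$-line; this is precisely the statement that the relevant part of $\gr^*_{\mot}\TC(\Z_p\langle\epsilon\rangle)/p$ vanishes in $H^2$ of each weight. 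I would make this precise by matching the $(t^i \mu^j \lambda_1 u_n)$-type contributions to the Nygaard spectral sequence output of \Cref{prop:hfpsstwisted} and tracking the bidegree through \Cref{cor:mot-fil-epsilon-main}, using the even-vanishing framework of \cite{kzpnother} recalled just before the theorem (the motivic spectral sequence for $\TC(\Z/p^n)$ lives only on the $0$-, $1$-, and $2$-lines, with the $0$-line trivial in positive degree), to see that $\lambda_1 u_n$-classes in the $\TR$-summand contribute to the $1$-line of the $\TC$-spectral sequence after accounting for the degree shift inherent in passing from $\TR(\Z_p;\Sigma^{2\ell}\Z_p)$ to its appearance inside $\TC(\Z_p\langle\epsilon\rangle)$.

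Next I would handle the three variants in turn. The statement for $\F_p(i)(-)/v_1$ follows from the $\F_p(i)(-)$ statement since quotienting by $v_1$ is exact and only introduces new classes in $H^{*}$ via $v_1$-torsion in $H^{*+1}$, and $H^3$ vanishes (syntomic cohomology of these rings is concentrated in degrees $\le 2$), so $H^2(\F_p(i)(-)/v_1) = H^2(\F_p(i)(-))/v_1 \oplus H^3(\F_p(i)(-))[v_1] = H^2(\F_p(i)(-))/v_1$, and a surjection of $\F_p[v_1]$-modules remains a surjection mod $v_1$. For the integral statement $H^2(\Z_p(i)(\Z_p)) \to H^2(\Z_p(i)(\Z/p^n))$ I would use the Bockstein/universal-coefficient long exact sequence relating $\Z_p(i)(-)$ and $\F_p(i)(-) = \Z_p(i)(-)/p$, namely the exact sequence $H^2(\Z_p(i)) \xrightarrow{p} H^2(\Z_p(i)) \to H^2(\F_p(i)) \to H^3(\Z_p(i))$, together with the fact that $H^3$ of absolute syntomic cohomology of $\Z_p$-algebras of this type vanishes (these are $F$-smooth or square-zero extensions thereof, with the relevant cohomological amplitude bound); then surjectivity of the mod $p$ map plus a diagram chase, using that $H^2(\Z_p(i)(\Z/p^n))$ is finitely generated and $p$-power torsion with $H^3(\Z_p(i)(\Z/p^n))=0$, upgrades the mod $p$ surjection to an integral one. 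The main obstacle I anticipate is the bidegree bookkeeping in the second paragraph: one genuinely has classes involving $\lambda_1 u_n$ which a priori look like they could populate $H^2$ of a single weight, and ruling this out requires carefully tracing how the polygonic-$\TR$ limit and the motivic degree shift $[2i]$ in \Cref{cor:mot-fil-epsilon-main} interact with the Nygaard-filtration bidegrees in \Cref{prop:hfpsstwisted} — i.e. confirming that the ``$u_n$'' direction is absorbed into the extra copy of the suspension coordinate rather than adding to cohomological degree, which is exactly the phenomenon that makes even vanishing for $\Z/p^n$ work in the first place.
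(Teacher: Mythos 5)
Your argument breaks down at its very first step. You claim that, by \Cref{thm:main}(1), the mod $p$ syntomic cohomology $\F_p(i)(\Z/p^n)$ (with no $v_1$-quotient) depends only on $\Z/p^n\otimes^{\mathbb{L}}\Z/p^2$ and is therefore identified with $\F_p(i)(\Z_p\langle\epsilon\rangle)$. But \Cref{thm:main}(1) only asserts this for the quotients $\F_p(\ast)(-)/v_1^{p^k}$, with the required $p$-power reduction growing with $k$; for fixed $n\ge 2$ it yields $\F_p(\ast)(\Z/p^n)/v_1^{p^{n-2}}\simeq \F_p(\ast)(\Z_p\langle\epsilon\rangle)/v_1^{p^{n-2}}$ and nothing more. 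The un-quotiented identification is in fact false: the full mod $p$ syntomic cohomology of $\Z/p^n$ genuinely depends on $n$ (e.g.\ the $v_1$-torsion order of $\partial\lambda_1$ is $p^{n-2}$ by \Cref{thm:2-line-tors}, whereas $\partial\lambda_1$ is $v_1$-torsionfree for $\Z_p\langle\epsilon\rangle$), so it cannot coincide with the single object $\F_p(i)(\Z_p\langle\epsilon\rangle)$ for every $n$. Consequently your reduction of the second map to the split injection $\F_p(i)(\Z_p)\to\F_p(i)(\Z_p\langle\epsilon\rangle)$ does not go through, and your order of deduction --- mod $p$ first, then mod $(p,v_1)$ as a corollary --- cannot be repaired by reshuffling, because the only statement the crystallinity theorem gives you directly is the $v_1$-truncated one.

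The paper's proof runs in the opposite direction: it first proves the mod $(p,v_1)$ statement, where \Cref{thm:main} with $n=0$ applies (only $\Z/p^n/p^2\simeq\Z\langle\epsilon\rangle/p^2$ is needed), identifies the target with $H^2(\F_p(i)(\Z_p\langle\epsilon\rangle)/v_1)$, and observes from \Cref{cor:tczpeps} that the extra summands $\gr^*_{\mot}\TR(\Z_p;\Sigma^{2\ell}\Z_p)/(p,v_1)$ contribute nothing on the $2$-line; it then bootstraps to the mod $p$ and integral statements by running the $v_1$-Bockstein (using that $v_1$ acts nilpotently on the $2$-line of $\Z/p^n$, so every class is, modulo the image from $\Z_p$, divisible by arbitrarily high powers of $v_1$ and hence zero) and then the $p$-Bockstein. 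Your treatment of the bidegree bookkeeping for the $\lambda_1 u_n$-classes is a legitimate point to verify but is a direct check against \Cref{prop:hfpsstwisted} and \Cref{cor:tczpeps}; the essential missing idea is the Bockstein bootstrap that lets you pass from the $v_1$-truncated statement --- the only one crystallinity gives you --- up to the mod $p$ and integral ones.
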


\begin{proof}
By \Cref{thm:main}, the map 
  \[H^2 (\F_p (i) (\Z_p)/v_1 ) \to H^2 (\F_p (i) (\Z/p^n)/v_1 )\]
  is isomorphic to the map
  \[H^2 (\F_p (i) (\Z_p)/v_1 ) \to H^2 (\F_p (i) (\Z_p \langle \epsilon \rangle)/v_1 ).\]
  That the first map is surjective now follows from \Cref{cor:tczpeps}, since each of the terms $\gr^*_{\mot}\TR (\ZZ_p;\Sigma^{2\ell} \ZZ_p)/(p,v_1)$ has homotopy concentrated between the $-1$-line and the $1$-line.

Running the $v_1$-Bockstein and then the $p$-Bockstein, it then follows that the second and third maps are also surjective. 
%	Suppose that the first map is surjective. Because the syntomic cohomology vanishes above degree $2$ on the source and target, by running the $v_1$-Bockstein spectral sequence, and the $p$-Bockstein spectral sequence, it follows that the second and third map are surjections too.
%	It thus suffices to show the first map is surjective.
%  By \Cref{thm:main}, the map 
%  \[H^2 (\F_p (i) (\Z_p)/v_1 ) \to H^2 (\F_p (i) (\Z/p^n)/v_1 )\]
%  is isomorphic to the map
%  \[H^2 (\F_p (i) (\Z_p)/v_1 ) \to H^2 (\F_p (i) (\Z_p \langle \epsilon \rangle)/v_1 ).\]
%  The result now follows from \Cref{cor:tczpeps}, since each of the terms $\gr_{\mot}\TR (\ZZ_p;\Sigma^{2\ell} \ZZ_p)/(p,v_1)$ are concentrated between the $-1$-line and the $1$-line.
\end{proof}

\begin{cor}
  For $n \geq 2$ and $i \ge 0$, the map
  \[K_{2i} (\Z_p) \to K_{2i} (\Z/p^n)\]
  is surjective.
  In particular, $K_{2i}(\Z/p^n)$ is cyclic.
\end{cor}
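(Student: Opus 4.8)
The plan is to deduce the corollary from \Cref{thm:2-linesurjectivity} by standard arguments with the motivic spectral sequence and the Dundas--Goodwillie--McCarthy theorem. First I would recall that, by the work of \cite{kzpnother}, the motivic spectral sequence computing $\pi_*\TC(\Z/p^n)$ is concentrated in Adams filtrations $0,1,2$, degenerates at $\mathrm{E}_2=\mathrm{E}_\infty$, and has trivial $0$-line above degree $0$. Hence for $i\ge 1$ the group $\pi_{2i}\TC(\Z/p^n)$ receives a complete filtration whose associated graded is $H^2(\Z_p(i+1)(\Z/p^n))\oplus H^0(\Z_p(i)(\Z/p^n))$, and the latter vanishes for $i\ge 1$; so $\pi_{2i}\TC(\Z/p^n)\cong H^2(\Z_p(i+1)(\Z/p^n))$. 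The same holds with $\Z/p^n$ replaced by $\Z_p$, and the comparison map $\TC(\Z_p)\to\TC(\Z/p^n)$ is filtered, so on $\pi_{2i}$ (for $i\ge 1$) it is identified with the map $H^2(\Z_p(i+1)(\Z_p))\to H^2(\Z_p(i+1)(\Z/p^n))$, which is surjective by \Cref{thm:2-linesurjectivity}.

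Next I would pass from $\TC$ to $K$-theory. By the Dundas--Goodwillie--McCarthy theorem, for each of $\Z_p$ and $\Z/p^n$ there is a cartesian square relating $K$, $\TC$, and the corresponding invariants of the reduction mod $p$, i.e. $K(\Z_p)$ (resp.\ $K(\Z/p^n)$) is the pullback of $\TC(\Z_p)\to\TC(\F_p)\leftarrow K(\F_p)$ (resp.\ the same with $\Z/p^n$ in place of $\Z_p$); after $p$-completion one may use $K(\F_p)_p^\wedge\simeq H\Z_p$ and the known $\TC(\F_p)$. These squares are natural in the ring, so the map $K(\Z_p)\to K(\Z/p^n)$ fits into a map of fiber sequences whose fibers $\mathrm{fib}(K\to\TC)$ agree, both being built from $K(\F_p)$ and $\TC(\F_p)$ and hence independent of whether we start with $\Z_p$ or $\Z/p^n$. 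Therefore the relative term is an equivalence and we get a map of long exact sequences in which the column $K_{2i}(\Z_p)\to K_{2i}(\Z/p^n)$ is sandwiched between $\pi_{2i}\TC(\Z_p)\to\pi_{2i}\TC(\Z/p^n)$ (surjective for $i\ge 1$ by the previous paragraph) and a term coming from $\pi_{2i-1}$ of the common fiber, which maps isomorphically. A diagram chase then yields surjectivity of $K_{2i}(\Z_p)\to K_{2i}(\Z/p^n)$ for $i\ge 1$; the case $i=0$ is immediate since $K_0$ of any local ring is $\Z$ and the map $\Z=K_0(\Z_p)\to K_0(\Z/p^n)=\Z$ is the identity.

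Finally, I would conclude that $K_{2i}(\Z/p^n)$ is cyclic: $K_{2i}(\Z_p)$ is itself cyclic for all $i\ge 0$ (indeed $K_0(\Z_p)=\Z$, and the higher even $K$-groups of $\Z_p$ are finite cyclic by the computation of $K(\Z_p)$, e.g.\ via \cite{LiuWang} together with the known rational and $\ell\neq p$ pieces), and a quotient of a cyclic group is cyclic. One subtlety worth spelling out is that the statement of \Cref{thm:2-linesurjectivity} is about absolute syntomic cohomology $H^2(\Z_p(i)(-))$, and one must be careful to index the Breuil--Kisin twist correctly when matching $H^2(\Z_p(i+1)(-))$ with $\pi_{2i}\TC(-)$ in the motivic spectral sequence (the $2$-line in weight $i+1$ contributes to degree $2i$); this bookkeeping, together with checking that the even-vanishing input of \cite{kzpnother} is being invoked in the right range, is the only place requiring real care. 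The main obstacle is thus not any hard new input but the careful assembly: confirming the degeneration and the concentration of the motivic spectral sequence for $\TC(\Z/p^n)$ exactly as stated in \cite{kzpnother}, and checking naturality of all the spectral sequences and of the DGM square so that the surjectivity genuinely transfers from $\TC$ to $K$.
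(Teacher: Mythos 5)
Your argument follows the paper's proof essentially verbatim: the Dundas--Goodwillie--McCarthy theorem identifies the fiber of $K \to \TC$ with that of $K(\F_p) \to \TC(\F_p)$, independent of the ring, and surjectivity on $\pi_{2i}\TC$ in positive degrees comes from the motivic spectral sequence being concentrated on the $1$- and $2$-lines there together with \Cref{thm:2-linesurjectivity}. The only step the paper includes that you gloss over is the reduction of the integral statement to the $p$-complete one (the motivic spectral sequence computes $p$-complete $\TC$): the paper uses that $K(\Z/p^n)[\tfrac{1}{p}]\simeq K(\F_p)[\tfrac{1}{p}]$ vanishes in positive degrees and that the $p$-local $K$-groups are finite there, so that $p$-completion loses nothing.
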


\begin{proof}
  It is clear that on $K_0$, the map is the isomorphism $\ZZ \xrightarrow{\simeq} \ZZ$.
  
  From the Dundas--Goodwillie--McCarthy theorem \cite{dundas2012local}, it follows that the fiber of the map $K(\ZZ/p^n) \to \TC(\ZZ/p^n)$ is the fiber of the map $K(\FF_p) \to \TC(\FF_p)$. It follows that $K(\ZZ/p^n)[\frac 1 p] \to K(\FF_p)[\frac 1 p]$ is an isomorphism, and since $K(\FF_p)[\frac 1 p]$ vanishes in positive degrees, it suffices to prove surjectivity after $p$-localization. Moreover the $p$-local $K$-groups are finite in positive degrees (see \cite[Theorem B]{angeltveit2011algebraic}), so it suffices to prove surjectivity after $p$-completion.
  
  The fiber of $K(\FF_p) \to \TC(\FF_p)$ is $p$-adically isomorphic to $\ZZ_p$ in degree $-2$, so the same is true for the fiber of the map $K(\ZZ/p^n) \to \TC(\ZZ/p^n)$.  It thus suffices to show that the $p$-completion of $\TC(\ZZ_p) \to \TC(\ZZ/p^n)$ is surjective in positive even degrees. We achieve this by looking at the map of motivic spectral sequences, which are concentrated on the $1$-line and $2$-line in positive degrees. Since the classes on the $1$-line are in odd degrees, it suffices to prove surjectivity on the $2$-line, which is \Cref{thm:2-linesurjectivity}.
\end{proof}

\subsection{Consequences for mod $(p,v_1^k)$ $K$-groups}

\begin{rec}
  When $p \geq 3$, there is a $v_1$-self map $v_1 : \Sigma^{2p-2} \Ss/p \to \Ss/p$ \cite{adams1966groups}.
  On the other hand, when $p=2$ we have a $v_1 ^4$-self map
  $v_1 ^4 : \Sigma^8 \Ss/2 \to \Ss/2$ \cite{adams1966groups}.

  As a consequence, the$\mod p$ homotopy groups of a spectrum are equipped with a self-map $v_1$ (resp $v_1^4$ when $p=2$).
  Moreover, we may form the generalized Moore complexes $\Ss/ (p,v_1^k)$ and consider the$\mod (p,v_1^k)$ homotopy of spectra (where $k$ is assumed to be divisible by $4$ if $p=2$).
\end{rec}

As input, we will require the following result of Achim Krause and the third named author \cite{KS}:

\begin{thm}[Krause--S.] \label{thm:2-line-tors}
  In $\pi_* \gr^{\mot} _* \TC(\Z/p^n)/p$, we have $v_1 ^{p^{n-2}} \partial \lambda_1 = 0$.
\end{thm}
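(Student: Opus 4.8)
The statement to prove is Theorem~\ref{thm:2-line-tors}: in $\pi_* \gr^{\mot}_* \TC(\Z/p^n)/p$, one has $v_1^{p^{n-2}} \partial \lambda_1 = 0$. Since the theorem is attributed to Krause and the third author in companion work \cite{KS}, the proof in this paper should consist of extracting the statement from the computations already made available. The plan is to use Corollary~\ref{cor:tczpeps}: for $n \ge 2$ we have $\F_p(\ast)(\Z/p^n) \simeq \F_p(\ast)(\Z_p\langle \epsilon \rangle)$ compatibly with all structure after modding out by $v_1^{p^{n-2}}$, so it suffices to establish the corresponding vanishing in $\pi_* \gr^{\mot}_* \TC(\Z_p\langle\epsilon\rangle)/p$, at least after quotienting by $v_1^{p^{n-2}}$. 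By Corollary~\ref{cor:tczpeps}, this group is $\pi_*\gr^*_{\mot}\TC(\Z_p)/p$ together with a sum of contributions $\pi_*\gr^*_{\mot}\TR(\Z_p;\Sigma^{2\ell}\Z_p)/p$. The class $\partial$ lives in the $\TC(\Z_p)$ summand (it is the class $\partial$ appearing in Remark~\ref{remark:liuwang}, coming from the retract $\Z_p \to \Z_p\langle\epsilon\rangle \to \Z_p$), as does $\lambda_1$, so $\partial\lambda_1$ lives entirely within $\pi_*\gr^*_{\mot}\TC(\Z_p)/p$.

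First I would recall from Remark~\ref{remark:liuwang} that $\pi_*\gr^*_{\mot}\TC(\Z_p)/p \cong \F_p[v_1]\otimes E(\lambda_1)\otimes E(\partial) \oplus \F_p[v_1]\{t\lambda_1,\dots,t^{p-1}\lambda_1\}$, which shows that $\partial\lambda_1$ generates a free $\F_p[v_1]$-module summand. Hence in $\pi_*\gr^*_{\mot}\TC(\Z_p)/p$ itself the element $v_1^k\partial\lambda_1$ is nonzero for every $k$, and the asserted vanishing can only occur \emph{after} passing to $\Z/p^n$, i.e. it is genuinely a statement about the difference between $\Z_p$ and $\Z/p^n$. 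The correct way to see it is therefore to trace the class $\partial$ through the motivic spectral sequence / $v_1$-Bockstein for $\TC(\Z/p^n)$ directly, or to use the input theorem of Krause--Senger as a black box. Concretely I would: (i) identify $\partial \in \pi_{-1}\TC(\Z/p^n)/p$ with the boundary class detected on the $0$-line of the motivic spectral sequence in weight $0$ (coming from $K(\F_p)\to\TC(\F_p)$ via Dundas--Goodwillie--McCarthy, as in the proof of Theorem~\ref{thm:intro-cyclic}); (ii) note $\lambda_1$ lies in $H^1(\F_p(1)(\Z/p^n))$; (iii) invoke the explicit computation of $\pi_*\gr^*_{\mot}\TC(\Z/p^n)/p$ provided by Corollary~\ref{cor:tczpeps} together with Theorem~\ref{thm:computation} to check the $v_1$-torsion order of the element $\partial\lambda_1$ in weight $1$, and observe that it is exactly $v_1^{p^{n-2}}$.

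The main obstacle is bookkeeping: one must correctly match the class $\partial\lambda_1$ with a specific generator in the explicit list of Theorem~\ref{thm:computation} (or rather in the $\TC(\Z_p)$-summand as modified by the identification $\F_p(\ast)(\Z/p^n)/v_1^{p^{n-2}} \simeq \F_p(\ast)(\Z_p\langle\epsilon\rangle)/v_1^{p^{n-2}}$), and verify its $v_1$-torsion order. The subtlety is precisely that the computation of Theorem~\ref{thm:computation} only determines $\F_p(\ast)(\Z/p^n)$ up to $v_1^{p^{n-2}}$ — which is why the theorem is stated with the exponent $p^{n-2}$ and why, as noted in a remark above, the case $k = p^{n-2}$ must be excluded from the $K$-theory consequences: it is not known whether $\partial$ is killed by $v_1^{p^{n-2}}$ itself or only becomes so modulo the ambiguity of the identification. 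I would therefore present the proof as a direct appeal to \cite{KS}, perhaps with a sentence indicating that it follows by inspecting the $v_1$-torsion order of the relevant $2$-line class in the computation of Corollary~\ref{cor:tczpeps} restricted to the image of the transfer/retract from $\Z_p$.
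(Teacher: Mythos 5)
Your final conclusion matches the paper exactly: the theorem is introduced there with the words ``As input, we will require the following result of Achim Krause and the third named author'' and is cited as a black box from \cite{KS}, with no proof given in the paper itself. Your diagnosis of why no internal proof is possible is also the right one --- the crystallinity identification with $\Z_p\langle\epsilon\rangle$ only controls $\TC(\Z/p^n)$ modulo $v_1^{p^{n-2}}$, where $\partial\lambda_1$ is $v_1$-torsionfree, so the paper's computations yield only the complementary lower bound $v_1^{p^{n-2}-1}\partial\lambda_1 \neq 0$, and the vanishing $v_1^{p^{n-2}}\partial\lambda_1 = 0$ is genuinely external input (so your intermediate step (iii), reading the torsion order off \Cref{cor:tczpeps}, should be dropped, as you yourself note).
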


In the above result, $\partial \lambda_1$ refers to the $\mathbb{F}_p[v_1]$-module generator of the $2$-line of $\pi_*\gr^*_{\mot} \TC(\mathbb{Z}_p) / p$, or more precisely its image along the natural map $\pi_*\gr^*_{\mot} \TC(\mathbb{Z}_p) / p \to \pi_*\gr^*_{\mot} \TC(\mathbb{Z}/p^n)/p$. Note that, by our \Cref{thm:2-linesurjectivity}, the $\mathbb{F}_p[v_1]$-module generated by $\partial \lambda_1$ contains all non-zero elements on the $2$-line of $\pi_*\gr^*_{\mot} \TC(\mathbb{Z}/p^n)/p$.

\begin{prop}
  Let $p$ denote a prime and $R=\Z_p \langle \epsilon \rangle$ or $R =\Z/p^n$ with $n \geq 2$.
  Then the motivic spectral sequence
  \[\pi_* \gr^\mot _* \TC(R)/ p \Rightarrow \pi_* \TC(R) / p\]
  degenerates at the $\mathrm{E}_2$-page. It has no hidden $v_1$-extensions ($v_1^4$-extensions if $p=2$), in the sense that any class $x\in \pi_* \gr^\mot _* \TC(R)/ p$ with $v_1^jx=0$ detects a class $\tilde{x} \in\pi_* \TC(R) / p$ with $v_1^j\tilde{x}=0$ (with $4$ dividing $j$ if $p=2$).
\end{prop}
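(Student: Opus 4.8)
The plan is to combine the splitting of \Cref{cor:tczpeps} with the sparseness of syntomic complexes, using \Cref{thm:main}(1), \Cref{thm:2-linesurjectivity} and \Cref{thm:2-line-tors} to handle $\ZZ/p^n$. (Throughout, at $p=2$ the class $v_1$ is to be read as the $v_1^4$-self-map and the exponents as multiples of $4$; the argument is unchanged.) For $R=\ZZ_p\langle\epsilon\rangle$, \Cref{cor:mot-fil-epsilon-main} applied to $\ZZ_p$ splits $\fil^*_{\mot}\TC(\ZZ_p\langle\epsilon\rangle)/p$, hence the motivic spectral sequence, as the direct sum of the motivic spectral sequence of $\TC(\ZZ_p)/p$ and those of the $\TR(\ZZ_p;\Sigma^{2\ell}\ZZ_p)/p$ with $p\nmid\ell$. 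By \Cref{remark:liuwang} the first lives on cohomological lines $0,1,2$ with a free $\FF_p[v_1]$-module as $E_2$-page, so it degenerates by \cite{LiuWang} and carries no $v_1$-torsion; by \Cref{thm:computation} each $\TR$-summand lives on only the two adjacent lines $0,1$ (the families of \Cref{dfn:tr-family-generators} have at most one factor of $\lambda_1$, at most one of $u_n$, and $\lambda_1 u_n$ has cohomological degree $0$), so its spectral sequence degenerates for trivial reasons. Since the motivic spectral sequence of $\TC(\ZZ_p\langle\epsilon\rangle)/p$ splits along these summands, a hidden $v_1$-power multiple of a line-$1$ class would have to land within the same summand, where there is no line $2$; and the remaining potential line-$0$-to-line-$1$ $v_1$-extensions inside a $\TR$-summand are excluded by the bidegree bookkeeping of \Cref{thm:computation} (the same ``assembled uniquely from associated graded'' phenomenon used throughout \Cref{sec:compute}: no two cyclic summands occupy the bidegrees a nontrivial extension would need). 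This settles $R=\ZZ_p\langle\epsilon\rangle$.

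For $R=\ZZ/p^n$ there is no splitting. The $E_2$-page $\pi_*\gr^*_{\mot}\TC(\ZZ/p^n)/p$ is again concentrated on lines $0,1,2$: this is part of \cite{kzpnother} integrally, and mod $p$ no $(-1)$-line can appear since $H^{-1}(\F_p(i)(\ZZ/p^n))$ would receive a contribution only from $H^0(\ZZ_p(i)(\ZZ/p^n))[p]$, which vanishes by \cite[Corollary 2.13(ii)]{kzpnother}. A motivic $d_r$, displayed in Adams grading, raises cohomological degree by $r$, so the only possibly nonzero differential is $d_2\colon H^0\to H^2$ --- and, in contrast to the integral situation, the mod-$p$ $0$-line is \emph{not} trivial above degree zero, so this needs a genuine argument. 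Reduce mod $v_1^{p^{n-2}}$: by \Cref{thm:main}(1) the resulting spectral sequence is that of $\ZZ_p\langle\epsilon\rangle$ mod $(p,v_1^{p^{n-2}})$, which degenerates (degeneration of the mod-$p$ spectral sequence of $\ZZ_p\langle\epsilon\rangle$ passes to any $v_1$-power quotient, the latter being the cofiber of a strictly filtered self-map). Hence the image of $d_2$ lies in $v_1^{p^{n-2}}H^2(\F_p(*)(\ZZ/p^n))$; but by \Cref{thm:2-linesurjectivity} the $2$-line is the cyclic $\FF_p[v_1]$-module on the image of $\partial\lambda_1$, and \Cref{thm:2-line-tors} gives $v_1^{p^{n-2}}\partial\lambda_1=0$, so $v_1^{p^{n-2}}H^2=0$ and $d_2=0$. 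Thus the spectral sequence degenerates at $E_2$.

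For the absence of hidden $v_1$-extensions when $R=\ZZ/p^n$: the motivic filtration on $\pi_*\TC(\ZZ/p^n)/p$ is complete and finite in each topological degree (three lines), and the $v_1$-adic filtration is finite in each degree since $v_1$ is nilpotent there, so it suffices to argue on $v_1$-adic associated gradeds. If $x$ with $v_1^jx=0$ lies on the $2$-line there is nothing of higher filtration, so any lift works; if $x$ lies on the $0$- or $1$-line, then for a lift $\tilde x$ the class $v_1^j\tilde x$ has leading term on a strictly higher line and of the \emph{same} motivic weight as $v_1^jx$, but a short weight count (as in \Cref{remark:liuwang}) shows the higher line --- in particular the $2$-line tower through $\partial\lambda_1$ --- contains no class of that weight, unless the offending class is itself sufficiently $v_1$-divisible to be absorbed by modifying $\tilde x$; for the $v_1$-free $1$-line class $\partial$ and its $\lambda_1$-multiple one instead uses naturality along $\ZZ_p\to\ZZ/p^n$. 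Iterating and invoking completeness produces a lift with $v_1^j\tilde x=0$, exactly the filtration-chasing already performed in the proofs of \Cref{prop:hfpsstwisted} and \Cref{thm:computation}.

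The main obstacle is the differential $d_2\colon H^0\to H^2$ for $\ZZ/p^n$: because the mod-$p$ $0$-line does not vanish above degree zero, one cannot argue as in the integral case of \cite{kzpnother}, and ruling it out is precisely the point at which the external input \Cref{thm:2-line-tors} of Krause and the third author becomes indispensable.
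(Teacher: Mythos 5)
Your treatment of $R=\Z_p\langle\epsilon\rangle$ follows the paper's route (the splitting of \Cref{cor:mot-fil-epsilon-main}, $v_1$-freeness of the $\TC(\Z_p)$ summand, and concentration of the $\TR$ summands on the $0$- and $1$-lines) and is essentially fine. The case $R=\Z/p^n$, however, contains a genuine gap. For degeneration you assert that, after reducing mod $v_1^{p^{n-2}}$, ``the resulting spectral sequence is that of $\Z_p\langle\epsilon\rangle$'' by \Cref{thm:main}(1). But \Cref{thm:main}(1) identifies only the syntomic complexes $\F_p(*)(-)/v_1^{p^{n-2}}$, i.e.\ the associated graded of the motivic filtration; it does not identify the filtered spectra $\fil^*_{\mot}\TC(-)/(p,v_1^{p^{n-2}})$, and two filtered spectra with isomorphic associated graded need not have isomorphic spectral sequences. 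This is precisely why \Cref{thm:intro-tc} is only an isomorphism of $\F_p$-vector spaces and why the paper must prove degeneration for $\Z/p^n$ separately rather than transporting it from $\Z_p\langle\epsilon\rangle$. Moreover, the differential you labor to kill vanishes for a reason you overlooked and which is the paper's entire degeneration argument: a class of weight $j$ and cohomological degree $s$ lives in topological degree $2j-s$, so topological degree plus filtration is always even (the ``checkerboard''); since a differential drops topological degree by $1$ and raises filtration by $r$, only $r$ odd, hence $r\geq 3$, is possible, and with the $E_2$-page concentrated on lines $0,1,2$ there is no room for any such differential. In particular no input beyond line-concentration is needed for degeneration, for either $R$.

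The second problem is that your no-hidden-extensions argument for $\Z/p^n$ omits the step where \Cref{thm:2-line-tors} is actually indispensable (you invoke it only in the superfluous degeneration detour). By the same parity consideration, a hidden $v_1$-extension can only run from the $0$-line to the $2$-line, and by \Cref{thm:2-linesurjectivity} the $2$-line is spanned by the classes $v_1^k\partial\lambda_1$ sitting in degrees $(2p-2)(k+1)$; a ``weight count'' cannot exclude these targets, since they occupy exactly the degrees that a line-$0$ source in degree divisible by $2p-2$ would feed into. The paper instead pins the only possible source down to the class $1$ and compares torsion orders: $1$ has $v_1$-torsion order $1+p+\dots+p^{n-1}$ by \cite[Theorem 6.3]{kzpnother}, while $v_1^{p^{n-2}}\partial\lambda_1=0$ by \Cref{thm:2-line-tors}, so by the time a power of $v_1$ annihilates $1$ on the $E_\infty$-page the class it would have to jump onto is already zero. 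Without this comparison the extension problem for $\Z/p^n$ remains open in your write-up.
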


%(Should I explain better what I mean by $v_1$-extension here? I just mean that things are never more $v_1^k$-divisible than expected. Alternatively, you can find lifts which are the expected level of torsion.)

\begin{proof}
Since motivic spectral sequences exhibit a checkerboard pattern, to prove their degeneration it suffices to show that they are concentrated on the $0$-line, $1$-line, and $2$-line.  In the case of $\ZZ/p^n$, we recall that $\pi_*\gr^*_{\mot}\TC(\ZZ/p^n)$ is concentrated on the $0$-line, $1$-line, and $2$-line, and furthermore the $0$-line is $p$-torsionfree \cite[Corollary 2.13]{kzpnother} (indeed, the $0$-line consists of a single copy of $\mathbb{Z}_p$ in degree $0$).  It follows that $\pi_*\gr^*_{\mot}\TC(\ZZ/p^n) / p$ is also concentrated on the $0$-line, $1$-line and $2$-line.
To prove degeneration of the motivic spectral sequence computing $\pi_*(\TC(\ZZ_p\langle \epsilon\rangle) / p)$, we also note that its $\mathrm{E}_2$-page is concentrated on the $0$-line, $1$-line, and $2$-line, for example by our explicit calculation of the $\mathrm{E}_2$-page as \Cref{cor:tczpeps}.

%to see it is on the $0,1,2$ line, we first observe that the mod $p$ prismatic cohomology/Nygaard filtered prismatic cohomology lie on the $0$ and $1$-lines. This follows from \Cref{prop:hfpsstwisted} and \Cref{prop:THHmodzero} using qrsp descent and left Kan extension. Then it follows that the syntomic cohomology lies on the $0,1,2$ lines. Since the $0$th syntomic cohomology is $p$-torsion free (it is just $\ZZ_p$ in degree $0$), the syntomic cohomology mod $p$ remains on the $0,1,2$ lines.

  It remains to prove that the are no hidden $v_1$-extensions (resp., hidden $v_1^4$-extensions).
  We begin with the case of $R = \Z/p^n$.
  Such an extension would have to go from the $0$-line to the $2$-line, and we recall that as an $\mathbb{F}_p$-vector space the $2$-line is generated by classes of the form $v_1^k \partial \lambda_1$ (which has degree $(2p-2)(k+1)$).
  The source of the extension would therefore be on the $0$-line in a degree divisible by $2p-2$ but less than $2p-2$.
  The only possible source of a hidden $v_1$-extension is therefore $1$.
  But the $v_1$-torsion order of $1$ is equal to $1+\dots+p^{n-1}$ by \cite[Theorem 6.3]{kzpnother}, and $v_1 ^{p^{n-2}} \partial \lambda_1 = 0$ by \Cref{thm:2-line-tors}, so there is no possible hidden $v_1$-extension.

  Next, we consider the case of $R = \Z_p \langle \epsilon \rangle$, where we have the splitting
  
  	$$\fil^*_{\mot}\TC(\ZZ_p)/p \oplus \bigoplus_{p \nmid \ell>0} \fil^*_{\mot}\TR (\ZZ_p;\Sigma^{2\ell} \ZZ_p)/p$$ of \Cref{thm:mot-fil-epsilon-main}. For each value of $\ell$, the homotopy groups of the associated graded are given by \Cref{thm:computation}.
  	
  The summand $\gr^*_{\mot}\TC(\ZZ_p)/p$ is $v_1$-torsionfree \cite[Theorem 1.5]{LiuWang}, so there are no possible $v_1$-extensions associated to that summand.
  The summands indexed by $\ell$ are concentrated on the $0$- and $1$-lines, so again there are no possible $v_1$-extensions for sparsity reasons.
\end{proof}

The following is an immediate corollary of the fact that there are no hidden $v_1$-extensions:

\begin{cor}
  Let $p$ denote a prime, $n \geq 2$, and $k \geq 1$ be divisible by $4$ if $p=2$.

  Then the spectral sequences
  \[\pi_* \gr^\mot _* \TC(\Z/p^n)/ (p,v_1^{k}) \Rightarrow \pi_* \TC(\Z/p^n) / (p,v_1 ^{k})\]
  and
  \[\pi_* \gr^\mot _* \TC(\Z_p \langle \epsilon \rangle)/ (p,v_1 ^{k}) \Rightarrow \pi_* \TC(\Z_p \langle \epsilon \rangle) / (p, v_1^{k})\]
  degenerate at the $\mathrm{E}_2$-page.
\end{cor}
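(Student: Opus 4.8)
The plan is to deduce this from the previous proposition by a dimension count, exploiting that reduction modulo $v_1^k$ is a cofiber operation compatible with the motivic filtration. Since the motivic filtration is lax symmetric monoidal, the $v_1$-self map of $\mathbb{S}/p$ (resp. the $v_1^4$-self map when $p=2$) lifts to a filtered self map, so we may take $\fil^*_{\mot}\TC(R)/(p,v_1^k)$ to be the cofiber of $v_1^k$ on $\fil^*_{\mot}\TC(R)/p$; its associated graded is then $\gr^*_{\mot}\TC(R)/(p,v_1^k) \simeq \cofib\!\left(v_1^k \colon \Sigma^{(2p-2)k}\gr^*_{\mot}\TC(R)/p \to \gr^*_{\mot}\TC(R)/p\right)$. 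The long exact sequences attached to these cofiber sequences give, in each total degree $m$, short exact sequences
\[0 \to \coker(v_1^k) \to \pi_m \TC(R)/(p,v_1^k) \to \ker(v_1^k) \to 0\]
of $\mathbb{F}_p$-vector spaces, where $v_1^k$ acts on $\pi_*\TC(R)/p$, together with the exactly analogous sequences in which $\pi_*\TC(R)/p$ is replaced by $\pi_*\gr^*_{\mot}\TC(R)/p$.

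First I would record that, by the previous proposition, the motivic spectral sequence for $\TC(R)/p$ degenerates at $\mathrm{E}_2$, so $\dim_{\mathbb{F}_p}\pi_m\TC(R)/p = \dim_{\mathbb{F}_p}\pi_m\gr^*_{\mot}\TC(R)/p$ for every $m$ --- both finite, since $R$ is truncated and the motivic spectral sequence is concentrated on the $0$-, $1$- and $2$-lines. Next I would compare the $v_1^k$-actions using the absence of hidden $v_1$-extensions: equipping $\ker\!\left(v_1^k \colon \pi_*\TC(R)/p \to \pi_*\TC(R)/p\right)$ with the filtration induced from the motivic filtration, its associated graded injects into $\ker(v_1^k)$ on $\pi_*\gr^*_{\mot}\TC(R)/p$, and the no-hidden-extension statement of the previous proposition (with $j=k$, which is permissible as $4 \mid k$ when $p = 2$) says precisely that this injection is surjective. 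Hence $\dim_{\mathbb{F}_p}\ker(v_1^k)$ agrees on $\pi_*\TC(R)/p$ and on $\pi_*\gr^*_{\mot}\TC(R)/p$ in each degree, and then rank--nullity together with the equality of total dimensions forces $\dim_{\mathbb{F}_p}\coker(v_1^k)$ to agree as well.

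Feeding these equalities into the short exact sequences above yields $\dim_{\mathbb{F}_p}\pi_m\TC(R)/(p,v_1^k) = \dim_{\mathbb{F}_p}\pi_m\gr^*_{\mot}\TC(R)/(p,v_1^k)$ for all $m$. The motivic spectral sequence $\pi_*\gr^*_{\mot}\TC(R)/(p,v_1^k) \Rightarrow \pi_*\TC(R)/(p,v_1^k)$ is convergent and, being concentrated on finitely many lines, has a finite filtration on its abutment in each total degree, so $\dim_{\mathbb{F}_p}\mathrm{E}_\infty \le \dim_{\mathbb{F}_p}\mathrm{E}_2$ in each total degree with the left-hand side equal to $\dim_{\mathbb{F}_p}\pi_m\TC(R)/(p,v_1^k)$; the displayed equality therefore forces $\mathrm{E}_2 = \mathrm{E}_\infty$, i.e. degeneration at $\mathrm{E}_2$. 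The step requiring real care --- the main obstacle --- is the comparison of $v_1^k$-kernels: one must verify that the no-hidden-extension clause of the previous proposition is exactly the surjectivity of $\gr\ker(v_1^k) \twoheadrightarrow \ker(\gr v_1^k)$, and that every vector space appearing is finite-dimensional in each degree (so that the rank--nullity bookkeeping is legitimate), which uses the boundedness of the $v_1$-torsion together with the finiteness of the homotopy of $\TC(R)$ in each degree --- i.e. the explicit computation of \Cref{cor:tczpeps} when $R = \Z_p\langle\epsilon\rangle$ and the corresponding known input for $R = \Z/p^n$.
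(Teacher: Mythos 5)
Your argument is correct and follows the same route the paper takes: the paper simply asserts the corollary is immediate from the degeneration of the mod $p$ spectral sequence together with the absence of hidden $v_1$-extensions, and your dimension count (identifying the no-hidden-extension clause with the surjectivity of $\gr\ker(v_1^k)\to\ker(\gr v_1^k)$, then comparing kernels, cokernels, and the two long exact sequences degreewise) is a careful justification of exactly that implication.
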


Combining this with \Cref{thm:mainintro}, we obtain the following corollary:

\begin{cor}
  Let $p$ denote a prime, $n \geq 2$, and $k \leq p^{n-2}$.

  If $p$ is odd, then there is an isomorphism of $\F_p$-vector spaces
  \[\pi_* \gr^\mot _* \TC(\Z_p \langle \epsilon \rangle)/ (p,v_1^{k}) \cong \pi_* \TC(\Z/p^n) / (p,v_1 ^{k}).\]

  If $p=2$ and $k$ is divisible by $4$, then the graded abelian group $\pi_* \TC(R) / (2,v_1 ^{k})$ admits a filtration with associated graded given by $\pi_* \gr^\mot _* \TC(\Z_2 \langle \epsilon \rangle)/ (2,v_1 ^{k})$.
\end{cor}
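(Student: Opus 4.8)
The plan is to assemble the statement from two ingredients already in hand: the degeneration of the mod $(p,v_1^k)$ motivic spectral sequence (the preceding corollary) and the crystallinity theorem \Cref{thm:main}. The guiding point is that for odd $p$ all the groups in sight are honest $\F_p$-vector spaces, so the filtration can be discarded, while at $p=2$ one must keep it.

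First I would recall that $\gr^i_{\mot}\TC(R)\simeq\Z_p(i)(R)[2i]$ for $p$-complete animated $R$, so that, as a bigraded $\F_p$-vector space, $\pi_*\gr^\mot_*\TC(R)/(p,v_1^k)$ is obtained from $\bigoplus_{i\ge 0}\pi_*\F_p(i)(R)$ by forming the cofiber of the $v_1^k$-self-map; this description is functorial in $R$ and compatible with the $v_1$-action. Since $k\le p^{n-2}$, \Cref{thm:main}(1) — exactly as used in the proof of \Cref{cor:tczpeps}, together with the isomorphisms $(\Z/p^n)/p^n\simeq\Z\langle\epsilon\rangle/p^n\simeq(\Z_p\langle\epsilon\rangle)/p^n$ of derived rings — identifies the mod $(p,v_1^k)$ syntomic cohomology of $\Z/p^n$ with that of $\Z_p\langle\epsilon\rangle$. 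I would thus conclude
\[\pi_*\gr^\mot_*\TC(\Z/p^n)/(p,v_1^k)\;\cong\;\pi_*\gr^\mot_*\TC(\Z_p\langle\epsilon\rangle)/(p,v_1^k)\]
as bigraded $\F_p$-vector spaces. (If one prefers not to quote \Cref{cor:tczpeps}, the case of general $k\le p^{n-2}$ follows from the case $k=p^{n-2}$ of \Cref{thm:main}(1), applied with its $v_1$-module structure, via $\F_p(\ast)(R)/v_1^k\simeq(\F_p(\ast)(R)/v_1^{p^{n-2}})\otimes_{\F_p[v_1]/v_1^{p^{n-2}}}\F_p[v_1]/v_1^k$.)

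Next, for $p$ odd I would use that $\Ss/p$, hence $\Ss/(p,v_1^k)$, is annihilated by $p$, so that all four groups $\pi_*\TC(\Z/p^n)/(p,v_1^k)$, $\pi_*\gr^\mot_*\TC(\Z/p^n)/(p,v_1^k)$, and the analogues for $\Z_p\langle\epsilon\rangle$, are $\F_p$-vector spaces. The motivic filtration on $\TC(\Z/p^n)/(p,v_1^k)$ is finite in each total degree, since only finitely many weights contribute (each $\gr^i_{\mot}\TC(\Z/p^n)$ has homotopy concentrated in a bounded band of degrees), and by the preceding corollary the associated spectral sequence degenerates at $\mathrm{E}_2=\mathrm{E}_\infty$. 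Hence $\pi_*\TC(\Z/p^n)/(p,v_1^k)$ is non-canonically isomorphic, as a graded $\F_p$-vector space, to its associated graded $\pi_*\gr^\mot_*\TC(\Z/p^n)/(p,v_1^k)$, which the previous step identifies with $\pi_*\gr^\mot_*\TC(\Z_p\langle\epsilon\rangle)/(p,v_1^k)$; this is the odd-primary statement. For $p=2$ with $4\mid k$ the two sides carry nontrivial $2$-extensions (propagated from $\TC(\Z_2)/2$), so I would only claim that the same degeneration equips the finite motivic filtration on $\pi_*\TC(\Z/2^n)/(2,v_1^k)$ with associated graded $\pi_*\gr^\mot_*\TC(\Z/2^n)/(2,v_1^k)\cong\pi_*\gr^\mot_*\TC(\Z_2\langle\epsilon\rangle)/(2,v_1^k)$, which is what is asserted.

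The argument is essentially a bookkeeping assembly of results already proven; the one place deserving genuine care — and hence the main obstacle — is verifying that the identification of $\pi_*\gr^\mot_*\TC(R)/(p,v_1^k)$ with the cofiber of $v_1^k$ acting on the weight-wise syntomic complexes is compatible with the crystallinity equivalence of \Cref{thm:main}, i.e. that that equivalence is $v_1$-linear and therefore survives the further quotient by $v_1^k$. Once this compatibility is pinned down, nothing else stands in the way.
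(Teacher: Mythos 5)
Your proposal is correct and follows essentially the same route as the paper, which simply combines the preceding degeneration corollary (degeneration at $\mathrm{E}_2$ with no hidden $v_1$-extensions) with \Cref{thm:main}(1) to identify the associated graded for $\Z/p^n$ with that for $\Z_p\langle\epsilon\rangle$, and then uses that for odd $p$ everything in sight is an $\F_p$-vector space so the finite filtration splits. The extra care you take about $v_1$-linearity and base change from $v_1^{p^{n-2}}$ to $v_1^k$ is a correct elaboration of what the paper leaves implicit.
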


%\begin{thm}
%  Let $p \geq 3$, $n \geq 2$, and $k \geq 1$.
%  Then there are isomorphisms of $\F_p$-vector spaces
%  \[\pi_* \TC(\Z/p^n) / (p,v_1^k) \cong \pi_* \gr^\mot _* \TC(\Z/p^n) / (p,v_1^k).\]
%  and
%  \[\pi_* \TC(\Z_p \langle \epsilon \rangle) / (p,v_1^k) \cong \pi_* \gr^\mot _* \TC(\Z_p \langle \epsilon \rangle) / (p,v_1^k).\]
%
%  If $p \geq 5$, then these are isomorphisms of $\F_p[v_1]$-modules.
%  If $p = 3$, then they are isomorphisms of $\F_3 [v_1^m]$-modules for all $m \geq 2$.
%\end{thm}
%
%By \Cref{thm:???}, all four of the objects appearing in THM are isomorphic when $k \leq 1+ p + \dots + p^{n-1}$.
%
%At $p=2$, the best we are able to do is the following:
%
%\begin{thm}
%  At $p=2$, the spectral sequences
%  \[\pi_* \gr^\mot _* \TC(\Z/2^n)/ (2,v_1^{4k}) \Rightarrow \pi_* \TC(R) / (2,v_1 ^{4k})\]
%  and
%  \[\pi_* \gr^\mot _* \TC(\Z_2 \langle \epsilon \rangle)/ (2,v_1 ^{4k}) \Rightarrow \pi_* \TC(\langle \epsilon \rangle) / (2, v_1^{4k})\]
%  degenerate at the $\mathrm{E}_2$-page.
%\end{thm}
%
%By \Cref{thm:???}, it follows that all four of the objects appearning in THM have the length as $\Z/4$-modules when $k \leq 1+ 2 + \dots + 2^{n-1}$.
%
\begin{rmk}
  There are $2$-extensions mapped forward from $\pi_*(\TC(\Z_2)/2)$ \cite[Theorem 8.21]{LiuWang}, but it is not clear that these constitute all $2$-extensions.
\end{rmk}

Finally, we conclude with a consequence for algebraic $K$-theory.

\begin{cor}
  Let $p$ denote a prime, $n \geq 2$, and let $k \leq p^{n-2}-1$. If $p=2$, then assume that $k$ is divisible by $4$.
  Then there is an exact sequence
  \[0 \to \F_p \{v_1 ^k \partial\} \to \pi_* K(\Z/p^n)/(p,v_1 ^k) \to \pi_* \TC(\Z/p^n)/(p,v_1 ^k) \to \F_p \{\partial\} \to 0.\]
\end{cor}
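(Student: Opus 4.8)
The plan is to deduce this from the fibre sequence relating $K$-theory and $\TC$ supplied by the Dundas--Goodwillie--McCarthy theorem. Recall that the fibre of $K(\Z/p^n) \to \TC(\Z/p^n)$ agrees with the fibre of $K(\F_p) \to \TC(\F_p)$, and that this fibre is, after $p$-completion, equivalent to $\Sigma^{-2}\Z_p$ (concentrated in degree $-2$); this is the same input used in the proof of \Cref{thm:intro-cyclic}. Applying $(-)/(p,v_1^k)$, which is an exact functor, the fibre of $K(\Z/p^n)/(p,v_1^k) \to \TC(\Z/p^n)/(p,v_1^k)$ becomes $(\Sigma^{-2}\Z_p)/(p,v_1^k)$. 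Since $\Z_p$ here is concentrated in a single degree with trivial $v_1$-action, $(\Sigma^{-2}\Z_p)/p \simeq \Sigma^{-2}\F_p \oplus \Sigma^{-1}\F_p$, and then quotienting by $v_1^k$ (which acts by zero) gives $(\Sigma^{-2}\Z_p)/(p,v_1^k) \simeq \Sigma^{-2}\F_p \oplus \Sigma^{-1}\F_p \oplus \Sigma^{|v_1^k|-2}\F_p \oplus \Sigma^{|v_1^k|-1}\F_p$, where $|v_1^k| = (2p-2)k$. Writing $\partial$ for the generator of $\pi_{-2}$ of the fibre of $K(\F_p)\to \TC(\F_p)$, the four summands are generated by $\partial$, $\beta\partial$ (its mod-$p$ Bockstein-type partner), $v_1^k\partial$, and $v_1^k\beta\partial$ respectively, in degrees $-2, -1, (2p-2)k-2, (2p-2)k-1$.

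From here I would write down the long exact sequence on homotopy groups associated to this fibre sequence and extract the relevant four-term piece. The map $\pi_* K(\Z/p^n)/(p,v_1^k) \to \pi_* \TC(\Z/p^n)/(p,v_1^k)$ has kernel equal to the image of $\pi_* \mathrm{fib}/(p,v_1^k) = \pi_*\big((\Sigma^{-2}\Z_p)/(p,v_1^k)\big)$ and cokernel equal to a subgroup of $\pi_{*-1}\mathrm{fib}/(p,v_1^k)$. The key point, exactly as in the companion arguments, is that the boundary map $\pi_*\TC(\Z/p^n)/(p,v_1^k) \to \pi_{*-1}\mathrm{fib}/(p,v_1^k)$ kills $v_1^k\partial$ and $v_1^k\beta\partial$: indeed these classes come, via the splitting of the fibre, from the unit map $\Ss \to K(\Z/p^n)$ composed with $K(\Z/p^n)\to \mathrm{cofib}$, and they lift to $K$-theory precisely because $\partial$ and $\beta\partial$ do not lie in the image of $K$ — more precisely, $\partial$ maps to a generator of $\pi_{-2}$ of the cofibre (not the fibre), so the relevant boundary hits $\partial$ and $\beta\partial$ but not their $v_1^k$-multiples. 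Thus the kernel of $\pi_* K(\Z/p^n)/(p,v_1^k)\to \pi_*\TC(\Z/p^n)/(p,v_1^k)$ is exactly $\F_p\{v_1^k\partial\}$ (the class $v_1^k\beta\partial$ being carried nontrivially because $\beta\partial$ is already $v_1^k$-divisible-free — here one uses $k \le p^{n-2}-1$ so that no collision occurs), and the cokernel is exactly $\F_p\{\partial\}$.

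The main obstacle, and the reason for the hypothesis $k \le p^{n-2}-1$ rather than $k \le p^{n-2}$, is controlling the interaction of the class $\partial$ (and its $v_1$-multiples) with the rest of $\pi_*\TC(\Z/p^n)/(p,v_1^k)$ — specifically, verifying that $v_1^k\partial \ne 0$ in $\pi_*\TC(\Z/p^n)/p$ and that $\partial$ itself survives to give a nonzero cokernel class. This is where \Cref{thm:2-line-tors} of Krause--Senger enters: it gives $v_1^{p^{n-2}}\partial\lambda_1 = 0$, equivalently $v_1^{p^{n-2}}$ annihilates the relevant $2$-line class, so for $k \le p^{n-2}-1$ the class $v_1^k\partial$ is genuinely nonzero and not further $v_1$-divisible, while for $k = p^{n-2}$ we would not know whether $v_1^{p^{n-2}}\partial$ vanishes, which is exactly the excluded case flagged in the remark after \Cref{thm:intro-tc}. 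So the plan is: (i) set up the DGM fibre sequence and compute the fibre mod $(p,v_1^k)$; (ii) run the long exact sequence and identify the boundary map on the relevant classes using the splitting off the unit; (iii) invoke \Cref{thm:2-line-tors} together with the bound $k\le p^{n-2}-1$ to pin down the kernel as $\F_p\{v_1^k\partial\}$ and the cokernel as $\F_p\{\partial\}$, yielding the asserted four-term exact sequence.
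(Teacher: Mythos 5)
Your overall route (the Dundas--Goodwillie--McCarthy fibre sequence, the induced long exact sequence mod $(p,v_1^k)$, and control of the $v_1$-torsion order of $\partial$) is the same as the paper's, which phrases it via the cofibre and the observation that $K(\Z/p^n)/p$ is the connective cover of $\TC(\Z/p^n)/p$. However, there are two genuine errors. First, your computation of the fibre mod $(p,v_1^k)$ is wrong: $\pi_{-2}$ of the fibre is $\Z_p$, which is $p$-torsion-free, so $(\Sigma^{-2}\Z_p)/p$ has homotopy $\F_p$ concentrated in degree $-2$ only. Your splitting $\Sigma^{-2}\F_p \oplus \Sigma^{-1}\F_p$ would require $p$ to act by zero, which "concentrated in a single degree" does not give you. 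Consequently $\pi_*\bigl((\Sigma^{-2}\Z_p)/(p,v_1^k)\bigr)$ consists of exactly two classes, in degrees $-2$ and $(2p-2)k-1$; the classes you call $\beta\partial$ and $v_1^k\beta\partial$ do not exist. This is not cosmetic: summing over all degrees, $\dim(\ker)+\dim(\mathrm{coker})$ for $\pi_*K/(p,v_1^k)\to\pi_*\TC/(p,v_1^k)$ must equal the total dimension of the homotopy of the fibre, so a four-dimensional fibre is incompatible with the asserted four-term sequence, and the bookkeeping in your second paragraph cannot be made to work as written. With the correct two classes the bookkeeping is exactly what is needed.

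Second, and more seriously, you attribute the crucial non-vanishing $v_1^k\partial\neq 0$ (for $k\leq p^{n-2}-1$) to \Cref{thm:2-line-tors}. That theorem asserts the vanishing $v_1^{p^{n-2}}\partial\lambda_1=0$ for the class $\partial\lambda_1$ on the $2$-line; $\partial$ itself lives in $\pi_{-1}$ on the $1$-line and is a different class, and in any case a vanishing statement at exponent $p^{n-2}$ cannot imply non-vanishing at smaller exponents. The input actually needed is \Cref{thm:main}: mod $(p,v_1^{p^{n-2}})$ the syntomic cohomology of $\Z/p^n$ agrees with that of $\Z_p\langle\epsilon\rangle$, where by \Cref{cor:tczpeps} and the Liu--Wang computation the class $\partial$ generates a free $\F_p[v_1]$-module summand; hence $v_1^{p^{n-2}-1}\partial\neq 0$ in $\pi_*\TC(\Z/p^n)/p$. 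Without this substitution your step (iii) has no support. Repairing both points recovers the paper's argument.
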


\begin{proof}
  By the Dundas-Goodwillie-McCarthy theorem, we have a pullback square:
  \begin{center}
    \begin{tikzcd}
      K(\Z/p^n) \ar[d] \ar[r] & K(\F_p) \ar[d] \\
      \TC(\Z/p^n) \ar[r] & \TC(\F_p).
    \end{tikzcd}
  \end{center}
  Using the identification of
  $\pi_* K(\F_p)/p \to \pi_* \TC(\F_p)/p$
  with
  $\F_p \to \F_p \{1,\partial\}$,
  we find that $K(\Z/p^n)/p \to \TC (\Z/p^n)/p$
  identifies the source with the connective cover of the target.
  Equivalently, $\pi_* K(\Z/p^n)/p$ obtained from $\pi_* \TC(\Z/p^n)/p$ by removing $\partial$.

  Now, it follows from \Cref{thm:mainintro} that $\partial$ is not $v_1 ^{p^{n-2}-1}$-torsion.
  The stated exact sequence is an immediate consequence.
\end{proof}

\begin{rmk}
  To determine what happens when $k=p^{n-2}$, one would need to figure out the precise $v_1$-torsion degree of $\partial$.
\end{rmk}

\bibliographystyle{alpha}
\bibliography{bibliography}

@article {LindenstraussMcCarthy,
    AUTHOR = {Lindenstrauss, Ayelet and McCarthy, Randy},
     TITLE = {On the {T}aylor tower of relative {$K$}-theory},
   JOURNAL = {Geom. Topol.},
  FJOURNAL = {Geometry \& Topology},
    VOLUME = {16},
      YEAR = {2012},
    NUMBER = {2},
     PAGES = {685--750},
      ISSN = {1465-3060,1364-0380},
   MRCLASS = {19D55 (18G60)},
  MRNUMBER = {2928981},
       DOI = {10.2140/gt.2012.16.685},
       URL = {https://doi.org/10.2140/gt.2012.16.685},
}

@article{heine2023equivalence,
	title={An equivalence between enriched {$\infty$}-categories and{$\infty$}-categories with weak action},
	author={Heine, Hadrian},
	journal={Advances in Mathematics},
	volume={417},
	pages={108941},
	year={2023},
	publisher={Elsevier}
}

@article{telescope,
	title={{$K$}-theoretic counterexamples to {R}avenel's telescope conjecture},
	author={Burklund, Robert and Hahn, Jeremy and Levy, Ishan and Schlank, Tomer M},
  note = {\href{https://arxiv.org/abs/2310.17459}{arXiv:2310.17459}},
	year={2023}
}

@article{antieau2021cartier,
	title={Cartier modules and cyclotomic spectra},
	author={Antieau, Benjamin and Nikolaus, Thomas},
	journal={Journal of the American Mathematical Society},
	volume={34},
	number={1},
	pages={1--78},
	year={2021}
}

@article{burklund2022multiplicative,
	title={Multiplicative structures on {M}oore spectra},
	author={Burklund, Robert},
  note = {\href{https://arxiv.org/abs/2203.14787}{arXiv:2203.14787}},
	year={2022}
}

@book{dundas2012local,
	title={The local structure of algebraic K-theory},
	author={Dundas, Bj{\o}rn Ian and Goodwillie, Thomas G and McCarthy, Randy},
	volume={18},
	year={2012},
	publisher={Springer Science \& Business Media}
}

@article{angeltveit2011algebraic,
	title={On the algebraic {K}-theory of {W}itt vectors of finite length},
	author={Angeltveit, Vigleik},
	year={2011},
  note = {\href{https://arxiv.org/abs/2405.04329}{arXiv:2405.04329}}
}

@article{adams1966groups,
    AUTHOR = {Adams, J. F.},
     TITLE = {On the groups {$J(X)$}. {IV}},
   JOURNAL = {Topology},
  FJOURNAL = {Topology. An International Journal of Mathematics},
    VOLUME = {5},
      YEAR = {1966},
     PAGES = {21--71},
      ISSN = {0040-9383},
   MRCLASS = {55.40},
  MRNUMBER = {198470},
MRREVIEWER = {E.\ Dyer},
       DOI = {10.1016/0040-9383(66)90004-8},
       URL = {https://doi.org/10.1016/0040-9383(66)90004-8},
}

@article{hahn2022redshift,
	title={Redshift and multiplication for truncated Brown--Peterson spectra},
	author={Hahn, Jeremy and Wilson, Dylan},
	journal={Annals of Mathematics},
	volume={196},
	number={3},
	pages={1277--1351},
	year={2022},
	publisher={Department of Mathematics, Princeton University Princeton, New Jersey, USA}
}

@article{Drinfeld,
  title={Prismatization},
  author={Drinfeld, Vladimir},
  note = {\href{https://arxiv.org/abs/2005.04746}{arXiv:2005.04746}},
  year={2020}
}

@article{LiuWang,
	title={Topological cyclic homology of local fields},
	author={Liu, Ruochuan and Wang, Guozhen},
	journal={Inventiones mathematicae},
	volume={230},
	number={2},
	pages={851--932},
	year={2022},
	publisher={Springer}
}

@article {BMS,
    AUTHOR = {Bhatt, Bhargav and Morrow, Matthew and Scholze, Peter},
     TITLE = {Topological {H}ochschild homology and integral {$p$}-adic
              {H}odge theory},
   JOURNAL = {Publ. Math. Inst. Hautes \'{E}tudes Sci.},
  FJOURNAL = {Publications Math\'{e}matiques. Institut de Hautes \'{E}tudes
              Scientifiques},
    VOLUME = {129},
      YEAR = {2019},
     PAGES = {199--310},
      ISSN = {0073-8301,1618-1913},
   MRCLASS = {14F30 (13A35)},
  MRNUMBER = {3949030},
MRREVIEWER = {Lance\ Edward\ Miller},
       DOI = {10.1007/s10240-019-00106-9},
       URL = {https://doi.org/10.1007/s10240-019-00106-9},
}

@article{APC,
  title={Absolute prismatic cohomology},
  author={Bhatt, Bhargav and Lurie, Jacob},
  note = {\href{https://arxiv.org/abs/2201.06120}{arXiv:2201.06120}},
  year={2022}
}

@article{BLprismatization,
  title={The prismatization of $ p $-adic formal schemes},
  author={Bhatt, Bhargav and Lurie, Jacob},
  note = {\href{https://arxiv.org/abs/2201.06124}{arXiv:2201.06124}},
  year={2022}
}

@article{Petrov,
  title={Non-decomposability of the de {R}ham complex and non-semisimplicity of the {S}en operator},
  author={Petrov, Alexander},
  note = {\href{https://arxiv.org/abs/2302.11389}{arXiv:2302.11389}},
  year={2023}
}

@article{kzpnother,
  AUTHOR = {Antieau, Benjamin and Krause, Achim and Nikolaus, Thomas},
  TITLE = {On the {$K$}-theory of $\mathbb{Z}/p^n$},
  Year = {2024},
  note = {\href{https://arxiv.org/abs/2405.04329}{arXiv:2405.04329}}
}

@article {BM,
    AUTHOR = {Bhatt, Bhargav and Mathew, Akhil},
     TITLE = {Syntomic complexes and {$p$}-adic \'etale {T}ate twists},
   JOURNAL = {Forum Math. Pi},
  FJOURNAL = {Forum of Mathematics. Pi},
    VOLUME = {11},
      YEAR = {2023},
     PAGES = {Paper No. e1, 26},
      ISSN = {2050-5086},
   MRCLASS = {14F30 (14F42)},
  MRNUMBER = {4530091},
MRREVIEWER = {Bj\o rn\ Ian\ Dundas},
       DOI = {10.1017/fmp.2022.21},
       URL = {https://doi.org/10.1017/fmp.2022.21},
}

@article {BeilFib,
    AUTHOR = {Antieau, Benjamin and Mathew, Akhil and Morrow, Matthew and
              Nikolaus, Thomas},
     TITLE = {On the {B}eilinson fiber square},
   JOURNAL = {Duke Math. J.},
  FJOURNAL = {Duke Mathematical Journal},
    VOLUME = {171},
      YEAR = {2022},
    NUMBER = {18},
     PAGES = {3707--3806},
      ISSN = {0012-7094,1547-7398},
   MRCLASS = {14F30 (14F40 19D55 19E15)},
  MRNUMBER = {4516307},
MRREVIEWER = {Guillermo\ Corti\~nas},
       DOI = {10.1215/00127094-2022-0037},
       URL = {https://doi.org/10.1215/00127094-2022-0037},
}

@article {Scheid,
    AUTHOR = {Scheiderer, Claus},
     TITLE = {Quasi-augmented simplicial spaces, with an application to
              cohomological dimension},
   JOURNAL = {J. Pure Appl. Algebra},
  FJOURNAL = {Journal of Pure and Applied Algebra},
    VOLUME = {81},
      YEAR = {1992},
    NUMBER = {3},
     PAGES = {293--311},
      ISSN = {0022-4049,1873-1376},
   MRCLASS = {18G20 (54B40 55U10)},
  MRNUMBER = {1179103},
MRREVIEWER = {Marek\ Golasi\'nski},
       DOI = {10.1016/0022-4049(92)90062-K},
       URL = {https://doi.org/10.1016/0022-4049(92)90062-K},
}

@article{fgauge,
  AUTHOR = {Bhatt, Bhargav},
  TITLE = {Prismatic {$F$}-gauges},
  Year = {2022},
  note = {\href{https://www.math.ias.edu/~bhatt//teaching/mat549f22/lectures.pdf}{https://www.math.ias.edu/~bhatt//teaching/mat549f22/lectures.pdf}}
}

@article{fgaugeBL,
	AUTHOR = {Bhatt, Bhargav and Lurie, Jacob},
	TITLE = {Prismatic {$F$}-gauges},
	Year = {},
	note = {In preparation}
}

@incollection {HazewinkelWitt,
    AUTHOR = {Hazewinkel, Michiel},
     TITLE = {Witt vectors. {I}},
 BOOKTITLE = {Handbook of algebra. {V}ol. 6},
    SERIES = {Handb. Algebr.},
    VOLUME = {6},
     PAGES = {319--472},
 PUBLISHER = {Elsevier/North-Holland, Amsterdam},
      YEAR = {2009},
      ISBN = {978-0-444-53257-2},
   MRCLASS = {13F35 (05E15 16T05)},
  MRNUMBER = {2553661},
MRREVIEWER = {Markus\ Szymik},
       DOI = {10.1016/S1570-7954(08)00207-6},
       URL = {https://doi.org/10.1016/S1570-7954(08)00207-6},
}

@book {DG,
    AUTHOR = {Demazure, Michel and Gabriel, Pierre},
     TITLE = {Groupes alg\'ebriques. {T}ome {I}: {G}\'eom\'etrie
              alg\'ebrique, g\'en\'eralit\'es, groupes commutatifs},
      NOTE = {Avec un appendice {\it Corps de classes local} par Michiel
              Hazewinkel},
 PUBLISHER = {Masson \& Cie, \'Editeurs, Paris; North-Holland Publishing
              Co., Amsterdam},
      YEAR = {1970},
     PAGES = {xxvi+700},
   MRCLASS = {14L15 (20G35)},
  MRNUMBER = {302656},
MRREVIEWER = {J.-E.\ Bertin},
}

@article{SamDGM,
  AUTHOR = {Raskin, Sam},
  TITLE = {On the {D}undas--{G}oodwillie--{M}c{C}arthy theorem},
  Year = {2018},
  note = {\href{https://arxiv.org/abs/1807.06709}{arXiv:1807.06709}}
}

@article{kzpnotherannounce,
  AUTHOR = {Antieau, Benjamin and Krause, Achim and Nikolaus, Thomas},
  TITLE = {On the {$K$}-theory of $\mathbb{Z}/p^n$ -- announcement},
  Year = {2022},
  note = {\href{https://arxiv.org/abs/2204.03420}{arXiv:2204.03420}}
}

@article{polygonicspectra,
      title={Polygonic spectra and {T}{R} with coefficients}, 
      author={Achim Krause and Jonas McCandless and Thomas Nikolaus},
      year={2023},
      eprint={2302.07686},
      archivePrefix={arXiv},
      primaryClass={math.AT},
      url={https://arxiv.org/abs/2302.07686}, 
}

@article{KS,
  AUTHOR = {Krause, Achim and Senger, Andrew},
  TITLE = {Exact bounds for even vanishing of ${K}_* (\mathbb{Z}/p^n)$},
  Year = {2024},
  note = {\href{https://arxiv.org/abs/2409.20523}{arXiv:2409.20523}}
}

@article {HM,
    AUTHOR = {Hesselholt, Lars and Madsen, Ib},
     TITLE = {Cyclic polytopes and the {$K$}-theory of truncated polynomial
              algebras},
   JOURNAL = {Invent. Math.},
  FJOURNAL = {Inventiones Mathematicae},
    VOLUME = {130},
      YEAR = {1997},
    NUMBER = {1},
     PAGES = {73--97},
      ISSN = {0020-9910,1432-1297},
   MRCLASS = {19D55 (19D50 55N15)},
  MRNUMBER = {1471886},
MRREVIEWER = {Jerry\ Lodder},
       DOI = {10.1007/s002220050178},
       URL = {https://doi.org/10.1007/s002220050178},
}

@book {DGM,
    AUTHOR = {Dundas, Bj\o rn Ian and Goodwillie, Thomas G. and McCarthy,
              Randy},
     TITLE = {The local structure of algebraic {K}-theory},
    SERIES = {Algebra and Applications},
    VOLUME = {18},
 PUBLISHER = {Springer-Verlag London, Ltd., London},
      YEAR = {2013},
     PAGES = {xvi+435},
      ISBN = {978-1-4471-4392-5; 978-1-4471-4393-2},
   MRCLASS = {19-02 (16E40 19D55 55-02 55N99)},
  MRNUMBER = {3013261},
MRREVIEWER = {Charles\ Weibel},
}

@article{Noah,
  AUTHOR = {Riggenbach, Noah},
  TITLE = {{$K$}-{T}heory of {T}runcated {P}olynomials},
  Year = {2022},
  note = {\href{https://arxiv.org/abs/2211.11110}{arXiv:2211.11110}}
}

@article{Vologodsky,
  AUTHOR = {Terentiuk, Gleb and Vologodsky, Vadim and Xu, Yujie},
  TITLE = {Prismatic {$F$}-{G}auges and {F}ontaine--{L}affaille modules},
  Year = {2024},
  note = {\href{https://arxiv.org/abs/2402.17755}{arXiv:2402.17755}}
}

@article {CMM,
    AUTHOR = {Clausen, Dustin and Mathew, Akhil and Morrow, Matthew},
     TITLE = {{$K$}-theory and topological cyclic homology of henselian
              pairs},
   JOURNAL = {J. Amer. Math. Soc.},
  FJOURNAL = {Journal of the American Mathematical Society},
    VOLUME = {34},
      YEAR = {2021},
    NUMBER = {2},
     PAGES = {411--473},
      ISSN = {0894-0347,1088-6834},
   MRCLASS = {19D55},
  MRNUMBER = {4280864},
MRREVIEWER = {Bj\o rn\ Ian\ Dundas},
       DOI = {10.1090/jams/961},
       URL = {https://doi.org/10.1090/jams/961},
}

@article{algKrealK,
  AUTHOR = {Angelini-{K}noll, Gabriel and Ausoni, Christian and Rognes, John},
  TITLE = {Algebraic {K}-theory of real topological {K}-theory},
  Year = {2023},
  note = {\href{https://arxiv.org/abs/2309.11463}{arXiv:2309.11463}}
}

@article{ausoni2002algebraic,
	title={Algebraic K-theory of topological K-theory},
	author={Ausoni, Christian and Rognes, John},
	journal={Acta mathematica},
	volume={188},
	number={1},
	pages={1--39},
	year={2002},
	publisher={Springer Netherlands, Dordrecht}
}

@inproceedings{dundas2017finite,
	title={Finite generation and continuity of topological Hochschild and cyclic homology},
	author={Dundas, Bj{\o}rn Ian and Morrow, Matthew},
	booktitle={Annales scientifiques de l'{\'E}cole Normale Sup{\'e}rieure},
	volume={50},
	number={1},
	pages={201--238},
	year={2017}
}

@article{ausoni2010algebraic,
	title={On the algebraic K-theory of the complex K-theory spectrum},
	author={Ausoni, Christian},
	journal={Inventiones mathematicae},
	volume={180},
	number={3},
	pages={611--668},
	year={2010},
	publisher={Springer}
}

@article{CycSyn,
  AUTHOR = {Antieau, Benjamin and Riggenbach, Noah},
  TITLE = {Cyclotomic synthetic spectra},
  Year = {2024},
  note = {\href{https://arxiv.org/abs/2411.19929}{arXiv:2411.19929}}
}

\end{document}